\newtheorem{theorem}{Theorem}[section]
\newtheorem{lemma}[theorem]{Lemma}
\theoremstyle{definition}
\newtheorem{proposition}[theorem]{Proposition}
\theoremstyle{remark}
\newtheorem{remark}[theorem]{Remark}
\numberwithin{equation}{section}
\begin{document}

\title[Decay and Stability of Inhomogeneous MHD systems]
 {On the Decay and Stability of Global Solutions to the 3D Inhomogeneous MHD system}

\author[J.X.Jia]{Junxiong Jia}
\address{School of Mathematics and Statistics,
Xi'an Jiaotong University,
 Xi'an
710049, China; Beijing Center for Mathematics and Information Interdisciplinary Sciences (BCMIIS);}
\email{jjx425@gmail.com}

 \author[J.G. Peng]{Jigen Peng}
\address{School of Mathematics and Statistics,
Xi'an Jiaotong University,
 Xi'an
710049, China; Beijing Center for Mathematics and Information Interdisciplinary Sciences (BCMIIS);}
\email{jgpeng@mail.xjtu.edu.cn}

\author[K.X. Li]{Kexue Li}
\address{School of Mathematics and Statistics,
Xi'an Jiaotong University,
 Xi'an
710049, China; Beijing Center for Mathematics and Information Interdisciplinary Sciences (BCMIIS);}
\email{kexueli@gmail.com}

\subjclass[2010]{35Q35, 76D03, 76W05}



\keywords{Inhomogeneous MHD system, Stability of large solution, Decay rate, Besov space}

\begin{abstract}
In this paper, we investigative the large time decay and stability to any given global smooth solutions of the $3$D incompressible inhomogeneous MHD systems.
We proved that given a solution $(a, u, B)$ of (\ref{mhd_a}), the velocity field and magnetic field decay to $0$ with an explicit rate, for $u$ which
coincide with incompressible inhomogeneous Navier-Stokes equations \cite{zhangping}.
In particular, we give the decay rate of higher order derivatives of $u$ and $B$
which is useful to prove our main stability result. For a large solutions of (\ref{mhd_a}) denoted by $(a, u, B)$,
we proved that a small perturbation to the initial data still generates a unique global smooth solution and the smooth solution
keeps close to the reference solution $(a, u, B)$.
Due to the coupling between $u$ and $B$,
we used elliptic estimates to get $\|(u, B)\|_{L^{1}(\mathbb{R}^{+};\dot{B}_{2,1}^{5/2})} < C$, which
is different to Navier-Stokes equations.
\end{abstract}

\maketitle


\section{Introduction and main results}

Magnetic fields influence many fluids. Magnetohydrodynamics (MHD) is concerned with the interaction between fluid flow and
magnetic field. The governing equations of nonhomogeneous MHD can be stated as follows \cite{Davidsom},
\begin{align}
\label{equation}
\begin{cases}
\partial_{t}\rho + \mathrm{div}(\rho u) = 0, \quad (t,x) \in \mathbb{R}^{+}\times \mathbb{R}^{3}, \\
\partial_{t}(\rho u) + \mathrm{div}(\rho u\otimes u) - \mathrm{div}(\mu \mathcal{M}) - (B\cdot\nabla)B + \nabla\Pi = 0, \\
\partial_{t}B - \lambda \Delta B - \mathrm{curl}(u \times B) = 0, \\
\mathrm{div}\,u = \mathrm{div}\,B = 0,  \\
\rho|_{t=0} = \rho_{0},\quad u|_{t=0} = u_{0},\quad B|_{t=0} = B_{0},
\end{cases}
\end{align}
where $\rho$, $u = (u_{1}, u_{2}, u_{3})$ stand for the density and velocity of the fluid, respectively,
$\mathcal{M} = \frac{1}{2}(\partial_{i}u_{j} + \partial_{j}u_{i})$, $\Pi$ is a scalar pressure function.
$B$ is the magnetic field. $\mu(\rho) \geq 0$ denotes the viscosity of fluid, which we assume in this paper is a positive
constant. $\lambda > 0$ is also a constant, which describes the relative strength of advection and diffusion of $B$.

If there is no magnetic field, i.e., $B = 0$, MHD system turns to be nonhomogeneous Navier-Stokes system. Since the second equation and third equation of (\ref{equation}) are similar, the study about MHD system has been along with that for Navier-Stokes one. Let us first recall some results about
Navier stokes equations. When $\rho_{0}$ is bounded away from zero, the global existence of weak solutions was established by Kazhikov \cite{Kazhikov}.
Moreover, Antontsev, Kazhikov and Monakhov \cite{Antontsev} gave the first result on local existence and uniqueness of strong solutions.
For the two dimensional case, they even proved that the strong solution is global. But the global existence of strong or smooth solutions
in 3D is still an open problem.

Recently, R.Danchin proved the global existence in the Besov space framework \cite{Danchin 2012}.
His result states the global in time existence of regular solutions
to the inhomogeneous Navier Stokes equations in $\mathbb{R}^{n}$ in the optimal Besov setting, under suitable smallness of the data.
In particular his results allows the initial densities have a jump at the interface.
At the same time, H. Abidi, G. Gui, P. Zhang \cite{zhangping2012} proved the local well-posedness of three-dimensional incompressible inhomogeneous
Navier-Stokes equations with initial data in the critical Besov spaces,
without assumptions of small density variation. And they also proved the global well-posedness when the initial velocity is
small in $\dot{B}_{2,1}^{1/2}(\mathbb{R}^{3})$. For more results in this direction,
see \cite{CheminZhangping,DanchinZhangping,Danchin 2013}and reference therein.

Now, let us go back to the MHD system (\ref{equation}). As said before, the research for MHD goes along with that for Naiver-Stokes equations.
The results are similar. When we assume $\rho$ is a constant, which means the fluid is homogeneous, the MHD system has been extensively studied.
Duraut and Lions \cite{lions} constructed a class of weak solutions with finite energy and a class of local strong solutions.
Recently, C. Cao, J. Wu \cite{Jiahong Wu 2011} proved global regularity of classical solutions for the MHD equations with mixed partial dissipation and magnetic diffusion. And they also give the global existence,
conditional regularity and uniqueness of a weak solution for 2D MHD equations with only magnetic diffusion.
For more results in this direction,
see \cite{Jiahong Wu 2013,Jiahong Wu}and reference therein.

When the fluid is nonhomogeneous. H. Abidi and M. Paicu \cite{HammadiMHD} proved that the
magneto-hydrodynamic system in $\mathbb{R}^{N}$ with variable density, variable viscosity and variable conductivity has a local weak solution in suitable Besov space if the initial density approaches a constant. And they also proved that the constructed solution exist globally
in time if the initial data are small enough. X. Huang and Y. Wang \cite{Xiangdi Huang 2013} proved the global existence of strong
solution with vacuum to the 2D nonhomogeneous incompressible MHD system, as long as the initial data satisfies some compatibility condition.
In this paper, we only consider non-vacuum case.

Let $a := \frac{1}{\rho} - 1$ and take $\mu = \lambda = 1$, then MHD system becomes
\begin{align}
\label{mhd_a}
\begin{cases}
\partial_{t}a + u\cdot \nabla a = 0, \quad\quad (t,x) \in \mathbb{R}^{+} \times \mathbb{R}^{3},  \\
\partial_{t}u + u\cdot \nabla u - (1+a)(\Delta u - \nabla \Pi) - (1+a)(B\cdot \nabla)B = 0,     \\
\partial_{t}B - \Delta B + (u\cdot \nabla)B - (B\cdot \nabla)u = 0,   \\
\mathrm{div}u = \mathrm{div}B = 0,  \\
(a, u, B)|_{t=0} = (a_{0}, u_{0}, B_{0}).
\end{cases}
\end{align}
or let $\rho := \frac{1}{1+a}$ and $(\rho, u, B)$ solves
\begin{align}
\label{mhd_rho}
\begin{cases}
\partial_{t}\rho + \mathrm{div}(\rho u) = 0, \quad\quad (t,x) \in \mathbb{R}^{+} \times \mathbb{R}^{3},  \\
\rho \partial_{t} u + \rho u \cdot \nabla u - \Delta u - (B\cdot \nabla)B + \nabla \Pi = 0, \\
\partial_{t} B - \Delta B + (u\cdot \nabla)B - (B\cdot \nabla)u = 0,    \\
\mathrm{div}u = \mathrm{div}B = 0,    \\
(\rho, u, B)|_{t=0} = (\rho_{0}, u_{0}, B_{0}).
\end{cases}
\end{align}

In what follows, we shall investigate the large time decay and stability for the above MHD system.
Compared to the classical incompressible Navier-Stokes system (NS), that is, in the case when $a=0$ and $B=0$ in (\ref{mhd_a}), the
MHD system is more complex. Given a global large solution $
u\in L_{\mathrm{loc}}^{\infty}([0,\infty);H^{1}(\mathbb{R}^{3}))\cap L_{\mathrm{loc}}^{2}([0,\infty);H^{2}(\mathbb{R}^{2}(\mathbb{R}^{3})))$,
Ponce, Racke, Sideris and Titi \cite{TiTi1994} proved (NS) global stability under the additional assumption $\int_{0}^{\infty}\|\nabla u(t)\|_{L^{2}}^{4}\,dt < \infty$. Then Gallagher, Iftimie and Planchon \cite{Planchon} removed the additional assumption.
For the inhomogeneous Navier-Stokes equations (INS), H. Abidi, G. Gui and P. Zhang \cite{zhangping} proved its decay and stability for large solutions.

Our first result concerns the global stability of the given solution of (\ref{mhd_a}) when the initial density $\rho_{0}$ is close to
a positive constant. This is a simple generalization of Theorem 1.1 in \cite{zhangping}.

\begin{theorem}
\label{close to 1 main theorem}
Let $\bar{a}_{0} \in B_{2,1}^{5/2}(\mathbb{R}^{3})$, $\bar{u}_{0} \in B_{2,1}^{3/2}(\mathbb{R}^{3})$ and $\bar{B}_{0} \in B_{2,1}^{3/2}(\mathbb{R}^{3})$
with $\mathrm{div}\bar{u}_{0} = \mathrm{div}\bar{B}_{0} = 0$, and let there exist two positive constants $m$ and $M$ so that
\begin{align}
\label{low}
m \leq 1+\bar{a}_{0} \leq M.
\end{align}
We assume that $\bar{a} \in C([0, \infty); B_{2,1}^{5/2}(\mathbb{R}^{3}))$ and
\begin{align*}
\bar{u}, \bar{B} \in C([0,\infty); B_{2,1}^{3/2}(\mathbb{R}^{3})) \cap L_{\text{loc}}^{1}(\mathbb{R}^{+}; \dot{B}_{2,1}^{7/2}(\mathbb{R}^{3}))
\end{align*}
is a given solution of MHD with initial data $(\bar{a}_{0}, \bar{u}_{0})$. Then there exist positive constants $c_{1}$, $C_{1}$ and
a large enough time $T_{0} := T_{0}(\bar{a}_{0}, \bar{u}_{0}, \bar{B}_{0})$ so that if
\begin{align}
\|\bar{a}_{0}\|_{\dot{B}_{2,1}^{3/2}}\mathrm{exp}\left\{ C_{1} \int_{0}^{T_{0}} \|\nabla \bar{u}(\tau)\|_{\dot{B}_{2,1}^{3/2}} \, d\tau \right\} \leq c_{1},
\end{align}
a constant $c_{2}$ exists so that $(a_{0}, u_{0}, B_{0}) := (\bar{a}_{0}+\tilde{a}_{0}, \bar{u}_{0}+\tilde{u}_{0}, \bar{B}_{0}+\tilde{B}_{0})$
generates a unique global solution with
\begin{align}
\begin{split}
& a \in C_{b}([0, \infty); B_{2,1}^{5/2}(\mathbb{R}^{3})),    \\
& u, B \in C_{b}([0, \infty); B_{2,1}^{3/2}(\mathbb{R}^{3})) \cap L^{1}([0, \infty); \dot{B}_{2,1}^{7/2}(\mathbb{R}^{3})),
\end{split}
\end{align}
provided that $(\tilde{a}_{0}, \tilde{u}_{0}, \tilde{B}_{0})$ satisfies
\begin{align}
\|\tilde{a}_{0}\|_{B_{2,1}^{3/2}} + \|\tilde{u}_{0}\|_{B_{2,1}^{1/2}} + \|\tilde{B}_{0}\|_{B_{2,1}^{1/2}} \leq c_{2}.
\end{align}
\end{theorem}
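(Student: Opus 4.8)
The plan is to adapt the perturbation argument of Abidi--Gui--Zhang \cite{zhangping} for the inhomogeneous Navier--Stokes system, the new feature being the simultaneous treatment of the magnetic field $B$ and its coupling with $u$. First I would set $(\tilde a,\tilde u,\tilde B):=(a-\bar a,\,u-\bar u,\,B-\bar B)$ and subtract the equations (\ref{mhd_a}) for the reference solution $(\bar a,\bar u,\bar B)$ from those for $(a,u,B)$. The density perturbation then solves the transport equation $\partial_t\tilde a+u\cdot\nabla\tilde a=-\tilde u\cdot\nabla\bar a$, while $(\tilde u,\tilde B)$ satisfies a parabolic system that is linear in the perturbation and forced by the reference solution; the terms that cannot be treated as in the scalar Navier--Stokes case are the linearized Lorentz force coming from $(1+a)(B\cdot\nabla)B$ and the stretching term $(B\cdot\nabla)u$ in the induction equation.

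Next I would decompose the time interval as $[0,\infty)=[0,T_0]\cup[T_0,\infty)$ and use the decay theorem of the previous section, which guarantees that $\|\bar u(t)\|_{\dot{B}_{2,1}^{1/2}}$ and $\|\bar B(t)\|_{\dot{B}_{2,1}^{1/2}}$ tend to $0$; fixing $T_0$ large makes the reference velocity and field at time $T_0$ as small as the small-data global theory requires. On the bounded interval $[0,T_0]$ the reference norms are merely finite, so I would run a continuation argument based on the local well-posedness in the spaces of the statement: the perturbation is controlled by its initial size multiplied by $\mathrm{exp}\{C_1\int_0^{T_0}\|\nabla\bar u(\tau)\|_{\dot{B}_{2,1}^{3/2}}\,d\tau\}$. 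This is exactly where the hypothesis on $\|\bar a_0\|_{\dot{B}_{2,1}^{3/2}}$ enters: propagating the transport equation for $a$ and using $m\le 1+\bar a_0\le M$ keeps the full density variation below the threshold $c_1$ at which the coefficient $(1+a)$ can be treated perturbatively. Choosing the initial perturbation $\le c_2$ small enough, the state $(a,u,B)|_{t=T_0}$ is then small data for (\ref{mhd_a}), and the small-data global existence result supplies a unique global solution in the stated spaces that remains close to $(\bar a,\bar u,\bar B)$.

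The main obstacle is closing the a priori estimates for the coupled pair $(u,B)$. In the scalar Navier--Stokes analysis one propagates the $B_{2,1}^{3/2}$ regularity of $u$ using only the parabolic smoothing of the velocity equation, but here the quadratic coupling terms $(B\cdot\nabla)B$ and $(B\cdot\nabla)u$ force $u$ and $B$ to be estimated together. As announced in the abstract, I would recover the missing regularity through an elliptic estimate: viewing the momentum equation as a Stokes system with the Lorentz force as a source and combining it with the heat-type estimate for the induction equation yields the key bound $\|(u,B)\|_{L^1(\mathbb{R}^+;\dot{B}_{2,1}^{5/2})}<C$ for the pair simultaneously, rather than for $u$ alone. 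Once this coupled gain of half a derivative is available, the paraproduct estimates for the convection, Lorentz and stretching terms, the transport estimate for $\tilde a$, and the Gronwall-type continuation all proceed as in the Navier--Stokes case, and uniqueness follows from the same a priori bounds applied to the difference of two solutions.
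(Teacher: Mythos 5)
Your overall skeleton matches the paper's: locate a large time $T_{0}$ at which the reference velocity and magnetic field are small, feed the smallness hypothesis on $\|\bar{a}_{0}\|_{\dot{B}_{2,1}^{3/2}}$ through the transport equation, and close a perturbation/bootstrap argument. However, your first step already leans on a tool that is not available. You invoke ``the decay theorem'' to conclude $\|\bar{u}(t)\|_{\dot{B}_{2,1}^{1/2}},\|\bar{B}(t)\|_{\dot{B}_{2,1}^{1/2}}\to 0$, but Theorem \ref{decay_main_theorem} requires $\bar{u}_{0},\bar{B}_{0}\in L^{p}\cap B_{2,1}^{2}$ with $p\in(1,\tfrac{6}{5})$, whereas the present theorem only assumes $B_{2,1}^{3/2}$ data. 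The paper instead uses the energy equality (\ref{basic_energy_equality}) together with the interpolation $\|\bar{u}\|_{\dot{B}_{2,1}^{1/2}}\lesssim\|\bar{u}\|_{L^{2}}^{1/2}\|\nabla\bar{u}\|_{L^{2}}^{1/2}$ to get $\int_{0}^{\infty}\|(\bar{u},\bar{B})\|_{\dot{B}_{2,1}^{1/2}}^{4}\,dt<\infty$, which yields only the \emph{existence} of one time $T_{0}$ satisfying (\ref{small}) --- weaker than decay, but sufficient. This part of your argument is fixable, but as written it appeals to hypotheses you do not have.

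The more serious gap is your mechanism for handling the coupling, which is the central new difficulty of this theorem. Recovering $\|(u,B)\|_{L^{1}(\mathbb{R}^{+};\dot{B}_{2,1}^{5/2})}$ by ``viewing the momentum equation as a Stokes system with the Lorentz force as a source'' plus elliptic estimates is the method of Section 5 (for Theorem \ref{stability main theorem}), and there it rests entirely on the decay estimates of Theorem \ref{decay_main_theorem} --- again unavailable under the present hypotheses. Worse, treating $B\cdot\nabla B$ as a source is precisely the approach Remark 1.4 rules out: in the perturbed system one then meets $\bar{B}\cdot\nabla\tilde{B}$ and $\bar{B}\cdot\nabla\tilde{u}$ as source terms, and the product laws give contributions of size $\|\bar{B}\|_{L^{\infty}_{t}(\dot{B}_{2,1}^{1/2})}\|(\tilde{u},\tilde{B})\|_{L^{1}_{t}(\dot{B}_{2,1}^{5/2})}$, i.e.\ a term \emph{linear} in $\tilde{Z}(t)$ in (\ref{fanbo 1}) whose coefficient is neither small (it involves the reference solution on $[0,T_{0}]$, where it is large) nor time-integrable at top order; it can be neither absorbed into the left-hand side nor removed by Gronwall, so the bootstrap does not close. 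The paper's actual device is Lemma \ref{linear estimate momentum}: the linearized coupling terms are kept on the \emph{left}, as part of the coupled linear system (\ref{linearcouple}) with $v=\bar{u}$, $w=\bar{B}$ (this is the reformulation (\ref{var close to 1 perturbation})). In the frequency-localized $L^{2}$ energy estimates, the terms $w\cdot\nabla\Delta_{q}\tilde{B}\cdot\Delta_{q}\tilde{u}$ and $w\cdot\nabla\Delta_{q}\tilde{u}\cdot\Delta_{q}\tilde{B}$ cancel each other by antisymmetry (since $\mathrm{div}\,\bar{B}=0$), leaving only the commutators $[\bar{B},\Delta_{q}]\nabla\tilde{B}$, $[\bar{B},\Delta_{q}]\nabla\tilde{u}$, which are lower order and controlled by Lemma \ref{comunatorestimate}. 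Your proposal contains no substitute for this cancellation, so the a priori estimate at the heart of the proof cannot be closed as described.
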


In order to get the stability of large solutions of system (\ref{mhd_a}), here, we need to investigate the decay properties of
the velocity field $u$ and magnetic field $B$. Compared to the (INS) case, our case is more complex and we need to use the coupling
between the equations of $u$ and $B$. Due to the coupling between $u$ and $B$, we can not get the estimate of
$\|(u, B)\|_{L^{1}(\mathbb{R}^{+};\dot{B}_{2,1}^{5/2})} < C$ by using propositions like Proposition 3.6 in \cite{zhangping}, which is
the methods used in (INS) case. In order to overcome this difficulty, we need to give the estimate of $\|\nabla a(t)\|_{L^{\infty}}$,
so that in addition to get the higher order decay properties of $u$ and $B$.
The rigorous statement is the following Theorem.

\begin{theorem}
\label{decay_main_theorem}
For $p \in (1,\frac{6}{5})$, let $a_{0} \in B_{2,1}^{5/2}$ and $u_{0} \in L^{p}(\mathbb{R}^{3}) \cap B_{2,1}^{2}(\mathbb{R}^{3})$
satisfying (\ref{low}) and $\mathrm{div}\,u_{0} = 0$. We assume that $a \in C([0,\infty);B_{2,1}^{5/2})$,
$u \in C([0, \infty);B_{2,1}^{2}(\mathbb{R}^{3})) \cap L_{\mathrm{loc}}^{1}(\mathbb{R}^{+};\dot{B}_{2,1}^{4}(\mathbb{R}^{3}))$
and $B \in C([0, \infty);B_{2,1}^{2}(\mathbb{R}^{3})) \cap L_{\mathrm{loc}}^{1}(\mathbb{R}^{+};\dot{B}_{2,1}^{4}(\mathbb{R}^{3}))$
is a given global solution of (\ref{mhd_a}) with initial data $(a_{0}, u_{0}, B_{0})$. Then there exists a positive time $t_{0}$
such that there hold
\begin{align}
\label{main e 1}
\begin{split}
 \|u(t)\|_{L^{2}} + \|B(t)\|_{L^{2}}& \leq C \, (1+t)^{-\beta(p)},    \\
 \|\nabla u(t)\|_{L^{2}} + \|\nabla B(t)\|_{L^{2}}& \leq C\, (1+t)^{-\frac{1}{2}-\beta(p)}, \quad \text{for } t\geq t_{0},   \\
\int_{t_{0}}^{\infty} (1+t)^{(1+2\, \beta(p))^{-}} \bigg( \|(\partial_{t}u, \partial_{t}B)\|_{L^{2}}^{2}
& + \|(\Delta u, \Delta B)\|_{L^{2}}^{2} + \|\nabla \Pi\|_{L^{2}}^{2} \bigg)\, dt \leq C, \\
\int_{t_{0}}^{\infty} \Big( \|u(t)\|_{L^{\infty}} + \|B(t)\|_{L^{\infty}} + & \|\nabla u(t)\|_{L^{\infty}} + \|\nabla B(t)\|_{L^{\infty}} \Big)\, dt \leq C
\end{split}
\end{align}
and
\begin{align}
\label{main e 3}
\begin{split}
\sup_{t\geq t_{0}}\Big( \|\nabla^{2}u(t)\|_{L^{2}}^{2} & + \|\nabla^{2}B(t)\|_{L^{2}}^{2} \Big)
 + \int_{t_{0}}^{\infty}\|\partial_{t}\nabla u(t)\|_{L^{2}}^{2} + \|\partial_{t}\nabla B(t)\|_{L^{2}}^{2}\, dt  \\
& + \int_{t_{0}}^{\infty}\|\nabla^{3}u(t)\|_{L^{2}}^{2} + \|\nabla^{3}B(t)\|_{L^{2}}^{2} \, dt \leq \, C,
\end{split}
\end{align}
where $\beta(p) = \frac{3}{4}(\frac{2}{p} - 1)$. $(1+2\beta(p))^{-}$ denotes any positive number smaller than $1+2\beta(p)$,
and the constant $C$ depends on the initial data.
\end{theorem}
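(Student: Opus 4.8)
The plan is to run a Schonbek-type Fourier-splitting argument organized by order of differentiation, working throughout with the $\rho$-formulation (\ref{mhd_rho}) so that the magnetic coupling cancels in each energy identity, and then to feed the resulting decay rates into time-weighted energy estimates. First I would record the basic energy law: testing the momentum equation of (\ref{mhd_rho}) against $u$ and the induction equation against $B$, the pressure drops out by $\mathrm{div}\,u=0$, the term $\int (u\cdot\nabla)B\cdot B$ vanishes, and the two magnetic contributions $-\int (B\cdot\nabla)B\cdot u$ and $\int (B\cdot\nabla)u\cdot B$ cancel after an integration by parts using $\mathrm{div}\,B=0$. Combined with the continuity equation this gives
\begin{align*}
\frac{1}{2}\frac{d}{dt}\Big(\|\sqrt{\rho}\,u\|_{L^{2}}^{2}+\|B\|_{L^{2}}^{2}\Big)+\|\nabla u\|_{L^{2}}^{2}+\|\nabla B\|_{L^{2}}^{2}=0,
\end{align*}
and by (\ref{low}) the weighted energy $\|\sqrt{\rho}\,u\|_{L^{2}}^{2}$ is comparable to $\|u\|_{L^{2}}^{2}$.

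For the $L^{2}$ decay I would apply Fourier splitting. Writing $\mathcal{E}(t)=\|\sqrt{\rho}\,u\|_{L^{2}}^{2}+\|B\|_{L^{2}}^{2}$ and using Plancherel with $\int|\xi|^{2}(|\hat{u}|^{2}+|\hat{B}|^{2})\,d\xi\ge g(t)^{2}\int_{|\xi|\ge g(t)}(|\hat{u}|^{2}+|\hat{B}|^{2})\,d\xi$ for the cutoff $g(t)^{2}=\alpha/(e+t)$, the energy law yields
\begin{align*}
\frac{d}{dt}\mathcal{E}+\frac{2\alpha}{e+t}\,\mathcal{E}\le \frac{2\alpha}{e+t}\int_{|\xi|\le g(t)}\big(|\hat{u}|^{2}+|\hat{B}|^{2}\big)\,d\xi .
\end{align*}
Recasting the momentum equation as $\partial_{t}u-\Delta u=\mathbb{P}\,\mathcal{F}$ with $\mathcal{F}$ quadratic in $(u,B)$ plus density-dependent corrections, the mild formulation bounds $\hat{u}(t,\xi),\hat{B}(t,\xi)$ uniformly for small $|\xi|$; since $u_{0}\in L^{p}$ with $p\in(1,\tfrac{6}{5})$ gives $\int_{|\xi|\le g}|\hat{u}_{0}|^{2}\,d\xi\lesssim g^{3(2/p-1)}$ by Hausdorff--Young, the low-frequency integral is controlled by $(e+t)^{-2\beta(p)}$ up to faster-decaying nonlinear contributions. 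A Gronwall argument with $\alpha$ chosen large then gives $\mathcal{E}(t)\lesssim(1+t)^{-2\beta(p)}$, the first line of (\ref{main e 1}).

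Next, to obtain the gradient decay and the weighted integrals I would test the momentum equation against $\partial_{t}u$ and the induction equation against $\partial_{t}B$, producing
\begin{align*}
\int\rho|\partial_{t}u|^{2}+\frac{1}{2}\frac{d}{dt}\|\nabla u\|_{L^{2}}^{2}=\int\big(-\rho\,u\cdot\nabla u+(B\cdot\nabla)B\big)\cdot\partial_{t}u
\end{align*}
together with its magnetic analogue, and then invoke the variable-coefficient Stokes estimate $\|\nabla^{2}u\|_{L^{2}}+\|\nabla\Pi\|_{L^{2}}\lesssim\|\rho\partial_{t}u+\rho u\cdot\nabla u-(B\cdot\nabla)B\|_{L^{2}}$ to recover second derivatives and the pressure. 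Running Fourier splitting once more at this level, now with the $L^{2}$ rate in hand, gives $\|\nabla u\|_{L^{2}}+\|\nabla B\|_{L^{2}}\lesssim(1+t)^{-1/2-\beta(p)}$ for $t\ge t_{0}$, where $t_{0}$ is a time after which enough dissipation has accumulated to propagate the higher regularity. Multiplying these identities by $(1+t)^{(1+2\beta(p))^{-}}$ and integrating gives the third line of (\ref{main e 1}); the exponent is taken strictly below $1+2\beta(p)$ precisely so that the term generated by differentiating the weight stays integrable. The $L^{\infty}$-in-time bound then follows from $\|f\|_{L^{\infty}}\lesssim\|\nabla f\|_{L^{2}}^{1/2}\|\nabla^{2}f\|_{L^{2}}^{1/2}$ and Cauchy--Schwarz against the weighted bounds, while (\ref{main e 3}) is obtained by differentiating the system once in space and repeating the scheme with $\partial_{t}\nabla u,\ \partial_{t}\nabla B$, the elliptic estimate upgrading the control to $\nabla^{3}u$ and $\nabla^{2}\Pi$.

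The hardest part will be the variable density entering the pressure. Since $a$ solves only a transport equation it does not decay, and in every estimate involving $\nabla\Pi$ the pressure satisfies the variable-coefficient elliptic problem $\mathrm{div}\big((1+a)\nabla\Pi\big)=\cdots$; converting the $\partial_{t}u$-bound into control of $\nabla^{2}u$ and $\nabla\Pi$ therefore forces one to absorb terms such as $\nabla a\cdot\nabla\Pi$, which requires an independent bound on $\|\nabla a(t)\|_{L^{\infty}}$ propagated along the flow using $\int_{t_{0}}^{\infty}\|\nabla u(t)\|_{L^{\infty}}\,dt<\infty$. This is exactly where the direct analogue of the (INS) argument fails, and the magnetic coupling compounds it, since $(B\cdot\nabla)B$ and $(B\cdot\nabla)u$ cross-feed the two energy hierarchies and force $u$ and $B$ to be estimated jointly at every order.
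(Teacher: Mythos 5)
Your overall framework (the energy law with magnetic cancellation, Schonbek's Fourier splitting, then time-weighted energy estimates) is the same as the paper's, but there are two genuine gaps. The first concerns the claim that a single splitting pass yields $\|u(t)\|_{L^{2}}^{2}+\|B(t)\|_{L^{2}}^{2}\lesssim(1+t)^{-2\beta(p)}$ because the nonlinear Duhamel contributions are ``faster-decaying.'' Before any decay is known, the quadratic terms are controlled only through the energy, e.g.
\begin{align*}
g(t)^{5}\Bigl(\int_{t_{0}}^{t}\|u\|_{L^{2}}^{2}\,dt'\Bigr)^{2}\lesssim \langle t \rangle^{-5/2}\,\langle t \rangle^{2}=\langle t \rangle^{-1/2},
\end{align*}
and since $2\beta(p)\in(1,\tfrac{3}{2})$ for $p\in(1,\tfrac{6}{5})$, this term dominates $\langle t \rangle^{-2\beta(p)}$. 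A single pass therefore gives only $\|u\|_{L^{2}}+\|B\|_{L^{2}}\lesssim\langle t \rangle^{-1/4}$; this is exactly why Proposition \ref{decay pro 3} runs the splitting in stages (rough rates $\langle t \rangle^{-1/4}$ and $\langle t \rangle^{-3/4}$ in Steps 1--2, then an iteration in Step 3 which feeds those rates back into the low-frequency estimates). The iteration is a necessary step of the argument, not an optional refinement.

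The second gap is structural and more serious: your route to $\int_{t_{0}}^{\infty}\bigl(\|\nabla u\|_{L^{\infty}}+\|\nabla B\|_{L^{\infty}}\bigr)\,dt\leq C$ is circular. In three dimensions any Gagliardo--Nirenberg bound of $\|\nabla u\|_{L^{\infty}}$ by $L^{2}$-based norms requires third derivatives (by scaling, $\|\nabla u\|_{L^{\infty}}\lesssim\|\nabla^{2}u\|_{L^{2}}^{1/2}\|\nabla^{3}u\|_{L^{2}}^{1/2}$ is the borderline case), so ``Cauchy--Schwarz against the weighted bounds'' needs the estimates (\ref{main e 3}). But (\ref{main e 3}) is proved by differentiating the system in space, which produces terms such as $\nabla\rho\,\partial_{t}u$ and therefore needs $\|\nabla a(t)\|_{L^{\infty}}\leq C$ (Proposition \ref{decay pro 5}); and, as you yourself note, that bound on $\nabla a$ is propagated along the transport equation using precisely $\int\|\nabla u\|_{L^{\infty}}\,dt<\infty$. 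The paper breaks this cycle with a device absent from your proposal: first, time-differentiated energy estimates giving $\sup_{t\geq t_{0}}\|(u_{t},B_{t})\|_{L^{2}}\leq C$ and $\int_{t_{0}}^{\infty}\|(\nabla u_{t},\nabla B_{t})\|_{L^{2}}^{2}\,dt\leq C$ (Step 1 of Proposition \ref{decay pro 4}); second, the \emph{constant-coefficient} Stokes estimate in $L^{6}$, obtained by writing $-\Delta u+\nabla\Pi=-\rho\partial_{t}u-\rho u\cdot\nabla u+B\cdot\nabla B$ with the density on the right-hand side, so that no derivative of $a$ ever enters, which converts $\|\nabla u_{t}\|_{L^{2}}$ into $\|\nabla^{2}u\|_{L^{2}_{t}(L^{6})}$; and finally $\|\nabla u\|_{L^{\infty}}\lesssim\|\nabla u\|_{L^{6}}^{1/2}\|\nabla^{2}u\|_{L^{6}}^{1/2}$, which closes the $L^{1}_{t}L^{\infty}$ bound before any $\nabla^{3}$ or $\nabla a$ control is available. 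Relatedly, your last paragraph mislocates the difficulty: for this theorem the paper never absorbs $\nabla a\cdot\nabla\Pi$ in a variable-coefficient elliptic estimate; it keeps the coefficient on the right-hand side, where only $\|a\|_{L^{\infty}}$ and $\|a\|_{L^{2}}$ (both conserved by the transport equation) are needed, and the $\|\nabla a\|_{L^{\infty}}$ bound is established only after, and by means of, the $L^{1}_{t}L^{\infty}$ estimate.
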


At the above theorem in hand, we can use the elliptic estimates and various interpolation theorems in Besov space to get the estimate of
$\|(u, B)\|_{L^{1}(\mathbb{R}^{+};\dot{B}_{2,1}^{5/2})} < C$ which is completely different to (INS) case.
Then after complex calculations, we can get the global estimates of the reference solution $(\bar{a}, \bar{u}, \bar{B})$.
At last, similar to Theorem \ref{decay_main_theorem} but need more complex calculations,
we obtain the decay properties of the perturbed solution $(a-\bar{a}, u-\bar{u}, B-\bar{B})$.
Using the decay properties of the reference solution and perturbed solution, we finally got the following theorem.

\begin{theorem}
\label{stability main theorem}
For $p \in (1, \frac{6}{5})$, let $\bar{a}_{0} \in B_{2,1}^{7/2}(\mathbb{R}^{3})$,
$\bar{u}_{0} \in L^{p}(\mathbb{R}^{3}) \cap B_{2,1}^{2}(\mathbb{R}^{3})$, $\bar{B}_{0} \in L^{p}(\mathbb{R}^{3}) \cap B_{2,1}^{2}(\mathbb{R}^{3})$
satisfy $\mathrm{div}\,\bar{u}_{0} = \mathrm{div}\,\bar{B}_{0} = 0$ and (\ref{low}).
We assume that $\bar{a} \in C([0, \infty);B_{2,1}^{7/2}(\mathbb{R}^{3}))$,
$\bar{u} \in C([0, \infty);B_{2,1}^{2}(\mathbb{R}^{3})) \cap L_{\mathrm{loc}}^{1}(\mathbb{R}^{+};\dot{B}_{2,1}^{4})$,
$\bar{B} \in C([0, \infty);B_{2,1}^{2}(\mathbb{R}^{3})) \\ \cap L_{\mathrm{loc}}^{1}(\mathbb{R}^{+};\dot{B}_{2,1}^{4})$
is a given global solution of (\ref{mhd_a}) with initial data $(\bar{a}_{0}, \bar{u}_{0}, \bar{B}_{0})$.
Then there exists a constant $c$ so that if
\begin{align*}
(\tilde{a}_{0}, \tilde{u}_{0}, \tilde{B}_{0}) \in B_{2,1}^{7/2}(\mathbb{R}^{3})\times
\Big( L^{p}(\mathbb{R}^{3}) \cap B_{2,1}^{2}(\mathbb{R}^{3}) \Big) \times
\Big( L^{p}(\mathbb{R}^{3}) \cap B_{2,1}^{2}(\mathbb{R}^{3}) \Big)
\end{align*}
with
\begin{align*}
A_{0} :=  \|(\tilde{u}_{0}, \tilde{B}_{0})\|_{H^{1}} + \|(\tilde{u}_{0}, \tilde{B}_{0})\|_{L^{p}} + \|\tilde{a}_{0}\|_{B_{2,1}^{3/2}} \leq c,
\end{align*}
$(a_{0}, u_{0}, B_{0}) := (\bar{a}_{0}+\tilde{a}_{0}, \bar{u}_{0}+\tilde{u}_{0}, \bar{B}_{0}+\tilde{B}_{0})$ generates a
unique global smooth solution $(a, u, B)$ to (\ref{mhd_a}) that satisfies
\begin{align*}
& a \in C_{b}([0,\infty);B_{2,1}^{7/2}(\mathbb{R}^{3})),       \\
& u \in C_{b}([0,\infty);L^{p}(\mathbb{R}^{3} \cap B_{2,1}^{2}(\mathbb{R}^{3}))) \cap L^{1}(\mathbb{R}^{+};\dot{B}_{2,1}^{4}(\mathbb{R}^{4})), \\
& B \in C_{b}([0,\infty);L^{p}(\mathbb{R}^{3} \cap B_{2,1}^{2}(\mathbb{R}^{3}))) \cap L^{1}(\mathbb{R}^{+};\dot{B}_{2,1}^{4}(\mathbb{R}^{4})).
\end{align*}
Moreover, there holds
\begin{align}
\label{1 3 1}
\|a-\tilde{a}\|_{\tilde{L}^{\infty}(\mathbb{R}^{+};B_{2,1}^{s+1})} \leq C A_{0}^{\frac{5}{4}-\frac{1}{2}s}
\end{align}
for any $s \in [\frac{1}{2}, \frac{5}{2}]$ and
\begin{align}
\label{1 3 2}
\begin{split}
\|(u-\bar{u}, B-\bar{B})\|_{\tilde{L}^{\infty}(\mathbb{R}^{+};B_{2,1}^{s})}
& + \|(u-\bar{u}, B-\bar{B})\|_{L^{\infty}(\mathbb{R}^{+};L^{p})}  \\
& + \|(u-\bar{u}, B-\bar{B})\|_{L^{1}(\mathbb{R}^{+};\dot{B}_{2,1}^{s+2})} \leq C A_{0}^{\frac{4}{3}-\frac{2}{3}s}
\end{split}
\end{align}
for any $s \in [\frac{1}{2}, 2]$.
\end{theorem}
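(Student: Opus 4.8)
The plan is to run a standard continuation (bootstrap) argument for the difference system, using the decay and integrability estimates of the reference solution supplied by Theorem \ref{decay_main_theorem} together with an elliptic pressure estimate to close all bounds globally in time. First I would set $(\delta a,\delta u,\delta B) := (a-\bar a,\,u-\bar u,\,B-\bar B)$ and write down the system they satisfy. Subtracting the reference equations from (\ref{mhd_a}) yields a transport equation $\partial_t\delta a + u\cdot\nabla\delta a = -\,\delta u\cdot\nabla\bar a$, together with forced parabolic equations for $\delta u$ and $\delta B$ in which every nonlinear difference is split into a part carrying the unknown perturbation and a part multiplied by the (controlled) reference solution, e.g.
\begin{align*}
\partial_t\delta u + u\cdot\nabla\delta u - (1+a)(\Delta\delta u - \nabla\delta\Pi) - (1+a)(B\cdot\nabla)\delta B = F,
\end{align*}
where $F$ collects $-\delta u\cdot\nabla\bar u$, $-\delta a(\Delta\bar u-\nabla\bar\Pi)$, $(1+a)(\delta B\cdot\nabla)\bar B$ and $\delta a(\bar B\cdot\nabla)\bar B$, and analogously for $\delta B$. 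By the local well-posedness theory in critical Besov spaces cited in the introduction, the perturbed data generate a unique solution on a maximal interval $[0,T^{*})$; the goal is to show $T^{*}=\infty$ under the smallness condition $A_0\le c$.

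Next I would set up the a priori estimates on $[0,T]$ for $T<T^{*}$. The transport estimate gives, for $s\in[\tfrac12,\tfrac52]$,
\begin{align*}
\|\delta a(t)\|_{B_{2,1}^{s+1}} \le e^{C\int_0^t\|\nabla u\|_{B_{2,1}^{3/2}}\,d\tau}\Big(\|\tilde a_0\|_{B_{2,1}^{s+1}} + \int_0^t \|\delta u\cdot\nabla\bar a\|_{B_{2,1}^{s+1}}\,d\tau\Big),
\end{align*}
so that $\delta a$ is controlled once $\delta u$ is. For the parabolic part I would perform coupled energy-type estimates on $(\delta u,\delta B)$ in $B_{2,1}^{s}$ and in $L^{p}$, treating $u$ and $B$ \emph{together} because the Lorentz force couples them. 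The forcing terms are estimated by paraproduct and product laws in Besov spaces, and the time integrals of the reference-solution factors are absorbed using Theorem \ref{decay_main_theorem}: the integrability $\int_{t_0}^{\infty}(\|(u,B)\|_{L^{\infty}}+\|\nabla(u,B)\|_{L^{\infty}})\,dt\le C$ and the higher-order bounds there (applied to the reference solution) supply the requisite finite time weights. The decisive point, and the step I expect to be the main obstacle, is the control of the pressure: taking the divergence of the $\delta u$ equation produces an elliptic problem $\mathrm{div}((1+a)\nabla\delta\Pi)=\mathrm{div}(\cdots)$ with variable coefficient $(1+a)$, whose Besov estimate requires a bound on $\|\nabla a\|_{L^{\infty}}$. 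This is exactly where the argument departs from the (INS) case and where one must invoke $\|(u,B)\|_{L^{1}(\mathbb{R}^{+};\dot{B}_{2,1}^{5/2})}<C$, derived from Theorem \ref{decay_main_theorem}; without it the coupled $u$--$B$ system cannot be closed.

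With the differential inequalities in hand I would close the bootstrap: one posits a smallness ansatz for the difference norms on $[0,T]$, feeds it into the a priori estimates, and shows that for $A_0\le c$ small enough the resulting bound is strictly better than the ansatz, so that by continuity the solution extends and the bounds hold on $[0,\infty)$. This simultaneously yields global existence and the qualitative assertions of the theorem.

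Finally, to obtain the quantitative estimates (\ref{1 3 1}) and (\ref{1 3 2}) with their explicit powers of $A_0$, I would prove the two endpoint estimates in each range and interpolate. For $\delta a$, the endpoints $s=\tfrac12$ and $s=\tfrac52$ give the linear bound $\|\delta a\|_{\tilde L^{\infty}(B_{2,1}^{3/2})}\lesssim A_0$ (from $\|\tilde a_0\|_{B_{2,1}^{3/2}}$) and the $O(1)$ bound $\|\delta a\|_{\tilde L^{\infty}(B_{2,1}^{7/2})}\lesssim A_0^{0}$ (from the common $B_{2,1}^{7/2}$ regularity of $a$ and $\bar a$); interpolation in the index $s+1$ then produces the exponent $\tfrac54-\tfrac12 s$. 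Likewise for $(\delta u,\delta B)$, the endpoints $s=\tfrac12$ and $s=2$ give the linear and $O(1)$ bounds whose interpolation yields the exponent $\tfrac43-\tfrac23 s$, with the $L^{p}$ and $L^{1}(\dot{B}_{2,1}^{s+2})$ norms carried along in the same scheme. The remaining work is the routine but lengthy verification of the product and commutator estimates and the bookkeeping of the time weights, which I would defer to the detailed computation.
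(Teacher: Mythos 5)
Your overall architecture --- difference system, coupled treatment of $(\tilde{u},\tilde{B})$ so that the linear coupling terms $\bar{B}\cdot\nabla\tilde{B}$, $\bar{B}\cdot\nabla\tilde{u}$ stay on the left (as in Lemma \ref{linear estimate momentum}), variable-coefficient elliptic estimate for the pressure, continuation argument, and the endpoint-plus-interpolation scheme producing the exponents $\frac{5}{4}-\frac{1}{2}s$ and $\frac{4}{3}-\frac{2}{3}s$ --- matches the paper, and your interpolation bookkeeping is exactly the paper's final ``standard interpolation argument''. But there is a genuine gap in the middle: you propose to close the bootstrap using only the decay of the \emph{reference} solution (Theorem \ref{decay_main_theorem}) together with scaling-invariant norms of the perturbation, and this cannot work when $\bar{a}$ is large. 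Concretely, applying the coupled linear estimate to the perturbation system produces on the right-hand side
\begin{align*}
\|a\|_{L_{t}^{\infty}(\dot{H}^{2})}\Big(\|\nabla\tilde{\Pi}\|_{L_{t}^{1}(L^{2})}+\|\tilde{u}\|_{L_{t}^{1}(\dot{B}_{2,1}^{2})}\Big),
\end{align*}
and the transport equations for $\tilde{a}$ and $\tilde{\rho}$ (the latter controls the coefficient $\tilde{\rho}/\bar{\rho}$ in front of $\Delta\bar{u}-\nabla\bar{\Pi}$ and $\bar{B}\cdot\nabla\bar{B}$) produce sources requiring $\|\tilde{u}\|_{L_{t}^{1}(L^{\infty})}$. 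Since $\|a\|_{L_{t}^{\infty}(\dot{H}^{2})}\approx\|\bar{a}\|_{\dot{H}^{2}}$ is $O(1)$ here (the reference density is not close to a constant), these time-integrated factors must themselves be small of order $A_{0}$. They are, however, \emph{subcritical} norms integrated over all of $\mathbb{R}^{+}$: interpolating your ansatz norms $\tilde{L}_{t}^{\infty}(\dot{B}_{2,1}^{1/2})$ and $L_{t}^{1}(\dot{B}_{2,1}^{5/2})$ only yields $\tilde{u}\in L_{t}^{4/3}(\dot{B}_{2,1}^{2})$ and $L_{t}^{2}(\dot{B}_{2,1}^{3/2})$, which on an infinite time interval do not control $L_{t}^{1}(\dot{B}_{2,1}^{2})$ or $L_{t}^{1}(L^{\infty})$. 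So the bootstrap, as you set it up, does not close.

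What is missing is precisely half of the paper's Section 6: decay-in-time estimates for the \emph{perturbation itself}. Before any Besov-norm continuation argument, one must prove (Lemmas \ref{lemma 7.1} and \ref{lemma 7.2}, and Proposition \ref{decay bar}), via Schonbek's Fourier-splitting method applied to the perturbed system together with a bootstrap on $\xi(t)=\sup_{t'\leq t}\big(\|\tilde{\rho}(t')\|_{L^{2}}+\|\tilde{\rho}(t')\|_{L^{\infty}}\big)$, that
\begin{align*}
\|(\tilde{u},\tilde{B})(t)\|_{L^{2}}\lesssim \delta_{0}\langle t\rangle^{-\beta(p)},\qquad
\|(\nabla\tilde{u},\nabla\tilde{B})(t)\|_{L^{2}}\lesssim \delta_{0}\langle t\rangle^{-\frac{1}{2}-\beta(p)},
\end{align*}
together with the weighted integral bound (\ref{7.14'}). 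Since $\beta(p)>\frac{1}{2}$ for $p\in(1,\frac{6}{5})$, these give
$\|\nabla\tilde{\Pi}\|_{L^{1}(\mathbb{R}^{+};L^{2})}+\|(\Delta\tilde{u},\Delta\tilde{B})\|_{L^{1}(\mathbb{R}^{+};L^{2})}+\|(\tilde{u},\tilde{B})\|_{L^{1}(\mathbb{R}^{+};L^{\infty})}\lesssim\delta_{0}\lesssim A_{0}$, which are exactly the small quantities your Besov bootstrap and your transport estimate for $\tilde{a}$ need; this is also what keeps $\xi(t)\leq C\delta_{0}$, i.e.\ the density perturbation small in $L^{\infty}$, for all time. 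Only after this decay machinery (plus the higher-order perturbation bounds of Proposition \ref{high per}) can one run the coupled Besov estimates and the interpolation step you describe.
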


\begin{remark}
The above Theorems may not be obtained by regarding the term $B\cdot \nabla B$ as a source term in the velocity equation.
The reason is that if we regard this term as a source term, we will encounter terms like $\bar{B}\cdot\nabla \tilde{B}$ and $\bar{B} \cdot\nabla \tilde{u}$
in (\ref{fanbo 1}) and (\ref{fanbo 2}). For the appearance of these terms, the Bootstrap argument will not work.
So we consider the linear system (\ref{linearcouple}) is necessary and the higher order decay estimates is also necessary
to get the results for MHD system.
\end{remark}

The paper is organized as follows. In section 2, we will give some notations, a brief introduction to the Besov space and some useful Lemmas.
In section 3, as a warm up, we give the proof of Theorem \ref{close to 1 main theorem}. Then, in section 4, we proved Theorem \ref{decay_main_theorem}
in a series of propositions. Using Theorem \ref{decay_main_theorem}, we got the global estimates of the reference solutions in section 5.
At last, we proved the decay properties of the perturbed solutions and Theorem \ref{stability main theorem} in section 6.
In the appendix, we proved some simple Lemmas which will be used in the above sections.


\section{Preliminaries}

Throughout this paper we will use the following notations.
\begin{itemize}
  \item For any tempered distribution $u$ both $\widehat{u}$ and $\mathcal{F}u$ denote the Fourier transform of $u$.
  \item The norm in the mixed space-time Lebesgue space $L^{p}([0,T];L^{r}(\mathbb{R}^{d}))$ is denoted by $\|\cdot\|_{L^{p}_{T}L^{r}}$ (with the obvious generalization to $\|\cdot\|_{L^{p}_{T}X}$ for any normed space X).
  \item For $X$ a Banach space and $I$ an interval of $\mathbb{R}$, we denote by $C(I; X)$ the set of continuous functions on $I$
  with values in $X$, and by $C_{b}(I; X)$ the subset of bounded functions of $C(I; X)$.
  \item For any pair of operators $P$ and $Q$ on some Banach space $X$, the commutator $[P,Q]$ is given by $PQ-QP$.
  \item $C$ stands for a ``harmless'' constant, and we sometimes use the notation $A \lesssim B$ as an
equivalent of $A \leq C B$. The notation $A \thickapprox B$ means that $A \lesssim B$ and $B \lesssim A$.
  \item $\{ c_{j,r} \}_{j \in \mathbb{Z}}$ a generic element of the sphere of $\ell^{r}(\mathbb{Z})$, and $(c_{k})_{k\in \mathbb{Z}}$
  (respectively, $(d_{j})_{j \in \mathbb{Z}}$) a generic element of the sphere of $\ell^{2}(\mathbb{Z})$ (respectively, $\ell^{1}(\mathbb{Z})$).
  \item Denote $\gamma^{-}$ be any number smaller than $\gamma$.
\end{itemize}

Then, we give a short introduction to the Besov type space. Details about Besov type space can be found in \cite{Danchin notes} or \cite{danchin book}.
There exist two radial positive functions $\chi \in \mathcal{D}(\mathbb{R}^{d})$ and $\varphi \in \mathcal{D}(\mathbb{R}^{d}\backslash \{0\})$
such that
\begin{itemize}
  \item $\chi (\xi) + \sum_{q\geq 0} \varphi (2^{-q} \xi) =1$; $\forall q \geq 1$,
        $\text{supp} \chi \cap \text{supp} \varphi (2^{-q}\cdot) = \phi$,
  \item $\text{supp} \varphi (2^{-j}\cdot) \cap \text{supp} \varphi (2^{-k}\cdot) = \phi$, if $|j-k|\geq 2$,
\end{itemize}

For every $v \in S^{'}(\mathbb{R}^{d})$ we set
\begin{align*}
\Delta_{-1}v = \chi(D)v, \quad \forall q \in \mathbb{N}, \quad \Delta_{j}v = \varphi (2^{-q}D)v \quad \text{and} \quad
S_{j} = \sum_{-1\leq m\leq j-1} \Delta_{m}.
\end{align*}
The homogeneous operators are defined by
\begin{align*}
\dot{\Delta}_{q}v = \varphi (2^{-q}D)v, \quad \dot{S}_{j}=\sum_{m\leq j-1} \dot{\Delta}_{j}v, \quad \forall q \in \mathbb{Z}.
\end{align*}
One can easily verifies that with our choice of $\varphi$,
\begin{align}\label{dd2}
\Delta_{j}\Delta_{k}f=0\quad \text{if} \quad |j-k|\geq 2
\end{align}
\begin{align}\label{ds4}
\Delta_{j}(S_{k-1}f \Delta_{k}f)=0\quad \text{if} \quad |j-k|\geq 5.
\end{align}
As in Bony's decomposition, we split the product $uv$ into three parts
\begin{align*}
uv=T_{u}v + T_{v}u +R(u,v),
\end{align*}
with
\begin{align*}
T_{u}v=\sum_{j}S_{j-1}u \Delta_{j}v,
\end{align*}
\begin{align*}
R(u,v)=\sum_{j} \Delta_{j}u \widetilde{\Delta}_{j}v
\end{align*}
where $\widetilde{\Delta}_{j} = \Delta_{j-1} + \Delta_{j} + \Delta_{j+1}$.

Let us now define inhomogeneous Besov spaces. For $(p,r)\in [1, +\infty]^{2}$ and $s\in \mathbb{R}$ we define the inhomogeneous
Besov space $B_{p,r}^{s}$ as the set of tempered distributions $u$ such that
\begin{align*}
\|u\|_{B_{p,r}^{s}}:=(2^{js}\|\Delta_{j}u\|_{L^{p}})_{\ell^{r}} < +\infty.
\end{align*}
The homogeneous Besov space $\dot{B}_{p,r}^{s}$ is defined as the set of $u \in S^{'}(\mathbb{R}^{d})$ up to polynomials such that
\begin{align*}
\|u\|_{\dot{B}_{p,r}^{s}}:=(2^{js}\|\dot{\Delta}_{j}u\|_{L^{p}})_{\ell^{r}} < +\infty.
\end{align*}
Notice that the usual Sobolev spaces $H^{s}$ coincide with $B_{2,2}^{s}$ for every $s \in \mathbb{R}$ and that the homogeneous
spaces $\dot{H}^{s}$ coincide with $\dot{B}_{2,2}^{s}$.
%

We shall need some mixed space-time spaces. Let $T>0$ and $\rho \geq 1$, we denote by $L^{\rho}_{T}B_{p,r}^{s}$ the space of
distribution $u$ such that
\begin{align*}
\|u\|_{L^{\rho}_{T}\dot{B}_{p,r}^{s}}:=\|(2^{js}\|\dot{\Delta}_{j}u\|_{L^{p}})_{\ell^{r}}\|_{L^{\rho}_{T}} <+\infty.
\end{align*}
We say that $u$ belongs to the space $\widetilde{L}_{T}^{\rho}B_{p,r}^{s}$ if
\begin{align*}
\|u\|_{\widetilde{L}^{\rho}_{T}\dot{B}_{p,r}^{s}}:=(2^{js}\|\dot{\Delta}_{j}u\|_{L^{\rho}_{T}L^{p}})_{\ell^{r}} <+\infty,
\end{align*}
which appeared firstly in \cite{chemin}.
Through a direct application of the Minkowski inequality, the following links between these spaces is true \cite{T.Hmidi2010JDE}.
Let $\varepsilon >0$, then
\begin{align*}
L^{\rho}_{T}B_{p,r}^{s} \hookrightarrow \widetilde{L}^{\rho}_{T}B_{p,r}^{s} \hookrightarrow L^{\rho}_{T}B_{p,r}^{s-\varepsilon},
\quad \text{if} \,\, r\geq \rho,
\end{align*}
\begin{align*}
L^{\rho}_{T}B_{p,r}^{s+\varepsilon} \hookrightarrow \widetilde{L}^{\rho}_{T}B_{p,r}^{s} \hookrightarrow
L^{\rho}_{T}B_{p,r}^{s}, \quad \text{if} \,\, \rho \geq r.
\end{align*}

\begin{lemma}
\label{bernsteininequality} \cite{danchin book}
Let $\mathcal{B}$ be a ball and $\mathcal{C}$ be a ring of $\mathbb{R}^{3}$. A constant $C$ exists so that for any positive real number $\lambda$,
any nonnegative integer $k$, any smooth homogeneous function $\sigma$ of degree $m$, and any couple of read number $(a, b)$ with $b \geq a \geq 1$,
there hold
\begin{align*}
\begin{split}
& \mathrm{supp}\, \hat{u} \subset \lambda \mathcal{B} \, \Rightarrow \, \sup_{|\alpha| = k} \|\partial^{\alpha} u\|_{L^{b}}
\leq C^{k+1} \lambda^{k+3(\frac{1}{a}-\frac{1}{b})} \|u\|_{L^{a}},  \\
& \mathrm{supp} \, \hat{u} \subset \lambda \mathcal{C} \, \Rightarrow \, C^{-1-k} \lambda^{k} \|u\|_{L^{a}} \leq
\sup_{|\alpha|=k} \|\partial^{\alpha} u\|_{L^{a}} \leq C^{1+k} \lambda^{k} \|u\|_{L^{a}},   \\
& \mathrm{supp} \, \hat{u} \subset \lambda \mathcal{C} \, \Rightarrow \, \|\sigma(D)u\|_{L^{b}} \leq C_{\sigma, m}\lambda^{m+3(\frac{1}{a}-\frac{1}{b})}
\|u\|_{L^{a}}.
\end{split}
\end{align*}
\end{lemma}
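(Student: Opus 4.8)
The plan is to derive all three estimates from a single mechanism: as soon as $\widehat u$ is supported in $\lambda\mathcal B$ (or in $\lambda\mathcal C$), the function $u$ equals its convolution against a rescaled fixed Schwartz kernel, so that differentiation, multiplication by a homogeneous symbol, and the change of integrability exponent $a\to b$ are all produced by Young's convolution inequality. Concretely, I would fix once and for all a function $\phi\in\mathcal D(\mathbb R^{3})$ with $\phi\equiv 1$ on $\mathcal B$ and, for the annulus, a function $\psi\in\mathcal D(\mathbb R^{3}\setminus\{0\})$ with $\psi\equiv 1$ on $\mathcal C$. Setting $g:=\mathcal F^{-1}\phi$ and $g_\lambda(x):=\lambda^{3}g(\lambda x)$, one has $\widehat{g_\lambda}(\xi)=\phi(\xi/\lambda)$, hence $u=g_\lambda * u$ whenever $\mathrm{supp}\,\widehat u\subset\lambda\mathcal B$.

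For the first inequality I would differentiate this identity, $\partial^{\alpha}u=(\partial^{\alpha}g_\lambda)*u$, and apply Young's inequality with exponents $1+\tfrac1b=\tfrac1c+\tfrac1a$, i.e.\ $\tfrac1c=1-(\tfrac1a-\tfrac1b)$. Since $\partial^{\alpha}g_\lambda(x)=\lambda^{3+|\alpha|}(\partial^{\alpha}g)(\lambda x)$, a change of variables gives $\|\partial^{\alpha}g_\lambda\|_{L^{c}}=\lambda^{|\alpha|+3(1-1/c)}\|\partial^{\alpha}g\|_{L^{c}}=\lambda^{k+3(1/a-1/b)}\|\partial^{\alpha}g\|_{L^{c}}$ for $|\alpha|=k$, which is exactly the claimed power of $\lambda$. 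The remaining constant is $\sup_{|\alpha|=k}\|\partial^{\alpha}g\|_{L^{c}}$, and the content of the statement is that this grows no faster than $C^{k+1}$ uniformly in $c\in[1,\infty]$.

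For the second inequality, the upper bound is the first inequality with $b=a$ (the annulus $\mathcal C$ sits inside a ball $\lambda\mathcal B'$, so that estimate applies). The lower bound is the only genuinely new point. Here I would use the elementary identity $|\xi|^{2k}=\big(\sum_{j}\xi_{j}^{2}\big)^{k}=\sum_{|\alpha|=k}\binom{k}{\alpha}\xi^{2\alpha}$ (multinomial coefficients) together with $\widehat u=\psi(\cdot/\lambda)\widehat u$ and $\xi^{\alpha}\widehat u=(-i)^{k}\widehat{\partial^{\alpha}u}$ to write
$$\widehat u(\xi)=\sum_{|\alpha|=k}\binom{k}{\alpha}\,\frac{\psi(\xi/\lambda)\,\xi^{\alpha}}{|\xi|^{2k}}\,\xi^{\alpha}\widehat u(\xi)=(-i)^{k}\sum_{|\alpha|=k}\binom{k}{\alpha}\,m_\alpha(\xi)\,\widehat{\partial^{\alpha}u}(\xi),$$
where $m_\alpha(\xi)=\psi(\xi/\lambda)\,\xi^{\alpha}|\xi|^{-2k}=\lambda^{-k}\widetilde m_\alpha(\xi/\lambda)$ with $\widetilde m_\alpha(\eta)=\psi(\eta)\,\eta^{\alpha}|\eta|^{-2k}\in\mathcal D(\mathbb R^{3}\setminus\{0\})$. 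Thus each $m_\alpha(D)$ is convolution with $\lambda^{3-k}h_\alpha(\lambda\cdot)$, $h_\alpha=\mathcal F^{-1}\widetilde m_\alpha$, and Young's inequality gives $\|m_\alpha(D)\partial^{\alpha}u\|_{L^{a}}\le\lambda^{-k}\|h_\alpha\|_{L^{1}}\|\partial^{\alpha}u\|_{L^{a}}$. Summing over the $O(k^{2})$ multi-indices of order $k$ yields $\lambda^{k}\|u\|_{L^{a}}\le C^{k+1}\sup_{|\alpha|=k}\|\partial^{\alpha}u\|_{L^{a}}$.

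The third inequality is the same convolution argument applied to the symbol. Since $\widehat u=\psi(\cdot/\lambda)\widehat u$ and $\sigma$ is homogeneous of degree $m$, the operator $\sigma(D)$ acts on $u$ through the symbol $\sigma(\xi)\psi(\xi/\lambda)=\lambda^{m}\theta(\xi/\lambda)$ with $\theta=\sigma\psi\in\mathcal D(\mathbb R^{3}\setminus\{0\})$; writing $\sigma(D)u=\lambda^{m}\big(\mathcal F^{-1}[\theta(\cdot/\lambda)]\big)*u$ and applying Young with the same $\tfrac1c=1-(\tfrac1a-\tfrac1b)$ produces $\lambda^{m+3(1/a-1/b)}$ times $\|\mathcal F^{-1}\theta\|_{L^{c}}$, which furnishes the constant $C_{\sigma,m}$. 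The one place that requires genuine care, and the main obstacle, is the uniform-in-$c$, geometric-in-$k$ control of the kernel norms $\|\partial^{\alpha}g\|_{L^{c}}$ and $\|h_\alpha\|_{L^{1}}$ by $C^{k+1}$: this is where the Schwartz bounds on the fixed cutoffs, the interpolation $\|\cdot\|_{L^{c}}\le\|\cdot\|_{L^{1}}^{1/c}\|\cdot\|_{L^{\infty}}^{1-1/c}$ used to suppress the $c$-dependence, and the combinatorial count of multi-indices must be combined. Everything else is routine bookkeeping.
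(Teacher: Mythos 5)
The paper itself offers no proof of this lemma: it is quoted verbatim from \cite{danchin book}, so the only meaningful comparison is with the textbook argument from that reference, and your proposal is in substance exactly that argument (spectral self-reproduction $u=g_{\lambda}\ast u$ plus Young's inequality for the first and third estimates; the multinomial identity $|\xi|^{2k}=\sum_{|\alpha|=k}\binom{k}{\alpha}\xi^{2\alpha}$ and the symbols $\psi(\eta)\eta^{\alpha}|\eta|^{-2k}$ for the reverse inequality), and it is correct. The one step you leave schematic, namely $\sup_{|\alpha|=k}\|\partial^{\alpha}g\|_{L^{c}}\le C^{k+1}$ uniformly in $c$ and $\|h_{\alpha}\|_{L^{1}}\le C^{k+1}$, does close exactly as you indicate: writing $(1+|x|^{2})^{2}$ times each kernel as $\mathcal{F}^{-1}\bigl[(\mathrm{Id}-\Delta_{\xi})^{2}(\cdot)\bigr]$, at most four $\xi$-derivatives falling on $\xi^{\alpha}$ (resp.\ on $\xi^{\alpha}|\xi|^{-2k}$) over the fixed compact support of the cutoff cost only a factor $k^{4}R^{k}$ (resp.\ $k^{4}(R/r^{2})^{k}$), which is geometric in $k$, and your $L^{1}$--$L^{\infty}$ interpolation then removes the $c$-dependence; together with $\sum_{|\alpha|=k}\binom{k}{\alpha}=3^{k}$ this yields the stated $C^{k+1}$.
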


\begin{lemma}
\label{comunatorestimate}
\cite{zhangping}
Let $r \in [1, \infty]$, $u \in \dot{B}_{2,r}^{s}(\mathbb{R}^{3})$, and $v \in \dot{B}_{2,1}^{5/2}(\mathbb{R}^{3})$ with $\mathrm{div}v = 0$.
Then there hold the following:

(i) If $-\frac{5}{2} < s < \frac{5}{2}$ (or $s = \frac{5}{2}$ with $r = 1$),
\begin{align}
\|[\Delta_{q}, v\cdot \nabla]u\|_{L^{2}} \lesssim c_{q,r} 2^{-sq} \|v\|_{\dot{B}_{2,1}^{5/2}} \|u\|_{\dot{B}_{2,r}^{s}}.
\end{align}

(ii) If $s > -\frac{5}{2}$ and $v \in \dot{B}_{2,r}^{s}(\mathbb{R}^{3})$, $\nabla u \in L^{\infty}(\mathbb{R}^{3})$
\begin{align}
\|[\Delta_{q}, v\cdot \nabla]u\|_{L^{2}} \lesssim c_{q,r} 2^{-sq} \left( \|v\|_{\dot{B}_{2,1}^{5/2}} \|u\|_{\dot{B}_{2,r}^{s}}
+ \|v\|_{\dot{B}_{2,r}^{s}}\|\nabla u\|_{L^{\infty}} \right).
\end{align}

(iii) If $s > -1$,
\begin{align}
\|[\Delta_{q}, v\cdot \nabla]v\|_{L^{2}} \lesssim c_{q,r} 2^{-sq} \|\nabla v\|_{L^{\infty}} \|v\|_{\dot{B}_{2,r}^{s}}.
\end{align}
\end{lemma}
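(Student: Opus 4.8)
The plan is to prove all three parts by means of Bony's paraproduct decomposition (recalled in Section~2), together with the divergence‑free condition $\mathrm{div}\,v=0$, which is what makes the endpoint exponents accessible. Throughout I read $\Delta_q$ as the homogeneous block $\dot\Delta_q$, consistent with the homogeneous norms in the statement. Writing $v\cdot\nabla u=\sum_k v^k\partial_k u$, I would decompose each product as $v^k\partial_k u=T_{v^k}\partial_k u+T_{\partial_k u}v^k+R(v^k,\partial_k u)$ and decompose $v\cdot\nabla\dot\Delta_q u$ in the same way. After subtracting, the bracket splits into three groups: the \emph{para‑convection commutator} $\sum_k[\dot\Delta_q,T_{v^k}]\partial_k u$, the \emph{high--low} terms $\sum_k\big(\dot\Delta_q(T_{\partial_k u}v^k)-T_{\partial_k\dot\Delta_q u}v^k\big)$, and the \emph{resonant} terms $\sum_k\big(\dot\Delta_q R(v^k,\partial_k u)-R(v^k,\partial_k\dot\Delta_q u)\big)$. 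I would estimate each group in $L^2$ and read off the admissible range of $s$ from the convergence of the associated dyadic sums.

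For the para‑convection commutator, spectral support forces $[\dot\Delta_q,T_{v^k}]\partial_k u=\sum_{|j-q|\le N_0}[\dot\Delta_q,\dot S_{j-1}v^k]\dot\Delta_j\partial_k u$ for a fixed $N_0$. Writing $\dot\Delta_q$ as convolution against $h_q(z)=2^{3q}h(2^q z)$, the commutator acquires the kernel $h_q(x-y)\big(\dot S_{j-1}v^k(y)-\dot S_{j-1}v^k(x)\big)$, so the mean value theorem together with $\big\||z|\,h_q\big\|_{L^1}\lesssim 2^{-q}$ produces a gain $2^{-q}\|\nabla v\|_{L^\infty}$. This absorbs exactly the derivative $\|\dot\Delta_j\partial_k u\|_{L^2}\lesssim 2^j\|\dot\Delta_j u\|_{L^2}$ since $2^j\approx 2^q$, and summing the finitely many $j$ yields $c_{q,r}2^{-sq}\|\nabla v\|_{L^\infty}\|u\|_{\dot B^s_{2,r}}$ with no restriction on $s$; here $\|\nabla v\|_{L^\infty}\lesssim\|v\|_{\dot B^{5/2}_{2,1}}$ by the embedding $\dot B^{3/2}_{2,1}(\mathbb{R}^3)\hookrightarrow L^\infty$.

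The two endpoint exponents come from the remaining groups. In the high--low group I place the low factor $\dot S_{j-1}\nabla u$ in $L^\infty$ and $\dot\Delta_j v$ in $L^2$; the estimate $\|\dot S_{j-1}\nabla u\|_{L^\infty}\lesssim\sum_{m\le j}2^{(5/2-s)m}c_{m,r}$ converges precisely for $s<5/2$, with the endpoint $s=5/2$ requiring $r=1$ so that the $\ell^1$ sum is finite; this is the source of the upper restriction. For the resonant group I invoke $\mathrm{div}\,v=0$ to write $\sum_k R(v^k,\partial_k u)=\partial_k\sum_k R(v^k,u)$, commute $\partial_k$ with $\dot\Delta_q$ so that $\|\dot\Delta_q\partial_k(\cdot)\|_{L^2}\lesssim 2^q\|\dot\Delta_q(\cdot)\|_{L^2}$, and bound the high--high sum via $\|\dot\Delta_j v\|_{L^2}\lesssim d_j 2^{-5j/2}\|v\|_{\dot B^{5/2}_{2,1}}$: the geometric series $\sum_{j\gtrsim q}2^{-(5/2+s)j}$ converges exactly when $s>-5/2$, after which the extracted $2^q$ restores the weight $2^{-sq}$. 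The subtracted counterparts $T_{\partial_k\dot\Delta_q u}v^k$ and $R(v^k,\partial_k\dot\Delta_q u)$ are lower order, because $\dot\Delta_q u$ is localized at frequency $2^q$ and, with a suitable $L^2$/$L^\infty$ placement, the remaining dyadic sums converge unconditionally. Collecting the three groups proves (i).

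Parts (ii) and (iii) are variations on the same scheme. For (ii), when $s\ge 5/2$ the high--low sum above diverges, so I bound the low factor trivially by $\|\dot S_{j-1}\nabla u\|_{L^\infty}\le\|\nabla u\|_{L^\infty}$, which produces the extra term $\|v\|_{\dot B^s_{2,r}}\|\nabla u\|_{L^\infty}$ and leaves only the lower restriction $s>-5/2$. For (iii) I set $u=v$ and, wherever a factor of $v$ is placed in $L^\infty$, use $\|\dot\Delta_j v\|_{L^\infty}\lesssim 2^{-j}\|\nabla v\|_{L^\infty}$ in place of the $\dot B^{5/2}_{2,1}$ embedding; since this supplies only one power of $2^{-j}$ rather than $2^{-5j/2}$, the resonant series (after the same divergence‑free integration by parts $\sum_k R(v^k,\partial_k v)=\partial_k\sum_k R(v^k,v)$) now converges for $s>-1$, giving the bound with $\|\nabla v\|_{L^\infty}$ alone on the relaxed range. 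I expect the main obstacle to be the bookkeeping of $L^2$ versus $L^\infty$ placement in each region and, above all, the systematic use of $\mathrm{div}\,v=0$ to move the derivative onto the exterior so that the high‑frequency factor of $v$ contributes its full decay; this is precisely what pins down the endpoint exponents $-5/2$ and $-1$.
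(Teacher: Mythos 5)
The paper does not prove this lemma at all — it is quoted from \cite{zhangping} — so there is no in-paper argument to compare against; your Bony-decomposition proof is precisely the standard one (the same scheme used in that reference, and the same style the present paper uses in its appendix for the related commutator $\mathrm{div}[\Delta_{q},a]\nabla u$), and it is correct: the para-convection commutator handled by the kernel/mean-value argument, the restriction $s<\frac{5}{2}$ (or $s=\frac{5}{2}$, $r=1$) coming from $\|S_{j-1}\nabla u\|_{L^{\infty}}$ in the high–low term, the restriction $s>-\frac{5}{2}$ (resp.\ $s>-1$) coming from the remainder after the divergence-free integration by parts, and the subtracted terms being harmless. One bookkeeping remark: in the remainder estimate the product $\dot{\Delta}_{j}v\,\tilde{\Delta}_{j}u$ is only in $L^{1}$, so you must also invoke Bernstein's $L^{1}\to L^{2}$ inequality on the ball of radius $\sim 2^{q}$, which supplies an extra factor $2^{3q/2}$; with it the exponents balance, $2^{q}\cdot 2^{3q/2}\cdot 2^{-(5/2+s)q}=2^{-sq}$, whereas as literally written ("the extracted $2^{q}$ restores the weight $2^{-sq}$") the arithmetic does not close — a standard and easily repaired omission that does not affect the validity of the approach.
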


\begin{lemma}
\cite{zhangping}
\label{transport_estimate}
Let $v$ be a divergence-free vector field with $\nabla v \in L^{1}([0,T]; \dot{B}_{2,1}^{3/2})$. For $s \in (-\frac{5}{2}, \frac{5}{2}]$,
given $f_{0} \in \dot{B}_{2,1}^{s}$, $F \in L^{1}([0,T]; \dot{B}_{2,1}^{s})$, the transport equation
\begin{align}
\begin{cases}
\partial_{t}f + v \cdot \nabla f = F \quad \text{in} \,\, \mathbb{R}^{+}\times \mathbb{R}^{3},  \\
f|_{t=0} = f_{0},
\end{cases}
\end{align}
has a unique solution $f \in C([0,T]; \dot{B}_{2,1}^{s})$. Moreover, there holds for all $t\in [0,T]$
\begin{align}
\label{transport 1}
\begin{split}
\|f\|_{\tilde{L}_{T}^{\infty}(\dot{B}_{2,1}^{s})} \leq & \, \|f_{0}\|_{\dot{B}_{2,1}^{s}} + C \int_{0}^{t} \|f(\tau)\|_{\dot{B}_{2,1}^{s}}
\|\nabla v(\tau)\|_{\dot{B}_{2,1}^{3/2}} \, d\tau   \\
& \, + C \|F\|_{L_{t}^{1}(\dot{B}_{2,1}^{s})}.
\end{split}
\end{align}
If $s \in (0, \frac{5}{2}]$, there also holds
\begin{align}
\label{transport 2}
\begin{split}
\|f\|_{\tilde{L}^{\infty}_{t}(B_{2,1}^{s})} \leq &\, \|f_{0}\|_{B_{2,1}^{s}} + C\int_{0}^{t}\|f(t')\|_{B_{2,1}^{s}}\|\nabla v(t')\|_{\dot{B}_{2,1}^{3/2}}\,dt' \\
&\, + C \|F\|_{L^{1}_{t}(B_{2,1}^{s})}.
\end{split}
\end{align}
\end{lemma}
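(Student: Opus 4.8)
The plan is to prove the estimate by a Littlewood--Paley/commutator argument, deferring to Lemma~\ref{comunatorestimate} for the one genuinely nontrivial ingredient, and then to read off existence, uniqueness and time-continuity from the resulting a priori bound. First I would apply the homogeneous dyadic block $\dot{\Delta}_{q}$ to the transport equation and rewrite it as
\begin{equation*}
\partial_{t}\dot{\Delta}_{q}f + v\cdot\nabla\dot{\Delta}_{q}f = \dot{\Delta}_{q}F + [v\cdot\nabla,\,\dot{\Delta}_{q}]f,
\end{equation*}
where $[v\cdot\nabla,\dot{\Delta}_{q}]f = v\cdot\nabla\dot{\Delta}_{q}f - \dot{\Delta}_{q}(v\cdot\nabla f)$. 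Taking the $L^{2}$ inner product with $\dot{\Delta}_{q}f$ and using $\mathrm{div}\,v=0$ to kill the transport term, $\int_{\mathbb{R}^{3}}(v\cdot\nabla\dot{\Delta}_{q}f)\,\dot{\Delta}_{q}f\,dx=0$, I obtain the blockwise bound
\begin{equation*}
\frac{d}{dt}\|\dot{\Delta}_{q}f\|_{L^{2}} \le \|\dot{\Delta}_{q}F\|_{L^{2}} + \|[v\cdot\nabla,\dot{\Delta}_{q}]f\|_{L^{2}}.
\end{equation*}

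\textbf{Summation and the homogeneous estimate.} Integrating in time and taking the supremum over $[0,t]$ (the right-hand side is nondecreasing) controls $\|\dot{\Delta}_{q}f\|_{L^{\infty}_{t}L^{2}}$ by $\|\dot{\Delta}_{q}f_{0}\|_{L^{2}}$ plus the time integrals of $\|\dot{\Delta}_{q}F\|_{L^{2}}$ and of the commutator. Multiplying by $2^{qs}$ and summing over $q\in\mathbb{Z}$ — which is exactly the definition of the Chemin--Lerner norm $\|f\|_{\tilde{L}^{\infty}_{t}(\dot{B}_{2,1}^{s})}$ — produces $\|f_{0}\|_{\dot{B}_{2,1}^{s}}$ and $\|F\|_{L^{1}_{t}(\dot{B}_{2,1}^{s})}$ from the first two terms. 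For the commutator term I would invoke Lemma~\ref{comunatorestimate}(i) with $r=1$, valid precisely on $-\tfrac{5}{2}<s<\tfrac{5}{2}$ and at the endpoint $s=\tfrac{5}{2}$, to get $2^{qs}\|[v\cdot\nabla,\dot{\Delta}_{q}]f\|_{L^{2}} \lesssim c_{q,1}\|v\|_{\dot{B}_{2,1}^{5/2}}\|f\|_{\dot{B}_{2,1}^{s}}$; summing the $\ell^{1}$ sequence $(c_{q,1})_{q}$ and using $\|v\|_{\dot{B}_{2,1}^{5/2}}\thickapprox\|\nabla v\|_{\dot{B}_{2,1}^{3/2}}$ yields the integral term $C\int_{0}^{t}\|f(\tau)\|_{\dot{B}_{2,1}^{s}}\|\nabla v(\tau)\|_{\dot{B}_{2,1}^{3/2}}\,d\tau$, which is exactly \eqref{transport 1}.

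\textbf{Existence, uniqueness, continuity.} For existence I would regularize the data, the source and $v$ (mollifying $v$ while preserving $\mathrm{div}\,v=0$), solve the resulting smooth linear transport equation along characteristics, and obtain approximate solutions $f^{n}$ satisfying the above a priori bound uniformly in $n$. The uniform control in $\tilde{L}^{\infty}_{T}(\dot{B}_{2,1}^{s})$ together with an estimate for $\partial_{t}f^{n}$ in a weaker space gives compactness and a limit solving the equation. Uniqueness follows by applying \eqref{transport 1} to the difference $w$ of two solutions, for which $f_{0}=0$ and $F=0$, so that $\|w\|_{\tilde{L}^{\infty}_{t}(\dot{B}_{2,1}^{s})}\le C\int_{0}^{t}\|w(\tau)\|_{\dot{B}_{2,1}^{s}}\|\nabla v(\tau)\|_{\dot{B}_{2,1}^{3/2}}\,d\tau$ and Gr\"onwall forces $w\equiv0$. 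Continuity follows since each $t\mapsto\dot{\Delta}_{q}f(t)$ is continuous into $L^{2}$ and the series $\sum_{q}2^{qs}\dot{\Delta}_{q}f$ converges uniformly in $C([0,T];\dot{B}_{2,1}^{s})$, its tail being dominated uniformly in $t$ by the a priori bound.

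\textbf{Inhomogeneous estimate and the main obstacle.} For \eqref{transport 2} with $s\in(0,\tfrac{5}{2}]$ I would split $\|f\|_{B_{2,1}^{s}} = \|\Delta_{-1}f\|_{L^{2}} + \sum_{q\ge0}2^{qs}\|\Delta_{q}f\|_{L^{2}}$. The high-frequency blocks $q\ge0$ are treated exactly as above; the only new point is the low-frequency block $\Delta_{-1}$, whose commutator $[v\cdot\nabla,\Delta_{-1}]f$ is frequency-localized near the origin and must be estimated by a nonhomogeneous commutator bound via Bernstein's Lemma~\ref{bernsteininequality} and the product laws, giving $\|[v\cdot\nabla,\Delta_{-1}]f\|_{L^{2}}\lesssim\|\nabla v\|_{\dot{B}_{2,1}^{3/2}}\|f\|_{B_{2,1}^{s}}$; the restriction $s>0$ is what allows the low- and high-frequency pieces to recombine into $\|f\|_{B_{2,1}^{s}}$ and closes the summation. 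The genuinely nontrivial analytic input — the commutator estimate — is supplied by Lemma~\ref{comunatorestimate}, so I expect the remaining delicate points to be (a) the endpoint $s=\tfrac{5}{2}$, admissible only because the $r=1$ branch of Lemma~\ref{comunatorestimate}(i) reaches it, and (b) the control of the low-frequency commutator in the inhomogeneous case; the existence/compactness step, though routine, will demand the most bookkeeping.
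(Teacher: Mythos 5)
The paper does not prove this lemma at all: it is imported verbatim from \cite{zhangping} (Abidi--Gui--Zhang), so there is no internal proof to compare against. Your argument is the standard dyadic proof used in that reference — localize with $\dot{\Delta}_{q}$, use $\mathrm{div}\,v=0$ to cancel the transport term in the $L^{2}$ energy identity, bound the commutator by Lemma~\ref{comunatorestimate}(i) with $r=1$ (which is exactly what admits the endpoint $s=\tfrac52$), sum the $\ell^{1}$ sequence, and close with Gr\"onwall for uniqueness — and it is correct, including the observation that $s>0$ is what lets the low frequencies be absorbed into $\|f\|_{B_{2,1}^{s}}$ in the inhomogeneous case. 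The one place where your sketch is thinner than it should be is the block $\Delta_{-1}$: since the hypothesis controls only $\nabla v$ (not $v$ itself, which in the homogeneous setting is defined only up to constants), you cannot estimate $\Delta_{-1}(v\cdot\nabla f)$ and $v\cdot\nabla\Delta_{-1}f$ separately by product laws; you must keep the commutator intact, write $[v\cdot\nabla,\Delta_{-1}]f=\partial_{k}[v^{k},\Delta_{-1}]f$ using $\mathrm{div}\,v=0$, and exploit the first-order cancellation of the kernel, $[v^{k},\Delta_{-1}]f(x)=\int h(y)\bigl(v^{k}(x)-v^{k}(x-y)\bigr)f(x-y)\,dy$, so that only $\|\nabla v\|_{L^{\infty}}\lesssim\|\nabla v\|_{\dot{B}_{2,1}^{3/2}}$ and $\|f\|_{L^{2}}\lesssim\|f\|_{B_{2,1}^{s}}$ appear. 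With that detail supplied, the proof is complete.
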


\begin{lemma}
\cite{Danchin notes}
\label{transport_estimate 2}
Let $1 \leq p \leq p_{1} \leq \infty$, $1 \leq r \leq \infty$ and $p' := (1-1/p)^{-1}$. Assume that
\begin{align*}
\sigma > -N \min\Big( \frac{1}{p_{1}}, \frac{1}{p^{'}} \Big) \quad \text{or} \quad \sigma > -1 -N\min\Big( \frac{1}{p_{1}}, \frac{1}{p^{'}} \Big)
\quad \text{if} \quad \mathrm{div}\,v = 0.
\end{align*}
Then for the transport equation:
\begin{align*}
\begin{cases}
\partial_{t}f + v\cdot\nabla f = g, \\
f|_{t=0} = f_{0},
\end{cases}
\end{align*}
there exists a constant $C$ depending only on $N$, $p$, $p_{1}$, $r$ and $\sigma$, such that the following estimates hold true:
\begin{align}
\label{transport 3}
\|f\|_{\tilde{L}_{t}^{\infty}(B_{p,r}^{\sigma})} \leq
\Big( \|f_{0}\|_{B_{p,r}^{\sigma}} + \int_{0}^{t}e^{-C\int_{0}^{\tau}Z(\tau')\,d\tau'}\|g(\tau)\|_{B_{p,r}^{\sigma}}\,d\tau \Big)
e^{C\int_{0}^{t}Z(\tau)\,d\tau},
\end{align}
with
\begin{align*}
\begin{cases}
Z(t) = \|\nabla v(t)\|_{B_{p_{1},\infty}^{\frac{N}{p_{1}}}\cap L^{\infty}} \quad \text{if} \quad \sigma < 1 + \frac{N}{p_{1}}, \\
Z(t) = \|\nabla v(t)\|_{B_{p_{1},1}^{\sigma - 1}} \quad \text{if} \quad \sigma > 1 + \frac{N}{p_{1}} \quad
\text{or} \quad \left\{ \sigma = 1 + \frac{N}{p_{1}} \quad \text{and} \quad r = 1 \right\}.
\end{cases}
\end{align*}
If $f = v$ then for all $\sigma > 0$ ($\sigma > -1$ if $\mathrm{div}\,v = 0$) estimates (\ref{transport 1}) hold with
\begin{align*}
Z(t) = \|\nabla v(t)\|_{L^{\infty}}.
\end{align*}
\end{lemma}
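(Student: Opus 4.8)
The plan is to prove this by the standard Littlewood--Paley / Chemin--Lerner energy method for transport equations, following the scheme of \cite{Danchin notes, danchin book}. First I would apply the dyadic block $\Delta_q$ to the equation and commute it past the transport operator, writing
\begin{align}
\partial_t \Delta_q f + v\cdot\nabla \Delta_q f = \Delta_q g + R_q, \qquad R_q := [v\cdot\nabla, \Delta_q]f = v\cdot\nabla\Delta_q f - \Delta_q(v\cdot\nabla f).
\end{align}
Then I would run an $L^p$ energy estimate on each localized piece: multiplying by $|\Delta_q f|^{p-2}\Delta_q f$ and integrating in $x$, the transport term contributes $\frac{1}{p}\int v\cdot\nabla|\Delta_q f|^p\,dx = -\frac{1}{p}\int (\mathrm{div}\,v)\,|\Delta_q f|^p\,dx$, which vanishes identically when $\mathrm{div}\,v = 0$ and is otherwise absorbed into $Z$. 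This yields the pointwise-in-time differential inequality
\begin{align}
\frac{d}{dt}\|\Delta_q f\|_{L^p} \leq \|\Delta_q g\|_{L^p} + \|R_q\|_{L^p} + C\,\|\mathrm{div}\,v\|_{L^\infty}\,\|\Delta_q f\|_{L^p}.
\end{align}

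The heart of the argument, and the step I expect to be the main obstacle, is the commutator estimate
\begin{align}
\|R_q\|_{L^p} \lesssim c_{q,r}\,2^{-q\sigma}\,Z(t)\,\|f\|_{B_{p,r}^\sigma},
\end{align}
with $(c_{q,r})$ lying in the unit ball of $\ell^r$. To establish it I would expand $v\cdot\nabla f = \sum_k v^k\partial_k f$ by Bony's decomposition $v^k\partial_k f = T_{v^k}\partial_k f + T_{\partial_k f}v^k + R(v^k,\partial_k f)$ and commute $\Delta_q$ with each of the three pieces, producing a paraproduct-commutator term $[\Delta_q,T_{v^k}]\partial_k f$, a term of the form $T_{\partial_k\Delta_q f}v^k$, and localized remainder contributions. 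Each is bounded using Bernstein's inequality (Lemma \ref{bernsteininequality}) together with the support and almost-orthogonality properties \eqref{dd2}--\eqref{ds4}; the hypothesis $p\leq p_1$ is what permits the H\"older step that places the $v$-factors in $L^{p_1}$ and the $f$-factors in $L^p$. The admissible range of $\sigma$ and the two alternative choices of $Z$ emerge precisely here. The low-frequency and remainder pieces require $\sigma > -N\min(1/p_1,1/p')$ for convergence of the relevant dyadic sums, improved to $\sigma > -1 - N\min(1/p_1,1/p')$ when $\mathrm{div}\,v = 0$ because the divergence-free structure lets one rewrite $v\cdot\nabla f = \mathrm{div}(v f)$ and thereby transfer a derivative off $f$ inside the remainder; while in the high-regularity regime $\sigma > 1 + N/p_1$ the paraproduct $T_{\partial_k f}v^k$ no longer closes under $\|\nabla v\|_{L^\infty}$-type control, forcing the stronger norm $\|\nabla v\|_{B_{p_1,1}^{\sigma-1}}$ in $Z$. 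The endpoint $\sigma = 1 + N/p_1$ is admissible only when $r=1$, for exactly the usual reason that $B_{p_1,1}^{N/p_1}$ embeds into $L^\infty$.

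Finally, I would multiply the differential inequality by $2^{q\sigma}$ and integrate the resulting linear first-order ODE in time via Gr\"onwall, obtaining
\begin{align}
2^{q\sigma}\|\Delta_q f(t)\|_{L^p} \leq e^{C\int_0^t Z}\Big(2^{q\sigma}\|\Delta_q f_0\|_{L^p} + \int_0^t 2^{q\sigma}e^{-C\int_0^\tau Z}\,\|\Delta_q g(\tau)\|_{L^p}\,d\tau\Big),
\end{align}
and then take the $\ell^r(\mathbb{Z})$ norm in $q$, using Minkowski's inequality to pass the $\ell^r$ norm through the time integral. By definition the left-hand side is exactly the $\tilde L_t^\infty(B_{p,r}^\sigma)$ norm, and the right-hand side reproduces \eqref{transport 3}. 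For the special case $f = v$, I would exploit that when the transported quantity coincides with the transporting field the remainder term in the commutator gains an extra derivative, so the entire estimate closes under $Z = \|\nabla v\|_{L^\infty}$ alone for every $\sigma > 0$ (respectively $\sigma > -1$ when $\mathrm{div}\,v = 0$); this is the Vishik-type refinement. The routine book-keeping of the dyadic sums, and the verification that the constant depends only on $N,p,p_1,r,\sigma$, I would defer to the standard references \cite{Danchin notes, danchin book}.
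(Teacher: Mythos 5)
The first thing to note is that the paper contains no proof of this lemma: it is quoted directly from \cite{Danchin notes} (see also \cite{danchin book}), so the only comparison available is with the standard literature argument, which is precisely what you sketch. Your plan — dyadic localization, blockwise $L^{p}$ energy estimate using $\int v\cdot\nabla|\Delta_{q}f|^{p}\,dx=-\int(\mathrm{div}\,v)\,|\Delta_{q}f|^{p}\,dx$, the commutator estimate via Bony decomposition, then Gr\"onwall and $\ell^{r}$ summation — is the correct and canonical route, and your attribution of each hypothesis to its source (the remainder term forcing $\sigma>-N\min(1/p_{1},1/p')$, the rewriting $v\cdot\nabla f=\mathrm{div}(vf)$ gaining one derivative when $\mathrm{div}\,v=0$, the paraproduct $T_{\partial_{k}f}v^{k}$ forcing $Z=\|\nabla v\|_{B^{\sigma-1}_{p_{1},1}}$ when $\sigma>1+N/p_{1}$, and the embedding $B^{N/p_{1}}_{p_{1},1}\hookrightarrow L^{\infty}$ at the endpoint with $r=1$) is accurate.

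There is, however, one step that is wrong as written, though easily repaired. The commutator estimate bounds $\|R_{q}\|_{L^{p}}$ by $c_{q,r}\,2^{-q\sigma}Z(t)\,\|f(t)\|_{B^{\sigma}_{p,r}}$, i.e.\ by the \emph{full} Besov norm of $f$, not by $\|\Delta_{q}f\|_{L^{p}}$: the commutator couples all frequencies. You therefore cannot ``integrate the resulting linear first-order ODE in time via Gr\"onwall'' at fixed $q$; your final display, which retains only the $f_{0}$ and $g$ contributions, is false blockwise because it has silently discarded $R_{q}$. The correct order is: integrate the blockwise inequality in time, multiply by $2^{q\sigma}$, take the $\ell^{r}$ norm in $q$ (Minkowski passes the $\ell^{r}$ norm through the time integral, and $\|(c_{q,r})_{q}\|_{\ell^{r}}\leq 1$ removes the coefficient), which yields
\begin{align*}
\|f\|_{\tilde{L}_{t}^{\infty}(B^{\sigma}_{p,r})}\leq \|f_{0}\|_{B^{\sigma}_{p,r}}+\int_{0}^{t}\|g(\tau)\|_{B^{\sigma}_{p,r}}\,d\tau
+ C\int_{0}^{t}Z(\tau)\,\|f\|_{\tilde{L}_{\tau}^{\infty}(B^{\sigma}_{p,r})}\,d\tau,
\end{align*}
and only then apply Gr\"onwall to the nondecreasing function $t\mapsto\|f\|_{\tilde{L}_{t}^{\infty}(B^{\sigma}_{p,r})}$; this produces exactly \eqref{transport 3}, including the weight $e^{-C\int_{0}^{\tau}Z}$ inside the integral of $g$. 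With that reordering (plus the routine regularization needed to justify the energy step when $p<2$ or $p\in\{1,\infty\}$), your argument is the standard one and closes correctly.
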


\begin{lemma}
\cite{zhangping}
\label{pressure_estimate}
Let $s \in (-\frac{3}{2}, 2)$, $\vec{F} = (F_{1}, F_{2}, F_{3}) \in L_{T}^{1}(\dot{B}_{2,1}^{s})$, $a \in \tilde{L}_{T}^{\infty}(\dot{H}^{2})$
with $\underline{a} := \mathrm{inf}_{(t,x) \in [0,T]\times \mathbb{R}^{3}}(1 + a(t,x)) > 0$, and $\Pi \in \tilde{L}_{T}^{1}(\dot{H}^{s + 1/2})$,
which solves
\begin{align}
\mathrm{div}\big( (1+a)\nabla \Pi \big) = \mathrm{div} \vec{F}.
\end{align}
Then there holds
\begin{align}
\underline{a}\|\nabla \Pi\|_{L_{T}^{1}(\dot{B}_{2,1}^{s})} \lesssim \|\vec{F}\|_{L_{T}^{1}(\dot{B}_{2,1}^{s})} +
\|a\|_{\tilde{L}_{T}^{\infty}(\dot{H}^{2})} \|\nabla \Pi\|_{\tilde{L}_{T}^{1}(\dot{H}^{s-1/2})}.
\end{align}
\end{lemma}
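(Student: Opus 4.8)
The plan is to derive the system governing the difference $(\tilde a,\tilde u,\tilde B):=(a-\bar a,\,u-\bar u,\,B-\bar B)$, to propagate a global-in-time bound on it by a continuation argument built on the decay information of Theorem \ref{decay_main_theorem}, and then to recover the fractional powers of $A_0$ in \eqref{1 3 1}--\eqref{1 3 2} by interpolating between two endpoint estimates. First I would subtract the equations \eqref{mhd_a} for the reference triple $(\bar a,\bar u,\bar B)$ from those for $(a,u,B)$. The density difference obeys the transport equation $\partial_t\tilde a+u\cdot\nabla\tilde a=-\,\tilde u\cdot\nabla\bar a$, while $(\tilde u,\tilde B)$ solves a coupled parabolic system in which the reference solution appears both as a variable coefficient, through the factors $1+a$ and $\tilde a$, and as quadratic forcing, through $\tilde u\cdot\nabla\bar u$, $\tilde B\cdot\nabla\bar B$, $\tilde a(\Delta\bar u-\nabla\bar\Pi)$ and their analogues in the $B$-equation, together with the Lorentz cross terms $\bar B\cdot\nabla\tilde B$ (in the $\tilde u$-equation) and $\bar B\cdot\nabla\tilde u$ (in the $\tilde B$-equation). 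As emphasised in the Remark, these last two must not be demoted to source terms: estimated jointly they enjoy the usual MHD energy cancellation, whereas separately they would destroy the bootstrap. I therefore keep the full $(\tilde u,\tilde B)$ coupling and estimate the pair simultaneously.

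The second ingredient is a complete set of global bounds on the reference solution. Applying Theorem \ref{decay_main_theorem} to $(\bar a,\bar u,\bar B)$ furnishes the decay rates \eqref{main e 1}--\eqref{main e 3}; combining them with the elliptic pressure estimate of Lemma \ref{pressure_estimate} and Besov interpolation upgrades these, as in Section 5, to the crucial bound $\|(\bar u,\bar B)\|_{L^1(\mathbb R^+;\dot B_{2,1}^{5/2})}\le C$, as well as the finiteness of $\int_0^\infty(\|\nabla\bar u\|_{L^\infty}+\|\nabla\bar B\|_{L^\infty})\,dt$ and a uniform control of $\|\nabla\bar a\|_{L^\infty}$ in time. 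Precisely these quantities render the reference-dependent coefficients in the perturbation system time-integrable, which is what permits a global rather than merely local closure.

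Since the perturbed data lie in the same spaces as the reference data, standard local well-posedness for \eqref{mhd_a} yields a unique solution on a maximal interval $[0,T^*)$, and it suffices to bound a controlling norm uniformly in $T<T^*$. I would set
\[
\mathcal E(T):=\|\tilde a\|_{\widetilde L_T^{\infty}(B_{2,1}^{7/2})}+\|(\tilde u,\tilde B)\|_{\widetilde L_T^{\infty}(B_{2,1}^{2})}+\|(\tilde u,\tilde B)\|_{L_T^{\infty}(L^{p})}+\|(\tilde u,\tilde B)\|_{L_T^{1}(\dot B_{2,1}^{4})},
\]
assume $\mathcal E(T)\le\eta$ for a small $\eta$, and close the estimate for each piece. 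For $\tilde a$ I apply the transport estimates of Lemma \ref{transport_estimate} and Lemma \ref{transport_estimate 2}, whose Gronwall factor $\exp\{C\int\|\nabla u\|_{\dot B_{2,1}^{3/2}}\,dt\}$ is finite by the reference bounds and the smallness of the perturbation, the source $\tilde u\cdot\nabla\bar a$ being handled by product laws and absorbing a factor of $\mathcal E(T)$. For $(\tilde u,\tilde B)$ I use the parabolic smoothing estimate on the coupled linear system, bounding the convection commutators by Lemma \ref{comunatorestimate} and the pressure gradient $\nabla\tilde\Pi$ by Lemma \ref{pressure_estimate}, so that the variable-coefficient elliptic problem loses no regularity. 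Splitting time into $[0,t_0]$, where all reference quantities and their time integrals are finite, and $[t_0,\infty)$, where the decay of Theorem \ref{decay_main_theorem} makes the reference coefficients small or integrable, the estimate closes schematically as $\mathcal E(T)\le C A_0+C\,\varepsilon_{\mathrm{ref}}\,\mathcal E(T)+C\,\mathcal E(T)^{2}$ with $\varepsilon_{\mathrm{ref}}<1$; taking $A_0$, hence $\eta$, small absorbs the last two terms and gives $\mathcal E(T)\le 2C A_0$ independently of $T$, so $T^*=\infty$. Establishing the global $L^1(\mathbb R^+;\dot B_{2,1}^4)$ component additionally requires decay estimates for the perturbation itself, proved by the same scheme that yields Theorem \ref{decay_main_theorem}.

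Finally, the fractional exponents follow by running this closure at two regularity levels and interpolating. At the lowest level the nonlinearities are genuinely quadratic in the data, giving bounds linear in $A_0$, namely $\|\tilde a\|_{\widetilde L^{\infty}(B_{2,1}^{3/2})}\lesssim A_0$ and $\|(\tilde u,\tilde B)\|_{\widetilde L^{\infty}(B_{2,1}^{1/2})}+\|(\tilde u,\tilde B)\|_{L^{\infty}(L^p)}+\|(\tilde u,\tilde B)\|_{L^1(\dot B_{2,1}^{5/2})}\lesssim A_0$; at the top level the same scheme yields only a uniform bound of order $A_0^{0}=C$. Interpolating through $\|f\|_{B_{2,1}^{\sigma}}\lesssim\|f\|_{B_{2,1}^{\sigma_0}}^{1-\theta}\|f\|_{B_{2,1}^{\sigma_1}}^{\theta}$ with $\sigma=(1-\theta)\sigma_0+\theta\sigma_1$ then produces the power $5/4-s/2$ for $\tilde a$ (with $\theta=(s-\tfrac12)/2$ between $\sigma_0=3/2$ and $\sigma_1=7/2$) and $4/3-2s/3$ for $(\tilde u,\tilde B)$ (with $\theta=(2s-1)/3$ between $\sigma_0=1/2$ and $\sigma_1=2$), matching \eqref{1 3 1}--\eqref{1 3 2} exactly. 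I expect the main obstacle to be the global closure at the top regularity: there the reference field enters terms such as $\bar B\cdot\nabla\tilde B$ at the highest derivative order while $\mathcal E$ is not small at that level, so no simple absorption is available; this is exactly where preserving the $(\tilde u,\tilde B)$ coupling and exploiting the higher-order decay in \eqref{main e 3}, which guarantees finiteness of $\int_{t_0}^\infty\|\nabla\bar B\|_{L^\infty}\,dt$ and kindred quantities, becomes indispensable, as anticipated in the Remark.
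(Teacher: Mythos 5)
Your proposal does not address the statement you were asked to prove. The statement is Lemma \ref{pressure_estimate}: a purely elliptic, variable-coefficient estimate for the pressure, asserting that any $\Pi$ solving $\mathrm{div}\big((1+a)\nabla\Pi\big)=\mathrm{div}\vec F$ satisfies
\begin{align*}
\underline{a}\,\|\nabla\Pi\|_{L_T^{1}(\dot B_{2,1}^{s})}\lesssim \|\vec F\|_{L_T^{1}(\dot B_{2,1}^{s})}+\|a\|_{\tilde L_T^{\infty}(\dot H^{2})}\,\|\nabla\Pi\|_{\tilde L_T^{1}(\dot H^{s-1/2})}.
\end{align*}
What you wrote instead is a strategy for Theorem \ref{stability main theorem}: you derive the perturbation system for $(\tilde a,\tilde u,\tilde B)$, invoke the decay of Theorem \ref{decay_main_theorem}, run a bootstrap, and interpolate to recover the powers $A_0^{5/4-s/2}$ and $A_0^{4/3-2s/3}$. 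In that sketch you actually \emph{use} Lemma \ref{pressure_estimate} as a black box (``the pressure gradient $\nabla\tilde\Pi$ by Lemma \ref{pressure_estimate}''), which makes the argument circular as a proof of the lemma itself. Nothing in your text touches the equation $\mathrm{div}\big((1+a)\nabla\Pi\big)=\mathrm{div}\vec F$, the role of the ellipticity constant $\underline{a}$, or the restriction $s\in(-\tfrac32,2)$.

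A genuine proof would proceed along entirely different lines: rewrite the equation as $\Delta\Pi=\mathrm{div}\big(\vec F-a\nabla\Pi\big)$, apply the homogeneous blocks $\dot\Delta_q$, and test against $\dot\Delta_q\Pi$ (or use Bernstein's inequality from Lemma \ref{bernsteininequality}) so that the coercivity of $1+a\geq\underline{a}$ produces the factor $\underline{a}$ multiplying $\|\nabla\Pi\|_{\dot B_{2,1}^{s}}$ on the left; then the commutator/product term $a\nabla\Pi$ is estimated via Bony's decomposition and the product law $\dot H^{2}\times\dot H^{s-1/2}\to\dot B_{2,1}^{s}$ in $\mathbb{R}^{3}$, which is exactly where the constraint $s\in(-\tfrac32,2)$ enters (it is needed for both paraproduct and remainder terms to be summable). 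Finally one integrates in time, pairing $L_T^{\infty}$ on $a$ with $L_T^{1}$ on $\nabla\Pi$. None of these steps can be replaced by the energy/decay/bootstrap machinery you describe, since that machinery is for the evolution problem and presupposes this elliptic estimate.
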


\begin{lemma}
\label{linear estimate momentum}
For $s \in (-\frac{3}{2},1)$, $r = 1 \,\, \text{or}\,\, 2$. Let $u_{0} \in \dot{B}_{2,r}^{s}$, $B_{0} \in \dot{B}_{2,r}^{s}$ and
$v \in L_{T}^{1}(\dot{B}_{2,1}^{5/2})$, $w \in L_{T}^{1}(\dot{B}_{2,1}^{5/2})$ be two divergence-free vector field.
Letting $f \in \tilde{L}_{T}^{1}(\dot{B}_{2,r}^{s})$ and $a \in L_{T}^{\infty}(\dot{H}^{2}) \cap L_{T}^{\infty}(\dot{H}^{s+3/2})$ with
$1+a \geq \underline{c} >0$, we assume that $u \in L_{T}^{\infty}(\dot{B}_{2,r}^{s}) \cap L_{T}^{1}(\dot{B}_{2,r}^{s+\frac{3}{2}})$,
$B \in L_{T}^{\infty}(\dot{B}_{2,r}^{s}) \cap L_{T}^{1}(\dot{B}_{2,r}^{s+\frac{3}{2}})$ and $\Pi \in L_{T}^{1}(\dot{H}^{1})$ solve
\begin{align}
\label{linearcouple}
\begin{cases}
\partial_{t} u - w \cdot \nabla B + v \cdot \nabla u - \Delta u + \nabla \Pi = f + a (\Delta u - \nabla \Pi), \quad \mathbb{R}^{+} \times \mathbb{R}^{3},  \\
\partial_{t} B + v \cdot \nabla B - w \cdot \nabla u - \Delta B = g, \\
\mathrm{div} u = \mathrm{div} B = 0,    \\
(u, B)|_{t=0} = (u_{0},B_{0}).
\end{cases}
\end{align}
Then there holds
\begin{align}
\label{linearestimate}
\begin{split}
&  \|(u, B)\|_{\tilde{L}_{T}^{\infty}(\dot{B}_{2,r}^{s})} + \|(u, B)\|_{\tilde{L}_{T}^{1}(\dot{B}_{2,r}^{s+2})}  \\
\lesssim & \, \mathrm{exp}\left( C \int_{0}^{T} \|v(t)\|_{\dot{B}_{2,1}^{5/2}} + \|w(t)\|_{\dot{B}_{2,1}^{5/2}} \, dt \, \right)
\bigg\{ \|(u_{0},B_{0})\|_{\dot{B}_{2,r}^{s}}   \\
&\,\,\,\,\,\,\,\,\,\,\,\,\,\,\,\quad\quad\quad\quad\quad + \|(f, g)\|_{\tilde{L}_{T}^{1}(\dot{B}_{2,r}^{s})}
+ \|a\|_{L_{T}^{\infty}(\dot{H}^{s+3/2})} \|\nabla \Pi\|_{L_{T}^{1}(L^{2})}   \\
&\,\,\,\,\,\quad\quad\quad\quad\quad\quad\quad\quad\quad\quad\quad\quad\quad\quad\quad
+ \|a\|_{L_{T}^{\infty}(\dot{H}^{2})} \|u\|_{L_{T}^{1}(\dot{B}_{2,r}^{s+3/2})} \bigg\}
\end{split}
\end{align}
\end{lemma}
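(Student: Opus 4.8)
The plan is to run a frequency-localized energy estimate on the coupled system (\ref{linearcouple}), treating $u$ and $B$ on exactly the same footing so that the skew-symmetry hidden in the coupling can be exploited. First I would apply the dyadic operator $\dot{\Delta}_{q}$ to the first two equations of (\ref{linearcouple}), write $u_q := \dot{\Delta}_{q} u$ and $B_q := \dot{\Delta}_{q} B$, and commute it past the transport operators, producing the localized equations
\begin{align*}
\partial_t u_q - \Delta u_q + v\cdot\nabla u_q - w\cdot\nabla B_q + \nabla\dot{\Delta}_{q}\Pi
&= \dot{\Delta}_{q} f + \dot{\Delta}_{q}\big(a(\Delta u - \nabla\Pi)\big) \\
&\quad - [\dot{\Delta}_{q}, v\cdot\nabla]u + [\dot{\Delta}_{q}, w\cdot\nabla]B,
\end{align*}
and the analogous equation for $B_q$, where the commutators collect all lower-order contributions of the transport and coupling terms.

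Then I would take the $L^2$ inner product of the $u_q$-equation with $u_q$ and of the $B_q$-equation with $B_q$ and add them. Several terms drop out by the divergence-free conditions: $\int v\cdot\nabla u_q\cdot u_q = \int v\cdot\nabla B_q\cdot B_q = 0$ since $\mathrm{div}\,v = 0$, and $\int\nabla\dot{\Delta}_{q}\Pi\cdot u_q = 0$ since $\mathrm{div}\,u_q = 0$. The decisive point, which distinguishes the MHD case from (INS), is the cancellation of the leading coupling terms: because $\mathrm{div}\,w = 0$, integration by parts gives $\int w\cdot\nabla B_q\cdot u_q = -\int w\cdot\nabla u_q\cdot B_q$, so the two coupling contributions $-\int w\cdot\nabla B_q\cdot u_q$ and $-\int w\cdot\nabla u_q\cdot B_q$ sum to zero. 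The diffusion term supplies the parabolic gain $-\int\Delta u_q\cdot u_q = \|\nabla u_q\|_{L^2}^2\gtrsim 2^{2q}\|u_q\|_{L^2}^2$ via Lemma \ref{bernsteininequality}. Setting $U_q := (\|u_q\|_{L^2}^2 + \|B_q\|_{L^2}^2)^{1/2}$ and dividing by $U_q$, I obtain the differential inequality
\begin{align*}
\frac{d}{dt}U_q + c\,2^{2q}U_q
&\lesssim \|\dot{\Delta}_{q}(f,g)\|_{L^2} + \|\dot{\Delta}_{q}(a(\Delta u - \nabla\Pi))\|_{L^2} \\
&\quad + \big\|[\dot{\Delta}_{q}, v\cdot\nabla](u,B)\big\|_{L^2} + \big\|[\dot{\Delta}_{q}, w\cdot\nabla](u,B)\big\|_{L^2}.
\end{align*}

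The remaining work is to estimate the right-hand side and sum. The commutators are controlled by Lemma \ref{comunatorestimate}(i), applicable precisely because $s\in(-3/2,1)\subset(-5/2,5/2)$ and $v,w\in\dot{B}_{2,1}^{5/2}$; they contribute $c_{q,r}2^{-sq}(\|v\|_{\dot{B}_{2,1}^{5/2}} + \|w\|_{\dot{B}_{2,1}^{5/2}})\|(u,B)\|_{\dot{B}_{2,r}^s}$. The variable-density source $a(\Delta u - \nabla\Pi)$ is split into $a\Delta u$ and $a\nabla\Pi$ and handled by Besov product laws: using $a\in\dot{H}^2$ one bounds $\|a\Delta u\|_{\dot{B}_{2,r}^s}\lesssim\|a\|_{\dot{H}^2}\|u\|_{\dot{B}_{2,r}^{s+3/2}}$, while using $a\in\dot{H}^{s+3/2}$ and $\nabla\Pi\in L^2$ one bounds $\|a\nabla\Pi\|_{\dot{B}_{2,r}^s}\lesssim\|a\|_{\dot{H}^{s+3/2}}\|\nabla\Pi\|_{L^2}$; the restriction $-3/2<s<1$ is exactly what makes both estimates admissible. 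Integrating in time, multiplying by $2^{qs}$, and taking the $\ell^r$ norm converts the $2^{2q}$ gain into the $\tilde{L}_T^1(\dot{B}_{2,r}^{s+2})$ norm on the left and the initial data into $\|(u_0,B_0)\|_{\dot{B}_{2,r}^s}$; a Gronwall argument applied to $X(t):=\|(u,B)\|_{\tilde{L}_t^\infty(\dot{B}_{2,r}^s)} + \|(u,B)\|_{\tilde{L}_t^1(\dot{B}_{2,r}^{s+2})}$ absorbs the transport commutators and yields the factor $\exp\big(C\int_0^T\|v\|_{\dot{B}_{2,1}^{5/2}} + \|w\|_{\dot{B}_{2,1}^{5/2}}\,dt\big)$.

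I expect the main obstacle to be twofold. First, verifying that the coupling terms genuinely cancel at leading order: any surviving top-order coupling between $-w\cdot\nabla B$ and $-w\cdot\nabla u$ would destroy the estimate, and this is precisely the structural reason one must treat the linearized system (\ref{linearcouple}) as a whole rather than regarding $B\cdot\nabla B$ as a forcing term. Second, carrying out the product estimate for $a(\Delta u-\nabla\Pi)$ inside $\tilde{L}_T^1(\dot{B}_{2,r}^s)$ without losing the half-derivative gain, which is what forces the simultaneous use of $\|a\|_{\dot{H}^2}$ and $\|a\|_{\dot{H}^{s+3/2}}$ and confines the admissible range to $s\in(-3/2,1)$.
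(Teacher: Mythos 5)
Your frequency-localized energy scheme, the cancellation of the coupling terms via $\mathrm{div}\,w=0$, the use of Lemma \ref{comunatorestimate} for the transport commutators, and the final Gronwall step all match the paper's argument. The genuine gap is your treatment of $a(\Delta u-\nabla\Pi)$ as a pure source term controlled by product laws. The two estimates you invoke, $\|a\Delta u\|_{\dot{B}_{2,r}^{s}}\lesssim\|a\|_{\dot{H}^{2}}\|u\|_{\dot{B}_{2,r}^{s+3/2}}$ and $\|a\nabla\Pi\|_{\dot{B}_{2,r}^{s}}\lesssim\|a\|_{\dot{H}^{s+3/2}}\|\nabla\Pi\|_{L^{2}}$, are not valid in homogeneous spaces: the paraproduct $T_{a}(\cdot)$ carrying the \emph{low} frequencies of $a$ cannot be estimated, since $\dot{H}^{2}$ embeds only into $\dot{B}_{\infty,2}^{1/2}$ (positive H\"older index, unbounded functions), not into $L^{\infty}$. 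Concretely, take $a(x)=2^{N/2}\phi(2^{-N}x)$ with $\hat{\phi}$ supported in an annulus, so that $\|a\|_{\dot{H}^{2}}\sim 1$ while $\|a\|_{L^{\infty}}\sim 2^{N/2}$, and let $u$ be a unit bump at frequency $1$ centered where $|a|\sim 2^{N/2}$; then $a\Delta u$ is spectrally localized near frequency $1$ and $\|a\Delta u\|_{\dot{B}_{2,r}^{s}}\sim 2^{N/2}\to\infty$, while $\|a\|_{\dot{H}^{2}}\|u\|_{\dot{B}_{2,r}^{s+3/2}}=O(1)$. The same example defeats the second estimate for $s\geq 0$ (so the range $s\in(-\tfrac32,1)$ does not rescue it, contrary to your closing remark). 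Nor do the hypotheses help: $1+a\geq\underline{c}$ is only a lower bound, so no $L^{\infty}$ control of $a$ is available; and even granting $a\in L^{\infty}$, you would be left with $\|a\|_{L^{\infty}}\|u\|_{\tilde{L}_{T}^{1}(\dot{B}_{2,r}^{s+2})}$ on the right, which cannot be absorbed into the left-hand side because no smallness of $a$ is assumed.

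The paper avoids ever forming the products $a\Delta u$ and $a\nabla\Pi$. It rewrites $(1+a)\Delta u=\mathrm{div}((1+a)\nabla u)-\nabla a\cdot\nabla u$ and $(1+a)\nabla\Pi=\nabla((1+a)\Pi)-\Pi\nabla a$, and keeps $\mathrm{div}((1+a)\nabla\Delta_{q}u)$ on the left of the localized equation, where the pointwise bound $1+a\geq\underline{c}$ alone yields the coercivity $c\,2^{2q}\|\Delta_{q}u\|_{L^{2}}^{2}$; the pressure contribution $\Delta_{q}\nabla((1+a)\Pi)$ is an exact gradient and vanishes against the divergence-free $\Delta_{q}u$. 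The surviving error terms are the commutators $R_{q}=\mathrm{div}[\Delta_{q},a]\nabla u+[\nabla a\cdot\nabla,\Delta_{q}]u$, the term $(\nabla a\cdot\nabla)\Delta_{q}u$ (which after integration by parts gives $\|\Delta a\|_{L^{2}}\|\Delta_{q}u\|_{L^{4}}^{2}$ and then, via Bernstein, the contribution $\|a\|_{\dot{H}^{2}}\|u\|_{\dot{B}_{2,r}^{s+3/2}}$), and the product $\Pi\nabla a$ — in all of which $a$ appears only through $\nabla a$, $\Delta a$, or a commutator, so its untamed low frequencies never occur. The admissible product law here is $\|\Pi\nabla a\|_{\dot{B}_{2,1}^{s}}\lesssim\|\nabla a\|_{\dot{H}^{s+1/2}}\|\Pi\|_{\dot{H}^{1}}$, and it is this estimate (both factors having subcritical H\"older index exactly when $s<1$) that forces the restriction $s\in(-\tfrac32,1)$. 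To repair your proof you must reproduce this divergence-form step; with that change, the remainder of your argument goes through as written.
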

\begin{proof}
We apply the operator $\Delta_{q}$ to (\ref{linearcouple}); then a standard commutator process gives
\begin{align}
\label{equationlocalu}
\begin{split}
& \partial_{t}\Delta_{q}u - (w\cdot \nabla)\Delta_{q}B + (v\cdot \nabla)\Delta_{q}u - \mathrm{div}((1+a)\nabla \Delta_{q}u)  \\
& \quad\quad\quad\quad + \Delta_{q} \nabla ((1+a)\Pi) = -[w, \Delta_{q}]\cdot \nabla B + [v, \Delta_{q}]\cdot \nabla u - (\nabla a \cdot \nabla)\Delta_{q}u   \\
& \quad\quad\quad\quad\quad\quad\quad\quad\quad\quad\quad\quad\quad\, + \Delta_{q} (\nabla a \cdot \Pi) + R_{q} + \Delta_{q}f,
\end{split}
\end{align}
\begin{align}
\label{equationlocalb}
\begin{split}
\partial_{t}\Delta_{q}B + v\cdot \nabla \Delta_{q}B - w\cdot \nabla \Delta_{q}u & - \Delta \Delta_{q}B  \\
& = \Delta_{q}g + [v, \Delta_{q}]\nabla B - [w, \Delta_{q}]\nabla u,
\end{split}
\end{align}
with
\begin{align*}
R_{q} = \mathrm{div}[\Delta_{q}, a]\nabla u + [\nabla a\cdot \nabla, \Delta_{q}] u.
\end{align*}
Thanks to the fact that $\mathrm{div}u = \mathrm{div}v = \mathrm{div}w = \mathrm{div} B = 0$ and $1 + a \geq \underline{c}$, we get by
taking the $L^{2}$ inner product of (\ref{equationlocalu}), (\ref{equationlocalb}) with $\Delta_{q}u$ and $\Delta_{q}B$ separately that
\begin{align*}
\begin{split}
& \frac{1}{2} \frac{d}{dt} \left( \|\Delta_{q}u(t)\|_{L^{2}}^{2} + \|\Delta_{q}B\|_{L^{2}}^{2} \right) +
c \, 2^{2q}\Big( \|\Delta_{q}u\|_{L^{2}}^{2} + \|\Delta_{q}B\|_{L^{2}}^{2} \Big) \\
\lesssim & \, \big(\|\Delta_{q} u\|_{L^{2}} + \|\Delta_{q}B\|_{L^{2}}\big)\big( \|[w, \Delta_{q}] \nabla B\|_{L^{2}} + \|[v, \Delta_{q}]\nabla u\|_{L^{2}}  \\
& \, + \|[v, \Delta_{q}]\nabla B\|_{L^{2}} + \|[w, \Delta_{q}]\nabla u\|_{L^{2}} + \|R_{q}\|_{L^{2}} + \|\Delta_{q}(\Pi \cdot \nabla a)\|_{L^{2}}  \\
& \, + \|\Delta_{q}f\|_{L^{2}} + \|\Delta_{q}g\|_{L^{2}}\big) + \|\Delta a\|_{L^{2}}\|\Delta_{q}u\|_{L^{4}}^{2}.
\end{split}
\end{align*}
Thanks to Lemma \ref{bernsteininequality}, we have
\begin{align*}
\begin{split}
\|\Delta a\|_{L^{2}} \|\Delta_{q}u\|_{L^{4}}^{2} \lesssim 2^{(3/2)q} \|\Delta a\|_{L^{2}} \|\Delta_{q}u\|_{2}^{2}.
\end{split}
\end{align*}
It follows from the product law that for all $s \in (-\frac{3}{2}, 1)$ that
\begin{align*}
\begin{split}
\|\Pi \nabla a(t)\|_{\dot{B}_{2,1}^{s}} \lesssim \|\nabla a(t)\|_{\dot{H}^{s+\frac{1}{2}}} \|\Pi(t)\|_{\dot{H}^{1}}.
\end{split}
\end{align*}
Similar to the proof in Proposition 3.6 in \cite{zhangping}, we have
\begin{align*}
\begin{split}
\|R_{q}\|_{L^{2}} \lesssim d_{q}(t) 2^{-q s} \|a(t)\|_{\dot{H}^{2}} \|u(t)\|_{\dot{H}^{s+3/2}}.
\end{split}
\end{align*}
Using Lemma \ref{comunatorestimate}, we have the following four inequalities:
\begin{align*}
\begin{split}
& \|[w, \Delta_{q}]\nabla B\|_{L^{2}} \lesssim c_{q,r} 2^{-qs} \|w\|_{\dot{B}_{2,1}^{5/2}} \|B\|_{\dot{B}_{2,r}^{s}},   \\
& \|[v, \Delta_{q}]\nabla u\|_{L^{2}} \lesssim c_{q,r} 2^{-qs} \|v\|_{\dot{B}_{2,1}^{5/2}} \|u\|_{\dot{B}_{2,r}^{s}},   \\
& \|[v, \Delta_{q}]\nabla B\|_{L^{2}} \lesssim c_{q,r} 2^{-qs} \|v\|_{\dot{B}_{2,1}^{5/2}} \|B\|_{\dot{B}_{2,r}^{s}},   \\
& \|[w, \Delta_{q}]\nabla u\|_{L^{2}} \lesssim c_{q,r} 2^{-qs} \|w\|_{\dot{B}_{2,1}^{5/2}} \|u\|_{\dot{B}_{2,r}^{s}}.
\end{split}
\end{align*}
Combining the above four estimates, we arrive at
\begin{align*}
& \, \|(u, B)\|_{\tilde{L}_{T}^{\infty}(\dot{B}_{2,r}^{s})} + \|(u, B)\|_{\tilde{L}_{T}^{1}(\dot{B}_{2,r}^{s+2})}   \\
\lesssim & \, \|(u_{0}, B_{0})\|_{\dot{B}_{2,r}^{s}} + \|(f, g)\|_{\tilde{L}_{T}^{1}(\dot{B}_{2,r}^{s})}
+ \|a\|_{L_{T}^{\infty}(\dot{H}^{s+3/2})} \|\nabla \Pi\|_{L_{T}^{1}(L^{2})}  \\
& \quad + \|a\|_{L_{T}^{\infty}(\dot{H}^{2})} \|u\|_{L_{T}^{1}(\dot{B}_{2,r}^{s+3/2})}  \\
& \quad + \int_{0}^{T} \big( \|w\|_{\dot{B}_{2,1}^{5/2}} + \|v\|_{\dot{B}_{2,1}^{5/2}} \big) \big( \|u\|_{\dot{B}_{2,r}^{s}} +
\|B\|_{\dot{B}_{2,r}^{s}} \big) \, d\tau .
\end{align*}
Through Gronwall's inequality, we complete the proof.
\end{proof}
\begin{remark}
It is easy to observe from the following
\begin{align*}
\|\Pi \nabla a\|_{\tilde{L}_{T}^{1}(\dot{B}_{2,r}^{s})} \lesssim \|\nabla a\|_{\tilde{L}_{T}^{\infty}\dot{H}^{1}}
\|\Pi\|_{\tilde{L}_{T}^{1}(\dot{H}^{s+1/2})} \quad \text{for all} \, \, s \in (-\frac{3}{2},1),
\end{align*}
that
\begin{align}
\label{linear_estimate_var}
\begin{split}
&  \|(u, B)\|_{\tilde{L}_{T}^{\infty}(\dot{B}_{2,r}^{s})} + \|(u, B)\|_{\tilde{L}_{T}^{1}(\dot{B}_{2,r}^{s+2})}  \\
\lesssim & \, \mathrm{exp}\left( C \int_{0}^{T} \|(v(t), B(t))\|_{\dot{B}_{2,1}^{5/2}} \, dt \, \right)
\bigg\{ \|(u_{0},B_{0})\|_{\dot{B}_{2,r}^{s}}  \\
& + \|(f, g)\|_{\tilde{L}_{T}^{1}(\dot{B}_{2,r}^{s})}
+ \|a\|_{\tilde{L}_{T}^{\infty}(\dot{H}^{2})}\big( \|\Pi\|_{\tilde{L}_{T}^{1}(\dot{H}^{s+1/2})} + \|u\|_{L_{T}^{1}(\dot{B}_{2,r}^{s+3/2})} \big) \bigg\}.
\end{split}
\end{align}
\end{remark}
\begin{remark}
Note that $\mathrm{div} u = 0$, taking $\mathrm{div}$ to the first equation of (\ref{linearcouple}), we obtain
\begin{align*}
\mathrm{div}\big((1+a)\nabla\Pi \big) = \mathrm{div} \big( f + a\Delta u +w \cdot \nabla B - v \cdot \nabla u \big),
\end{align*}
then is follows from Lemma \ref{pressure_estimate} that for $s \in (-\frac{3}{2}, \frac{3}{2})$,
\begin{align}
\begin{split}
\|\nabla \Pi\|_{L_{T}^{1}(\dot{B}_{2,1}^{s})} \lesssim &\, \|v\|_{L_{T}^{\infty}(\dot{H}^{s})} \|u\|_{L_{T}^{1}(\dot{B}_{2,1}^{5/2})}
+ \|w\|_{L_{T}^{\infty}(\dot{H}^{s})} \|B\|_{L_{T}^{1}(\dot{B}_{2,1}^{5/2})}    \\
&\, + \|a\|_{\tilde{L}_{T}^{\infty}(\dot{B}_{2,1}^{3/2})} \|u\|_{\tilde{L}_{T}^{1}\dot{H}^{s+2}}
+ \|a\|_{\tilde{L}_{T}^{\infty}(\dot{H}^{2})} \|\nabla \Pi\|_{\tilde{L}_{T}^{1}(\dot{H}^{s-1/2})}  \\
&\, + \|f\|_{\tilde{L}_{T}^{1}(\dot{B}_{2,1}^{s})}.
\end{split}
\end{align}
\end{remark}
\begin{remark}
\label{s=1 linear couple}
If the parameter $s = 1$, then we have the following estimation
\begin{align}
\label{estimate s = 1}
\begin{split}
&\, \|u\|_{\tilde{L}_{T}^{\infty}(\dot{B}_{2,1}^{1})} + \|u\|_{\tilde{L}_{T}^{1}(\dot{B}_{2,1}^{3})}
+ \|B\|_{\tilde{L}_{T}^{\infty}(\dot{B}_{2,1}^{1})} + \|B\|_{\tilde{L}_{T}^{1}(\dot{B}_{2,1}^{3})}  \\
\lesssim &\, \exp{\left( C\int_{0}^{T}\|v\|_{\dot{B}_{2,1}^{5/2}} + \|w\|_{\dot{B}_{2,1}^{5/2}} \, dt \right)}
\bigg( \|u_{0}\|_{\dot{B}_{2,1}^{1}} + \|B_{0}\|_{\dot{B}_{2,1}^{1}}   \\
&\, + \|f\|_{\tilde{L}_{T}^{1}(\dot{B}_{2,1}^{1})} + \|g\|_{\tilde{L}_{T}^{1}(\dot{B}_{2,1}^{1})}
+ \|a\|_{L_{T}^{\infty}(\dot{B}_{2,1}^{2})}\|\Pi\|_{L_{T}^{1}(\dot{B}_{2,1}^{3/2})}    \\
&\, + \|a\|_{L_{T}^{\infty}(\dot{B}_{2,1}^{2})}\|u\|_{L_{T}^{1}(\dot{B}_{2,1}^{5/2})} \bigg).
\end{split}
\end{align}
The proof is similar, so we only give the main estimations in the appendix.
\end{remark}


\section{Stability of Global Solutions with Densities Close to 1}

The aim of this section is to investigate the global stability of the given solution of (\ref{mhd_a}) with the initial density of which is close to 1, namely Theorem \ref{close to 1 main theorem}. Next, we give the detailed statement.

\begin{proof}
To deal with the global well-posedness of $(\ref{mhd_a})$ with initial data $(a_{0}, u_{0}, B_{0})$ given by the theorem, we need some global-in-time control of
the reference solution $(\bar{a}, \bar{u}, \bar{B})$. In what follows, we shall always denote $\bar{\rho} := \frac{1}{1+\bar{a}}$.
Then we get by a standard energy estimate to ($\ref{mhd_rho}$) that
\begin{align*}
& \frac{1}{2}\frac{d}{dt} \|\sqrt{\bar{\rho}} \bar{u}(t)\|_{L^{2}}^{2} + \|\nabla \bar{u}(t)\|_{L^{2}}^{2}
- \int_{\mathbb{R}^{3}} (\bar{B}\cdot \nabla)\bar{B} \, \bar{u} \, dx = 0,   \\
& \frac{1}{2}\frac{d}{dt} \|\bar{B}(t)\|_{L^{2}}^{2} + \|\nabla \bar{B}(t)\|_{L^{2}}^{2}
- \int_{\mathbb{R}^{3}} (\bar{B}\cdot \nabla)\bar{u} \, \bar{B} \, dx = 0
\end{align*}
Combining the above two energy equality, we have
\begin{align}
\label{basic diff}
\frac{1}{2}\frac{d}{dt}\left( \|\sqrt{\bar{\rho}}\bar{u}(t)\|_{L^{2}}^{2} + \|\bar{B}(t)\|_{L^{2}}^{2} \right)
+ \|\nabla \bar{u}(t)\|_{L^{2}}^{2} + \|\nabla \bar{B}(t)\|_{L^{2}}^{2} = 0, \quad \text{for} \,\, t > 0.
\end{align}
After integration, we have
\begin{align}
\label{basic_energy_equality}
\frac{1}{2}\|(\sqrt{\bar{\rho}}\bar{u}(t),\bar{B}(t))\|_{L^{2}}^{2}
+ \int_{0}^{t} \|(\nabla \bar{u}(\tau), \nabla \bar{B}(\tau))\|_{L^{2}}^{2}\, d\tau = \frac{1}{2}\|(\sqrt{\bar{\rho}_{0}}\bar{u}_{0}, \bar{B}_{0})\|_{L^{2}}^{2}.
\end{align}
The transport equation in (\ref{mhd_rho}) gives
\begin{align}
\label{basic_energy_rho}
\|\bar{\rho}(t) - 1\|_{L^{p}} = \|\bar{\rho}_{0} - 1\|_{L^{p}},  \quad \text{for all} \, \, p \in [1, \infty].
\end{align}
Using the interpolation inequality
\begin{align*}
& \|\bar{u}\|_{\dot{B}_{2,1}^{1/2}} \lesssim \|\bar{u}\|_{L^{2}}^{1/2} \|\nabla \bar{u}\|_{L^{2}}^{1/2},  \\
& \|\bar{B}\|_{\dot{B}_{2,1}^{1/2}} \lesssim \|\bar{B}\|_{L^{2}}^{1/2} \|\nabla \bar{B}\|_{L^{2}}^{1/2},
\end{align*}
and from (\ref{basic_energy_equality}), we deduce that
\begin{align*}
\int_{0}^{t} \|\bar{u}(\tau)\|_{\dot{B}_{2,1}^{1/2}}^{4} \, d\tau & \lesssim \int_{0}^{t} \|\bar{u}(\tau)\|_{L^{2}}^{2} \|\nabla \bar{u}(\tau)\|_{L^{2}}^{2}\, d\tau   \\
& \lesssim \|\bar{u}_{0}\|_{L^{2}}^{4} + \|\bar{B}_{0}\|_{L^{2}}^{4}, \quad \text{for} \,\, t>0,    \\
\int_{0}^{t} \|\bar{B}(\tau)\|_{\dot{B}_{2,1}^{1/2}}^{4} \, d\tau & \lesssim \int_{0}^{t} \|\bar{B}(\tau)\|_{L^{2}}^{2} \|\nabla \bar{B}(\tau)\|_{L^{2}}^{2}\, d\tau   \\
& \lesssim \|\bar{u}_{0}\|_{L^{2}}^{4} + \|\bar{B}_{0}\|_{L^{2}}^{4}, \quad \text{for} \,\, t>0,
\end{align*}
Hence, for any $\epsilon > 0$, there exists $T_{0} = T_{0}(\epsilon) > 0$ such that
\begin{align}
\label{small}
\begin{split}
& \|\bar{u}(T_{0})\|_{\dot{B}_{2,1}^{1/2}} < \epsilon,    \\
& \|\bar{B}(T_{0})\|_{\dot{B}_{2,1}^{1/2}} < \epsilon.
\end{split}
\end{align}
On the other hand, applying Lemma \ref{transport_estimate} to the transport equation in (\ref{mhd_a}) gives
\begin{align*}
\|\bar{a}(t)\|_{\tilde{L}_{t}^{\infty}(\dot{B}_{2,1}^{3/2})} \leq \|\bar{a}_{0}\|_{\dot{B}_{2,1}^{3/2}}
+ C \int_{0}^{t} \|\bar{a}(\tau)\|_{\dot{B}_{2,1}^{3/2}} \|\nabla \bar{u}(\tau)\|_{\dot{B}_{2,1}^{3/2}} \, d\tau,
\end{align*}
for any $t \geq 0$. Then applying Gronwall's inequality yields
\begin{align*}
\|\bar{a}\|_{\tilde{L}_{T_{0}}^{\infty}(\dot{B}_{2,1}^{3/2})} \leq \|\bar{a}_{0}\|_{\dot{B}_{2,1}^{3/2}}
\mathrm{exp} \left\{ C \int_{0}^{T_{0}} \|\nabla \bar{u}(\tau)\|_{\dot{B}_{2,1}^{3/2}} \, d\tau \right\}.
\end{align*}
Following the same idea, it is easy to obtain that for $t \geq T_{0}$
\begin{align}
\label{close to 1 transport}
\|\bar{a}\|_{\tilde{L}^{\infty}([T_{0},t]; \dot{B}_{2,1}^{3/2})} \leq \|\bar{a}(T_{0})\|_{\dot{B}_{2,1}^{3/2}}
+ C \|\bar{a}\|_{L^{\infty}([T_{0}, t]; \dot{B}_{2,1}^{3/2})} \|\nabla \bar{u}\|_{L^{1}([T_{0}, t]; \dot{B}_{2,1}^{3/2})}.
\end{align}
Note that for $\bar{a}$ small, we can rewrite the momentum equation and magnetic field equation in (\ref{mhd_a}) as
\begin{align*}
& \partial_{t}\bar{u} + (\bar{u}\cdot \nabla)\bar{u} - \Delta \bar{u} + \nabla \bar{\Pi} = \bar{a} (\Delta \bar{u} - \nabla \bar{\Pi})
+ (\bar{B}\cdot \nabla)\bar{B} + \bar{a} (\bar{B}\cdot \nabla) \bar{B}, \\
& \partial_{t}\bar{B} + (\bar{u}\cdot \nabla)\bar{B} - \Delta \bar{B} = (\bar{B}\cdot \nabla)\bar{u},
\end{align*}
from which we get by using a standard energy estimate about heat equation that
\begin{align}
\label{close to 1 momentum}
\begin{split}
& \|(\bar{u},\bar{B})\|_{\tilde{L}^{\infty}([T_{0},t]; \dot{B}_{2,1}^{1/2})} + \|(\bar{u}, \bar{B})\|_{L^{1}([T_{0}, t]; \dot{B}_{2,1}^{5/2})}
+ \|\nabla \bar{\Pi}\|_{L^{1}([T_{0}, t]; \dot{B}_{2,1}^{1/2})} \\
\lesssim & \, \|(\bar{u}(T_{0}), \bar{B}(T_{0}))\|_{\dot{B}_{2,1}^{1/2}} + \|\bar{a} \bar{B}\nabla \bar{B}\|_{L^{1}([T_{0}, t]; \dot{B}_{2,1}^{1/2})}
+ \|\bar{a} (\Delta \bar{u} - \nabla \bar{\Pi})\|_{L^{1}([T_{0}, t]; \dot{B}_{2,1}^{1/2})}  \\
&\, + \left\|\left((\bar{u}\cdot \nabla ) \bar{u}, (\bar{B}\cdot \nabla )\bar{B}, (\bar{u}\cdot \nabla )\bar{B},
(\bar{B}\cdot \nabla )\bar{u}\right)\right\|_{L^{1}([T_{0},t]; \dot{B}_{2,1}^{1/2})}.
\end{split}
\end{align}
For any $t \geq T_{0}$, we denote
\begin{align*}
\bar{Z}(t) := & \,\|\bar{a}\|_{\tilde{L}^{\infty}([T_{0}, t]; \dot{B}_{2,1}^{3/2})} + \|(\bar{u}, \bar{B})\|_{\tilde{L}^{\infty}([T_{0}, t]; \dot{B}_{2,1}^{1/2})}
+ \|\nabla \Pi\|_{L^{1}([T_{0}, t]; \dot{B}_{2,1}^{1/2})}   \\
&\, + \|(\bar{u}, \bar{B})\|_{L^{1}([T_{0}, t]; \dot{B}_{2,1}^{5/2})}.
\end{align*}
From the product law, we get
\begin{align*}
& \|(\bar{u}\cdot \nabla)\bar{u}\|_{L^{1}([T_{0}, t]; \dot{B}_{2,1}^{1/2})} \lesssim \|\bar{u}\|_{L^{\infty}([T_{0}, t]; \dot{B}_{2,1}^{1/2})}
\|\nabla \bar{u}\|_{L^{1}([T_{0}, t]; \dot{B}_{2,1}^{3/2})},    \\
& \|(\bar{B}\cdot \nabla)\bar{B}\|_{L^{1}([T_{0}, t]; \dot{B}_{2,1}^{1/2})} \lesssim  \|\bar{B}\|_{L^{\infty}([T_{0}, t]; \dot{B}_{2,1}^{1/2})}
\|\nabla \bar{B}\|_{L^{1}([T_{0}, t]; \dot{B}_{2,1}^{3/2})},    \\
& \|\bar{a} \, (\bar{B} \cdot \nabla )\bar{B}\|_{L^{1}([T_{0}, t]; \dot{B}_{2,1}^{1/2})}  \\
& \quad\quad\quad\quad \lesssim
\|\bar{a}\|_{L^{\infty}([T_{0}, t]; \dot{B}_{2,1}^{3/2})} \|\bar{B}\|_{L^{\infty}([T_{0}, t]; \dot{B}_{2,1}^{1/2})}
\|\nabla \bar{B}\|_{L^{1}([T_{0}, t]; \dot{B}_{2,1}^{3/2})},    \\
& \|\bar{a} (\Delta \bar{u} - \nabla \bar{\Pi})\|_{L^{1}([T_{0}, t]; \dot{B}_{2,1}^{1/2})}  \\
& \quad\quad\quad\quad \lesssim
\|\bar{a}\|_{L^{\infty}([T_{0}, t]; \dot{B}_{2,1}^{3/2})} \left( \|\bar{u}\|_{L^{1}([T_{0}, t]; \dot{B}_{2,1}^{5/2})}
+ \|\nabla \bar{\Pi}\|_{L^{1}([T_{0}, t]; \dot{B}_{2,1}^{1/2})} \right)   \\
& \|(\bar{u}\cdot \nabla)\bar{B}\|_{L^{1}([T_{0}, t]; \dot{B}_{2,1}^{1/2})} \lesssim \|\bar{u}\|_{L^{\infty}([T_{0}, t]; \dot{B}_{2,1}^{1/2})}
\|\nabla \bar{B}\|_{L^{1}([T_{0}, t]; \dot{B}_{2,1}^{3/2})},    \\
& \|(\bar{B}\cdot \nabla)\bar{u}\|_{L^{1}([T_{0}, t]; \dot{B}_{2,1}^{1/2})} \lesssim \|\bar{B}\|_{L^{\infty}([T_{0}, t]; \dot{B}_{2,1}^{1/2})}
\|\nabla \bar{u}\|_{L^{1}([T_{0}, t]; \dot{B}_{2,1}^{3/2})},    \\
\end{align*}
Then we get by summing up (\ref{close to 1 transport}), (\ref{close to 1 momentum}) and the above estimates that
\begin{align*}
\bar{Z}(t) \leq \|\bar{a}(T_{0})\|_{\dot{B}_{2,1}^{3/2}} + \|(\bar{u}(T_{0}), \bar{B}(T_{0}))\|_{\dot{B}_{2,1}^{1/2}}
+ C \, \bar{Z}(t)^{2} + C \, \bar{Z}(t)^{3}.
\end{align*}
Let
\begin{align}
\label{definition_of_T}
\bar{T} := \sup_{t > T_{0}} \left\{ t \, : \, \bar{Z}(t) \leq 2\, \left( \|\bar{a}(T_{0})\|_{\dot{B}_{2,1}^{3/2}} +
\|(\bar{u}(T_{0}), \bar{B}(T_{0}))\|_{\dot{B}_{2,1}^{1/2}} \right) \right\}
\end{align}
With no loss of generality, we can assume that
\begin{align}
\|\bar{a}(T_{0})\|_{\dot{B}_{2,1}^{3/2}} + \|\bar{u}(T_{0})\|_{\dot{B}_{2,1}^{1/2}} + \|\bar{B}(T_{0})\|_{\dot{B}_{2,1}^{1/2}} \leq 1.
\end{align}
Then, if $\bar{T} < \infty$, for $T_{0} < t < \bar{T}$, we have
\begin{align*}
\bar{Z}(t) \leq \left( \|\bar{a}(T_{0})\|_{\dot{B}_{2,1}^{3/2}} + \|(\bar{u}(T_{0}),\bar{B}(T_{0}))\|_{\dot{B}_{2,1}^{1/2}} \right)
\left( 1+6C\bar{Z}(t) \right)
\end{align*}
Taking $\epsilon$ in (\ref{small}) small enough so that $\epsilon \leq \frac{1}{108 C}$, then if $c_{1}$ in Theorem \ref{close to 1 main theorem}
is so small that $c_{1} \leq \frac{1}{108 C}$, we have
\begin{align}
\label{bootstrap}
\bar{Z}(t) \leq \frac{3}{2} \left( \|\bar{a}(T_{0})\|_{\dot{B}_{2,1}^{3/2}} + \|(\bar{u}(T_{0}),\bar{B}(T_{0}))\|_{\dot{B}_{2,1}^{1/2}} \right)
\quad \text{for} \,\, T_{0} \leq t \leq \bar{T}.
\end{align}
This contradicts with the definition of (\ref{definition_of_T}), and therefore $\bar{T} = \infty$.
Moreover, there hold
\begin{align}
\label{global_estimate_small}
\begin{split}
& \|\bar{a}\|_{\tilde{L}^{\infty}(\mathbb{R}^{+}; \dot{B}_{2,1}^{3/2})} \leq 2 \, \|\bar{a}_{0}\|_{\dot{B}_{2,1}^{3/2}}
\mathrm{exp}\left\{ C \int_{0}^{T_{0}} \|\nabla \bar{u}(\tau)\|_{\dot{B}_{2,1}^{3/2}}\, d\tau \right\} + 4 \epsilon,    \\
& \|(\bar{u}, \bar{B})\|_{\tilde{L}^{\infty}(\mathbb{R}^{+}; \dot{B}_{2,1}^{1/2})} + \|(\bar{u}, \bar{B})\|_{L^{1}(\mathbb{R}^{+}; \dot{B}_{2,1}^{5/2})}
+ \|\nabla \bar{\Pi}\|_{L^{1}(\mathbb{R}^{+}; \dot{B}_{2,1}^{1/2})} \leq C.
\end{split}
\end{align}
With ($\ref{global_estimate_small}$), we can solve the global well-posedness with initial data
$(a_{0}, u_{0}, B_{0}) = (\bar{a}_{0}+\tilde{a}_{0}, \bar{u}_{0}+\tilde{u}_{0}, \bar{B}_{0}+\tilde{B}_{0})$ for
$(\tilde{a}_{0}, \tilde{u}_{0}, \tilde{B}_{0})$ sufficiently small. Let $\tilde{u} := u - \bar{u}$, $\tilde{B} := B - \bar{B}$, then
$(a, \tilde{u}, \tilde{B})$ solves
\begin{align}
\label{close to 1 perturbation}
\begin{cases}
\partial_{t}a + (\bar{u} + \tilde{u}) \nabla a = 0, \\
\partial_{t}\tilde{u} - \Delta \tilde{u} + \nabla \tilde{\Pi} = -(\bar{u} + \tilde{u})\nabla \tilde{u} - (\tilde{u}\cdot \nabla)\bar{u}
+ a \, (\Delta \tilde{u} - \nabla \tilde{\Pi})  \\
\quad\quad\quad\quad\quad\quad\quad\quad\,\, + (a-\bar{a})(\Delta \bar{u} - \nabla \bar{\Pi})
+ (a-\bar{a})(\bar{B}\cdot\nabla\bar{B})    \\
\quad\quad\quad\quad\quad\quad\quad\quad\,\, + (1+a)(\bar{B} + \tilde{B})\nabla \tilde{B}
+ (1+a)(\tilde{B}\cdot \nabla) \bar{B},  \\
\partial_{t}\tilde{B} - \Delta \tilde{B} = -(\tilde{u} + \bar{u})\nabla \tilde{B} - (\tilde{u}\cdot \nabla)\bar{B}
+ (\tilde{B}+\bar{B})\nabla \tilde{u} + (\tilde{B}\cdot \nabla)\bar{u}, \\
\mathrm{div} \tilde{u} = \mathrm{div} \tilde{B} = 0,    \\
(a, \tilde{u}, \tilde{B})|_{t=0} = (\bar{a}_{0} + \tilde{a}_{0}, \tilde{u}_{0}, \tilde{B}_{0}).
\end{cases}
\end{align}
Applying Lemma \ref{transport_estimate} to the first equation of (\ref{close to 1 perturbation}) gives
\begin{align}
\begin{split}
\|a\|_{\tilde{L}_{t}^{\infty}(\dot{B}_{2,1}^{3/2})} \leq &\, \|a_{0}\|_{\dot{B}_{2,1}^{3/2}} + C \int_{0}^{t} \|a(\tau)\|_{\dot{B}_{2,1}^{3/2}}
\|\bar{u}(\tau)\|_{\dot{B}_{2,1}^{5/2}} \, d\tau    \\
&\, + C \, \|a\|_{L_{t}^{\infty}(\dot{B}_{2,1}^{3/2})} \|\tilde{u}\|_{L_{t}^{1}(\dot{B}_{2,1}^{5/2})},
\end{split}
\end{align}
which along with (\ref{global_estimate_small}) and the Gronwall's inequality ensures
\begin{align}
\label{perturb a}
\|a\|_{\tilde{L}_{t}^{\infty}(\dot{B}_{2,1}^{3/2})} \lesssim \left( \|a_{0}\|_{\dot{B}_{2,1}^{3/2}} + \|a\|_{L_{t}^{\infty}(\dot{B}_{2,1}^{3/2})}
\|\tilde{u}\|_{L_{t}^{1}(\dot{B}_{2,1}^{5/2})} \right).
\end{align}
Reformulate equation (\ref{close to 1 perturbation}) as follows
\begin{align}
\label{var close to 1 perturbation}
\begin{cases}
\partial_{t}\tilde{u} - (\bar{B}\cdot\nabla)\tilde{B} + (\bar{u}\cdot\nabla)\tilde{u} - \Delta \tilde{u} + \nabla \tilde{\Pi} = \tilde{f}
+ a\, (\Delta \tilde{u} - \nabla \tilde{\Pi}),  \\
\partial_{t}\tilde{B} + (\bar{u}\cdot\nabla)\tilde{B} - (\bar{B}\cdot\nabla)\tilde{u} - \Delta \tilde{B} = \tilde{g},
\end{cases}
\end{align}
with
\begin{align*}
& \tilde{f} = -(\tilde{u}\cdot\nabla)\tilde{u} - (\tilde{u}\cdot\nabla)\bar{u} + (a-\bar{a})(\Delta \bar{u} - \nabla \bar{\Pi})
+ (a-\bar{a})(\bar{B}\cdot\nabla\bar{B}) \\
& \quad\quad + (\tilde{B}\cdot\nabla)\tilde{B} + (\tilde{B}\cdot\nabla)\bar{B}
+ a \,\tilde{B}\cdot\nabla\tilde{B} + a \, \bar{B}\cdot\nabla\tilde{B} + a\, \tilde{B}\cdot\nabla\bar{B}  \\
& \tilde{g} = -(\tilde{u}\cdot\nabla)\tilde{B} - (\tilde{u}\cdot\nabla)\bar{B} + (\tilde{B}\cdot\nabla)\tilde{u} + (\tilde{B}\cdot\nabla)\bar{u}.
\end{align*}
Using standard estimates, we easily obtain that
\begin{align*}
&\, \|(\tilde{u}, \tilde{B})\|_{\tilde{L}_{t}^{\infty}(\dot{B}_{2,1}^{1/2})} + \|(\tilde{u}, \tilde{B})\|_{\tilde{L}_{t}^{1}(\dot{B}_{2,1}^{5/2})}
+ \|\nabla\tilde{\Pi}\|_{L_{t}^{1}(\dot{B}_{2,1}^{1/2})} \\
\lesssim &\, \|(\tilde{u}_{0}, \tilde{B}_{0})\|_{\dot{B}_{2,1}^{1/2}} + \int_{0}^{t} \|(\tilde{u}, \tilde{B})\|_{\dot{B}_{2,1}^{1/2}}
\|(\tilde{u}, \tilde{B})\|_{\dot{B}_{2,1}^{5/2}} \, d\tau   \\
&\, + \int_{0}^{t} \|(\tilde{u}, \tilde{B})\|_{\dot{B}_{2,1}^{1/2}} \|(\bar{u}, \bar{B})\|_{\dot{B}_{2,1}^{5/2}} \, d\tau
+ \int_{0}^{t} \|(a-\bar{a})(\Delta \bar{u} - \nabla \bar{\Pi})\|_{\dot{B}_{2,1}^{1/2}} \, d\tau    \\
&\, +\int_{0}^{t} \|a \, (\Delta \tilde{u} - \nabla \tilde{\Pi})\|_{\dot{B}_{2,1}^{1/2}}\, d\tau
+ \int_{0}^{t} \|(a-\bar{a})\bar{B}\cdot\nabla\bar{B}\|_{\dot{B}_{2,1}^{1/2}}\,d\tau    \\
&\, + \int_{0}^{t}\|a\|_{\dot{B}_{2,1}^{3/2}}\|\tilde{B}\|_{\dot{B}_{2,1}^{1/2}}\|\nabla\tilde{B}\|_{\dot{B}_{2,1}^{3/2}}\,d\tau
+ \int_{0}^{t}\|a\|_{\dot{B}_{2,1}^{3/2}}\|\bar{B}\|_{\dot{B}_{2,1}^{1/2}}\|\nabla\tilde{B}\|_{\dot{B}_{2,1}^{3/2}}\,d\tau  \\
&\, + \int_{0}^{t}\|a\|_{\dot{B}_{2,1}^{3/2}}\|\tilde{B}\|_{\dot{B}_{2,1}^{1/2}}\|\nabla\bar{B}\|_{\dot{B}_{2,1}^{3/2}}\,d\tau
\end{align*}
Then Gronwall's inequality ensures that
\begin{align}
\label{perture u B}
\begin{split}
&\, \|(\tilde{u}, \tilde{B})\|_{\tilde{L}_{t}^{\infty}(\dot{B}_{2,1}^{1/2})} + \|(\tilde{u}, \tilde{B})\|_{L_{t}^{1}(\dot{B}_{2,1}^{5/2})}
+ \|\nabla \tilde{\Pi}\|_{L_{t}^{1}(\dot{B}_{2,1}^{1/2})} \\
\lesssim &\, \|(\tilde{u}_{0}, \tilde{B}_{0})\|_{\dot{B}_{2,1}^{1/2}} + \|(\tilde{u}, \tilde{B})\|_{L_{t}^{\infty}(\dot{B}_{2,1}^{1/2})}
\|(\tilde{u}, \tilde{B})\|_{L_{t}^{1}(\dot{B}_{2,1}^{5/2})} \\
&\, + \|a\|_{\tilde{L}_{t}^{\infty}(\dot{B}_{2,1}^{3/2})} \left( \|(\tilde{u},\tilde{B})\|_{L_{t}^{1}(\dot{B}_{2,1}^{5/2})}
+ \|\tilde{B}\|_{\tilde{L}_{t}^{\infty}(\dot{B}_{2,1}^{1/2})} + \|\nabla \tilde{\Pi}\|_{L_{t}^{1}(\dot{B}_{2,1}^{1/2})} \right)  \\
&\, + \|\bar{a}\|_{L_{t}^{\infty}(\dot{B}_{2,1}^{3/2})}
+ \|a\|_{\tilde{L}_{t}^{\infty}(\dot{B}_{2,1}^{3/2})}\|\tilde{B}\|_{\tilde{L}_{t}^{\infty}(\dot{B}_{2,1}^{1/2})}
\|\tilde{B}\|_{L_{t}^{1}(\dot{B}_{2,1}^{5/2})}  \\
&\, + \int_{0}^{t} \|a\|_{\dot{B}_{2,1}^{3/2}} \left( \|\bar{u}\|_{\dot{B}_{2,1}^{5/2}}
+ \|\nabla \bar{\Pi}\|_{\dot{B}_{2,1}^{1/2}}+\|\bar{B}\|_{\dot{B}_{2,1}^{1/2}}\|\nabla\bar{B}\|_{\dot{B}_{2,1}^{3/2}} \right) \, d\tau
\end{split}
\end{align}
Let
\begin{align*}
\tilde{Z}(t) := \|a\|_{\tilde{L}_{t}^{\infty}(\dot{B}_{2,1}^{3/2})} + \|(\tilde{u}, \tilde{B})\|_{\tilde{L}_{t}^{\infty}(\dot{B}_{2,1}^{1/2})}
+ \|(\tilde{u}, \tilde{B})\|_{L_{t}^{1}(\dot{B}_{2,1}^{5/2})} + \|\nabla \tilde{\Pi}\|_{L_{t}^{1}(\dot{B}_{2,1}^{1/2})}.
\end{align*}
Summing up (\ref{perturb a}) and (\ref{perture u B}), and then applying Gronwall's inequality, we obtain
\begin{align}\label{fanbo 1}
\tilde{Z}(t) \leq \, C \, \left( \|(\tilde{u}_{0}, \tilde{B}_{0})\|_{\dot{B}_{2,1}^{1/2}} + \|\tilde{a}_{0}\|_{\dot{B}_{2,1}^{3/2}}
+ \|\bar{a}\|_{L_{t}^{\infty}(\dot{B}_{2,1}^{3/2})} + \tilde{Z}(t)^{2} + \tilde{Z}(t)^{3} \right).
\end{align}
Using similar arguments used in proving (\ref{bootstrap}), we can show that if
$\|(\tilde{u}_{0}, \tilde{B}_{0})\|_{\dot{B}_{2,1}^{1/2}} + \|\tilde{a}_{0}\|_{\dot{B}_{2,1}^{3/2}} + 2 c_{1} + 4 \epsilon \leq \frac{1}{12 \, C^{2} }$,
there holds
\begin{align}
\label{small global}
\tilde{Z}(t) \leq \, 2\, C \, \left( \|(\tilde{u}_{0}, \tilde{B}_{0})\|_{\dot{B}_{2,1}^{1/2}} + \|\tilde{a}_{0}\|_{\dot{B}_{2,1}^{3/2}}
+ \|\bar{a}\|_{\tilde{L}_{t}^{\infty}(\dot{B}_{2,1}^{3/2})} \right)\quad \text{for all } t>0.
\end{align}
With (\ref{small global}), we can prove the propagation of regularity for smoother initial data.
Applying Lemma \ref{transport_estimate}, we obtain for any $t>0$,
\begin{align*}
\|a\|_{\tilde{L}_{t}^{\infty}(\dot{B}_{2,1}^{5/2})} \leq \|a_{0}\|_{\dot{B}_{2,1}^{5/2}} + C \int_{0}^{t} \|a(\tau)\|_{\dot{B}_{2,1}^{5/2}}
\|\nabla u(\tau)\|_{\dot{B}_{2,1}^{3/2}}\, d\tau.
\end{align*}
This along with (\ref{global_estimate_small}) and (\ref{small global}) implies
\begin{align}
\label{part 1 a}
\begin{split}
\|a\|_{\tilde{L}_{t}^{\infty}(\dot{B}_{2,1}^{5/2})} \leq \,C \, \|a_{0}\|_{\dot{B}_{2,1}^{5/2}}.
\end{split}
\end{align}
Noticing that $\|a(t)\|_{L^{2}} = \|a_{0}\|_{L^{2}}$, this along with the above inequality shows that
$a \in \tilde{L}^{\infty}(\mathbb{R}^{+}; B_{2,1}^{5/2}(\mathbb{R}^{3}))$.
Applying standard energy estimate to the second and third equation of (\ref{mhd_a}) gives
\begin{align*}
&\, \|(u,B)\|_{\tilde{L}_{t}^{\infty}(\dot{B}_{2,1}^{s})} + \|(u, B)\|_{L_{t}^{1}(\dot{B}_{2,1}^{s+2})} + \|\nabla \Pi\|_{L_{t}^{1}(\dot{B}_{2,1}^{s})} \\
\leq &\, \|(u_{0}, B_{0})\|_{\dot{B}_{2,1}^{s}} + \|a\|_{L_{t}^{\infty}(\dot{B}_{2,1}^{3/2})}\left( \|u\|_{L_{t}^{1}(\dot{B}_{2,1}^{s+2})}
+ \|\nabla \Pi\|_{L_{t}^{1}(\dot{B}_{2,1}^{s})} \right) \\
&\, + C\, \left(1+\|a\|_{L_{t}^{\infty}(\dot{B}_{2,1}^{3/2})}\right) \int_{0}^{t} \|(\nabla u(\tau), \nabla B(\tau))\|_{L^{\infty}}
\|(u(\tau), B(\tau))\|_{\dot{B}_{2,1}^{s}} \, d\tau     \\
&\, + \int_{0}^{t} \|(\nabla u, \nabla B)\|_{\dot{B}_{2,1}^{3/2}} \|B\|_{\dot{B}_{2,1}^{s}}\, d\tau,
\end{align*}
for $s \in [0,\frac{3}{2}]$ and $t>0$, this together with (\ref{global_estimate_small}) and (\ref{small global}) gives rise to
\begin{align}
\label{part 1 u B}
\|(u, B)\|_{\tilde{L}_{t}^{\infty}(\dot{B}_{2,1}^{s})} + \|(u, B)\|_{L_{t}^{1}(\dot{B}_{2,1}^{s+2})} + \|\nabla \Pi\|_{L_{t}^{1}(\dot{B}_{2,1}^{s})} \leq C
\quad \text{for all } t>0.
\end{align}
With (\ref{part 1 a}) and (\ref{part 1 u B}), we can easily prove by a classical argument that
\begin{align*}
& a\in C_{b}([0, \infty); B_{2,1}^{5/2}),      \\
& u,B \in C_{b}([0, \infty) ; B_{2,1}^{3/2}).
\end{align*}
Now, the proof of Theorem \ref{close to 1 main theorem} is completed.
\end{proof}


\section{Decay in Time Estimates for the Reference Solutions}

In this section, we will give the decay estimates, namely Theorem \ref{decay_main_theorem}. The main ingredient of the proof will be H. Abidi, G. Gui
and P. Zhang's approach in \cite{zhangping}. The difference is that we need to give the decay estimates for more higher order derivatives of momentum and
magnetic fields, which is required for the global in time estimates proved in the next section.

In what follows, we shall always denote $\rho(t,x) := \frac{1}{1+a(t,x)}$ so that we can use both (\ref{mhd_a}) and (\ref{mhd_rho}) just
according to our convenience. In order to make our presentation clearly, we divided the proof of Theorem \ref{decay_main_theorem} into the following
propositions:

\begin{proposition}
\label{decay pro 1}
Under the same assumptions of Theorem \ref{decay_main_theorem}, there exists $t_{0} > 0$ and two positive constants $e_{1}$ and $e_{2}$ such that
there holds
\begin{align}
\begin{split}
\frac{d}{dt} \|(\nabla u(t), \nabla B(t))\|_{L^{2}}^{2}
&\, + e_{1}\|(\partial_{t} u(t), \partial_{t} B(t))\|_{L^{2}}^{2}   \\
& \, + e_{2} \|(\nabla^{2}u(t), \nabla^{2}B(t))\|_{L^{2}}^{2} \leq 0 \quad \text{for all } t\geq t_{0},
\end{split}
\end{align}
or consequently
\begin{align}
\label{4.2}
\begin{split}
& \sup_{t\geq t_{0}}\|(\nabla u(t), \nabla B(t))\|_{L^{2}}^{2}
 + \int_{t_{0}}^{\infty}e_{1}\|(\partial_{t}u(t), \partial_{t}B(t))\|_{L^{2}}^{2} \, d\tau   \\
& \quad\quad\quad\quad\quad
+ \int_{t_{0}}^{\infty}e_{2}\|(\nabla^{2}u(t), \nabla^{2}B(t))\|_{L^{2}}^{2} \, d\tau \leq \|(\nabla u(t_{0}), \nabla B(t_{0}))\|_{L^{2}}^{2}.
\end{split}
\end{align}
\end{proposition}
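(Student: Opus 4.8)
The plan is to obtain the stated differential inequality from an $H^1$-level energy estimate, produced by testing the momentum and magnetic equations of (\ref{mhd_rho}) against the time derivatives $\partial_t u$ and $\partial_t B$, and then to convert the time-derivative dissipation into second-order dissipation through an elliptic (Stokes) estimate. The choice of the starting time $t_0$ and, above all, the absorption of the nonlinear terms into the dissipation are the core of the argument. First I would take the $L^2$ inner product of $\rho\partial_t u+\rho u\cdot\nabla u-\Delta u-(B\cdot\nabla)B+\nabla\Pi=0$ with $\partial_t u$ and of the $B$-equation with $\partial_t B$. Using $\mathrm{div}\,u=\mathrm{div}\,B=0$ (so the pressure term drops and $-\langle\Delta u,\partial_t u\rangle=\tfrac12\tfrac{d}{dt}\|\nabla u\|_{L^2}^2$) and summing gives
\begin{align*}
\tfrac12\tfrac{d}{dt}\big(\|\nabla u\|_{L^2}^2+\|\nabla B\|_{L^2}^2\big)+\|\sqrt{\rho}\,\partial_t u\|_{L^2}^2+\|\partial_t B\|_{L^2}^2 = N,
\end{align*}
where $N$ collects the four convective and Lorentz terms $-\langle\rho u\cdot\nabla u,\partial_t u\rangle+\langle B\cdot\nabla B,\partial_t u\rangle-\langle u\cdot\nabla B,\partial_t B\rangle+\langle B\cdot\nabla u,\partial_t B\rangle$; by (\ref{low}) we have $1/M\le\rho\le1/m$, so the first dissipation term bounds $\tfrac1M\|\partial_t u\|_{L^2}^2$ from below.

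Next I would estimate each piece of $N$ by Cauchy--Schwarz and the three-dimensional Gagliardo--Nirenberg inequalities $\|w\|_{L^\infty}\lesssim\|\nabla w\|_{L^2}^{1/2}\|\nabla^2 w\|_{L^2}^{1/2}$ and $\|w\|_{L^6}\lesssim\|\nabla w\|_{L^2}$. For example $|\langle\rho u\cdot\nabla u,\partial_t u\rangle|\lesssim\|u\|_{L^\infty}\|\nabla u\|_{L^2}\|\partial_t u\|_{L^2}\lesssim\|\nabla u\|_{L^2}^{3/2}\|\nabla^2 u\|_{L^2}^{1/2}\|\partial_t u\|_{L^2}$, and the coupling terms are treated identically with $u$ and $B$ mixed. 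After Young's inequality this gives $N\le\tfrac14\|(\partial_t u,\partial_t B)\|_{L^2}^2+\tfrac{\epsilon}{4}\|(\nabla^2 u,\nabla^2 B)\|_{L^2}^2+C\big(\|\nabla u\|_{L^2}^6+\|\nabla B\|_{L^2}^6\big)$. The sixth-order remainder is the delicate point: invoking the interpolation $\|\nabla w\|_{L^2}^2\le\|w\|_{L^2}\|\nabla^2 w\|_{L^2}$ I would rewrite $\|\nabla u\|_{L^2}^6\le\|\nabla u\|_{L^2}^2\,\|u\|_{L^2}^2\,\|\nabla^2 u\|_{L^2}^2$, so that this term also carries the small factor $\|\nabla u\|_{L^2}^2\|u\|_{L^2}^2$ and can be absorbed once $\|\nabla u\|_{L^2}$ is small and $\|u\|_{L^2}$ is bounded.

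To produce the $\|(\nabla^2 u,\nabla^2 B)\|_{L^2}^2$ dissipation on the left I would view the momentum equation as a stationary Stokes system $-\Delta u+\nabla\Pi=-\rho\partial_t u-\rho u\cdot\nabla u+B\cdot\nabla B$ with $\mathrm{div}\,u=0$; standard elliptic regularity, together with the variable-coefficient pressure bound of Lemma \ref{pressure_estimate}, yields $\|\nabla^2 u\|_{L^2}\lesssim\|\partial_t u\|_{L^2}+\|u\cdot\nabla u\|_{L^2}+\|B\cdot\nabla B\|_{L^2}$, and similarly $\|\nabla^2 B\|_{L^2}\lesssim\|\partial_t B\|_{L^2}+\|u\cdot\nabla B\|_{L^2}+\|B\cdot\nabla u\|_{L^2}$. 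This lets me insert a small multiple $e_2\|(\nabla^2 u,\nabla^2 B)\|_{L^2}^2$ into the left by spending part of the $\|(\partial_t u,\partial_t B)\|_{L^2}^2$ dissipation, the quadratic remainders being of the same small type handled above. For the choice of $t_0$ I would use the basic energy equality (\ref{basic_energy_equality}), which gives $\int_0^\infty\|(\nabla u,\nabla B)\|_{L^2}^2\,dt<\infty$ together with boundedness of $\|(u,B)\|_{L^2}$; hence there is a time $t_0$ at which $\|(\nabla u(t_0),\nabla B(t_0))\|_{L^2}$ is as small as desired. Choosing $e_1,e_2$ and the smallness threshold consistently, every nonlinear contribution then carries a factor of $\|(\nabla u,\nabla B)\|_{L^2}^2$ or $\|(u,B)\|_{L^2}^2$ and is absorbed into $\tfrac{e_1}{2}\|(\partial_t u,\partial_t B)\|_{L^2}^2+\tfrac{e_2}{2}\|(\nabla^2 u,\nabla^2 B)\|_{L^2}^2$, giving the pointwise inequality for $t\ge t_0$; a short continuity argument shows the smallness of $\|(\nabla u,\nabla B)\|_{L^2}$ propagates from $t_0$ onward, since the inequality itself forces $\tfrac{d}{dt}\|(\nabla u,\nabla B)\|_{L^2}^2\le0$. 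Integrating from $t_0$ then yields (\ref{4.2}).

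The principal obstacle is achieving a genuine right-hand side of $0$, i.e.\ absorbing the nonlinearities into the dissipation with no leftover positive term. Unlike the homogeneous Navier--Stokes case, the Lorentz coupling $B\cdot\nabla B$ and the induction coupling $B\cdot\nabla u$ require tracking $\|\nabla u\|_{L^2}$ and $\|\nabla B\|_{L^2}$ together; the resulting sixth-order remainders are tamed only through the extra interpolation $\|\nabla w\|_{L^2}^2\le\|w\|_{L^2}\|\nabla^2 w\|_{L^2}$ and the smallness of the low-order norms for $t\ge t_0$, and the bootstrap keeping these norms small must be closed in tandem with the choice of $e_1,e_2$. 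The non-constant density $\rho$, bounded only above and below, further forces the elliptic step to rely on the variable-coefficient estimate of Lemma \ref{pressure_estimate} rather than the constant-coefficient Stokes bound.
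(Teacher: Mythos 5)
Your proposal is correct and follows essentially the same route as the paper: the paper likewise combines the $\partial_t$-energy estimate (its inequality (\ref{decay 1 second})) with second-order dissipation recovered from the equation itself — it tests with $\tfrac{1}{\rho}\Delta u$ and $\Delta B$ and uses the orthogonality $\langle\Delta u,\nabla\Pi\rangle_{L^{2}}=0$ to get $\|\Delta u\|_{L^{2}}+\|\nabla\Pi\|_{L^{2}}\leq\sqrt{2}\,\|\rho\partial_{t}u+\rho u\cdot\nabla u-B\cdot\nabla B\|_{L^{2}}$, which is exactly your Stokes step — and then absorbs the nonlinearities via the same interpolation inequalities, the smallness of $\|(\nabla u(t_{0}),\nabla B(t_{0}))\|_{L^{2}}$ supplied by the energy equality (\ref{basic_energy_equality}), and the same $\tau^{*}$-continuity/bootstrap argument. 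One minor correction: your elliptic step does not require the variable-coefficient bound of Lemma \ref{pressure_estimate}, since in the formulation (\ref{mhd_rho}) the Stokes system is constant-coefficient ($\rho$ enters only through the right-hand side), which is precisely why the paper's simple orthogonality trick suffices.
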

\begin{proof}
Taking $L^{2}$ inner product of the momentum equation of (\ref{mhd_rho}) with $\frac{1}{\rho}\Delta u$ that
\begin{align}
\label{decay 1 momentum}
\begin{split}
\frac{1}{2}\frac{d}{dt}\|\nabla u(t)\|_{L^{2}}^{2} + \|\frac{1}{\sqrt{\rho}}\Delta u(t)\|_{L^{2}}^{2}
= &\, \int_{\mathbb{R}^{3}} (u\cdot \nabla)u \, \Delta u \, dx  \\
+ &\, \int_{\mathbb{R}^{3}}\frac{1}{\rho}\,\nabla \Pi \, \Delta u \, dx - \int_{\mathbb{R}^{3}}\frac{1}{\rho}(B\cdot \nabla)B \, \Delta u \, dx.
\end{split}
\end{align}
Taking $L^{2}$ inner product of the magnetic fields equation of (\ref{mhd_rho}) with $\Delta B$ that
\begin{align}
\label{decay 1 magnetic}
\begin{split}
\frac{1}{2}\frac{d}{dt}\|\nabla B(t)\|_{L^{2}}^{2} + \|\Delta B(t)\|_{L^{2}}^{2} = \int_{\mathbb{R}^{3}} (u\cdot \nabla)B \, \Delta B \, dx
- \int_{\mathbb{R}^{3}} (B\cdot \nabla)u \, \Delta B \, dx.
\end{split}
\end{align}
Thanks to the equation of (\ref{mhd_rho}) and $\mathrm{div} u = 0$, we have
\begin{align}
\label{decay 1 pressure}
\begin{split}
&\, \|\Delta u(t)\|_{L^{2}} + \|\nabla \Pi\|_{L^{2}}  \\
\leq &\, \sqrt{2} \|\Delta u(t) - \nabla \Pi(t)\|_{L^{2}}   \\
\leq &\, \sqrt{2} \|\rho \partial_{t}u + \rho u \cdot \nabla u - B\cdot \nabla B\|_{L^{2}}  \\
\leq &\, C\, \|\sqrt{\rho} \partial_{t}u\|_{L^{2}} + C\, \|u(t)\|_{L^{2}}^{1/2}\|\nabla u(t)\|_{L^{2}}^{1/2} \|\Delta u(t)\|_{L^{2}} \\
&\, \quad\quad\quad + C \, \|B(t)\|_{L^{2}}^{1/2}\|\nabla B(t)\|_{L^{2}}^{1/2} \|\Delta B(t)\|_{L^{2}}.
\end{split}
\end{align}
Combining (\ref{decay 1 momentum}), (\ref{decay 1 magnetic}) and (\ref{decay 1 pressure}), we obtain
\begin{align}
\label{decay 1 first}
\begin{split}
&\, \frac{d}{dt}\|(\nabla u(t), \nabla B(t))\|_{L^{2}}^{2} + c\, \|(\nabla^{2}u(t), \nabla^{2}B(t))\|_{L^{2}}^{2}   \\
\leq &\, C\, \|(u(t), B(t))\|_{L^{2}}^{1/2} \|(\nabla u(t), \nabla B(t))\|_{L^{2}}^{1/2} \|(\nabla^{2}u(t), \nabla^{2}B(t))\|_{L^{2}}^{2}   \\
&\,  + C\, \|\sqrt{\rho}\partial_{t}u(t)\|_{L^{2}}^{2},
\end{split}
\end{align}
for some positive constant $c$.

On the other hand, taking $L^{2}$ inner product of the momentum equations in (\ref{mhd_rho}) with $\partial_{t}u$, we obtain
\begin{align*}
\begin{split}
\|\sqrt{\rho} \partial_{t}u(t)\|_{L^{2}}^{2} + \frac{1}{2}\frac{d}{dt} \|\nabla u(t)\|_{L^{2}}^{2}
= - \int_{\mathbb{R}^{3}} \rho \, u \cdot \nabla u \, \partial_{t}u \, dx + \int_{\mathbb{R}^{3}} B\cdot \nabla B \, \partial_{t} u\, dx.
\end{split}
\end{align*}
Taking $L^{2}$ inner product of the magnetic field equations in (\ref{mhd_rho}) with $\partial_{t}B$, we have
\begin{align*}
\begin{split}
\|\partial_{t}B(t)\|_{L^{2}}^{2} + \frac{1}{2}\frac{d}{dt}\|\nabla B(t)\|_{L^{2}}^{2} =
-\int_{\mathbb{R}^{3}} u\cdot \nabla B \, \partial_{t}B\, dx + \int_{\mathbb{R}^{3}} B\cdot \nabla u \, \partial_{t}B \, dx.
\end{split}
\end{align*}
The above two equalities gives
\begin{align}
\label{decay 1 second}
\begin{split}
\frac{d}{dt}\|(\nabla u(t), \nabla B(t))\|_{L^{2}}^{2} &\, + \|(\sqrt{\rho} \partial_{t}u(t), \partial_{t}B(t))\|_{L^{2}}^{2}   \\
&\, \leq C \, \|(u, B)\|_{L^{2}} \|(\nabla u, \nabla B)\|_{L^{2}} \|(\nabla^{2}u, \nabla^{2}B)\|_{L^{2}}^{2}.
\end{split}
\end{align}
The above inequalities along with (\ref{decay 1 first}) ensures a positive constant $e_{1}$ such that
\begin{align}
\label{decay 1 last}
\begin{split}
 \frac{d}{dt}\|(\nabla u, \nabla B)\|_{L^{2}}^{2} + e_{1}\|(\partial_{t}u, \partial_{t}B)\|_{L^{2}}^{2}
 + \mathcal{A}\|(\nabla^{2}u, \nabla^{2}B)\|_{L^{2}}^{2} \leq 0.
\end{split}
\end{align}
where
\begin{align*}
\mathcal{A}:= \frac{c}{2\, C} - \frac{1}{2}\|(u, B)\|_{L^{2}}^{1/2}\|(\nabla u, \nabla B)\|_{L^{2}}^{1/2}
- C \, \|(u, B)\|_{L^{2}}\|(\nabla u, \nabla B)\|_{L^{2}}
\end{align*}
By (\ref{basic_energy_equality}), for any $\eta > 0$, there exists $t_{0} = t_{0}(\eta) > 0$ such that
\begin{align*}
\|\nabla u(t_{0})\|_{L^{2}} + \|\nabla B(t_{0})\|_{L^{2}} \leq \eta.
\end{align*}
Now choosing $\eta > 0$ small enough such that
\begin{align}
\label{eta}
\eta^{1/2} \|(u_{0}, B_{0})\|_{L^{2}}^{1/2}\left(1 + \|(u_{0}, B_{0})\|_{L^{2}}^{1/2} \eta^{1/2} \right) \leq \frac{c}{16\, C^{2}},
\end{align}
we define
\begin{align}
\tau^{*}:=\sup\left\{ t \geq t_{0} \, : \, \|(\nabla u(t), \nabla B(t))\|_{L^{2}} \leq 2 \eta \right\}.
\end{align}
We claim that $\tau^{*} = \infty$. Indeed, if $\tau^{*} < \infty$, (\ref{decay 1 last}) and (\ref{eta}) imply that
\begin{align*}
\begin{split}
\frac{d}{dt}\|(\nabla u, \nabla B)\|_{L^{2}}^{2} & + e_{1} \|(\partial_{t}u, \partial_{t}B)\|_{L^{2}}^{2}     \\
& + \frac{c}{4\,C} \|(\nabla^{2}u, \nabla^{2}B)\|_{L^{2}}^{2} \leq 0 \quad \text{for } t \in [t_{0}, \tau^{*}],
\end{split}
\end{align*}
which gives
\begin{align*}
\|(\nabla u, \nabla B)\|_{L^{2}}^{2} + \int_{t_{0}}^{\tau^{*}} e_{1} \|(\partial_{t}u, \partial_{t}B)\|_{L^{2}}^{2} \, d\tau
& + \int_{t_{0}}^{\tau^{*}} \frac{c}{4\, C} \|(\nabla^{2}u, \nabla^{2}B)\|_{L^{2}}^{2}\, d\tau    \\
& \leq \|(\nabla u(t_{0}), \nabla B(t_{0}))\|_{L^{2}}^{2} \leq \eta^{2}.
\end{align*}
This is a contradiction to the definition of $\tau^{*}$, and thus $\tau^{*} = \infty$. Then the proof is completed.
\end{proof}

\begin{proposition}
\label{decay pro 2}
Under the assumptions of Theorem \ref{decay_main_theorem}, there holds $u,B \in C([0, \infty); L^{p}(\mathbb{R}^{3}))$ where
$p \in (1,\frac{6}{5})$.
\end{proposition}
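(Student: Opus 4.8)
The plan is to write each of the velocity and magnetic field equations in mild (Duhamel) form driven by the heat semigroup $e^{t\Delta}$, and to propagate the $L^p$ integrability of the data by means of the $L^q$--$L^p$ smoothing estimates of the heat flow. The inputs already available are the embeddings $B_{2,1}^{2}\hookrightarrow L^{2}\cap L^{\infty}$ and $B_{2,1}^{5/2}\hookrightarrow W^{1,\infty}$, which yield $u,B\in L^{\infty}_{\mathrm{loc}}([0,\infty);L^{2}\cap L^{\infty})$ and $a,\nabla a\in L^{\infty}([0,\infty);L^{2}\cap L^{\infty})$, the bound $\nabla u\in L^{\infty}_{\mathrm{loc}}([0,\infty);L^{2}\cap L^{6})$ coming from $\nabla u\in B_{2,1}^{1}\hookrightarrow L^{6}$, and $\nabla\Pi\in L^{\infty}_{\mathrm{loc}}([0,\infty);L^{2})$, which follows from (\ref{decay 1 pressure}) and Lemma \ref{pressure_estimate}. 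By interpolation $u,B\in L^{\infty}_{\mathrm{loc}}(L^{r})$ for every $r\in[2,\infty]$; in particular $u,B\in L^{2p}$, so the quadratic expressions $u\otimes u$, $B\otimes B$, $u\otimes B$ all lie in $L^{\infty}_{\mathrm{loc}}$ in time with values in $L^{p}$. Since $1<p<6/5<\infty$, the Leray projector $\mathbb{P}$ is bounded on $L^{p}$, which I use freely.

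I treat the magnetic field first, its equation being a genuine heat equation. Using $\mathrm{div}\,u=\mathrm{div}\,B=0$ I write $(u\cdot\nabla)B-(B\cdot\nabla)u=\mathrm{div}(u\otimes B-B\otimes u)$, so that
\begin{align*}
B(t)=e^{t\Delta}B_0+\int_0^t e^{(t-s)\Delta}\,\mathrm{div}\big(u\otimes B-B\otimes u\big)(s)\,ds.
\end{align*}
Provided $B_0\in L^{p}$, the first term is strongly continuous into $L^{p}$ and equals $B_0$ at $t=0$. For the integral I invoke $\|e^{(t-s)\Delta}\mathrm{div}\,F\|_{L^{p}}\lesssim(t-s)^{-1/2}\|F\|_{L^{p}}$ together with $\|u\otimes B\|_{L^{p}}\le\|u\|_{L^{2p}}\|B\|_{L^{2p}}$; the weight $(t-s)^{-1/2}$ is integrable on bounded intervals, so the Duhamel integral converges absolutely and, by dominated convergence, is continuous in $t$ with values in $L^{p}$. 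Hence $B\in C([0,\infty);L^{p})$.

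For the velocity I apply $\mathbb{P}$ to the momentum equation of (\ref{mhd_a}), so that the full gradient $\nabla\Pi$ drops out and
\begin{align*}
\partial_t u-\Delta u=\mathbb{P}\big[-\mathrm{div}(u\otimes u)+a\Delta u+(1+a)(B\cdot\nabla)B-a\nabla\Pi\big].
\end{align*}
To make the variable-coefficient and Lorentz terms amenable to the semigroup estimates I split them as $a\Delta u=\mathrm{div}(a\nabla u)-\nabla a\cdot\nabla u$ and $(1+a)(B\cdot\nabla)B=\mathrm{div}((1+a)B\otimes B)-\nabla a\cdot(B\otimes B)$. Every divergence-form tensor ($u\otimes u$, $a\nabla u$, $(1+a)B\otimes B$) is bounded in $L^{p}$ by Hölder, using $a\in L^{\infty}$ and, where a gradient appears, $\|a\nabla u\|_{L^{p}}\le\|a\|_{L^{r}}\|\nabla u\|_{L^{2}}$ with $\tfrac1r=\tfrac1p-\tfrac12\in(\tfrac13,\tfrac12)$, so $r\in(2,3)$ and $a\in L^{r}$; these are handled with the $(t-s)^{-1/2}$ smoothing. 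Every lower-order term ($\nabla a\cdot\nabla u$, $\nabla a\cdot(B\otimes B)$, $a\nabla\Pi$) is estimated directly in $L^{p}$ by the same bookkeeping together with $\nabla\Pi\in L^{2}$, and fed into the semigroup without smoothing. All the resulting time integrals converge on bounded intervals, so $u$ is given by an absolutely convergent Duhamel formula and $u\in C([0,\infty);L^{p})$ exactly as for $B$.

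The main obstacle is the variable-density structure of the momentum equation: unlike the constant-density case, the principal part is $(1+a)\Delta u$, so $a\Delta u$ cannot be absorbed into the semigroup and must instead be rewritten as $\mathrm{div}(a\nabla u)-\nabla a\cdot\nabla u$ and controlled through the $H^{2}$-regularity of $u$; likewise the pressure cannot be eliminated completely, leaving the remainder $a\nabla\Pi$ whose $L^{p}$-bound rests on the $L^{2}$-control of $\nabla\Pi$ furnished by Lemma \ref{pressure_estimate}. These two couplings are exactly what distinguishes the present $L^{p}$-persistence from the homogeneous setting.
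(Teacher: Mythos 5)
Your proposal is correct in substance, but it takes a genuinely different route from the paper's own proof. The paper proves Proposition \ref{decay pro 2} by an $L^{p}$ energy method: it multiplies the component equations by $|u^{i}|^{p-1}\mathrm{sgn}(u^{i})$ (resp. $|B^{i}|^{p-1}\mathrm{sgn}(B^{i})$), integrates, and arrives at the a priori inequality (\ref{L p estimate}); the pressure is controlled in $L^{p}$ through the elliptic equation $\Delta \Pi = \mathrm{div}\left( -u\cdot\nabla u + (1+a)B\cdot\nabla B + a(\Delta u - \nabla\Pi)\right)$ and Calder\'on--Zygmund theory, with $\|\Delta u - \nabla\Pi\|_{L^{2}_{t}(L^{2})}$ coming from the $\partial_{t}u$ bound (\ref{decay 2 partial u to t}); continuity in time is then deduced via Aubin--Lions. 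You instead work with the mild (Duhamel) formulation: the Leray projector removes $\nabla\Pi$ (leaving $a\nabla\Pi$ as a genuine source), the variable-coefficient terms are rewritten in divergence form plus lower-order pieces, and the $(t-s)^{-1/2}$ heat smoothing closes every estimate, with time continuity coming for free from strong continuity of $e^{t\Delta}$ on $L^{p}$. Both arguments are valid; yours is more modular and avoids the compactness lemma, while the paper's computation has a payoff worth noting: the inequality (\ref{L p estimate}) it produces is reused verbatim in the proof of Proposition \ref{global reference 2} to bound $\|\bar{u}\|_{L^{\infty}(\mathbb{R}^{+};L^{p})}$, so the energy proof does double duty. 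Two minor imprecisions in your write-up, neither fatal: (i) what is cheaply available for the pressure is $\nabla\Pi \in L^{2}_{\mathrm{loc}}(\mathbb{R}^{+};L^{2})$ rather than $L^{\infty}_{\mathrm{loc}}$ --- (\ref{decay 1 pressure}) controls $\|\nabla\Pi(t)\|_{L^{2}}$ by $\|\partial_{t}u(t)\|_{L^{2}}$, which is only square-integrable in time by (\ref{decay 2 partial u to t}); this weaker bound still makes all your Duhamel integrals converge, and the stronger statement would need a separate Lax--Milgram argument on the elliptic equation for $\Pi$; (ii) like the paper's own proof, you need $B_{0}\in L^{p}$, which the statement of Theorem \ref{decay_main_theorem} records only for $u_{0}$ but is implicitly assumed for $B_{0}$ as well.
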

\begin{proof}
Multiplying the $u^{i}$ equation in (\ref{mhd_rho}) by $|u^{i}|^{p-1}\mathrm{sgn}(u^{i})$ for $i=1,2,3$ and integrating the resulting equation
over $\mathbb{R}^{3}$, we obtain that
\begin{align*}
\begin{split}
\frac{1}{p}\frac{d}{dt}\int_{\mathbb{R}^{3}}\rho |u^{i}|^{p} \, dx & + \frac{4\, (p-1)}{p^{2}} \int_{\mathbb{R}^{3}} |\nabla |u^{i}|^{p/2}|^{2} \,dx  \\
& \lesssim \int_{\mathbb{R}^{3}}B\cdot\nabla B^{i} \, |u^{i}|^{p-1}\mathrm{sgn}(u^{i})\, dx + \|\nabla \Pi\|_{L^{p}}\|u^{i}\|_{L^{p}}^{p-1}.
\end{split}
\end{align*}
Using similar ideas, we have
\begin{align*}
\begin{split}
& \frac{1}{p}\frac{d}{dt}\int_{\mathbb{R}^{3}}|B^{i}|^{p}\, dx + \frac{4\, (p-1)}{p^{2}} \int_{\mathbb{R}^{3}}|\nabla |B^{i}|^{p/2}|^{2}\, dx \\
= & - \int_{\mathbb{R}^{3}} u\cdot \nabla B^{i} \, |B^{i}|^{p-1} \mathrm{sgn}(B^{i})\, dx +
\int_{\mathbb{R}^{3}}B\cdot \nabla u^{i}\, |B^{i}|^{p-1} \mathrm{sgn}(B^{i})\, dx
\end{split}
\end{align*}
Using H\'{o}lder's inequalities, we obtain
\begin{align*}
\begin{split}
\frac{d}{dt} \|\rho^{1/p}u\|_{L^{p}}^{p} & + \frac{4\, (p-1)}{p} \int_{\mathbb{R}^{3}}|\nabla |u|^{p/2}|^{2} \, dx    \\
& \lesssim \|B\|_{\frac{2p}{2-p}} \|\nabla B\|_{L^{2}} \|u\|_{L^{p}}^{p-1} + \|\nabla \Pi\|_{L^{p}} \|u\|_{L^{p}}^{p-1}.
\end{split}
\end{align*}
Considering $p \in (1,\frac{6}{5})$, we can easily obtain that
\begin{align}
\label{decay 2 u}
\begin{split}
\frac{d}{dt} \|\rho^{1/p}u\|_{L^{p}}^{p} + & \frac{4\, (p-1)}{p} \int_{\mathbb{R}^{3}}|\nabla |u|^{p/2}|^{2}\, dx     \\
& \lesssim \left( \|B\|_{L^{2}\cap L^{3}}\|\nabla B\|_{L^{2}} + \|\nabla \Pi\|_{L^{p}} \right)\|u\|_{L^{p}}^{p-1}.
\end{split}
\end{align}
Also by simple calculation, we have
\begin{align}
\label{decay 2 b}
\begin{split}
\frac{d}{dt} \|B\|_{L^{p}}^{p} & + \frac{4\, (p-1)}{p} \int_{\mathbb{R}^{3}} |\nabla |B|^{p/2}|^{2} \, dx   \\
& \lesssim \|u\|_{L^{2}\cap L^{3}} \|\nabla B\|_{L^{2}} \|B\|_{L^{p}}^{p-1} + \|B\|_{L^{2}\cap L^{3}} \|\nabla u\|_{L^{2}}\|B\|_{L^{p}}^{p-1}.
\end{split}
\end{align}
Summing up (\ref{decay 2 u}) and (\ref{decay 2 b}), we easily obtain that
\begin{align}
\label{L p estimate}
\begin{split}
\|(u, B)\|_{L_{t}^{\infty}(L^{p})} \lesssim &\, \|(u_{0}, B_{0})\|_{L^{p}} + \|\nabla \Pi\|_{L_{t}^{1}(L^{p})}
+ \int_{0}^{t}\|B\|_{L^{2}\cap L^{3}}\|\nabla u\|_{L^{2}}\, d\tau   \\
&\, + \int_{0}^{t}\left( \|B\|_{L^{2}\cap L^{3}} + \|u\|_{L^{2}\cap L^{3}} \right) \|\nabla B\|_{L^{2}} \, d\tau
\end{split}
\end{align}
On the other hand, applying the operator $\mathrm{div}$ to the first equation in (\ref{mhd_a}), we get
\begin{align*}
\Delta \Pi = \mathrm{div} \left( -u\cdot \nabla u + (1+a)B\cdot \nabla B + a\, (\Delta u - \nabla \Pi) \right),
\end{align*}
which together with the classical elliptic estimates implies
\begin{align}
\label{decay 2 pressure}
\begin{split}
\|\nabla \Pi\|_{L^{p}} & \lesssim \|u\cdot \nabla u\|_{L^{p}} + \|a\, (\Delta u - \nabla \Pi)\|_{L^{p}}
+ \|(1+a)B\cdot \nabla B\|_{L^{p}}  \\
& \lesssim \|u\|_{L^{2}\cap L^{3}}\|\nabla u\|_{L^{2}} + \|a\|_{L^{2}\cap L^{3}}\|\Delta u - \nabla \Pi\|_{L^{2}}   \\
& \quad\,\,\, + \left( 1 + \|a\|_{L^{\infty}} \right)\|B\|_{L^{2}\cap L^{3}} \|\nabla B\|_{L^{2}}.
\end{split}
\end{align}
Due to (\ref{decay 1 second}), we have
\begin{align}
\label{decay 2 partial u to t}
\begin{split}
& \|(\partial_{t}u, \partial_{t}B)\|_{L_{t}^{2}(L^{2})}^{2}   \\
\lesssim &\, \|(\nabla u_{0}, \nabla B_{0})\|_{L^{2}}^{2}   \\
&\,\,\,\,\,\, +\|(u, B)\|_{L_{t}^{\infty}(L^{2})}\|(\nabla u, \nabla B)\|_{L_{t}^{\infty}(L^{2})}\|(\nabla^{2}u, \nabla^{2}B)\|_{L_{t}^{2}(L^{2})}^{2}  \\
\leq &\, C.
\end{split}
\end{align}
which together with (\ref{decay 2 pressure}) gives rise to
\begin{align}
\begin{split}
\|\Delta u - \nabla \Pi\|_{L_{t}^{2}(L^{2})}^{2} \lesssim &\, \|\partial_{t}u\|_{L_{t}^{2}(L^{2})}^{2}
+ \|u\|_{L_{t}^{\infty}(L^{2})} \|\nabla u\|_{L_{t}^{\infty}(L^{2})} \|\Delta u\|_{L_{t}^{2}(L^{2})}^{2}    \\
&\, + \|B\|_{L_{t}^{\infty}(L^{2})}\|\nabla B\|_{L_{t}^{\infty}(L^{2})}\|\Delta B\|_{L_{t}^{2}(L^{2})}^{2}  \\
\leq &\, C.
\end{split}
\end{align}
Therefore, thanks to $H^{1}(\mathbb{R}^{3}) \hookrightarrow L^{2}\cap L^{3}(\mathbb{R}^{3})$, together with (\ref{decay 2 pressure}), we get
\begin{align*}
\|\nabla \Pi\|_{L_{t}^{1}(L^{p})} \leq C(t),
\end{align*}
so that we have
\begin{align*}
\|u\|_{L_{t}^{\infty}(L^{p})} + \|B\|_{L_{t}^{\infty}(L^{p})} \leq C(t)
\end{align*}
which, together with (\ref{decay 2 partial u to t}) and the classical Aubin-Lions lemma implies
$u \in C([0, \infty); L^{p}(\mathbb{R}^{3}))$.
\end{proof}

\begin{proposition}
\label{decay pro 3}
Under the assumptions of Theorem \ref{decay_main_theorem}, there holds
\begin{align*}
 \|u(t)\|_{L^{2}} + \|B(t)\|_{L^{2}}& \leq C \, (1+t)^{-\beta(p)},    \\
 \|\nabla u(t)\|_{L^{2}} + \|\nabla B(t)\|_{L^{2}}& \leq C\, (1+t)^{1/2-\beta(p)}, \quad \text{for } t\geq t_{0},   \\
\int_{t_{0}}^{\infty} (1+t)^{(1+2\, \beta(p))^{-}} \bigg( \|(\partial_{t}u, \partial_{t}B)\|_{L^{2}}^{2}
& + \|(\Delta u, \Delta B)\|_{L^{2}}^{2} + \|\nabla \Pi\|_{L^{2}}^{2} \bigg)\, dt \leq C
\end{align*}
where $C$ depends on $m, M, \|a_{0}\|_{L^{2}}, \|a_{0}\|_{L^{\infty}}, \|u_{0}\|_{L^{p}}$ and $\|u_{0}\|_{H^{1}}$.
\end{proposition}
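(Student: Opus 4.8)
The plan is to run Schonbek's Fourier--splitting method on the basic energy balance (\ref{basic_energy_equality}), to control the low frequencies through the $L^{p}$ information provided by Proposition \ref{decay pro 2}, and then to transfer the resulting $L^{2}$ decay both to the gradient and to the weighted space--time norms by exploiting the monotonicity already built into Proposition \ref{decay pro 1}. First I would differentiate (\ref{basic_energy_equality}) to get $\frac{d}{dt}\|(\sqrt{\rho}\,u,B)\|_{L^{2}}^{2}+2\|(\nabla u,\nabla B)\|_{L^{2}}^{2}=0$ and split Fourier space across the shrinking ball $\{|\xi|\le R(t)\}$ with $R(t)^{2}=k/(1+t)$. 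Since (\ref{low}) forces $\rho=(1+a)^{-1}$ to be bounded above and below, the weight $\sqrt{\rho}$ costs only harmless constants and $\|(u,B)\|_{L^{2}}^{2}\gtrsim\|(\sqrt{\rho}\,u,B)\|_{L^{2}}^{2}$; Plancherel gives $\|(\nabla u,\nabla B)\|_{L^{2}}^{2}\ge R(t)^{2}\big(\|(u,B)\|_{L^{2}}^{2}-\int_{|\xi|\le R(t)}|(\widehat{u},\widehat{B})|^{2}\,d\xi\big)$, hence the differential inequality $\frac{d}{dt}\|(\sqrt{\rho}\,u,B)\|_{L^{2}}^{2}+cR(t)^{2}\|(\sqrt{\rho}\,u,B)\|_{L^{2}}^{2}\le cR(t)^{2}\int_{|\xi|\le R(t)}|(\widehat{u},\widehat{B})|^{2}\,d\xi$.

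Next I would bound the low-frequency bulk by Hausdorff--Young together with H\"older on the ball: with $p'=p/(p-1)$, $\int_{|\xi|\le R}|\widehat{u}|^{2}\,d\xi\lesssim |B_{R}|^{1-2/p'}\|u\|_{L^{p}}^{2}\lesssim R^{3(2/p-1)}\|u\|_{L^{p}}^{2}$, and likewise for $B$. Because $\frac{3}{2}(2/p-1)=2\beta(p)$, this makes the right-hand side of the differential inequality $\lesssim (1+t)^{-1-2\beta(p)}\big(\|u\|_{L^{p}}^{2}+\|B\|_{L^{p}}^{2}\big)$; integrating with the factor $(1+t)^{ck}$ (with $k$ chosen so $ck>2\beta(p)$) then delivers $\|(u,B)(t)\|_{L^{2}}^{2}\le C(1+t)^{-2\beta(p)}$, \emph{provided} the $L^{p}$ norm stays bounded uniformly in time. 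Securing this uniform $L^{p}$ bound is the crux. From (\ref{L p estimate}) and (\ref{decay 2 pressure}) the growth of $\|(u,B)\|_{L^{p}}$ is driven by $\int_{0}^{t}\big(\|\nabla u\|_{L^{2}}+\|\nabla B\|_{L^{2}}\big)\,d\tau$ (the prefactors $\|u\|_{L^{2}\cap L^{3}},\|B\|_{L^{2}\cap L^{3}}$ being controlled via $H^{1}\hookrightarrow L^{2}\cap L^{3}$ and Proposition \ref{decay pro 1}) together with the pressure contribution. Since $\beta(p)>\frac12$ for $p\in(1,\frac{6}{5})$, the anticipated gradient decay $\|\nabla(u,B)\|_{L^{2}}\lesssim(1+t)^{-\frac12-\beta(p)}$ is time-integrable, so a continuity/bootstrap argument closes the uniform $L^{p}$ bound and the sharp $L^{2}$ decay simultaneously.

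For the gradient decay I would use that Proposition \ref{decay pro 1} makes $\|\nabla(u,B)(t)\|_{L^{2}}^{2}$ nonincreasing for $t\ge t_{0}$, while (\ref{basic_energy_equality}) yields $\int_{t}^{\infty}\|\nabla(u,B)\|_{L^{2}}^{2}\,d\tau=\frac12\|(\sqrt{\rho}\,u,B)(t)\|_{L^{2}}^{2}\le C(1+t)^{-2\beta(p)}$. Averaging over the slab $[t/2,t]$ and invoking monotonicity gives $\frac{t}{2}\|\nabla(u,B)(t)\|_{L^{2}}^{2}\le\int_{t/2}^{\infty}\|\nabla(u,B)\|_{L^{2}}^{2}\,d\tau\le C(1+t)^{-2\beta(p)}$, that is $\|\nabla(u,B)(t)\|_{L^{2}}\le C(1+t)^{-\frac12-\beta(p)}$. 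Finally, for the weighted space--time estimate I would multiply the pointwise inequality (\ref{decay 1 last}) of Proposition \ref{decay pro 1} by $(1+t)^{(1+2\beta(p))^{-}}$ and integrate by parts in $t$; the boundary term is finite, and the remainder $\int_{t_{0}}^{\infty}(1+t)^{(1+2\beta(p))^{-}-1}\|\nabla(u,B)\|_{L^{2}}^{2}\,dt$ converges precisely because the weight exponent is chosen strictly below $1+2\beta(p)$ while $\|\nabla(u,B)\|_{L^{2}}^{2}\lesssim(1+t)^{-1-2\beta(p)}$. This controls the weighted integrals of $\|\partial_{t}(u,B)\|_{L^{2}}^{2}$ and $\|\Delta(u,B)\|_{L^{2}}^{2}$; the pressure term $\|\nabla\Pi\|_{L^{2}}^{2}$ is then absorbed using (\ref{decay 1 pressure}), which bounds $\|\nabla\Pi\|_{L^{2}}$ by $\|\sqrt{\rho}\,\partial_{t}u\|_{L^{2}}$ plus already-decaying quadratic terms.

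The hardest step is the coupled bootstrap of the second paragraph: unlike the inhomogeneous Navier--Stokes case, one must simultaneously handle the non-divergence density term $a(\Delta u-\nabla\Pi)$ and the magnetic coupling $B\cdot\nabla B$ inside the $L^{p}$ and pressure estimates, so the uniform $L^{p}$ bound cannot be read off before the decay is known and has to be propagated together with it.
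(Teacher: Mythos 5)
Your skeleton (Fourier splitting on the energy identity, Proposition \ref{decay pro 1} for the gradient and weighted norms, \eqref{decay 1 pressure} for the pressure) matches the paper, and two of your steps are actually correct and cleaner than the paper's: the gradient decay via monotonicity of $\|\nabla(u,B)\|_{L^{2}}^{2}$ plus the tail identity $\int_{t/2}^{\infty}\|\nabla(u,B)\|_{L^{2}}^{2}\,d\tau\le\frac12\|(\sqrt{\rho}u,B)(t/2)\|_{L^{2}}^{2}$ (the paper instead runs a second Fourier splitting at the gradient level), and the weighted space--time estimate via multiplying \eqref{decay 1 last} by $(1+t)^{(1+2\beta(p))^{-}}$ and integrating by parts. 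But there is a genuine gap exactly at the step you yourself call the crux. Your low-frequency bound $\int_{|\xi|\le R(t)}|\widehat{u}(\xi,t)|^{2}\,d\xi\lesssim R(t)^{3(2/p-1)}\|u(t)\|_{L^{p}}^{2}$ uses the solution's \emph{instantaneous} $L^{p}$ norm, so the whole argument hinges on $\sup_{t}\|u(t)\|_{L^{p}}<\infty$. No such bound is available: Proposition \ref{decay pro 2} only yields $\|(u,B)\|_{L^{\infty}_{t}(L^{p})}\le C(t)$ with a time-dependent constant (a priori of order $\sqrt{t}$, since before any decay is known the drift terms $\int_{0}^{t}\|B\|_{L^{2}\cap L^{3}}\|\nabla u\|_{L^{2}}\,d\tau$ and $\int_{0}^{t}\|a\|_{L^{2}\cap L^{3}}\|\Delta u-\nabla\Pi\|_{L^{2}}\,d\tau$ are only Cauchy--Schwarz-bounded by $\sqrt{t}$). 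Your proposed fix is a ``continuity/bootstrap argument,'' but writing it out gives an inequality of the form $\Lambda\le A(\mathrm{data})+B(\mathrm{data})\,\Lambda$ (or quadratic in $\Lambda$), where $\Lambda=\sup_{t}\|(u,B)(t)\|_{L^{p}}$ and $B(\mathrm{data})$ involves products like $\|B\|_{L^{\infty}_{t}(L^{2}\cap L^{3})}\int_{0}^{\infty}(1+\tau)^{-\frac12-\beta(p)}d\tau$. Theorem \ref{decay_main_theorem} concerns \emph{large} solutions, there is no smallness anywhere, so $B(\mathrm{data})\ge 1$ in general and the continuity argument does not close.

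The paper avoids this trap entirely: the low frequencies are estimated through the Duhamel representation $u(t)=e^{(t-t_{0})\Delta}u(t_{0})+\int_{t_{0}}^{t}e^{(t-t')\Delta}\mathbb{P}(\cdots)\,dt'$ (and similarly for $B$), so Hausdorff--Young is applied only to the free evolution of the data at the single time $t_{0}$, where $u(t_{0}),B(t_{0})\in L^{p}$ is exactly what Proposition \ref{decay pro 2} provides; the nonlinear contributions in \eqref{decay 3 step 1 u b} are bounded \emph{unconditionally} by energy quantities ($\int_{0}^{t}\|u\|_{L^{2}}^{2}\lesssim t$, $\|a\|_{L^{\infty}_{t}(L^{2})}\|\Delta u-\nabla\Pi\|_{L^{1}_{t}(L^{2})}\lesssim\sqrt{t}$, etc.), which yields the rough rate $\langle t\rangle^{-1/4}$ with no hypothesis, and this rough rate is then fed back into the same Duhamel estimates to reach $\langle t\rangle^{-\beta(p)}$. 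Each step is an iteration of already-established facts, never a self-referential bootstrap. If you want to keep your ``instantaneous $L^{p}$'' route, you would have to replace the continuity argument by an analogous iteration on algebraic exponents (start from $\|u(t)\|_{L^{p}}\lesssim\langle t\rangle^{1/2}$, derive decay $\langle t\rangle^{-\beta(p)+\sigma}$, improve the $L^{p}$ growth exponent by $\beta(p)-\frac12>0$ per step, and terminate after finitely many steps); as written, that scaffolding is missing and the proof does not go through.
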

\begin{proof}
Step 1 : Rough decay estimate of $\|u(t)\|_{L^{2}}$ and $\|B(t)\|_{L^{2}}$. We split the phase space $\mathbb{R}^{3}$ into two
time-dependent regions so that
\begin{align*}
& \|\nabla u(t)\|_{L^{2}}^{2} = \int_{S_{1}(t)} |\xi|^{2}|\hat{u}(\xi, t)|^{2}\, d\xi + \int_{S_{1}(t)^{c}} |\xi|^{2}|\hat{u}(\xi,t)|^{2}\, d\xi,   \\
& \|\nabla B(t)\|_{L^{2}}^{2} = \int_{S_{2}(t)} |\xi|^{2}|\hat{B}(\xi, t)|^{2}\, d\xi + \int_{S_{2}(t)^{c}} |\xi|^{2}|\hat{B}(\xi,t)|^{2}\, d\xi,
\end{align*}
where $S_{1}(t):= \left\{ \xi : |\xi| \leq \sqrt{\bar{\rho}/2}\,g(t) \right\}$, $S_{2}(t):= \left\{ \xi : |\xi| \leq \sqrt{1/2}\,g(t) \right\}$
with $\bar{\rho} := \sup_{(t,x)\in \mathbb{R}^{+}\times \mathbb{R}^{3}} \rho(x,t) = \sup_{x\in \mathbb{R}^{3}} \rho_{0}(x)$
and $g(t)$ satisfies $g(t) \lesssim \langle t \rangle^{-1/2}$ which will be chosen later on. Here and in what follows, we shall always denote
$1+t$ by $\langle t \rangle$. Then, thanks to (\ref{basic diff}), we obtain
\begin{align}
\label{decay 3 step 1 main}
\begin{split}
& \frac{d}{dt} \left( \|\sqrt{\rho}u(t)\|_{L^{2}}^{2} + \|B(t)\|_{L^{2}}^{2} \right)
+ g^{2}(t)\left( \|\sqrt{\rho}u(t)\|_{L^{2}}^{2} + \|B(t)\|_{L^{2}}^{2} \right) \\
\leq &\, \bar{\rho} g^{2}(t) \int_{S_{1}(t)} |\hat{u}(\xi, t)|^{2}\, d\xi + g^{2}(t)\int_{S_{2}(t)} |\hat{B}(\xi, t)|^{2}\, d\xi.
\end{split}
\end{align}
Next, we need to deal with the low-frequency part of $u$ and $B$, we rewrite the momentum equation and magnetic field equation as follows
\begin{align*}
u(t) = &\, e^{(t-t_{0})\Delta}u(t_{0})
 + \int_{t_{0}}^{t} e^{(t-t')\Delta}\mathbb{P}\bigg( \nabla \cdot (-u \otimes u) + a\, (\Delta u - \nabla \Pi)  \\
&\, + \nabla\cdot (B \otimes B) + a\, (B\cdot \nabla B) \bigg)\, dt',        \\
B(t) = &\, e^{(t-t_{0})\Delta}B(t_{0})
+ \int_{t_{0}}^{t}e^{(t-t')\Delta}\bigg( B\cdot \nabla u(t') - u\cdot \nabla B(t') \bigg)\, dt',
\end{align*}
where $\mathbb{P}$ denote the Leray projection operator ant $t_{0}$ is the positive time determined by Proposition \ref{decay pro 1}.
Taking Fourier transform with respect to $x$-variables leads to
\begin{align*}
|\hat{u}(\xi, t)| \lesssim &\, e^{-(t-t_{0})|\xi|^{2}}|\hat{u}_{t_{0}}(\xi)|
 + \int_{t_{0}}^{t}e^{-(t-t')|\xi|^{2}}\bigg( |\xi|\, |\widehat{u \otimes u}(\xi)| + |\xi|\, |\widehat{B \otimes B}(\xi)| \\
& \, + |\mathcal{F}(a\, (\Delta u -\nabla \Pi))(\xi)| + |\mathcal{F}(a\,B\cdot \nabla B)(\xi)| \bigg) \, dt',   \\
|\hat{B}(\xi, t)| \lesssim &\, e^{-(t-t_{0})|\xi|^{2}}|\hat{B}_{t_{0}}(\xi)|
+ \int_{t_{0}}^{t} e^{-(t-t')|\xi|^{2}} \bigg( |\xi|\, |\widehat{B \otimes u}(\xi)| + |\xi|\, |\widehat{u \otimes B}(\xi)| \bigg)\, dt'.
\end{align*}
which implies that
\begin{align}
\label{decay 3 step 1 u b}
\begin{split}
\int_{S_{1}(t)} |\hat{u}(\xi, t)|^{2}\, d\xi \lesssim &\, \int_{S_{1}(t)}e^{-2(t-t_{0})|\xi|^{2}} |\hat{u}_{t_{0}}(\xi)|^{2}\, d\xi
+ g(t)^{5} \left(\int_{t_{0}}^{t}\|\widehat{u \otimes u}\|_{L_{\xi}^{\infty}}\, dt'\right)^{2}    \\
&\, + g(t)^{5} \left( \int_{t_{0}}^{t}\|\widehat{B \otimes B}\|_{L_{\xi}^{\infty}} \, dt' \right)^{2}
+ g(t)^{3} \left( \int_{t_{0}}^{t} \|\mathcal{F}(a\, B\cdot \nabla B)\|_{L_{\xi}^{\infty}}\,dt' \right)^{2}  \\
&\, + g(t)^{3} \left(\int_{t_{0}}^{t} \|\mathcal{F}(a\, (\Delta u - \nabla \Pi))\|_{L_{\xi}^{\infty}}\,dt' \right)^{2}, \\
\int_{S_{2}(t)}|\hat{B}(\xi, t)|^{2}\, d\xi \lesssim &\, \int_{S_{2}(t)} e^{-2 (t-t_{0})|\xi|^{2}}|\hat{B}_{t_{0}}(\xi)|^{2} \, d\xi
+ g(t)^{5} \left( \int_{t_{0}}^{t}\|\widehat{B \otimes u}\|_{L_{\xi}^{\infty}} \, dt' \right)^{2}.
\end{split}
\end{align}
Thanks to Proposition \ref{decay pro 1} and (\ref{basic_energy_equality}), we have
\begin{align*}
& \left( \int_{t_{0}}^{t} \|\mathcal{F}(a\, (\Delta u - \nabla \Pi))\|_{L_{\xi}^{\infty}} \, dt' \right)^{2}\leq   \\
& \quad\quad\quad\quad\quad\quad\quad\quad
 \|a\|_{L_{t}^{\infty}(L^{2})}^{2} \left( \int_{t_{0}}^{t} \|\Delta u - \nabla \Pi\|_{L^{2}}\, dt' \right)^{2} \lesssim t-t_{0},
\end{align*}
and
\begin{align*}
\left( \int_{t_{0}}^{t} \|\mathcal{F}(a\, B\cdot \nabla B)\|_{L_{\xi}^{\infty}} \right)^{2} & \lesssim
\left( \int_{t_{0}}^{t} \|a\, B\cdot \nabla B\|_{L^{1}} \, dt' \right)^{2}  \\
& \lesssim \|a\|_{L_{t}^{\infty}(L^{\infty})}^{2} \left( \int_{t_{0}}^{t} \|B\|_{L^{2}} \|\nabla B\|_{L^{2}} \, dt' \right)^{2} \\
& \lesssim \, t-t_{0},
\end{align*}
while it is easy to see that
\begin{align*}
& \left( \int_{t_{0}}^{t} \|\widehat{u \otimes u}\|_{L_{\xi}^{\infty}} + \|\widehat{B \otimes B}\|_{L_{\xi}^{\infty}}
+ \|\widehat{B \otimes u}\|_{L_{\xi}^{\infty}} \, dt' \right)^{2} \\
\lesssim & \left( \int_{t_{0}}^{t} \|u\|_{L^{2}}^{2}\, dt' \right)^{2} + \left( \int_{t_{0}}^{t} \|B\|_{L^{2}}^{2}\, dt' \right)^{2}
+ \left( \int_{t_{0}}^{t}\|B\|_{L^{2}}\|u\|_{L^{2}}\,dt' \right)^{2} \\
\lesssim &\, (t-t_{0})^{2}.
\end{align*}
By Proposition \ref{decay pro 2}, we know that $u(t_{0}), B(t_{0}) \in L^{p}(\mathbb{R}^{3})$ for $1 < p < \frac{6}{5}$,
and for $\frac{1}{q} := \frac{4}{3}\beta(p) = \frac{2}{p} - 1$ and $\frac{1}{p} + \frac{1}{p'} = 1$,
\begin{align}
\label{u decay beta}
\begin{split}
\int_{S_{1}(t)}e^{-2(t-t_{0})|\xi|^{2}}|\hat{u}(\xi, t_{0})|^{2}\, d\xi \lesssim &\,
\left( \int_{S_{1}(t)}e^{-2q(t-t_{0})|\xi|^{2}}\,d\xi \right)^{1/q} \|\hat{u}(\xi, t_{0})\|_{L^{p'}}^{2}    \\
\lesssim &\, \|u(t_{0})\|_{L^{p}}^{2} \langle t \rangle^{-2 \beta(p)}
\end{split}
\end{align}
and
\begin{align}
\label{b decay beta}
\begin{split}
\int_{S_{2}(t)}e^{-2(t-t_{0})|\xi|^{2}}|\hat{B}(\xi, t_{0})|^{2}\, d\xi \lesssim &\,
\left( \int_{S_{2}(t)}e^{-2q(t-t_{0})|\xi|^{2}}\,d\xi \right)^{1/q} \|\hat{B}(\xi, t_{0})\|_{L^{p'}}^{2}    \\
\lesssim &\, \|B(t_{0})\|_{L^{p}}^{2} \langle t \rangle^{-2 \beta(p)}
\end{split}
\end{align}
where we used the Hausforff-Young inequality \cite{Classical}. Then since $g(t) \lesssim \langle t \rangle^{-1/2}$, we deduce from (\ref{decay 3 step 1 u b}) that
\begin{align}
\label{decay 3 step 1 low u}
\begin{split}
\int_{S_{1}(t)} |\hat{u}(\xi, t)|^{2} \, d\xi \lesssim \, \langle t \rangle^{-2 \beta(p)} + \langle t \rangle^{-\frac{1}{2}} \lesssim \, \langle t \rangle^{-\frac{1}{2}}
\quad \text{for } t \geq t_{0},
\end{split}
\end{align}
and
\begin{align}
\label{decay 3 step 1 low b}
\begin{split}
\int_{S_{2}(t)} |\hat{B}(\xi, t)|^{2} \, d\xi \lesssim \, \langle t \rangle^{-2 \beta(p)} + \langle t \rangle^{-\frac{1}{2}} \lesssim \, \langle t \rangle^{-\frac{1}{2}}
\quad \text{for } t \geq t_{0},
\end{split}
\end{align}
where we used the fact $\frac{1}{2} < \beta(p) < \frac{3}{4}$. Substituting (\ref{decay 3 step 1 low u}) and (\ref{decay 3 step 1 low b})
into (\ref{decay 3 step 1 main}) results in
\begin{align*}
\frac{d}{dt}\|(\sqrt{\rho}u, B)\|_{L^{2}}^{2} + g^{2}(t)\|(\sqrt{\rho}u, B)\|_{L^{2}}^{2} \lesssim \, g^{2}(t)\langle t \rangle^{-\frac{1}{2}}
\lesssim \, \langle t \rangle^{-\frac{3}{2}}\quad \text{for } t \geq t_{0},
\end{align*}
which gives
\begin{align*}
e^{\int_{t_{0}}^{t} g^{2}(t)\, dt'}\|(\sqrt{\rho}u, B)\|_{L^{2}}^{2} \lesssim \, \|(\sqrt{\rho}u(t_{0}), B(t_{0}))\|_{L^{2}}^{2}
+ \int_{t_{0}}^{t} e^{\int_{t_{0}}^{t'} g(\tau)^{2}\, d\tau} \langle t' \rangle^{-\frac{3}{2}}\, dt'
\end{align*}
Taking $g^{2}(t) := \frac{\alpha}{1+t}$ (with $\alpha > \frac{1}{2}$) in the above inequality, we infer
\begin{align*}
\left( \|\sqrt{\rho}u(t)\|_{L^{2}}^{2} + \|B(t)\|_{L^{2}}^{2} \right)\langle t \rangle^{\alpha} \lesssim \, 1 + \langle t \rangle^{\alpha - \frac{1}{2}},
\end{align*}
which gives
\begin{align}
\label{rough decay u b}
\|u(t)\|_{L^{2}} + \|B(t)\|_{L^{2}} \lesssim \langle t \rangle^{-\frac{1}{4}}.
\end{align}

Step 2 : Rough decay estimate of $\|\nabla u(t)\|_{L^{2}}$ and $\|\nabla B(t)\|_{L^{2}}$.
We split the phase space $\mathbb{R}^{3}$ into two time-dependent regions so that
\begin{align*}
\|\nabla^{2}u(t)\|_{L^{2}}^{2} = \int_{S(t)} |\xi|^{4} |\hat{u}(\xi, t)|^{2} \, d\xi + \int_{S(t)^{c}} |\xi|^{2} |\widehat{\nabla u}(\xi, t)|^{2} \, d\xi,  \\
\|\nabla^{2}B(t)\|_{L^{2}}^{2} = \int_{S(t)} |\xi|^{4} |\hat{B}(\xi, t)|^{2} \, d\xi + \int_{S(t)^{c}} |\xi|^{2} |\widehat{\nabla B}(\xi, t)|^{2} \, d\xi,
\end{align*}
where $S(t) := \left\{ \xi : |\xi| \leq \sqrt{\frac{1}{e_{2}}}\, g(t) \right\}$ and $g(t) \lesssim \langle t \rangle^{-\frac{1}{2}}$, which will be chosen later on. Then we deduce from Proposition \ref{decay pro 1} that
\begin{align}
\label{decay 3 step 2 main}
\begin{split}
& \frac{d}{dt}\|(\nabla u, \nabla B)\|_{L^{2}}^{2} + e_{1}\|(\partial_{t}u, \partial_{t}B)\|_{L^{2}}^{2} + g^{2}(t)\|(\nabla u, \nabla B)\|_{L^{2}}^{2}   \\
&\quad\quad\quad\quad \leq g^{4}(t) \int_{S(t)} |\hat{u}(\xi, t)|^{2} + |\hat{B}(\xi ,t)|^{2}\, d\xi \lesssim g^{4}(t)\langle t \rangle^{-\frac{1}{2}}
\lesssim \langle t \rangle^{-\frac{5}{2}},
\end{split}
\end{align}
which gives
\begin{align*}
& e^{\int_{t_{0}}^{t}g(t')^{2}\, dt'} \|(\nabla u, \nabla B)\|_{L^{2}}^{2}
+ e_{1}\int_{t_{0}}^{t} e^{\int_{t_{0}}^{t'} g(\tau)^{2}\, d\tau} \|(\partial_{t}u, \partial_{t}B)\|_{L^{2}}^{2} \, dt'    \\
& \quad\quad\quad\quad\quad\quad\quad
\lesssim \, \|(\nabla u(t_{0}), \nabla B(t_{0}))\|_{L^{2}}^{2} + \int_{t_{0}}^{t} e^{\int_{t_{0}}^{t'}g(\tau)^{2}\,d\tau}\langle t \rangle^{-\frac{5}{2}} \,dt'
\end{align*}
Taking $g^{2}(t) := \frac{\alpha}{1+t}$ (with $\alpha > 1$) in the above inequality, we deduce that
\begin{align}
\label{decay 3 step2 biger than 1}
\begin{split}
\|(\nabla u, \nabla B)\|_{L^{2}}^{2}\langle t \rangle^{\alpha} + e_{1}\int_{t_{0}}^{t} \langle t' \rangle^{\alpha} \|(\partial_{t}u, \partial_{t}B)\|_{L^{2}}^{2}\,dt' \lesssim \, 1 + \int_{t_{0}}^{t} \langle t' \rangle^{\alpha - \frac{5}{2}} \, dt'.
\end{split}
\end{align}
In particular, for $\alpha > \frac{3}{2}$, we get
\begin{align*}
\|(\nabla u, \nabla B)\|_{L^{2}}^{2} \langle t \rangle^{\alpha} \lesssim \, 1 + \langle t \rangle^{\alpha - \frac{3}{2}} \quad \text{for } t \geq t_{0},
\end{align*}
which implies
\begin{align}
\label{rough decay geadient u b}
\|\nabla u(t)\|_{L^{2}} + \|\nabla B(t)\|_{L^{2}} \lesssim \, \langle t \rangle^{-\frac{3}{4}} \quad \text{for } t\geq t_{0}.
\end{align}
Moreover, it follows from (\ref{decay 3 step2 biger than 1}) that
\begin{align*}
\int_{t_{0}}^{t}\langle t' \rangle^{\left(\frac{3}{2}\right)^{-}} \|(\partial_{t}u, \partial_{t}B)\|_{L^{2}}^{2} \, dt' \lesssim \, 1,
\end{align*}
which ensures that
\begin{align}
\begin{split}
&\, \left( \int_{t_{0}}^{t} \|(\partial_{t}u, \partial_{t}B)\|_{L^{2}} \right)^{2}  \\
\lesssim &\, \int_{t_{0}}^{t}\langle t \rangle^{\left( \frac{3}{2} \right)^{-}}\|(\partial_{t}u, \partial_{t}B)\|_{L^{2}}^{2}\, dt'
\int_{t_{0}}^{t}\langle t \rangle^{-\left( \frac{3}{2} \right)^{-}}\, dt'
\lesssim \, 1.
\end{split}
\end{align}

Step 3 : Improved decay estimates of $\|u(t)\|_{L^{2}}$, $\|\nabla u(t)\|_{L^{2}}$ and $\|B(t)\|_{L^{2}}, \|\nabla B(t)\|_{L^{2}}$.
We shall utilize an iteration argument. Thanks to (\ref{rough decay u b}), we obtain
\begin{align}
\label{decay 3 step 3 u u}
\begin{split}
\left( \int_{t_{0}}^{t} \|\mathcal{F}(u \otimes u)(t')\|_{L_{\xi}^{\infty}} \, dt' \right)^{2}
\lesssim & \left( \int_{t_{0}}^{t} \|u(t')\|_{L^{2}}^{2}\, dt' \right)^{2}    \\
\lesssim & \left( \int_{t_{0}}^{t} \langle t \rangle^{-\frac{1}{2}} \, dt' \right)^{2} \lesssim \, \langle t \rangle,
\end{split}
\end{align}
and similar we have
\begin{align}
\label{decay 3 step 3 u b}
\begin{split}
\left( \int_{t_{0}}^{t} \|\mathcal{F}(u \otimes B)(t')\|_{L_{\xi}^{\infty}} \, dt' \right)^{2}
\lesssim \, \langle t \rangle.
\end{split}
\end{align}
Same argument as in \cite{zhangping}, we have
\begin{align}
\label{decay 3 step 3 a u pi}
\left( \int_{t_{0}}^{t} \|\mathcal{F}(a\, (\Delta u - \nabla \Pi))(t')\|_{L_{\xi}^{\infty}} \, dt' \right)^{2} \leq \, C.
\end{align}
Using (\ref{rough decay u b}) and (\ref{rough decay geadient u b}), we obtain
\begin{align}
\label{decay 3 step 3 a b b}
\begin{split}
&\, \left(\int_{t_{0}}^{t} \|\mathcal{F}(a\, B\cdot \nabla B)(t')\|_{L_{\xi}^{\infty}} \, dt' \right)^{2} \\
\lesssim &\, \left( \int_{t_{0}}^{t} \|a\, B \cdot \nabla B\|_{L^{1}} \,dt' \right)^{2}   \\
\lesssim &\, \left( \int_{t_{0}}^{t} \|B\|_{L^{2}} \|\nabla B\|_{L^{2}} \, dt' \right)^{2}  \\
\lesssim &\, \left( \int_{t_{0}}^{t} \langle t \rangle^{-\frac{1}{4}} \langle t \rangle^{-\frac{3}{4}} \right)^{2}
\lesssim \, \left( \mathrm{ln}\langle t \rangle \right)^{2}
\end{split}
\end{align}
Substituting (\ref{u decay beta}), (\ref{b decay beta}), (\ref{decay 3 step 3 u u}), (\ref{decay 3 step 3 u b}), (\ref{decay 3 step 3 a u pi})
and (\ref{decay 3 step 3 a b b}) into (\ref{decay 3 step 1 u b}) results in
\begin{align}
\label{decay 3 step 3 low u}
\int_{S(t)} |\hat{u}(\xi, t)|^{2} \, d\xi \lesssim \langle t \rangle^{-2 \beta(p)} + \langle t \rangle^{-\frac{3}{2} + \epsilon}
\lesssim \langle t \rangle^{-2 \beta(p)},
\end{align}
and
\begin{align}
\label{decay 3 step 3 low b}
\int_{S(t)} |\hat{B}(\xi, t)|^{2}\, d\xi \lesssim \langle t \rangle^{-2 \beta(p)} + \langle t \rangle^{-\frac{3}{2}} \lesssim \langle t \rangle^{-2 \beta(p)},
\end{align}
where $\epsilon > 0$ is a small enough constant.
From this and (\ref{decay 3 step 1 main}), we have
\begin{align*}
\frac{d}{dt} \|(\sqrt{\rho}u, B)\|_{L^{2}}^{2} + g^{2}(t)\|(\sqrt{\rho}u, B)\|_{L^{2}}^{2}
\lesssim \, g^{2}(t) \langle t \rangle^{-2 \beta(p)} \lesssim \, \langle t \rangle^{-1-2\beta(p)},
\end{align*}
which implies
\begin{align*}
e^{\int_{t_{0}}^{t}g(t')^{2}\, dt'}\|(\sqrt{\rho} u, B)\|_{L^{2}}^{2}
\lesssim \, \|(\sqrt{\rho}u(t_{0}), B(t_{0}))\|_{L^{2}}^{2} + \int_{t_{0}}^{t} e^{\int_{t_{0}}^{t'}g(\tau)^{2}\, d\tau} \langle t \rangle^{-1-2\beta(p)}\, dt'
\end{align*}
Taking $g^{2}(t) = \frac{\alpha}{1+t}$ (with $\alpha > 2 \beta(p)$) in the above inequality leads to
\begin{align*}
\left(\|\sqrt{\rho}u(t)\|_{L^{2}}^{2} + \|B(t)\|_{L^{2}}^{2} \right) \langle t \rangle^{\alpha} \lesssim 1 + \langle t \rangle^{\alpha - 2\beta(p)},
\end{align*}
which in particular gives
\begin{align*}
\|u(t)\|_{L^{2}} + \|B(t)\|_{L^{2}} \lesssim \langle t \rangle^{-\beta(p)}.
\end{align*}
From (\ref{decay 3 step 3 low u}), (\ref{decay 3 step 3 low b}) and (\ref{decay 3 step 2 main}), we infer
\begin{align*}
\frac{d}{dt}\|(\nabla u, \nabla B)\|_{L^{2}}^{2} & + e_{1} \|(\partial_{t}u, \partial_{t}B)\|_{L^{2}}^{2} \\
& + g^{2}(t)\|(\nabla u, \nabla B)\|_{L^{2}}^{2} \lesssim \, g^{4}(t) \langle t \rangle^{-2 \beta(p)}
\lesssim \, \langle t \rangle^{-2-2\beta(p)},
\end{align*}
which implies
\begin{align*}
e^{\int_{t_{0}}^{t}g(t')^{2}\, dt'}\|(\nabla u, \nabla B)\|_{L^{2}}^{2} & + e_{1} \int_{t_{0}}^{t} e^{\int_{t_{0}}^{t'} g(\tau)^{2}\, d\tau}
\|(\partial_{t}u, \partial_{t}B)\|_{L^{2}}^{2} \, dt'   \\
& \lesssim \, \|(\nabla u(t_{0}), \nabla B(t_{0}))\|_{L^{2}}^{2} + \int_{t_{0}}^{t} e^{\int_{t_{0}}^{t'}g(\tau)\, d\tau}
\langle t \rangle^{-2-2\beta(p)}\, dt'
\end{align*}
Taking $g^{2}(t) = \frac{\alpha}{1+t}$ (with $\alpha > 1$) in the above inequality, we obtain
\begin{align}
\label{decay 3 step 3 geadient u b}
\begin{split}
\|(\nabla u, \nabla B)\|_{L^{2}}^{2} \langle t \rangle^{\alpha} + & e_{1} \int_{t_{0}}^{t} \langle t' \rangle^{\alpha}\|(\partial_{t}u, \partial_{t}B)\|_{L^{2}}^{2} \, dt' \\
& \quad\quad\quad\quad \lesssim \, 1 + \int_{t_{0}}^{t} \langle t' \rangle^{\alpha - 2 - 2\beta(p)}\, dt'.
\end{split}
\end{align}
In particular, taking $\alpha > 1+2\beta(p)$ in (\ref{decay 3 step 3 geadient u b}), we get
\begin{align}
\label{decay geadient u b}
\|\nabla u(t)\|_{L^{2}} + \|\nabla B(t)\|_{L^{2}} \lesssim \langle t \rangle^{-\frac{1}{2}-\beta(p)}.
\end{align}
Taking $\alpha \in (\frac{3}{2}, 1+2\beta(p))$ in (\ref{decay 3 step 3 geadient u b}) results in
\begin{align*}
\int_{t_{0}}^{t} \langle t' \rangle^{(1+2\beta(p))^{-}} \left( \|\partial_{t}u(t')\|_{L^{2}}^{2}
+ \|\partial_{t}B(t')\|_{L^{2}}^{2} \right) \, dt' \leq C.
\end{align*}
Combining the following fact
\begin{align*}
\|u\cdot \nabla u\|_{L^{2}}^{2} \leq \,C\,\|u\|_{L^{\infty}}^{2}\|\nabla u\|_{L^{2}}^{2} \leq \, C\, \|\Delta u\|_{L^{2}}\|\nabla u\|_{L^{2}}^{3},  \\
\|B\cdot \nabla B\|_{L^{2}}^{2} \leq \,C\,\|B\|_{L^{\infty}}^{2}\|\nabla B\|_{L^{2}}^{2} \leq \, C\, \|\Delta B\|_{L^{2}}\|\nabla B\|_{L^{2}}^{3},
\end{align*}
with (\ref{decay 1 pressure}), we have
\begin{align}
\label{decay pressure}
\begin{split}
& \int_{t_{0}}^{\infty} \langle t' \rangle^{(1+2\beta(p))^{-}}\left( \|\Delta u(t')\|_{L^{2}}^{2}+\|\Delta B(t')\|_{L^{2}}^{2} + \|\nabla \Pi(t')\|_{L^{2}}^{2} \right)\, dt'    \\
\lesssim &\, \int_{t_{0}}^{\infty} \langle t \rangle^{(1+2\beta(p))^{-}} \|(\partial_{t}u,\partial_{t}B)\|_{L^{2}}^{2}\, dt'
+ \int_{t_{0}}^{\infty} \langle t \rangle^{-\frac{1}{2}(1+2\beta(p))^{-}} \|(\Delta u, \Delta B)\|_{L^{2}}\, dt' \leq \, C,
\end{split}
\end{align}
where we used (\ref{decay geadient u b}) and (\ref{4.2}).
\end{proof}

\begin{remark}
\label{decay remark}
Thanks to (\ref{decay pressure}), it is easy to observe that
\begin{align}
\begin{split}
\int_{t_{0}}^{\infty}\langle t' \rangle^{\alpha} \left( \|\Delta u(t')\|_{L^{2}}^{2} + \|\Delta B(t')\|_{L^{2}}^{2}
+ \|\nabla \Pi(t')\|_{L^{2}}^{2} \right)\, dt' \lesssim \, \langle t \rangle^{\alpha - (1+2\beta(p))},
\end{split}
\end{align}
for $\alpha \in (1+2\beta(p), 2+6\beta(p))$. Moreover, without loss of generality, we may assume that
\begin{align}
\|\Delta u(t_{0})\|_{L^{2}} + \|\Delta B(t_{0})\|_{L^{2}} \leq \, 1.
\end{align}
\end{remark}

\begin{proposition}
\label{decay pro 4}
Under the assumptions of Theorem \ref{decay_main_theorem}, there holds
\begin{align}
\int_{t_{0}}^{\infty} \left( \|u(t)\|_{L^{\infty}} + \|B(t)\|_{L^{\infty}} + \|\nabla u(t)\|_{L^{\infty}} + \|\nabla B(t)\|_{L^{\infty}} \right)\, dt \leq C
\end{align}
and
\begin{align}
\label{laplace u b final}
\int_{t_{0}}^{\infty}\left( \|\Delta u(t)\|_{L^{2}}^{\theta} + \|\Delta B(t)\|_{L^{2}}^{\theta}
+ \|\nabla \Pi(t)\|_{L^{2}}^{\theta} \right)\, dt \leq \, C,
\end{align}
for $\frac{2}{3} \leq \theta \leq 2$.
\end{proposition}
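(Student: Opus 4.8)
The plan is to derive both assertions from the weighted $L^{2}$-in-time estimate of Proposition~\ref{decay pro 3} together with the pointwise decay rates it already provides, the only genuinely new ingredient being a third-order energy estimate needed to reach the $L^{\infty}$ norms of the gradients. Throughout I abbreviate $\beta=\beta(p)$ and $w=(1+2\beta)^{-}$, and I use repeatedly that $p\in(1,\tfrac65)$ forces $\tfrac12<\beta<\tfrac34$; this strict lower bound $\beta>\tfrac12$ is exactly what makes all the time integrals below converge.

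For the estimate (\ref{laplace u b final}) I would simply interpolate the bound $\int_{t_{0}}^{\infty}\langle t\rangle^{w}\|\Delta u\|_{L^{2}}^{2}\,dt\le C$ of Proposition~\ref{decay pro 3} against a pure power of $\langle t\rangle$. By Hölder with the conjugate exponents $\tfrac2\theta$ and $\tfrac{2}{2-\theta}$,
\[
\int_{t_{0}}^{\infty}\|\Delta u\|_{L^{2}}^{\theta}\,dt\le\Big(\int_{t_{0}}^{\infty}\langle t\rangle^{w}\|\Delta u\|_{L^{2}}^{2}\,dt\Big)^{\theta/2}\Big(\int_{t_{0}}^{\infty}\langle t\rangle^{-w\theta/(2-\theta)}\,dt\Big)^{(2-\theta)/2},
\]
and the last factor is finite precisely when $\theta>\tfrac{2}{1+w}$, a threshold that lies below $\tfrac23$ because $1+w>2+2\beta>3$; the case $\theta=2$ is immediate and $\Delta B$, $\nabla\Pi$ are treated identically, which yields (\ref{laplace u b final}) for all $\theta\in[\tfrac23,2]$. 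For the undifferentiated $L^{\infty}$ norms I would invoke the three-dimensional Gagliardo--Nirenberg inequality $\|f\|_{L^{\infty}}\lesssim\|\nabla f\|_{L^{2}}^{1/2}\|\Delta f\|_{L^{2}}^{1/2}$, so that after Hölder with exponents $4,\tfrac43$,
\[
\int_{t_{0}}^{\infty}\|u\|_{L^{\infty}}\,dt\lesssim\Big(\int_{t_{0}}^{\infty}\langle t\rangle^{w}\|\Delta u\|_{L^{2}}^{2}\,dt\Big)^{1/4}\Big(\int_{t_{0}}^{\infty}\langle t\rangle^{-w/3}\|\nabla u\|_{L^{2}}^{2/3}\,dt\Big)^{3/4}.
\]
Feeding in $\|\nabla u\|_{L^{2}}\lesssim\langle t\rangle^{-1/2-\beta}$ from (\ref{decay geadient u b}), the second integrand is $\langle t\rangle^{-(w+1+2\beta)/3}\approx\langle t\rangle^{-2(1+2\beta)/3}$, whose exponent is below $-1$ as soon as $\beta>\tfrac14$; the same argument with $\|\nabla B\|_{L^{2}}$ handles $\int_{t_{0}}^{\infty}\|B\|_{L^{\infty}}\,dt$.

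The gradient $L^{\infty}$ bounds are the main obstacle, since in three dimensions $\|\nabla f\|_{L^{\infty}}$ cannot be controlled by $\|\nabla f\|_{L^{2}}$ and $\|\Delta f\|_{L^{2}}$ alone; one genuinely needs a third-order quantity, for instance $\|\nabla f\|_{L^{\infty}}\lesssim\|\nabla f\|_{L^{2}}^{1/4}\|\nabla^{3}f\|_{L^{2}}^{3/4}$. The crux is therefore to upgrade Propositions~\ref{decay pro 1} and~\ref{decay pro 3} by one derivative: I would differentiate the $u$- and $B$-equations of (\ref{mhd_rho}) and run the same energy and frequency-splitting scheme, which should produce $\|\nabla^{2}(u,B)(t)\|_{L^{2}}\lesssim\langle t\rangle^{-1-\beta}$ together with a weighted bound $\int_{t_{0}}^{\infty}\langle t\rangle^{W}\|\nabla^{3}(u,B)\|_{L^{2}}^{2}\,dt\le C$ for a range of $W$ analogous to that of Proposition~\ref{decay pro 3} and Remark~\ref{decay remark} (any $W>(4-2\beta)/3$, a threshold which is $<1$, will suffice). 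Granting this, Hölder with exponents $\tfrac83,\tfrac85$ gives
\[
\int_{t_{0}}^{\infty}\|\nabla u\|_{L^{\infty}}\,dt\lesssim\Big(\int_{t_{0}}^{\infty}\langle t\rangle^{W}\|\nabla^{3}u\|_{L^{2}}^{2}\,dt\Big)^{3/8}\Big(\int_{t_{0}}^{\infty}\langle t\rangle^{-3W/5}\|\nabla u\|_{L^{2}}^{2/5}\,dt\Big)^{5/8},
\]
whose second integrand is $\langle t\rangle^{-3W/5-(1+2\beta)/5}$, integrable exactly because $W>(4-2\beta)/3$; the field $B$ is identical.

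The real work concentrates in that third-order energy estimate. After differentiating (\ref{mhd_rho}) the variable density generates commutators such as $[\nabla^{2},a]\Delta u$ and $\nabla^{2}(a\nabla\Pi)$, while the coupling generates $\nabla^{2}(B\cdot\nabla B)$ and $\nabla^{2}(B\cdot\nabla u)$; these must be absorbed using Lemma~\ref{bernsteininequality}, the product laws, and --- crucially --- the smallness of $\|\nabla u\|_{L^{2}}+\|\nabla B\|_{L^{2}}$ for $t\ge t_{0}$ furnished by (\ref{4.2}), which lets the top-order terms be moved to the left-hand side exactly as the quantity $\mathcal{A}$ was rendered positive in Proposition~\ref{decay pro 1}. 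The pressure contribution is then controlled through (\ref{decay 1 pressure}) and (\ref{decay pressure}). Once this weighted higher-order estimate is closed, the four Hölder computations above converge and both (\ref{laplace u b final}) and the $L^{\infty}$ integral in (\ref{main e 1}) follow.
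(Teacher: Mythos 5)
Your treatment of (\ref{laplace u b final}) and of $\int_{t_{0}}^{\infty}\|(u,B)\|_{L^{\infty}}\,dt$ is sound: the single H\"older interpolation of the weighted bound from Proposition \ref{decay pro 3} against a power of $\langle t\rangle$, and the Agmon inequality $\|f\|_{L^{\infty}}\lesssim\|\nabla f\|_{L^{2}}^{1/2}\|\Delta f\|_{L^{2}}^{1/2}$, reproduce what the paper does with only cosmetic differences (the paper proves the endpoints $\theta=2/3$ and $\theta=2$ and interpolates, and bounds $\|u\|_{L^{\infty}}\lesssim\|\nabla u\|_{L^{2}}+\|\Delta u\|_{L^{2}}$ instead). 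One small slip: you write $1+w>2+2\beta$, but $w=(1+2\beta)^{-}$ means $w<1+2\beta$, hence $1+w<2+2\beta$; what saves the argument is that $2+2\beta>3$, so $w$ may be \emph{chosen} close enough to $1+2\beta$ that $1+w>3$. This is harmless.

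The genuine gap is in your treatment of $\int_{t_{0}}^{\infty}\|(\nabla u,\nabla B)\|_{L^{\infty}}\,dt$, which is the heart of the proposition. Your plan requires a weighted third-order energy estimate $\int_{t_{0}}^{\infty}\langle t\rangle^{W}\|\nabla^{3}(u,B)\|_{L^{2}}^{2}\,dt\leq C$, obtained by differentiating (\ref{mhd_rho}) in space and ``running the same energy scheme.'' But spatial differentiation of the momentum equation produces terms carrying $\nabla\rho$ (equivalently $\nabla a$), such as $\partial_{j}\rho\,\partial_{t}u$ and $\partial_{j}(\rho u)\cdot\nabla u$, and these cannot be removed by integration by parts (attempting it produces $\Delta\rho$, which is worse). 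The smallness of $\|(\nabla u,\nabla B)\|_{L^{2}}$ that you invoke does nothing for them: one needs $\|\nabla a(t)\|_{L^{\infty}}$ (and $\|a(t)\|_{\dot H^{2}}$) uniformly in time. In this paper that control is exactly Proposition \ref{decay pro 5}, whose proof is a Gronwall argument on the transport equation requiring $\int_{0}^{t}\|\nabla u\|_{L^{\infty}}\,d\tau\leq C$ --- i.e.\ the very statement you are trying to prove. Your route is therefore circular as it stands (and indeed the paper's own third-order estimate, Proposition \ref{decay pro 6}, is proved \emph{after} this proposition and uses Proposition \ref{decay pro 5} as input). The paper escapes this trap by never taking a third spatial derivative: it differentiates the equations in \emph{time}, where the density terms $\rho_{t}=-u\cdot\nabla\rho$ can be rewritten via $\mathrm{div}\,u=0$ and integration by parts so that only undifferentiated $\rho$ appears, yielding $\int_{t_{0}}^{\infty}\|(\nabla u_{t},\nabla B_{t})\|_{L^{2}}^{2}\,dt\leq C$; it then applies $W^{2,6}$ Stokes elliptic regularity, $\|\nabla^{2}u\|_{L^{6}}+\|\nabla\Pi\|_{L^{6}}\lesssim\|u_{t}\|_{L^{6}}+\|u\cdot\nabla u\|_{L^{6}}+\|B\cdot\nabla B\|_{L^{6}}$ with $\|u_{t}\|_{L^{6}}\lesssim\|\nabla u_{t}\|_{L^{2}}$, and concludes via $\|\nabla u\|_{L^{\infty}}\lesssim\|\Delta u\|_{L^{2}}^{1/2}\|\nabla^{2}u\|_{L^{6}}^{1/2}$, so that $\int\|\nabla u\|_{L^{\infty}}\,dt\lesssim\int\|\nabla^{2}u\|_{L^{6}}^{2}\,dt+\int\|\Delta u\|_{L^{2}}^{2/3}\,dt$. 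To repair your argument you would either have to adopt this time-derivative/elliptic route, or set up a simultaneous bootstrap coupling $\|\nabla a\|_{L^{\infty}}$ with the third-order energy --- a substantially more delicate task that your proposal does not address.
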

\begin{proof}
Step 1 : Estimate of $\|u_{t}(t)\|_{L^{2}}$, $\|u(t)\|_{\dot{H}^{2}}$ and $\|B_{t}(t)\|_{L^{2}}$, $\|B(t)\|_{\dot{H}^{2}}$.
We get by first applying $\partial_{t}$ to the momentum equation of (\ref{mhd_rho}) and then taking $L^{2}$ inner product of the resulting
equation with $\partial_{t}u$ that
\begin{align}
\label{decay 4 step 1 u}
\begin{split}
& \frac{1}{2}\frac{d}{dt}\|\sqrt{\rho}u_{t}(t)\|_{L^{2}}^{2} + \|\nabla u_{t}(t)\|_{L^{2}}^{2} =  - \int_{\mathbb{R}^{3}} \rho_{t}u_{t} \cdot
(u\cdot \nabla u) \\
& \quad\quad\quad\quad\quad
-\int_{\mathbb{R}^{3}} \rho u_{t}\cdot (u_{t}\cdot \nabla u) + \rho_{t} |u_{t}|^{2} - B_{t}\cdot \nabla B \cdot u_{t}
- B \cdot \nabla B_{t} \cdot u_{t} \, dx.
\end{split}
\end{align}
Applying $\partial_{t}$ to the magnetic field equation of (\ref{mhd_rho}) and then taking $L^{2}$ inner product of the resulting equation
with $\partial_{t}B$ that
\begin{align}
\label{decay 4 step 1 b}
\begin{split}
\frac{1}{2}\frac{d}{dt}\|B_{t}(t)\|_{L^{2}}^{2} & + \|\nabla B_{t}(t)\|_{L^{2}}^{2} =     \\
& -\int_{\mathbb{R}^{3}} u_{t}\cdot \nabla B \cdot B_{t} - B_{t}\cdot \nabla u \cdot B_{t} - B\cdot \nabla u_{t}\cdot B_{t} \, dx.
\end{split}
\end{align}
Using similar methods in \cite{zhangping}, we obtain
\begin{align*}
\left| \int_{\mathbb{R}^{3}} \rho_{t}u_{t}\cdot (u\cdot\nabla u)\, dx \right| \leq \frac{1}{8} \|\nabla u_{t}\|_{L^{2}}^{2} +
C\, \|\nabla u\|_{L^{2}}^{4} \|\nabla^{2}u\|_{L^{2}}^{2},
\end{align*}
\begin{align*}
\left| \int_{\mathbb{R}^{3}}\rho u_{t}\cdot (u_{t}\cdot \nabla u)\, dx \right| \leq \frac{1}{8} \|\nabla u_{t}\|_{L^{2}}^{2} +
C\, \|\sqrt{\rho} u_{t}\|_{L^{2}}^{2} \|\nabla u\|_{L^{2}}^{4},
\end{align*}
\begin{align*}
\left| \int_{\mathbb{R}^{3}} \rho_{t} |u_{t}|^{2} \, dx \right| \leq \frac{1}{8} \|\nabla u_{t}\|_{L^{2}}^{2}
+ C\, \|\nabla u\|_{L^{2}}^{4} \|\sqrt{\rho} u_{t}\|_{L^{2}}^{2},
\end{align*}
\begin{align*}
\left| \int_{\mathbb{R}^{2}} B_{t}\cdot \nabla B\cdot u_{t} \, dx \right| \leq \frac{1}{8} \|\nabla u_{t}\|_{L^{2}}^{2}
+ \frac{1}{8} \|\nabla B_{t}\|_{L^{2}}^{2} + C \, \|B_{t}\|_{L^{2}}^{2} \|\nabla B\|_{L^{2}}^{4},
\end{align*}
and
\begin{align*}
\left| \int_{\mathbb{R}^{3}} B_{t} \cdot (B_{t}\cdot \nabla u)\, dx \right| \leq \frac{1}{8} \|\nabla B_{t}\|_{L^{2}}^{2}
+ C\, \|B_{t}\|_{L^{2}}^{2} \|\nabla u\|_{L^{2}}^{4}.
\end{align*}
Substituting the above five inequalities into (\ref{decay 4 step 1 u}), (\ref{decay 4 step 1 b}), we easily obtain
\begin{align*}
\frac{d}{dt}\|(\sqrt{\rho} u_{t}, B_{t})\|_{L^{2}}^{2} + \|(\nabla u_{t}, \nabla B_{t})\|_{L^{2}}^{2}
\lesssim &\, \|\nabla u\|_{L^{2}}^{4}\|\nabla^{2}u\|_{L^{2}}^{2} + \|\sqrt{\rho}u_{t}\|_{L^{2}}^{2}\|\nabla u\|_{L^{2}}^{4} \\
&\, + \|B_{t}\|_{L^{2}}^{2}\|\nabla B\|_{L^{2}}^{4} + \|B_{t}\|_{L^{2}}^{2} \|\nabla u\|_{L^{2}}^{4}.
\end{align*}
Integrating the above inequality from $t_{0}$ to $t$, we get
\begin{align*}
&\, \|\sqrt{\rho} u_{t}(t)\|_{L^{2}}^{2} + \|B_{t}(t)\|_{L^{2}}^{2} + \int_{t_{0}}^{t}\|(\nabla u_{t}, \nabla B_{t})\|_{L^{2}}^{2}\, dt'    \\
\leq &\, \|\sqrt{\rho}u_{t}(t_{0})\|_{L^{2}}+\|B_{t}(t_{0})\|_{L^{2}}^{2} + C\, \int_{t_{0}}^{t}\|\nabla^{2}u\|_{L^{2}}^{2}\|\nabla u\|_{L^{2}}^{4}\, dt'   \\
&\, + C\, \int_{t_{0}}^{t}\|\sqrt{\rho}u_{t}\|_{L^{2}}^{2}\|\nabla u\|_{L^{2}}^{4}\, dt'
+ C\, \int_{t_{0}}^{t} \|B_{t}\|_{L^{2}}^{2} \|\nabla B\|_{L^{2}}^{4} \, dt' \\
&\, + C\, \int_{t_{0}}^{t} \|B_{t}\|_{L^{2}}^{2}\|\nabla u\|_{L^{2}}^{4}\, dt'.
\end{align*}
Applying Gronwall's inequality gives
\begin{align*}
&\, \|\sqrt{\rho}u_{t}(t)\|_{L^{2}}^{2} + \|B_{t}(t)\|_{L^{2}}^{2} + \|(\nabla u_{t}, \nabla B_{t})\|_{L^{2}([t_{0},t];L^{2})}^{2}  \\
\leq &\, e^{ C\int_{t_{0}}^{t} \|(\nabla u, \nabla B)\|_{L^{2}}^{4}\, dt'}
\left( \|(\sqrt{\rho}u_{t}(t_{0}), B_{t}(t_{0}))\|_{L^{2}}^{2} + C \int_{t_{0}}^{t} \|\nabla^{2} u\|_{L^{2}}^{2}
\|\nabla u\|_{L^{2}}^{4} \, dt' \right)
\end{align*}
It then follows from (\ref{basic_energy_equality}) and Proposition \ref{decay pro 1} that
\begin{align*}
\int_{t_{0}}^{t}\|\nabla u\|_{L^{2}}^{4}\, dt' \leq \sup_{t'\in [t_{0}, t]} \|\nabla u(t')\|_{L^{2}}^{2} \|\nabla u\|_{L_{t}^{2}(L^{2})}^{2} \leq C, \\
\int_{t_{0}}^{t}\|\nabla B\|_{L^{2}}^{4}\, dt' \leq \sup_{t'\in [t_{0}, t]} \|\nabla B(t')\|_{L^{2}}^{2} \|\nabla B\|_{L_{t}^{2}(L^{2})}^{2} \leq C,
\end{align*}
and
\begin{align*}
\int_{t_{0}}^{t} \|\nabla^{2}u(t')\|_{L^{2}}^{2}\|\nabla u(t')\|_{L^{2}}^{4}\, dt' \leq \,C\, \sup_{t'\in [t_{0}, t]}
\|\nabla u(t')\|_{L^{2}}^{4} \|\Delta u\|_{L^{2}([t_{0}, t]; L^{2})}^{2} \leq C.
\end{align*}

Applying Remark \ref{decay remark} and the standard energy estimate to the momentum and magnetic field equation of (\ref{mhd_rho})
at $t = t_{0}$ yields
\begin{align*}
\|\sqrt{\rho} u_{t}(t_{0})\|_{L^{2}}^{2} & \lesssim \, \|(u\cdot\nabla u)(t_{0})\|_{L^{2}}^{2} + \|(B\cdot\nabla B)(t_{0})\|_{L^{2}}^{2}
+ \|\Delta u (t_{0})\|_{L^{2}}^{2}  \\
& \lesssim \, \|u(t_{0})\|_{L^{4}}^{2}\|\nabla u(t_{0})\|_{L^{4}}^{2} + \|B(t_{0})\|_{L^{4}}^{2}\|\nabla B(t_{0})\|_{L^{4}}^{2}
 + \|\Delta u(t_{0})\|_{L^{2}}^{2} \\
& \lesssim \, \|u(t_{0})\|_{L^{2}}^{1/2} \|\nabla u(t_{0})\|_{L^{2}}^{2} \|\Delta u(t_{0})\|_{L^{2}}^{3/2}    \\
& \quad\,\,  + \|B(t_{0})\|_{L^{2}}^{1/2} \|\nabla B(t_{0})\|_{L^{2}}^{2} \|\Delta B(t_{0})\|_{L^{2}}^{3/2} + \|\Delta u(t_{0})\|_{L^{2}}^{2}  \\
& \leq \, C,
\end{align*}
and
\begin{align*}
\|B_{t}(t_{0})\|_{L^{2}}^{2} \lesssim &\, \|\Delta B(t_{0})\|_{L^{2}}^{2} + \|(u\cdot \nabla)B(t_{0})\|_{L^{2}}^{2} + \|(B\cdot \nabla)u(t_{0})\|_{L^{2}}^{2}   \\
\lesssim &\, 1 + \|u(t_{0})\|_{L^{4}}^{2} \|\nabla B(t_{0})\|_{L^{4}}^{2} + \|B(t_{0})\|_{L^{4}}^{2} \|\nabla u(t_{0})\|_{L^{4}}^{2}  \leq \, C.
\end{align*}
Hence, we finally have
\begin{align}
\label{decay 4 step main}
\begin{split}
\sup_{t\geq t_{0}} \left( \|\sqrt{\rho}u_{t}(t)\|_{L^{2}}^{2} + \|B_{t}(t)\|_{L^{2}}^{2} \right)
+ \int_{t_{0}}^{\infty} \left( \|\nabla u_{t}\|_{L^{2}}^{2} + \|\nabla B_{t}\|_{L^{2}}^{2} \right)\, dt' \leq \, C.
\end{split}
\end{align}
Notice from the momentum equation of (\ref{mhd_rho}) that
\begin{align*}
\|\nabla^{2}u(t)\|_{L^{2}} + \|\nabla \Pi(t)\|_{L^{2}} \lesssim &\, \|\sqrt{\rho} u_{t}(t)\|_{L^{2}} + \|(u\cdot\nabla)u(t)\|_{L^{2}}
+ \|(B\cdot \nabla)B(t)\|_{L^{2}}   \\
\leq &\, C \bigg(\|\sqrt{\rho}u_{t}(t)\|_{L^{2}} + \|u(t)\|_{L^{2}} \|\nabla u(t)\|_{L^{2}}^{4}  \\
&\, + \|B(t)\|_{L^{2}}\|\nabla B(t)\|_{L^{2}}^{4}\bigg) + \frac{1}{6} \|(\nabla^{2}u(t), \nabla^{2}B(t))\|_{L^{2}},
\end{align*}
and
\begin{align*}
\|\nabla^{2} B(t)\|_{L^{2}} \leq &\, C \,\bigg( \|B_{t}(t)\|_{L^{2}} + \|(u\cdot\nabla)B(t)\|_{L^{2}} + \|(B\cdot\nabla)u(t)\|_{L^{2}} \bigg)  \\
\leq &\, C\, \bigg( \|B_{t}(t)\|_{L^{2}} + \|u(t)\|_{L^{2}}\|\nabla u(t)\|_{L^{2}}^{4} + \|B(t)\|_{L^{2}}\|\nabla B(t)\|_{L^{2}}^{4} \bigg) \\
&\, + \frac{1}{6} \|\nabla^{2}B(t)\|_{L^{2}} + \frac{1}{6} \|\nabla^{2}B(t)\|_{L^{2}},
\end{align*}
which along with Proposition \ref{decay pro 1} and (\ref{decay 4 step main}) implies that
\begin{align}
\label{decay 4 step 1 last}
\sup_{t\geq t_{0}} \left( \|\nabla^{2}u(t)\|_{L^{2}} + \|\nabla^{2}B(t)\|_{L^{2}} + \|\nabla \Pi(t)\|_{L^{2}} \right) \leq \, C.
\end{align}

Step 2 : Estimate of $\int_{t_{0}}^{\infty}\|(u(t'), B(t'))\|_{L^{\infty}}\, dt'$,
and $\int_{t_{0}}^{\infty}\|(\nabla u(t'),\nabla B(t'))\|_{L^{\infty}} dt'$.
Let $(v, q)$ solve
\begin{align*}
- \Delta v + \nabla q = f, \quad \mathrm{div} v = 0;
\end{align*}
then we have $\nabla q = -\nabla (-\Delta)^{-1} \mathrm{div} f$, which implies that for any $r \in (1, \infty)$,
\begin{align*}
\|\nabla q\|_{L^{r}} \leq \, C\, \|f\|_{L^{r}} \quad \text{and} \quad \|\Delta v\|_{L^{r}} \leq \, C\, \|f\|_{L^{r}}.
\end{align*}
From this and the momentum equation of (\ref{mhd_rho}), we infer
\begin{align*}
&\, \|\nabla^{2} u(t)\|_{L^{6}} + \|\nabla \Pi(t)\|_{L^{6}} \\
\lesssim &\, \|u_{t}(t)\|_{L^{6}} + \|(u\cdot\nabla)u(t)\|_{L^{6}} + \|(B\cdot\nabla)B(t)\|_{L^{6}} \\
\lesssim &\, \|\nabla u_{t}(t)\|_{L^{2}} + \|\nabla u(t)\|_{L^{2}}^{1/2} \|\nabla^{2}u(t)\|_{L^{2}}^{3/2}
+ \|\nabla B(t)\|_{L^{2}}^{1/2} \|\nabla^{2}B(t)\|_{L^{2}}^{3/2}    \\
&\, + \|u(t)\|_{L^{2}}^{1/4}\|\nabla u(t)\|_{L^{2}}^{3/4}\|\nabla^{2}u(t)\|_{L^{2}}^{1/4}\|\nabla^{2}u(t)\|_{L^{6}}^{3/4}   \\
&\, + \|B(t)\|_{L^{2}}^{1/4}\|\nabla B(t)\|_{L^{2}}^{3/4}\|\nabla^{2}B(t)\|_{L^{2}}^{1/4}\|\nabla^{2}B(t)\|_{L^{6}}^{3/4}
\end{align*}
Taking the $L^{2}$ norm for the time variables on $[t_{0}, t]$, we get by using (\ref{decay 4 step main}) and (\ref{decay 4 step 1 last}) that
\begin{align}
\label{decay 4 step 2 u pi}
\begin{split}
&\, \|\nabla^{2}u\|_{L^{2}([t_{0},t];L^{6})}^{2} + \|\nabla \Pi\|_{L^{2}([t_{0},t];L^{6})}^{2} \\
\leq &\, C\, \bigg\{ \|\nabla u_{t}\|_{L^{2}([t_{0},t];L^{2})}^{2}
+ \sup_{t'\in [t_{0},t]}\|\nabla u(t')\|_{L^{2}} \sup_{t'\in [t_{0},t]}\|\nabla^{2}u(t')\|_{L^{2}}\|\nabla^{2}u\|_{L^{2}([t_{0},t];L^{2})}^{2}  \\
&\quad\, + \sup_{t'\in [t_{0},t]}\|u(t')\|_{L^{2}}^{2} \sup_{t'\in [t_{0},t]}\|\nabla u(t')\|_{L^{2}}^{6} \|\nabla^{2} u\|_{L^{2}([t_{0},t];L^{2})}^{2} \\
&\quad\, + \sup_{t'\in [t_{0},t]}\|\nabla B(t')\|_{L^{2}} \sup_{t'\in [t_{0},t]}\|\nabla^{2}B(t')\|_{L^{2}}\|\nabla^{2}B\|_{L^{2}([t_{0},t];L^{2})}^{2}  \\
&\quad\, + \sup_{t'\in [t_{0},t]}\|B(t')\|_{L^{2}}^{2} \sup_{t'\in [t_{0},t]}\|\nabla B(t')\|_{L^{2}}^{6} \|\nabla^{2} B\|_{L^{2}([t_{0},t];L^{2})}^{2}
\bigg\} + \frac{1}{4} \|\nabla^{2} B\|_{L^{2}([t_{0},t];L^{6})}^{2}.
\end{split}
\end{align}
Using similar methods, we can easily obtain
\begin{align}
\label{decay 4 step 2 b}
\begin{split}
&\, \|\nabla^{2}B\|_{L^{2}([t_{0},t];L^{6})}^{2}    \\
\leq &\, C\, \bigg\{ \|\nabla B_{t}\|_{L^{2}([t_{0},t];L^{2})}^{2} + \sup_{t'\in [t_{0},t]} \|\nabla u(t')\|_{L^{2}}
\sup_{t'\in [t_{0},t]} \|\nabla^{2}u(t')\|_{L^{2}} \|\nabla^{2}B\|_{L^{2}([t_{0},t];L^{2})}^{2} \\
&\quad\, + \sup_{t'\in [t_{0},t]}\|u(t')\|_{L^{2}}^{2} \sup_{t'\in [t_{0},t]}\|\nabla u(t')\|_{L^{2}}^{6} \|\nabla^{2}B\|_{L^{2}([t_{0},t];L^{2})}^{2}  \\
&\quad\, + \sup_{t'\in [t_{0},t]} \|\nabla B(t')\|_{L^{2}} \sup_{t'\in [t_{0},t]} \|\nabla^{2}B(t')\|_{L^{2}} \|\nabla^{2}u\|_{L^{2}([t_{0},t];L^{2})}^{2} \\
&\quad\, + \sup_{t'\in [t_{0},t]}\|B(t')\|_{L^{2}}^{2} \sup_{t'\in [t_{0},t]}\|\nabla B(t')\|_{L^{2}}^{6} \|\nabla^{2}u\|_{L^{2}([t_{0},t];L^{2})}^{2} \bigg\} \\
&\quad\, + \frac{1}{4} \|\nabla^{2}B\|_{L^{2}([t_{0},t];L^{6})}^{2} + \frac{1}{4} \|\nabla^{2}u\|_{L^{2}([t_{0},t];L^{6})}^{2}.
\end{split}
\end{align}
Summing up (\ref{decay 4 step 2 u pi}) and (\ref{decay 4 step 2 b}), we get
\begin{align}
\label{gradient u gradient pi}
\begin{split}
&\, \|\nabla^{2}u\|_{L^{2}([t_{0},t];L^{6})}^{2} + \|\nabla^{2}B\|_{L^{2}([t_{0},t];L^{6})}^{2} + \|\nabla \Pi\|_{L^{2}([t_{0},t];L^{6})}^{2}   \\
\lesssim &\, \|\nabla u_{t}\|_{L^{2}([t_{0},t];L^{2})}^{2} + \|\nabla B_{t}\|_{L^{2}([t_{0},t];L^{2})}^{2}  \\
&\, + \sup_{t'\in [t_{0},t]}\|\nabla u(t')\|_{L^{2}}\sup_{t'\in [t_{0},t]}\|\nabla^{2}u(t')\|_{L^{2}}
\|(\nabla^{2} u, \nabla^{2} B)\|_{L^{2}([t_{0},t];L^{2})}^{2} \\
&\, + \sup_{t'\in [t_{0},t]}\|u(t')\|_{L^{2}}^{2}\sup_{t'\in [t_{0},t]}\|\nabla u(t')\|_{L^{2}}^{6}
\|(\nabla^{2} u, \nabla^{2} B)\|_{L^{2}([t_{0},t];L^{2})}^{2}   \\
&\, + \sup_{t'\in [t_{0},t]}\|\nabla B(t')\|_{L^{2}}\sup_{t'\in [t_{0},t]}\|\nabla^{2}B(t')\|_{L^{2}}
\|(\nabla^{2} u, \nabla^{2} B)\|_{L^{2}([t_{0},t];L^{2})}^{2} \\
&\, + \sup_{t'\in [t_{0},t]}\|B(t')\|_{L^{2}}^{2}\sup_{t'\in [t_{0},t]}\|\nabla B(t')\|_{L^{2}}^{6}
\|(\nabla^{2} u, \nabla^{2} B)\|_{L^{2}([t_{0},t];L^{2})}^{2}   \\
\leq &\, C.
\end{split}
\end{align}
Now, let us turn to the estimate of $\int_{t_{0}}^{t}\|\nabla u(t')\|_{L^{\infty}}\, dt'$ and $\int_{t_{0}}^{t}\|\nabla B(t')\|_{L^{\infty}}\, dt'$.
Indeed, simple interpolation in three dimensions gives
\begin{align*}
\int_{t_{0}}^{t}\|\nabla u(t')\|_{L^{\infty}}\, dt' \lesssim &\, \int_{t_{0}}^{t} \|\nabla u(t')\|_{L^{6}}^{1/2}\|\nabla^{2}u(t')\|_{L^{6}}^{1/2}\, dt' \\
\lesssim &\, \int_{t_{0}}^{t}\|\Delta u(t')\|_{L^{2}}^{1/2}\|\nabla^{2}u(t')\|_{L^{6}}^{1/2} \, dt' \\
\lesssim &\, \int_{t_{0}}^{t} \|\nabla^{2} u(t')\|_{L^{6}}^{2}\, dt' + \int_{t_{0}}^{t}\|\Delta u(t')\|_{L^{2}}^{2/3}\, dt',
\end{align*}
and
\begin{align*}
\int_{t_{0}}^{t}\|\nabla B(t')\|_{L^{\infty}}\, dt' \lesssim &\, \int_{t_{0}}^{t} \|\nabla B(t')\|_{L^{6}}^{1/2}\|\nabla^{2}B(t')\|_{L^{6}}^{1/2}\, dt' \\
\lesssim &\, \int_{t_{0}}^{t}\|\Delta B(t')\|_{L^{2}}^{1/2}\|\nabla^{2}B(t')\|_{L^{6}}^{1/2} \, dt' \\
\lesssim &\, \int_{t_{0}}^{t} \|\nabla^{2} B(t')\|_{L^{6}}^{2}\, dt' + \int_{t_{0}}^{t}\|\Delta B(t')\|_{L^{2}}^{2/3}\, dt'.
\end{align*}
Thanks to (\ref{decay pressure}), we get
\begin{align}
\label{laplace u b}
\begin{split}
&\, \int_{t_{0}}^{t} \left( \|\Delta u(t')\|_{L^{2}}^{2/3} + \|\Delta B(t')\|_{L^{2}}^{2/3} + \|\nabla \Pi(t')\|_{L^{2}}^{2/3} \right) \, dt'   \\
\lesssim &\, \left( \int_{t_{0}}^{t} \langle t' \rangle^{(1+2\beta(p))^{-}} \left( \|\Delta u(t')\|_{L^{2}}^{2}
+ \|\Delta B(t')\|_{L^{2}}^{2} + \|\nabla \Pi(t')\|_{L^{2}}^{2} \right)\, dt' \right)^{\frac{1}{3}} \\
\leq &\, C,
\end{split}
\end{align}
which along with (\ref{gradient u gradient pi}) ensures that
\begin{align}
\int_{t_{0}}^{t}\|\nabla u(t)\|_{L^{\infty}} + \|\nabla B(t)\|_{L^{\infty}} \,dt' \leq \,C.
\end{align}
Due to (\ref{decay 1 pressure}) and magnetic field equation, we have
\begin{align*}
&\, \int_{t_{0}}^{t} \left( \|\Delta u(t')\|_{L^{2}}^{2} + \|\Delta B(t')\|_{L^{2}}^{2} + \|\nabla \Pi(t')\|_{L^{2}}^{2} \right) \,dt'   \\
\leq &\,C\int_{t_{0}}^{t}\big( \|(\partial_{t}u(t'), \partial_{t} B(t'))\|_{L^{2}}^{2} + \|(u\cdot\nabla)u(t')\|_{L^{2}}^{2}
+ \|(B\cdot \nabla)B(t')\|_{L^{2}}^{2}  \\
&\,  + \|(u\cdot\nabla)B(t')\|_{L^{2}}^{2} + \|(B\cdot\nabla)u(t')\|_{L^{2}}^{2} \big)\, dt'    \\
\leq &\, C \int_{t_{0}}^{t}\|(\partial_{t}u(t'), \partial_{t}B(t'))\|_{L^{2}}^{2}
+ \|u(t')\|_{L^{6}}^{2}(\|\nabla u(t')\|_{L^{3}}^{2} + \|\nabla B(t')\|_{L^{3}}^{2})    \\
&\, + \|B(t')\|_{L^{6}}^{2}(\|\nabla u(t')\|_{L^{3}}^{2} + \|\nabla B(t')\|_{L^{3}}^{2}) \, dt' \\
\leq &\, C.
\end{align*}
This together with (\ref{laplace u b}) proves (\ref{laplace u b final}).

At last, using (\ref{decay pro 3}), we can get that
\begin{align*}
\|u(t)\|_{L^{\infty}} \lesssim \|u(t)\|_{L^{6}}^{1/2} \|\nabla u(t)\|_{L^{6}}^{1/2} & \lesssim \|\nabla u(t)\|_{L^{2}} + \|\Delta u(t)\|_{L^{2}}  \\
& \lesssim \langle t \rangle^{-\frac{1}{2}-\beta(p)} + \|\Delta u(t)\|_{L^{2}}, \\
\|B(t)\|_{L^{\infty}} \lesssim \|B(t)\|_{L^{6}}^{1/2} \|\nabla B(t)\|_{L^{6}}^{1/2} & \lesssim \|\nabla B(t)\|_{L^{2}} + \|\Delta B(t)\|_{L^{2}}  \\
& \lesssim \langle t \rangle^{-\frac{1}{2}-\beta(p)} + \|\Delta B(t)\|_{L^{2}},
\end{align*}
which along with (\ref{laplace u b final}) and $p \in (1, \frac{6}{5})$ yields
\begin{align*}
\int_{t_{0}}^{\infty}\|u(t)\|_{L^{\infty}} + \|B(t)\|_{L^{\infty}}\, dt \leq \, C.
\end{align*}
Hence, the proof is completed.
\end{proof}

\begin{proposition}
\label{decay pro 5}
Under the assumptions of Theorem \ref{decay_main_theorem}, there holds
\begin{align}
\begin{split}
\|\nabla a(t)\|_{L^{q}} \leq \,C\,\|\nabla a_{0}\|_{L^{q}},
\end{split}
\end{align}
and
\begin{align}
\begin{split}
\|a(t)\|_{\dot{H}^{2}}\leq \,C\, \|a_{0}\|_{\dot{H}^{2}}.
\end{split}
\end{align}
\end{proposition}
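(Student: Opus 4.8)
The plan is to estimate the spatial derivatives of $a$ using the transport structure of the density equation $\partial_t a + u\cdot\nabla a = 0$. Differentiating this equation in space turns it into a transport equation for $\nabla a$ with a source that is linear in $\nabla a$, and the key leverage is that the coefficient of that source involves $\nabla u$, whose time-integrability in $L^\infty$ has just been established in Proposition \ref{decay pro 4}.

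\medskip

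\emph{Step 1: The $L^q$ estimate for $\nabla a$.} First I would apply $\partial_j$ to the transport equation to obtain
\begin{align*}
\partial_t \partial_j a + u\cdot\nabla(\partial_j a) = -\partial_j u \cdot \nabla a.
\end{align*}
Multiplying by $|\partial_j a|^{q-1}\mathrm{sgn}(\partial_j a)$, summing in $j$, and integrating over $\mathbb{R}^3$, the transport term drops out since $\mathrm{div}\,u = 0$, leaving
\begin{align*}
\frac{1}{q}\frac{d}{dt}\|\nabla a(t)\|_{L^q}^q \lesssim \|\nabla u(t)\|_{L^\infty}\,\|\nabla a(t)\|_{L^q}^q.
\end{align*}
Dividing by $\|\nabla a\|_{L^q}^{q-1}$ and applying Gronwall gives
\begin{align*}
\|\nabla a(t)\|_{L^q} \leq \|\nabla a_0\|_{L^q}\,\exp\Big(C\int_0^t \|\nabla u(\tau)\|_{L^\infty}\,d\tau\Big).
\end{align*}
The exponential is bounded because $\int_0^\infty \|\nabla u\|_{L^\infty}\,dt < \infty$: on $[t_0,\infty)$ this is exactly Proposition \ref{decay pro 4}, and on the finite interval $[0,t_0]$ the integral is controlled by the assumed regularity $u \in L^1_{\mathrm{loc}}(\dot{B}_{2,1}^4) \hookrightarrow L^1_{\mathrm{loc}}(\mathrm{Lip})$ together with the embedding $\dot{B}_{2,1}^{5/2}\hookrightarrow \dot{W}^{1,\infty}$. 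This yields the first inequality.

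\medskip

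\emph{Step 2: The $\dot{H}^2$ estimate for $a$.} For the second bound I would differentiate the transport equation twice. Writing $\partial^2 a$ for a generic second-order derivative, one gets schematically
\begin{align*}
\partial_t \partial^2 a + u\cdot\nabla(\partial^2 a) = -2\,\partial u\cdot\nabla(\partial a) - \partial^2 u\cdot\nabla a.
\end{align*}
Taking the $L^2$ inner product with $\partial^2 a$, the transport term again vanishes by $\mathrm{div}\,u=0$, giving
\begin{align*}
\frac{1}{2}\frac{d}{dt}\|a(t)\|_{\dot H^2}^2 \lesssim \|\nabla u(t)\|_{L^\infty}\,\|a(t)\|_{\dot H^2}^2 + \|\nabla^2 u(t)\|_{L^6}\,\|\nabla a(t)\|_{L^3}\,\|a(t)\|_{\dot H^2}.
\end{align*}
Here the last term uses H\"older with exponents $(6,3,2)$. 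After dividing by $\|a\|_{\dot H^2}$, the factor $\|\nabla a\|_{L^3}$ is uniformly bounded by Step 1 (taking $q=3$, and noting $\nabla a_0 \in L^3$ follows from $a_0 \in B_{2,1}^{5/2}\hookrightarrow \dot W^{1,3}$), so Gronwall's inequality gives
\begin{align*}
\|a(t)\|_{\dot H^2} \leq \Big(\|a_0\|_{\dot H^2} + C\int_0^t \|\nabla^2 u(\tau)\|_{L^6}\,d\tau\Big)\exp\Big(C\int_0^t\|\nabla u\|_{L^\infty}\,d\tau\Big).
\end{align*}
The hard part will be controlling $\int_{t_0}^\infty \|\nabla^2 u\|_{L^6}\,d\tau$. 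I would extract this from the estimate $\|\nabla^2 u\|_{L^2([t_0,t];L^6)}^2 \leq C$ proved in (\ref{gradient u gradient pi}) together with a decay/time-weight argument: combining the uniform $L^2_t$-in-$L^6$ bound with the algebraic decay of $\|\Delta u\|_{L^2}$ from Proposition \ref{decay pro 3} and interpolation should promote this to an $L^1_t$ bound, giving a finite time integral. The interval $[0,t_0]$ is again harmless by the local regularity hypotheses. This closes the Gronwall estimate and yields $\|a(t)\|_{\dot H^2} \leq C\|a_0\|_{\dot H^2}$.
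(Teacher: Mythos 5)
Your Step 1 is correct and coincides with the paper's own argument: differentiate the transport equation once, use $\mathrm{div}\,u=0$ so the convection term drops in the $L^{q}$ energy identity, and close with Gronwall together with $\int_{0}^{\infty}\|\nabla u\|_{L^{\infty}}\,dt\leq C$ from Proposition \ref{decay pro 4}.

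Step 2, however, has a genuine gap in the treatment of the term $\partial^{2}u\cdot\nabla a$. By placing the $L^{6}$ norm on $\nabla^{2}u$ and the $L^{3}$ norm on $\nabla a$, you convert this term into an inhomogeneous source, and your Gronwall inequality then requires $\int_{0}^{\infty}\|\nabla^{2}u(\tau)\|_{L^{6}}\,d\tau<\infty$. This is not available at this stage: the paper only proves $\nabla^{2}u\in L^{2}([t_{0},\infty);L^{6})$ in (\ref{gradient u gradient pi}), with no time weight, and promoting an $L^{2}_{t}$ bound to an $L^{1}_{t}$ bound on an infinite interval would require something like $\int\langle t\rangle^{1+\epsilon}\|\nabla^{2}u\|_{L^{6}}^{2}\,dt<\infty$. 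The decay information of Proposition \ref{decay pro 3} lives entirely at the level of $\|\Delta u\|_{L^{2}}$, and interpolation cannot transfer it upward: by scaling, $\|\nabla^{2}u\|_{L^{6}}$ has the same dimension as $\|\nabla^{3}u\|_{L^{2}}$, so any Gagliardo--Nirenberg inequality of the form $\|\nabla^{2}u\|_{L^{6}}\lesssim\|\Delta u\|_{L^{2}}^{\theta}\|\nabla^{3}u\|_{L^{2}}^{1-\theta}$ forces $\theta=0$, i.e.\ there is no gain whatsoever from the decaying factor. Control of $\|\nabla^{3}u\|_{L^{2}}$ in time only appears in Proposition \ref{decay pro 6}, whose proof uses the present proposition, so that route would be circular.

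The fix --- and this is exactly what the paper does --- is to distribute the H\"older exponents the other way: estimate the term by $\|\nabla^{2}u\|_{L^{3}}\|\nabla a\|_{L^{6}}\|a\|_{\dot{H}^{2}}$ and use Sobolev embedding $\|\nabla a\|_{L^{6}}\lesssim\|a\|_{\dot{H}^{2}}$, so the term stays \emph{linear} in the Gronwall quantity $\|a\|_{\dot{H}^{2}}$ with coefficient $\|\nabla^{2}u\|_{L^{3}}$. Interpolation and Young's inequality then give $\|\nabla^{2}u\|_{L^{3}}\lesssim\|\Delta u\|_{L^{2}}^{1/2}\|\Delta u\|_{L^{6}}^{1/2}\lesssim\|\Delta u\|_{L^{2}}^{2/3}+\|\Delta u\|_{L^{6}}^{2}$, and both of these are integrable in time --- the first by (\ref{laplace u b final}) with $\theta=2/3$, the second by (\ref{gradient u gradient pi}) --- so Gronwall closes and yields $\|a(t)\|_{\dot{H}^{2}}\leq C\|a_{0}\|_{\dot{H}^{2}}$.
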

\begin{proof}
According to the transport equation in (\ref{mhd_rho}) and $\mathrm{div} u = 0$, we know
\begin{align*}
\|\nabla a(t)\|_{L^{q}} \leq \|\nabla a_{0}\|_{L^{q}} + C \int_{0}^{t}\|\nabla u\|_{L^{\infty}}\|\nabla a\|_{L^{q}}\, d\tau,
\end{align*}
for $q \in [1, \infty]$.
Then Gronwall's inequality and Proposition \ref{decay pro 4} implies
\begin{align*}
\|\nabla a(t)\|_{L^{q}} \leq \|\nabla a_{0}\|_{L^{q}} e^{C\int_{0}^{t}\|\nabla u(t')\|_{L^{\infty}}\, dt'} \leq \,C\, \|\nabla a_{0}\|_{L^{q}}.
\end{align*}
On the other hand, differentiating $\partial_{t}a + u\cdot\nabla a = 0$ twice with respect to the spatial variables, we get by a standard enery
estimate that
\begin{align*}
\|a(t)\|_{\dot{H}^{2}} \leq &\, \|a_{0}\|_{\dot{H}^{2}}
+ C\int_{0}^{t}\|\nabla u\|_{L^{\infty}}\|a\|_{\dot{H}^{2}} + \|\nabla a\|_{L^{6}}\|\nabla^{2}u\|_{L^{3}}\, dt' \\
\leq &\, \|a_{0}\|_{\dot{H}^{2}} + C\int_{0}^{t}\|\nabla u\|_{L^{\infty}}\|a\|_{\dot{H}^{2}}        \\
&\, \quad\quad\quad\quad\quad\quad\quad\quad\quad + \|\Delta a\|_{L^{2}}(\|\Delta u\|_{L^{2}}^{2/3} + \|\Delta u\|_{L^{6}}^{2})\, dt'.
\end{align*}
Applying the Gronwall's inequality, Proposition \ref{decay pro 3}, Proposition \ref{decay pro 4} and (\ref{gradient u gradient pi}), we obtain
\begin{align*}
\|a(t)\|_{\dot{H}^{2}}\leq &\, \|a_{0}\|_{\dot{H}^{2}}e^{C\int_{0}^{t}\|\nabla u\|_{L^{\infty}} + \|\Delta u\|_{L^{2}}^{2/3} + \|\Delta u\|_{L^{6}}^{2} \,dt' }    \\
\leq &\, C \|a_{0}\|_{\dot{H}^{2}}.
\end{align*}
Hence, the proof is completed.
\end{proof}

\begin{proposition}
\label{decay pro 6}
Under the assumptions of Theorem \ref{decay_main_theorem}, there holds
\begin{align*}
\sup_{t\geq t_{0}}\left( \|\nabla^{2}u(t)\|_{L^{2}}^{2} + \|\nabla^{2}B(t)\|_{L^{2}}^{2} \right)
& + c_{1}\int_{t_{0}}^{\infty}\|\partial_{t}\nabla u(t)\|_{L^{2}}^{2} + \|\partial_{t}\nabla B(t)\|_{L^{2}}^{2}\, dt  \\
& + c_{2}\int_{t_{0}}^{\infty}\|\nabla^{3}u(t)\|_{L^{2}}^{2} + \|\nabla^{3}B(t)\|_{L^{2}}^{2} \, dt \leq \, C.
\end{align*}
where $C$ depend on $m$, $M$, $\|a_{0}\|_{L^{2}}$, $\|a_{0}\|_{L^{\infty}}$, $\|(u_{0}, B_{0})\|_{L^{p}}$, $\|(u_{0}, B_{0})\|_{H^{2}}$.
\end{proposition}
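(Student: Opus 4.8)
The plan is to notice that two of the three asserted bounds are already in hand from the preceding propositions, and to reduce the only genuinely new content — the space-time bound on third derivatives — to an elliptic reading of the equations. Indeed, the uniform bound $\sup_{t\ge t_0}(\|\nabla^2 u\|_{L^2}^2+\|\nabla^2 B\|_{L^2}^2)\le C$ and the dissipation bound $\int_{t_0}^\infty(\|\partial_t\nabla u\|_{L^2}^2+\|\partial_t\nabla B\|_{L^2}^2)\,dt\le C$ (recall $\partial_t\nabla=\nabla\partial_t$) were both obtained inside Proposition \ref{decay pro 4}. Hence the whole task is to prove $\int_{t_0}^\infty(\|\nabla^3 u\|_{L^2}^2+\|\nabla^3 B\|_{L^2}^2)\,dt\le C$, after which the proposition follows by collecting these three facts.

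For the third derivatives I would read them off the equations rather than differentiate and test, which avoids producing $\nabla^2\rho$. Since $\|\nabla^3 f\|_{L^2}=\|\nabla\Delta f\|_{L^2}$ on $\mathbb{R}^3$ by Plancherel, the magnetic equation $\Delta B=\partial_t B+u\cdot\nabla B-B\cdot\nabla u$ gives $\|\nabla^3 B\|_{L^2}\lesssim\|\nabla\partial_t B\|_{L^2}+\|\nabla(u\cdot\nabla B)\|_{L^2}+\|\nabla(B\cdot\nabla u)\|_{L^2}$. For the velocity, applying the Leray projection $\mathbb{P}$ to the momentum equation of (\ref{mhd_rho}) annihilates the pressure and yields $\Delta u=\mathbb{P}\big[\rho(\partial_t u+u\cdot\nabla u)-B\cdot\nabla B\big]$; as $\mathbb{P}$ is bounded on $L^2$ and commutes with $\nabla$, this gives $\|\nabla^3 u\|_{L^2}\lesssim\|\nabla(\rho\,\partial_t u)\|_{L^2}+\|\nabla(\rho\,u\cdot\nabla u)\|_{L^2}+\|\nabla(B\cdot\nabla B)\|_{L^2}$. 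It then remains to show that each right-hand side, squared and integrated over $[t_0,\infty)$, is finite.

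I would estimate these by the Leibniz rule, splitting each product into a "lowest-order $\times$ highest-order" factor and a "density-derivative" factor and bounding each piece via Gagliardo--Nirenberg together with the already established information. The linear terms $\|\nabla\partial_t u\|_{L^2},\|\nabla\partial_t B\|_{L^2}$ integrate to a constant by Proposition \ref{decay pro 4}. The convective terms produce pieces such as $\|\nabla u\,\nabla B\|_{L^2}\lesssim\|\nabla u\|_{L^4}\|\nabla B\|_{L^4}$ and $\|u\,\nabla^2 B\|_{L^2}\lesssim\|u\|_{L^\infty}\|\nabla^2 B\|_{L^2}$; interpolating the first against the uniform $L^\infty_t\dot H^2$ bound leaves $\|\nabla u\|_{L^2}^{1/4}\|\nabla B\|_{L^2}^{1/4}$, whose square decays like $\langle t\rangle^{-(\frac12+\beta(p))}$ and is integrable since $\beta(p)>\frac12$ by Proposition \ref{decay pro 3}, while the second is handled by $\int_{t_0}^\infty\|u\|_{L^\infty}^2\,dt<\infty$, which follows from $\|u\|_{L^\infty}\lesssim\langle t\rangle^{-\frac12-\beta(p)}+\|\Delta u\|_{L^2}$ and the bounds of Propositions \ref{decay pro 3}--\ref{decay pro 4}. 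The density factors are controlled through $\nabla(\rho f)=\rho\nabla f+\nabla\rho\,f$ with $\nabla\rho=-\nabla a/(1+a)^2$, so that only $\|\nabla a\|_{L^3}$ and $\|\nabla a\|_{L^6}$ enter; for instance $\|\nabla(\rho\,\partial_t u)\|_{L^2}\lesssim(1+\|\nabla a\|_{L^3})\|\nabla\partial_t u\|_{L^2}$, and these $L^q$ norms of $\nabla a$ are uniformly bounded by Proposition \ref{decay pro 5}.

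The main obstacle is the interplay of the $u$--$B$ coupling with the variable density: the two equations cannot be treated separately, so one must carry the decay of $\nabla u$ and $\nabla B$ jointly while keeping the $\nabla\rho$ factors under control, which is precisely why the $L^q$ gradient bound of Proposition \ref{decay pro 5} is indispensable here. The most delicate terms are those paired with a factor that is only uniformly bounded rather than decaying, such as $\|\nabla^2 B\|_{L^2}$ or $\|\nabla^2 u\|_{L^2}$; for these, time-integrability cannot come from algebraic decay and must instead be supplied entirely by the integrability of the companion $L^\infty$ or $\partial_t\nabla$ norm coming from Proposition \ref{decay pro 4}, so the argument hinges on correctly matching each non-decaying factor with an integrable partner.
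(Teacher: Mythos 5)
Your proposal is correct, but it takes a genuinely different route from the paper. The paper proves all three bounds at once by an energy method on the spatially differentiated system: it tests $\nabla(\text{momentum})$ against $\partial_{t}\nabla u$ and against $\tfrac{1}{\rho}\Delta\nabla u$ (and the analogous pairings for $B$), sums the resulting identities (\ref{4.54}) and (\ref{dd u b 2}), and then must \emph{absorb} the right-hand terms of the form $\bigl(\|u\|_{L^{2}}^{1/2}\|\nabla u\|_{L^{2}}^{1/2}+\dots\bigr)\|(\nabla^{3}u,\nabla^{3}B)\|_{L^{2}}^{2}$ into the left side; this forces a retroactive shrinking of the smallness parameter $\eta$ chosen in the proof of Proposition \ref{decay pro 1}, after which the differential inequality is integrated in time using Proposition \ref{decay pro 3}. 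You instead observe that the sup bound on $\|(\nabla^{2}u,\nabla^{2}B)\|_{L^{2}}$ and the integrability of $\|(\partial_{t}\nabla u,\partial_{t}\nabla B)\|_{L^{2}}^{2}$ are already contained in (\ref{decay 4 step 1 last}) and (\ref{decay 4 step main}) of Proposition \ref{decay pro 4}, and you obtain the only new bound, $\int_{t_{0}}^{\infty}\|(\nabla^{3}u,\nabla^{3}B)\|_{L^{2}}^{2}\,dt$, by elliptic regularity: $\Delta B$ is read off the magnetic equation, the pressure is killed by the Leray projector so that $\Delta u=\mathbb{P}\bigl[\rho\partial_{t}u+\rho u\cdot\nabla u-B\cdot\nabla B\bigr]$, and then $\|\nabla^{3}\cdot\|_{L^{2}}=\|\nabla\Delta\cdot\|_{L^{2}}$ is estimated term by term. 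Your term-by-term verification is sound: the quadratic gradient terms are integrable because $\beta(p)>\tfrac12$, the $u\,\nabla^{2}B$-type terms because $\int\|(u,B)\|_{L^{\infty}}^{2}\,dt<\infty$ (via $\|u\|_{L^{\infty}}\lesssim\langle t\rangle^{-1/2-\beta(p)}+\|\Delta u\|_{L^{2}}$ and (\ref{laplace u b final}) with $\theta=2$), and the $\nabla\rho$ factors only require the $\|\nabla a\|_{L^{q}}$ bounds of Proposition \ref{decay pro 5} — the same proposition the paper's proof leans on. What each approach buys: yours is shorter, needs no differential inequality and no absorption/bootstrap (hence no revisiting of $\eta$ and $t_{0}$), and never introduces $\|\nabla^{3}\cdot\|_{L^{2}}$ on the right-hand side at all; the paper's energy method is self-contained (it re-derives the first two bounds without citing Proposition \ref{decay pro 4}) and would survive in settings where the clean Fourier-multiplier identities ($\mathbb{P}$ commuting with $\nabla$, $\|\nabla^{3}f\|_{L^{2}}=\|\nabla\Delta f\|_{L^{2}}$) are unavailable, e.g.\ in domains with boundary. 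For a complete write-up you would still need to display the handful of remaining Leibniz terms (such as $\nabla\rho\,(u\cdot\nabla u)$ and $B\cdot\nabla\nabla B$), but they all fall under the two patterns you identified, so this is bookkeeping rather than a gap.
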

\begin{proof}
Taking spatial derivative to the momentum equation in (\ref{mhd_rho}), we have
\begin{align*}
\rho \partial_{t} \partial_{j}u^{i} + \partial_{j}\rho\cdot \partial_{t}u^{i} & + \partial_{j}(\rho u)\cdot \nabla u^{i}  \\
& + \rho u\cdot \nabla \partial_{j}u^{i} - \Delta \partial_{j} u^{i} + \partial_{j}\partial_{i}\Pi = \partial_{j}(B\cdot \nabla B^{i}),
\end{align*}
with $i=1,2,3$.
Standard energy estimates yields
\begin{align}
\label{d u basic}
\begin{split}
& \|\sqrt{\rho} \partial_{t}\nabla u(t)\|_{L^{2}}^{2} + \frac{1}{2}\frac{d}{dt}\|\nabla^{2} u(t)\|_{L^{2}}^{2}
= - \int_{\mathbb{R}^{3}}\nabla \rho \, \partial_{t}u \, \partial_{t}\nabla u + \nabla (\rho u)\nabla u\, \partial_{t}\nabla u\,dx  \\
&\quad\quad\quad\quad\quad\quad\quad\quad\quad\quad\quad\quad\quad
- \int_{\mathbb{R}^{3}}(\rho u\cdot \nabla)\nabla u\, \partial_{t}\nabla u - \nabla (B\cdot \nabla B)\, \partial_{t}\nabla u \, dx.
\end{split}
\end{align}
Similar argument gives
\begin{align}
\label{d b basic}
\begin{split}
\|\partial_{t}\nabla B(t)\|_{L^{2}}^{2} + \frac{1}{2}\frac{d}{dt}\|\nabla^{2}B(t)\|_{L^{2}}^{2}
= -\int_{\mathbb{R}^{3}}\nabla (u\cdot\nabla B)\,\partial_{t}\nabla B - \nabla (B\cdot \nabla u)\,\partial_{t}\nabla B \, dx.
\end{split}
\end{align}
Summing (\ref{d u basic}), (\ref{d b basic}), (\ref{basic_energy_rho}) and Proposition \ref{decay pro 5}, we obtain
\begin{align}
\label{4.54}
\begin{split}
&\, \frac{1}{2}\frac{d}{dt}\left( \|\nabla^{2}u(t)\|_{L^{2}}^{2} + \|\nabla^{2}B(t)\|_{L^{2}}^{2} \right)
+ \|\sqrt{\rho} \partial_{t}\nabla u(t)\|_{L^{2}}^{2} + \|\partial_{t}\nabla B(t)\|_{L^{2}}^{2} \\
\lesssim &\, \|\partial_{t} u\|_{L^{2}}^{2} + \|\nabla u\|_{L^{2}}^{2}\|\nabla^{2} u\|_{L^{2}}\|\nabla^{3}u\|_{L^{2}}
+ \|\nabla u\|_{L^{2}}^{3}\|\nabla^{2} u\|_{L^{2}}  \\
&\, + \|u\|_{L^{2}}\|\nabla u\|_{L^{2}}\|\nabla^{3}u\|_{L^{2}}^{2} + \|\nabla B\|_{L^{2}}^{2}\|\nabla^{2}B\|_{L^{2}}\|\nabla^{3}B\|_{L^{2}}    \\
&\, + \|B\|_{L^{2}}\|\nabla B\|_{L^{2}}\|\nabla^{3}B\|_{L^{2}}^{2} + \|\nabla B\|_{L^{2}}^{2}\|\nabla^{2}u\|_{L^{2}}\|\nabla^{3}u\|_{L^{2}} \\
&\, + \|u\|_{L^{2}}\|\nabla u\|_{L^{2}}\|\nabla^{3}B\|_{L^{2}}^{2} + \|B\|_{L^{2}}\|\nabla B\|_{L^{2}}\|\nabla^{3}u\|_{L^{2}}^{2}.
\end{split}
\end{align}
On the other hand, taking partial derivative to the momentum equation in (\ref{mhd_rho}) and multiplying $\frac{1}{\rho}\Delta \nabla u$ on both side,
we can get
\begin{align}
\label{dd u basic}
\begin{split}
\frac{1}{2}\frac{d}{dt} \|\nabla^{2}u(t)\|_{L^{2}}^{2}& + \|\frac{1}{\sqrt{\rho}}\Delta \nabla u\|_{L^{2}}^{2}
= \int_{\mathbb{R}^{3}}\nabla \rho \, \partial_{t}u\, \frac{1}{\rho}\, \Delta \nabla u\, dx \\
& + \int_{\mathbb{R}^{3}} \nabla(\rho u)\, \nabla u\,\frac{1}{\rho}\, \Delta\nabla u\,dx
+ \int_{\mathbb{R}^{3}}\nabla(\nabla \Pi) \,\frac{1}{\rho}\, \Delta\nabla u\,dx     \\
&\, - \int_{\mathbb{R}^{3}}\nabla (B\cdot \nabla B)\,\frac{1}{\rho}\,\Delta\nabla u\,dx
+ \int_{\mathbb{R}^{3}} u\cdot\nabla(\nabla u)\,\Delta\nabla u\,dx
\end{split}
\end{align}
by integrate over $\mathbb{R}^{3}$.
Through similar estimate, we obtain
\begin{align}
\label{dd b basic}
\begin{split}
\frac{1}{2}\frac{d}{dt}\|\nabla^{2}B\|_{L^{2}}^{2} & + \|\nabla^{3}B\|_{L^{2}}^{2}    \\
& = \int_{\mathbb{R}^{3}}\nabla(u\cdot\nabla B)\,\Delta\nabla B\,dx
-\int_{\mathbb{R}^{3}}\nabla(B\cdot\nabla u)\,\Delta \nabla B\, dx.
\end{split}
\end{align}
Combining (\ref{dd u basic}) and (\ref{dd b basic}), we have
\begin{align}
\label{dd u b}
\begin{split}
&\, \frac{1}{2}\frac{d}{dt}\left( \|\nabla^{2}u\|_{L^{2}}^{2} + \|\nabla^{2}B\|_{L^{2}}^{2} \right) + \|\frac{1}{\sqrt{\rho}}\nabla^{3}u\|_{L^{2}}^{2}
+ \|\nabla^{3}B\|_{L^{2}}^{2}   \\
= &\, \int_{\mathbb{R}^{3}} \nabla \rho \, \partial_{t}u\, \frac{1}{\rho}\,\Delta\nabla u\, dx
+ \int_{\mathbb{R}^{3}} \nabla (\rho u)\, \Delta u \, \frac{1}{\rho}\, \Delta \nabla u\, dx \\
&\, + \int_{\mathbb{R}^{3}} u\cdot \nabla (\nabla u)\,\Delta \nabla u\,dx + \int_{\mathbb{R}^{3}}(1+a)\nabla(\nabla \Pi)\, \Delta \nabla u\, dx \\
&\, - \int_{\mathbb{R}^{3}}\nabla(B\cdot\nabla B)\,\frac{1}{\rho}\,\Delta\nabla u\,dx
+ \int_{\mathbb{R}^{3}}\nabla(u\cdot\nabla B)\,\Delta \nabla B\,dx  \\
&\, - \int_{\mathbb{R}^{3}} \nabla(B\cdot\nabla u)\,\Delta \nabla B \, dx.
\end{split}
\end{align}
Next, we give the estimation of each term on the right hand side of (\ref{dd u b}),
\begin{align*}
\int_{\mathbb{R}^{3}} \nabla \rho \, \partial_{t}u \,\frac{1}{\rho}\, \Delta\nabla u\,dx \lesssim \|1+a\|_{L^{\infty}}
\|\nabla a\|_{L^{\infty}} \|\partial_{t}u\|_{L^{2}}\|\nabla^{3}u\|_{L^{2}},
\end{align*}
\begin{align*}
\int_{\mathbb{R}^{3}} \nabla(\rho u)\,\nabla u \,\frac{1}{\rho}\,\Delta\nabla u\,dx \lesssim &\,
\|1+a\|_{L^{\infty}}\|\nabla a\|_{L^{\infty}}\|u\|_{L^{2}}^{1/2}\|\nabla u\|_{L^{2}}^{1/2}\|\nabla^{2}u\|_{L^{2}}\|\nabla^{3}u\|_{L^{2}}    \\
&\, + \left( \|\nabla^{2} u\|_{L^{2}} + \|\nabla^{3}u\|_{L^{2}} \right) \|\nabla u\|_{L^{2}}\|\nabla^{3}u\|_{L^{2}},
\end{align*}
\begin{align*}
\int_{\mathbb{R}^{3}}(u\cdot\nabla)\nabla u\,\Delta\nabla u\,dx \lesssim \|u\|_{L^{2}}^{1/2}\|\nabla u\|_{L^{2}}^{1/2}\|\nabla^{3}u\|_{L^{2}}^{2},
\end{align*}
\begin{align*}
\int_{\mathbb{R}^{3}}\nabla(B\cdot \nabla B)\,\frac{1}{\rho}\,\Delta\nabla u\,dx \lesssim &\, \|1+a\|_{L^{\infty}}\|B\|_{L^{2}}^{1/2}
\|\nabla B\|_{L^{2}}^{1/2}\|\nabla^{3}B\|_{L^{2}}\|\nabla^{3}u\|_{L^{2}}    \\
+ & \|1+a\|_{L^{\infty}}\left( \|\nabla^{2}B\|_{L^{2}} + \|\nabla^{3}B\|_{L^{2}} \right)\|\nabla B\|_{L^{2}}\|\nabla^{3}u\|_{L^{2}},
\end{align*}
\begin{align*}
\int_{\mathbb{R}^{3}}\nabla(u\cdot\nabla B)\,\Delta\nabla B\,dx \lesssim &\, \|u\|_{L^{2}}^{1/2}\|\nabla u\|_{L^{2}}^{1/2}\|\nabla^{3}B\|_{L^{2}}^{2}   \\
&\, + \left( \|\nabla^{2}u\|_{L^{2}} + \|\nabla^{3}u\|_{L^{2}} \right)\|\nabla B\|_{L^{2}}\|\nabla^{3}B\|_{L^{2}},
\end{align*}
\begin{align*}
\int_{\mathbb{R}^{3}}(1+a)\,\nabla(\nabla \Pi) \, \Delta\nabla u\, dx \lesssim \|1+a\|_{L^{\infty}}\|\nabla^{2}\Pi\|_{L^{2}}\|\nabla^{3}u\|_{L^{2}}.
\end{align*}
Moreover, using $\mathrm{div}u = 0$, we can get
\begin{align*}
&\, \|\Delta\nabla u(t)\|_{L^{2}} + \|\nabla(\nabla \Pi(t))\|_{L^{2}}   \\
\leq &\, \sqrt{2}\|\Delta\nabla u - \nabla(\nabla \Pi)\|_{L^{2}}    \\
\leq &\, \sqrt{2}\|\rho\partial_{t}\nabla u + \nabla \rho \,\partial_{t}u + \nabla(\rho u)\,\nabla u
+ \rho u \cdot \nabla (\nabla u) - \nabla(B\cdot \nabla B) \|_{L^{2}}   \\
\leq &\, \|\sqrt{\rho}\partial_{t}\nabla u\|_{L^{2}} + \|\partial_{t}u\|_{L^{2}} + \|u\|_{L^{2}}^{1/2}\|\nabla u\|_{L^{2}}^{1/2}\|\Delta u\|_{L^{2}}
+ \|B\|_{L^{2}}^{1/2}\|\nabla B\|_{L^{2}}^{1/2}\|\nabla^{3}B\|_{L^{2}}   \\
&\, + \|u\|_{L^{2}}^{1/2}\|\nabla u\|_{L^{2}}^{1/2}\|\nabla^{3}u\|_{L^{2}} + \|\nabla u\|_{L^{2}}\|\nabla^{2}u\|_{L^{2}}^{\frac{1}{2}}
\|\nabla^{3}u\|_{L^{2}}^{\frac{1}{2}}   \\
&\, + \|\nabla B\|_{L^{2}}\|\nabla^{2}B\|_{L^{2}}^{\frac{1}{2}}\|\nabla^{3}B\|_{L^{2}}^{\frac{1}{2}}.
\end{align*}
Summing up all the above estimations in (\ref{dd u b}), then after a long and tedious calculations, we have
\begin{align}
\label{dd u b 2}
\begin{split}
&\, \frac{1}{2}\frac{d}{dt}\left( \|\nabla^{2}u(t)\|_{L^{2}}^{2} + \|\nabla^{2}B(t)\|_{L^{2}}^{2} \right)
+\|\frac{1}{\sqrt{\rho}}\nabla^{3}u(t)\|_{L^{2}}^{2} + \|\nabla^{3}B(t)\|_{L^{2}}^{2}   \\
\lesssim &\, \|\partial_{t}u\|_{L^{2}}^{2} + \|u\|_{L^{2}}\|\nabla u\|_{L^{2}}\|\nabla^{2}u\|_{L^{2}}^{2}
+ \|\nabla^{2}u\|_{L^{2}}^{2}\|\nabla u\|_{L^{2}}^{2} + \|\nabla^{2}B\|_{L^{2}}^{2}\|\nabla B\|_{L^{2}}^{2}  \\
&\, + \|\nabla B\|_{L^{2}}^{2}\|\nabla^{2}u\|_{L^{2}}^{2} + \|\nabla u\|_{L^{2}}^{4}\|\nabla^{2}u\|_{L^{2}}^{2}
+ \|\nabla B\|_{L^{2}}^{4}\|\nabla^{2}B\|_{L^{2}}^{2} + \|\sqrt{\rho}\partial_{t}\nabla u\|_{L^{2}}^{2} \\
&\, + \|\nabla u\|_{L^{2}}^{2}\|\nabla^{3}u\|_{L^{2}}^{2} + \|u\|_{L^{2}}^{1/2}\|\nabla u\|_{L^{2}}^{1/2}\|\nabla^{3}u\|_{L^{2}}^{2}
+ \|B\|_{L^{2}}\|\nabla B\|_{L^{2}}\|\nabla^{3} B\|_{L^{2}}^{2}     \\
&\, + \|\nabla B\|_{L^{2}}^{2}\|\nabla^{3}B\|_{L^{2}}^{2} + \|u\|_{L^{2}}^{1/2}\|\nabla u\|_{L^{2}}^{1/2}\|\nabla^{3}B\|_{L^{2}}^{2}
+ \|u\|_{L^{2}}\|\nabla u\|_{L^{2}}\|\nabla^{3}u\|_{L^{2}}^{2}.
\end{split}
\end{align}
We perform $(\ref{4.54}) + \frac{1}{2}(\ref{dd u b 2})$ so that
\begin{align}
\label{dd half}
\begin{split}
&\, \frac{d}{dt}\left( \|\nabla^{2}\|_{L^{2}}^{2} + \|\nabla^{2}B\|_{L^{2}}^{2} \right) + \|\sqrt{\rho}\partial_{t}\nabla u\|_{L^{2}}^{2}
+ \|\partial_{t}\nabla B\|_{L^{2}}^{2} + \|\frac{1}{\sqrt{\rho}}\nabla^{3}u\|_{L^{2}}^{2} + \|\nabla^{3}B\|_{L^{2}}^{2} \\
\lesssim &\, \|\partial_{t}u\|_{L^{2}}^{2} + \left(\|\nabla u\|_{L^{2}}^{4} + \|\nabla B\|_{L^{2}}^{4}\right)
\left( \|\nabla^{2}u\|_{L^{2}}^{2} + \|\nabla^{2}B\|_{L^{2}}^{2} \right) + \|\nabla u\|_{L^{2}}^{3}\|\nabla^{2}u\|_{L^{2}}  \\
&\, + \|u\|_{L^{2}}\|\nabla u\|_{L^{2}}\|\nabla^{2}u\|_{L^{2}}^{2} + \|\nabla^{2}u\|_{L^{2}}^{2}\|\nabla u\|_{L^{2}}^{2}
+ \|\nabla^{2}B\|_{L^{2}}^{2}\|\nabla B\|_{L^{2}}^{2} \\
&\,  + \|\nabla^{2}u\|_{L^{2}}^{2}\|\nabla B\|_{L^{2}}^{2} + \big( \|u\|_{L^{2}}\|\nabla u\|_{L^{2}} + \|\nabla u\|_{L^{2}}
+ \|B\|_{L^{2}}\|\nabla B\|_{L^{2}} + \|\nabla B\|_{L^{2}}^{2}  \\
&\, + \|u\|_{L^{2}}^{1/2}\|\nabla u\|_{L^{2}}^{1/2} \big) \big( \|\nabla^{3}u\|_{L^{2}}^{2} + \|\nabla^{3}B\|_{L^{2}}^{2} \big).
\end{split}
\end{align}
Note that we can take $\eta >0$ in the proof of Proposition \ref{decay pro 1} smaller so that
\begin{align*}
\|u_{0}\|_{L^{2}}\eta +\|B_{0}\|_{L^{2}}\eta + \|u_{0}\|_{L^{2}}^{1/2}\eta^{1/2} + \eta^{2} \leq \frac{c}{16 C^{2}}
\end{align*}
where $C$ is the constant on the right hand side of (\ref{dd half}), $c$ satisfies
\begin{align*}
c\left(\|\nabla^{3}u\|_{L^{2}}^{2} + \|\nabla^{3}B\|_{L^{2}}^{2}\right) \leq \|\frac{1}{\sqrt{\rho}}\nabla^{3}u\|_{L^{2}}^{2} + \|\nabla^{3}B\|_{L^{2}}^{2}.
\end{align*}
From the proof of Proposition \ref{decay pro 1}, we know that
$\tau^{*} = \infty$ as in Position \ref{decay pro 1}.
Hence, there exists two constant $c_{1}$ and $c_{2}$ such that
\begin{align*}
\begin{split}
&\, \frac{d}{dt}\left( \|\nabla^{2}u\|_{L^{2}}^{2} + \|\nabla^{2}B\|_{L^{2}}^{2} \right)
+ c_{1}\left( \|\partial_{t}\nabla u\|_{L^{2}}^{2} + \|\partial_{t}\nabla B\|_{L^{2}}^{2}\right)    \\
&\, \quad\quad\quad\quad\quad\quad\quad\quad\quad\quad\quad\quad\quad\quad\quad\quad
+ c_{2}\left( \|\nabla^{3}u\|_{L^{2}}^{2} + \|\nabla^{3}B\|_{L^{2}}^{2} \right) \\
\lesssim &\, \|\partial_{t}u\|_{L^{2}}^{2} + \left( \|\nabla u\|_{L^{2}}^{4} + \|\nabla B\|_{L^{2}}^{4} \right)
\left( \|\nabla^{2}u\|_{L^{2}}^{2} + \|\nabla^{2}B\|_{L^{2}}^{2} \right)    \\
&\, + \|\nabla u\|_{L^{2}}^{3}\|\nabla^{2}u\|_{L^{2}} + \|u\|_{L^{2}}\|\nabla u\|_{L^{2}}\|\nabla^{2}u\|_{L^{2}}^{2}
+ \|\nabla^{2}u\|_{L^{2}}^{2}\|\nabla u\|_{L^{2}}^{2}   \\
&\, + \|\nabla^{2}B\|_{L^{2}}^{2}\|\nabla B\|_{L^{2}}^{2} + \|\nabla^{2}u\|_{L^{2}}^{2}\|\nabla B\|_{L^{2}}^{2},
\end{split}
\end{align*}
for $t\geq t_{0}$. Integrating the above inequality form $t_{0}$ to $\infty$, we then obtain
\begin{align}
\begin{split}
&\, \sup_{t\geq t_{0}} \left( \|\nabla^{2}u(t)\|_{L^{2}}^{2} + \|\nabla^{3}B(t)\|_{L^{2}}^{2} \right)
+ c_{1}\int_{t_{0}}^{\infty} \|\partial_{t}\nabla u\|_{L^{2}}^{2} + \|\partial_{t}\nabla B(t)\|_{L^{2}}^{2} \, dt   \\
&\quad\quad\quad\quad\quad\quad\quad\quad\quad\quad\quad\quad\quad\quad\,\,\,
+ c_{2}\int_{t_{0}}^{\infty} \|\nabla^{3}u\|_{L^{2}}^{2} + \|\nabla^{3}B\|_{L^{2}}^{2}\,dt   \\
\lesssim &\, \|\nabla^{2}u(t_{0})\|_{L^{2}}^{2} + \|\nabla^{2}B(t_{0})\|_{L^{2}}^{2} + \int_{t_{0}}^{\infty} \|\partial_{t}u\|_{L^{2}}^{2}\,dt  \\
&\, + \sup_{t\in [t_{0},\infty]} \left( \|\nabla u\|_{L^{2}}^{4} + \|\nabla B\|_{L^{2}}^{4} \right)
\int_{t_{0}}^{\infty} \|\nabla^{2}u\|_{L^{2}}^{2} + \|\nabla^{2}B\|_{L^{2}}^{2}\, dt    \\
&\, + \sup_{t\in [t_{0},\infty]}\|\nabla u(t)\|_{L^{2}}^{3}\int_{t_{0}}^{\infty}\|\nabla^{2}u\|_{L^{2}}\,dt
+ \sup_{t\in [t_{0},\infty]} \|u(t)\|_{L^{2}}\|\nabla u(t)\|_{L^{2}}\int_{t_{0}}^{\infty}\|\nabla^{2}u\|_{L^{2}}^{2}\,dt    \\
&\, + \sup_{t\in [t_{0},\infty]}\left(\|\nabla u\|_{L^{2}}^{2} + \|\nabla B(t)\|_{L^{2}}^{2}\right)
\int_{t_{0}}^{\infty} \|\nabla^{2}u\|_{L^{2}}^{2} + \|\nabla^{2} B\|_{L^{2}}^{2}\,dt
\end{split}
\end{align}
At last by Proposition \ref{decay pro 3}, the proof is completed.
\end{proof}


\section{Global in Time Estimates for Reference Solutions}

In this section, we prove the global in time estimates for the reference solution of (\ref{mhd_rho}).
The proof will be based mainly on Theorem \ref{decay_main_theorem}.

\begin{proposition}
\label{global reference 5/2}
Under the assumptions of Theorem \ref{stability main theorem}, there holds
\begin{align}
\|(\bar{u}, \bar{B})\|_{\tilde{L}^{\infty}(\mathbb{R}^{+}; \dot{B}_{2,1}^{1/2})}
+ \|(\bar{u}, \bar{B})\|_{L^{1}(\mathbb{R}^{+}; \dot{B}_{2,1}^{5/2})} \leq C,
\end{align}
for some $C$ depending on the initial data.
\end{proposition}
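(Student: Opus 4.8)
The plan is to derive both bounds directly from the decay estimates of Theorem~\ref{decay_main_theorem}, which apply to $(\bar a,\bar u,\bar B)$ because the hypotheses of Theorem~\ref{stability main theorem} are strictly stronger. The two elementary tools I will use repeatedly are the Littlewood--Paley interpolation inequalities
\[
\|f\|_{\dot B_{2,1}^{s}}\lesssim \|f\|_{L^{2}}^{1-s/2}\|f\|_{\dot H^{2}}^{s/2}\quad(0<s<2),\qquad \|f\|_{\dot B_{2,1}^{5/2}}\lesssim \|\nabla^{2}f\|_{L^{2}}^{1/2}\|\nabla^{3}f\|_{L^{2}}^{1/2},
\]
both obtained by splitting $\sum_q 2^{qs}\|\dot\Delta_q f\|_{L^2}$ at a frequency $2^N$, bounding the low block by the lower-order norm and the high block by the higher-order norm (using $\sum_q\|\dot\Delta_q f\|_{L^2}^2\approx\|f\|_{L^2}^2$), and then optimizing in $N$. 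Note also that $\|\nabla^2 f\|_{L^2}=\|\Delta f\|_{L^2}$ by Plancherel.

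For the $\tilde L^{\infty}(\mathbb R^{+};\dot B_{2,1}^{1/2})$ part I will first show that $(\bar u,\bar B)$ is uniformly bounded in $\dot B_{2,1}^{1/2-\varepsilon}\cap\dot B_{2,1}^{1/2+\varepsilon}$ for a small $\varepsilon>0$. Indeed, the basic energy identity \eqref{basic_energy_equality} gives $\sup_{t\ge0}\|(\bar u,\bar B)(t)\|_{L^2}<\infty$, while Proposition~\ref{decay pro 6} together with $\bar u,\bar B\in C([0,t_0];B_{2,1}^2)$ gives $\sup_{t\ge0}\|\nabla^2(\bar u,\bar B)(t)\|_{L^2}<\infty$; the first interpolation inequality with $s=\tfrac12\pm\varepsilon$ then yields $(\bar u,\bar B)\in L^\infty(\mathbb R^+;\dot B_{2,1}^{1/2\pm\varepsilon})$. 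The claim follows from the elementary frequency-splitting bound $\|f\|_{\tilde L^\infty_T\dot B_{2,1}^{1/2}}\lesssim_\varepsilon \|f\|_{L^\infty_T\dot B_{2,1}^{1/2-\varepsilon}}+\|f\|_{L^\infty_T\dot B_{2,1}^{1/2+\varepsilon}}$, obtained by writing $2^{q/2}=2^{q\varepsilon}2^{q(1/2-\varepsilon)}$ for $q<0$ and $2^{q/2}=2^{-q\varepsilon}2^{q(1/2+\varepsilon)}$ for $q\ge0$ and summing the resulting geometric series in $q$.

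For the $L^{1}(\mathbb R^{+};\dot B_{2,1}^{5/2})$ part I will split the time integral into $[0,t_0]$ and $[t_0,\infty)$. On $[0,t_0]$ the bound is immediate from the assumed regularity: interpolating $\dot B_{2,1}^{5/2}$ between $\dot B_{2,1}^{2}$ and $\dot B_{2,1}^{4}$ and using $\bar u,\bar B\in C([0,t_0];B_{2,1}^2)\cap L^1([0,t_0];\dot B_{2,1}^4)$ with Hölder in time gives a finite contribution. On $[t_0,\infty)$ I will use the second interpolation inequality and then a three-factor Hölder inequality with exponents $(4,4,2)$:
\[
\int_{t_0}^\infty\|\Delta f\|_{L^2}^{1/2}\|\nabla^3 f\|_{L^2}^{1/2}\,dt\le\Big(\int_{t_0}^\infty\langle t\rangle^{a}\|\Delta f\|_{L^2}^{2}\,dt\Big)^{1/4}\Big(\int_{t_0}^\infty\|\nabla^3 f\|_{L^2}^{2}\,dt\Big)^{1/4}\Big(\int_{t_0}^\infty\langle t\rangle^{-a/2}\,dt\Big)^{1/2}.
\]
The first factor is finite by the weighted estimate of Proposition~\ref{decay pro 3} (the weight $(1+2\beta(p))^{-}$ there being any exponent $a<1+2\beta(p)$), the second factor is finite by Proposition~\ref{decay pro 6}, and the third factor converges provided $a>2$. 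Such an $a$ exists precisely because $\beta(p)>\tfrac12$ for $p\in(1,\tfrac65)$, so that $1+2\beta(p)>2$.

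The main obstacle is exactly this last tail estimate: unlike the (INS) case one cannot close $\|(\bar u,\bar B)\|_{L^1(\dot B_{2,1}^{5/2})}$ by a single transport/heat estimate because of the $u$--$B$ coupling, and the naive interpolation $\|f\|_{\dot B_{2,1}^{5/2}}\lesssim\|\nabla f\|_{L^2}^{1/4}\|\nabla^3 f\|_{L^2}^{3/4}$ fails (it would require $\beta(p)>1$). Carrying both the \emph{weighted} second-derivative bound and the \emph{global} third-derivative bound of Proposition~\ref{decay pro 6} is what allows the weight exponent $a$ to exceed $2$ while staying below $1+2\beta(p)$, which is possible only in the sharp range $p<\tfrac65$; this is precisely the role played by the higher-order decay estimates.
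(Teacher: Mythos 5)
Your proof is correct, and it reaches the $L^{1}(\mathbb{R}^{+};\dot{B}_{2,1}^{5/2})$ bound by a genuinely different route than the paper. The paper's proof goes through the equations: it writes $-\Delta\bar{u}+\nabla\bar{\Pi}=-\tfrac{1}{1+\bar{a}}\partial_{t}\bar{u}-\tfrac{1}{1+\bar{a}}\bar{u}\cdot\nabla\bar{u}+\bar{B}\cdot\nabla\bar{B}$ (and similarly for $\bar{B}$ and for one spatial derivative of these equations), applies elliptic estimates to bound $\|\bar{u}\|_{L^{1}(\dot{H}^{2})}+\|\bar{u}\|_{L^{1}(\dot{H}^{3})}$, and then controls each resulting term ($\|\partial_{t}\bar{u}\|_{L^{1}(L^{2})}$, $\|\partial_{t}\nabla\bar{u}\|_{L^{1}(L^{2})}$, $\|\bar{u}\cdot\nabla\bar{u}\|_{L^{1}(L^{2})}$, $\|\nabla\bar{u}\cdot\nabla\bar{u}\|_{L^{1}(L^{2})}$, etc.) one by one from the decay estimates of Propositions \ref{decay pro 3}--\ref{decay pro 6}. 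You bypass the equations entirely: the pointwise-in-time interpolation $\|f\|_{\dot{B}_{2,1}^{5/2}}\lesssim\|\Delta f\|_{L^{2}}^{1/2}\|\nabla^{3}f\|_{L^{2}}^{1/2}$ combined with the $(4,4,2)$ weighted H\"older in time feeds directly off the weighted bound $\int\langle t\rangle^{(1+2\beta(p))^{-}}\|\Delta\bar{u}\|_{L^{2}}^{2}\,dt\leq C$ of Proposition \ref{decay pro 3} and the unweighted bound $\int\|\nabla^{3}\bar{u}\|_{L^{2}}^{2}\,dt\leq C$ of Proposition \ref{decay pro 6}, with the same arithmetic fact $1+2\beta(p)>2$ (i.e.\ $p<\tfrac65$) making the window $2<a<1+2\beta(p)$ nonempty. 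Both arguments thus rest on the same two decay propositions, but yours is shorter, avoids the term-by-term estimation of nonlinearities, and — a point the paper glosses over — treats the interval $[0,t_{0}]$ explicitly via interpolation between $\dot{B}_{2,1}^{2}$ and $\dot{B}_{2,1}^{4}$. For the $\tilde{L}^{\infty}(\dot{B}_{2,1}^{1/2})$ part you are also correct but slightly wasteful: the paper simply interpolates between $L^{2}$ and $\dot{H}^{1}$ (i.e.\ $\|\bar{u}\|_{\tilde{L}^{\infty}\dot{B}_{2,1}^{1/2}}\lesssim\|\bar{u}\|_{L^{\infty}H^{1}}$, available already from (\ref{basic_energy_equality}) and Proposition \ref{decay pro 1}), whereas you route through $\dot{H}^{2}$ and an $\varepsilon$-splitting; both are fine. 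One small inaccuracy in a non-essential remark: your claim that the interpolation $\|f\|_{\dot{B}_{2,1}^{5/2}}\lesssim\|\nabla f\|_{L^{2}}^{1/4}\|\nabla^{3}f\|_{L^{2}}^{3/4}$ "would require $\beta(p)>1$" is off — running that route with the pointwise decay $\|\nabla\bar{u}\|_{L^{2}}\lesssim\langle t\rangle^{-\frac12-\beta(p)}$ and $\nabla^{3}\bar{u}\in L^{2}_{t}(L^{2})$ actually demands $\beta(p)>2$; the conclusion that it fails in the range $\beta(p)\in(\tfrac12,\tfrac34)$ is of course still true.
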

\begin{proof}
According to imbedding theorems of Besov space and Theorem \ref{decay_main_theorem}, we have
\begin{align*}
\|\bar{u}\|_{\tilde{L}^{\infty}(\mathbb{R}^{+} ; \dot{B}_{2,1}^{1/2})} \lesssim &\, \|\bar{u}\|_{L^{\infty}(\mathbb{R}^{+}; H^{1})}   \\
\lesssim &\, \|\bar{u}\|_{L^{\infty}(\mathbb{R}^{+};L^{2})} + \|\nabla \bar{u}\|_{L^{\infty}(\mathbb{R}^{+};L^{2})} \leq C.
\end{align*}
Same considerations yields
\begin{align*}
\|\bar{B}\|_{\tilde{L}^{\infty}(\mathbb{R}^{+} ; \dot{B}_{2,1}^{1/2})} \leq C.
\end{align*}
By imbedding theorem of Besov space, we know that
\begin{align*}
\|\bar{u}\|_{L^{1}(\mathbb{R}^{+};\dot{B}_{2,1}^{5/2})} \lesssim \|\bar{u}\|_{L^{1}(\mathbb{R}^{+}; \dot{H}^{2})}
+ \|\bar{u}\|_{L^{1}(\mathbb{R}^{+}; \dot{H}^{3})}, \\
\|\bar{B}\|_{L^{1}(\mathbb{R}^{+};\dot{B}_{2,1}^{5/2})} \lesssim \|\bar{B}\|_{L^{1}(\mathbb{R}^{+}; \dot{H}^{2})}
+ \|\bar{B}\|_{L^{1}(\mathbb{R}^{+}; \dot{H}^{3})}.
\end{align*}
Rewrite the momentum equation in (\ref{mhd_rho}) as follows
\begin{align*}
- \Delta \bar{u} + \nabla \bar{\Pi} = - \frac{1}{1+\bar{a}} \, \partial_{t}\bar{u} - \frac{1}{1+\bar{a}} \, \bar{u}\cdot\nabla\bar{u}
+\bar{B}\cdot\nabla\bar{B}.
\end{align*}
From the elliptic estimate, we have
\begin{align*}
\|\bar{u}\|_{L^{1}(\mathbb{R}^{+} ; \dot{H}^{2})} \leq &\, \|\frac{1}{1+\bar{a}}\, \partial_{t}\bar{u}\|_{L^{1}(\mathbb{R}^{+};L^{2})}
+ \|\frac{1}{1+\bar{a}}\, \bar{u}\cdot\nabla\bar{u}\|_{L^{1}(\mathbb{R}^{+} ; L^{2})}   \\
&\, + \|\bar{B}\cdot\nabla\bar{B}\|_{L^{1}(\mathbb{R}^{+} ; L^{2})} \\
\lesssim &\, \|\partial_{t}\bar{u}\|_{L^{1}(\mathbb{R}^{+} ; L^{2})} + \|\bar{u}\cdot\nabla\bar{u}\|_{L^{1}(\mathbb{R}^{+} ; L^{2})}
+ \|\bar{B}\cdot\nabla\bar{B}\|_{L^{1}(\mathbb{R}^{+} ; L^{2})}.
\end{align*}
Next, we estimate the three terms on the right hand side of the above inequality.
Simple calculations yields
\begin{align*}
\int_{t_{0}}^{\infty} \|\partial_{t}\bar{u}\|_{L^{2}}\, dt \lesssim &\,
\left( \int_{t_{0}}^{\infty}\langle t \rangle^{(1+2 \beta(p))^{-}} \|\partial_{t}\bar{u}\|_{L^{2}}^{2} \, dt\right)^{1/2}
\left( \int_{t_{0}}^{\infty} \langle t \rangle^{-(1+2\beta(p))^{-}} \,dt \right)^{1/2}  \\
\leq &\, C,
\end{align*}
\begin{align*}
\int_{t_{0}}^{\infty}\|\bar{u}\cdot\nabla\bar{u}\|_{L^{2}}\, dt \lesssim &\, \int_{t_{0}}^{\infty}\|\bar{u}\|_{L^{4}}\|\nabla\bar{u}\|_{L^{4}}\, dt \\
\lesssim &\, \int_{t_{0}}^{\infty}\|\bar{u}\|_{L^{2}}^{1/4}\|\nabla\bar{u}\|_{L^{2}}\|\nabla^{2}\bar{u}\|_{L^{2}}^{3/4}\, dt    \\
\lesssim &\, \|\bar{u}\|_{L^{2}}^{1/4}\left( \int_{t_{0}}^{\infty}\|\nabla\bar{u}\|_{L^{2}}^{4}\,dt \right)^{1/4}
\left( \int_{t_{0}}^{\infty} \|\nabla^{2}\bar{u}\|_{L^{2}}\, dt \right)^{3/4} \\
\leq &\, C.
\end{align*}
\begin{align*}
\int_{t_{0}}^{\infty}\|\bar{B}\cdot\nabla\bar{B}\|_{L^{2}}\, dt \lesssim &\,
\|\bar{B}\|_{L^{2}}^{1/4} \left( \int_{t_{0}}^{\infty} \|\nabla\bar{B}\|_{L^{2}}^{4} \, dt \right)^{1/4}
\left( \int_{t_{0}}^{\infty} \|\nabla^{2}\bar{B}\|_{L^{2}} \, dt \right)^{3/4}    \\
\leq &\, C.
\end{align*}
Now we give the estimates about $\|\bar{u}\|_{L^{1}(\mathbb{R}^{+} ; \dot{H}^{3})}$,
\begin{align*}
\|\bar{u}\|_{L^{1}(\mathbb{R}^{+} ; \dot{H}^{3})} \lesssim &\, \left\|\nabla \left( \frac{1}{1+\bar{a}}\, \partial_{t}\bar{u} \right)\right\|_{L^{1}(\mathbb{R}^{+};L^{2})}
 + \left\|\nabla \left( \frac{1}{1+\bar{a}}\, \bar{u}\cdot\nabla\bar{u} \right) \right\|_{L^{1}(\mathbb{R}^{+} ; L^{2})}    \\
&\, + \left\| \nabla \left( \bar{B}\cdot\nabla\bar{B} \right) \right\|_{L^{1}(\mathbb{R}^{+} ; L^{2})}  \\
\lesssim &\, \|\nabla \bar{a} \cdot \partial_{t}\bar{u}\|_{L^{1}(\mathbb{R}^{+} ; L^{2})} + \|\partial_{t}\nabla\bar{u}\|_{L^{1}(\mathbb{R}^{+};L^{2})}
+ \|\nabla\bar{a}\cdot\bar{u}\cdot\nabla\bar{u}\|_{L^{1}(\mathbb{R}^{+} ; L^{2})}   \\
&\, + \|\nabla\bar{u}\cdot\nabla\bar{u}\|_{L^{1}(\mathbb{R}^{+} ; L^{2})} + \|\bar{u}\cdot\nabla(\nabla\bar{u})\|_{L^{1}(\mathbb{R}^{+} ; L^{2})}   \\
&\, + \|\nabla\bar{B}\cdot\nabla\bar{B}\|_{L^{1}(\mathbb{R}^{+};L^{2})} + \|\bar{B}\cdot\nabla(\nabla\bar{B})\|_{L^{1}(\mathbb{R}^{+};L^{2})}.
\end{align*}
Now, we give the estimates of the right hand side.
According to Proposition \ref{decay pro 3} to Proposition \ref{decay pro 6}, we have
\begin{align*}
\int_{t_{0}}^{\infty}\|\nabla\bar{a}\cdot\partial_{t}\bar{u}\|_{L^{2}}\, dt \lesssim \int_{t_{0}}^{\infty}\|\partial_{t}\bar{u}\|_{L^{2}}\,dt \leq\, C,
\end{align*}
\begin{align*}
\int_{t_{0}}^{\infty}\|\partial_{t}\nabla\bar{u}\|_{L^{2}}\, dt \leq \,C,
\end{align*}
\begin{align*}
&\, \int_{t_{0}}^{\infty}\|\nabla\bar{a}\cdot\bar{u}\cdot\nabla\bar{u}\|_{L^{2}}\, dt   \\
\lesssim &\, \|\nabla\bar{a}\|_{L^{\infty}(\mathbb{R}^{+} ; L^{2})}\int_{t_{0}}^{\infty}\|\bar{u}\cdot\nabla\bar{u}\|_{L^{2}}\,dt   \\
\lesssim &\, \|\nabla\bar{a}\|_{L^{\infty}(\mathbb{R}^{+} ; L^{2})}
\int_{t_{0}}^{\infty}\|\bar{u}\|_{L^{2}}^{\frac{1}{2}}\|\nabla\bar{u}\|_{L^{2}}^{\frac{1}{2}}\|\nabla^{2}\bar{u}\|_{L^{2}}\,dt  \\
\lesssim &\, \|\nabla\bar{a}\|_{L^{\infty}(\mathbb{R}^{+};L^{2})}\|\bar{u}\|_{L^{\infty}(\mathbb{R}^{+};L^{2})}^{\frac{1}{2}}
\|\nabla\bar{u}\|_{L^{\infty}(\mathbb{R}^{+};L^{2})}^{\frac{1}{2}}\int_{t_{0}}^{\infty}\|\nabla^{2}\bar{u}\|_{L^{2}}\,dt  \\
\leq &\, C,
\end{align*}
\begin{align*}
\int_{t_{0}}^{\infty}\|\nabla\bar{u}\cdot\nabla\bar{u}\|_{L^{2}}\,dt \lesssim &\, \int_{t_{0}}^{\infty}\|\nabla\bar{u}\|_{L^{4}}
\|\nabla\bar{u}\|_{L^{4}} \,dt  \\
\lesssim &\, \int_{t_{0}}^{\infty}\|\nabla\bar{u}\|_{L^{2}}^{1/4}\|\nabla^{2}\bar{u}\|_{L^{2}}^{3/4}\|\nabla\bar{u}\|_{L^{2}}^{1/4}
\|\nabla^{2}\bar{u}\|_{L^{2}}^{3/4}\, dt    \\
\lesssim &\, \int_{t_{0}}^{\infty}\|\nabla\bar{u}\|_{L^{2}}^{1/2}\|\nabla^{2}\bar{u}\|_{L^{2}}^{3/2}\, dt \leq \,C,
\end{align*}
\begin{align*}
&\, \int_{t_{0}}^{\infty}\|\bar{u}\cdot\nabla(\nabla\bar{u})\|_{L^{2}}\,dt  \\
\lesssim &\, \int_{t_{0}}^{\infty}\|\bar{u}\|_{L^{\infty}}\|\nabla^{2}\bar{u}\|_{L^{2}}\,dt \\
\lesssim &\, \int_{t_{0}}^{\infty}\|\nabla\bar{u}\|_{L^{2}}\|\nabla^{2}\bar{u}\|_{L^{2}}\,dt
+ \int_{t_{0}}^{\infty}\|\nabla^{2}\bar{u}\|_{L^{2}}^{2}\,dt \leq \,C.
\end{align*}
Similar to the above estimates of $\bar{u}$, we can obtain
\begin{align*}
\int_{t_{0}}^{\infty}\|\nabla\bar{B}\cdot\nabla\bar{B}\|_{L^{2}}\,dt \leq \,C
\quad \text{and} \quad
\int_{t_{0}}^{\infty}\|\bar{B}\cdot\nabla(\nabla\bar{B})\|_{L^{2}}\,dt \leq \,C.
\end{align*}
Summing up all the above estimates, we finally obtain
\begin{align*}
\|\bar{u}\|_{L^{1}(\mathbb{R}^{+} ; \dot{B}_{2,1}^{5/2})} \leq \, C.
\end{align*}
Using similar ideas as for $\bar{u}$, we have
\begin{align*}
\|\bar{B}\|_{L^{1}(\mathbb{R}^{+};\dot{H}^{2})} \leq &\, \|\partial_{t}\bar{B}\|_{L^{1}(\mathbb{R}^{+};L^{2})}
+ \|\bar{u}\cdot\nabla\bar{B}\|_{L^{1}(\mathbb{R}^{+};L^{2})} + \|\bar{B}\cdot\nabla\bar{u}\|_{L^{1}(\mathbb{R}^{+};L^{2})} \\
\leq \, C,
\end{align*}
and
\begin{align*}
&\, \|\bar{B}\|_{L^{1}(\mathbb{R}^{+};\dot{H}^{3})}     \\
\leq &\, \|\partial_{t}\nabla\bar{B}\|_{L^{1}(\mathbb{R}^{+};L^{2})}
+ \|\nabla\,(\bar{B}\cdot\nabla\bar{u})\|_{L^{1}(\mathbb{R}^{+};L^{2})} + \|\nabla\, (\bar{u}\cdot\nabla\bar{B})\|_{L^{1}(\mathbb{R}^{+};L^{2})} \\
\leq &\, C.
\end{align*}
So we also obtain that
\begin{align*}
\|\bar{B}\|_{L^{1}(\mathbb{R}^{+} ; \dot{B}_{2,1}^{5/2})} \leq \, C.
\end{align*}
Hence, finally the proof of Proposition \ref{global reference 5/2} is completed.
\end{proof}

\begin{proposition}
\label{global reference 2}
Under the assumptions of Theorem \ref{stability main theorem}, there hold
\begin{align}
\label{estimate of a in 5 2}
\|\bar{a}\|_{\tilde{L}^{\infty}(\mathbb{R}^{+} ; B_{2,1}^{5/2})} \leq \, C
\end{align}
and
\begin{align}
& \|\bar{u}\|_{L^{\infty}(\mathbb{R}^{+};L^{p})} + \|\bar{u}\|_{\tilde{L}^{\infty}(\mathbb{R}^{+};B_{2,1}^{2})}
+ \|\bar{u}\|_{L^{1}(\mathbb{R}^{+};\dot{B}_{2,1}^{4})} + \|\nabla \bar{\Pi}\|_{L^{1}(\mathbb{R}^{+};B_{2,1}^{2})} \leq \,C,  \\
& \quad\quad\quad\quad \|\bar{B}\|_{L^{\infty}(\mathbb{R}^{+};L^{p})} + \|\bar{B}\|_{\tilde{L}^{\infty}(\mathbb{R}^{+};B_{2,1}^{2})}
+ \|\bar{B}\|_{L^{1}(\mathbb{R}^{+};\dot{B}_{2,1}^{4})} \leq \,C.
\end{align}
\end{proposition}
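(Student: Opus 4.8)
The plan is to establish the three displayed estimates one at a time, in each case reducing matters to the decay information of Theorem~\ref{decay_main_theorem} (Propositions~\ref{decay pro 3}--\ref{decay pro 6}) and the integrability $\bar u,\bar B\in L^1(\mathbb{R}^+;\dot B_{2,1}^{5/2})$ already proved in Proposition~\ref{global reference 5/2}. Because every local-in-time norm on $[0,t_0]$ is finite by the assumed regularity of the reference solution, it suffices to control $[t_0,\infty)$ and then adjoin the finite $[0,t_0]$ part. For the density bound \eqref{estimate of a in 5 2} I would feed the transport estimate \eqref{transport 2} of Lemma~\ref{transport_estimate} at $s=\tfrac52$ into Gronwall's inequality,
\[
\|\bar a\|_{\tilde L_t^\infty(B_{2,1}^{5/2})}\le\|\bar a_0\|_{B_{2,1}^{5/2}}+C\int_0^t\|\bar a(\tau)\|_{B_{2,1}^{5/2}}\|\nabla\bar u(\tau)\|_{\dot B_{2,1}^{3/2}}\,d\tau,
\]
the exponent being finite since $\int_0^\infty\|\nabla\bar u\|_{\dot B_{2,1}^{3/2}}\,d\tau\lesssim\|\bar u\|_{L^1(\mathbb{R}^+;\dot B_{2,1}^{5/2})}<\infty$; the conservation $\|\bar a(t)\|_{L^2}=\|\bar a_0\|_{L^2}$ along the divergence-free flow then upgrades this to the full inhomogeneous $B_{2,1}^{5/2}$ norm.

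For the Lebesgue bounds $\bar u,\bar B\in L^\infty(\mathbb{R}^+;L^p)$ I would revisit the $L^p$ inequality \eqref{L p estimate} of Proposition~\ref{decay pro 2} and show that its right-hand side is bounded uniformly in time. Interpolating the $L^3$ norms between $L^2$ and $\dot H^1$ and inserting the rates of Proposition~\ref{decay pro 3}, each bilinear integrand such as $\|\bar B\|_{L^2\cap L^3}\|\nabla\bar u\|_{L^2}$ decays like $\langle t\rangle^{-\frac12-2\beta(p)}$; since $\beta(p)>\tfrac12$ for $p\in(1,\tfrac65)$ this is integrable on $[t_0,\infty)$, while $\|\nabla\bar\Pi\|_{L^1(L^p)}$ is handled through the elliptic bound \eqref{decay 2 pressure} and the same rates.

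The substantial part is $\|(\bar u,\bar B)\|_{\tilde L^\infty(\mathbb{R}^+;B_{2,1}^2)}$, $\|(\bar u,\bar B)\|_{L^1(\mathbb{R}^+;\dot B_{2,1}^4)}$ and $\|\nabla\bar\Pi\|_{L^1(\mathbb{R}^+;B_{2,1}^2)}$. Following the Remark after Theorem~\ref{stability main theorem}, the term $\bar B\cdot\nabla\bar B$ must \emph{not} be treated as a source; instead I would recast the momentum and magnetic equations in the coupled form \eqref{linearcouple} with $v=\bar u$, $w=\bar B$, $a=\bar a$, leaving only the genuinely weighted source $f=\bar a\,\bar B\cdot\nabla\bar B$ and $g=0$, so that the cross terms $\bar B\cdot\nabla\bar B$ and $\bar B\cdot\nabla\bar u$ are absorbed by the commutator structure of the estimate (Lemma~\ref{comunatorestimate}) without loss of derivatives. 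I would then apply the variant \eqref{linear_estimate_var} at level $s=2$, whose parabolic gain of two derivatives produces the $\dot B_{2,1}^4$-integral directly and keeps the density dependence only at the level $\|\bar a\|_{\tilde L^\infty(\dot H^2)}$, controlled by the first step. The prefactor $\exp\!\big(C\int_0^\infty\|\bar u\|_{\dot B_{2,1}^{5/2}}+\|\bar B\|_{\dot B_{2,1}^{5/2}}\,dt\big)$ is finite by Proposition~\ref{global reference 5/2}, so that -- unlike in the small-data Theorem~\ref{close to 1 main theorem} -- this is a genuine a priori bound requiring no smallness; the source $f$ is bounded in $L^1(\dot B_{2,1}^2)$ by product laws that assign the top regularity to $\bar a\in\tilde L^\infty(B_{2,1}^{5/2})$ and leave $\bar B$ at the levels controlled by Propositions~\ref{decay pro 3}--\ref{decay pro 6}, whose time decay secures the $L^1$ integrability.

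The hard part is twofold. First, Lemmas~\ref{linear estimate momentum} and~\ref{pressure_estimate} are stated only for $s<1$ and $s<2$, so the estimate \eqref{linear_estimate_var} must be re-derived at $s=2$; this is possible because the commutator Lemma~\ref{comunatorestimate} is valid up to $s=\tfrac52$ (with $r=1$) and because $\bar a\in\tilde L^\infty(B_{2,1}^{5/2})$ supplies exactly the regularity needed for the products $\bar a\,\Delta\bar u$ and $\bar\Pi\,\nabla\bar a$. Second, and more delicate, the resulting inequality carries top-order terms $\|\bar a\|_{\tilde L^\infty(\dot H^2)}\|\bar u\|_{L^1(\dot B_{2,1}^{7/2})}$ and $\|\bar a\|_{\tilde L^\infty(\dot H^2)}\|\bar\Pi\|_{\tilde L^1(\dot H^{5/2})}$ in which $\bar a$ is bounded but not small, so they cannot be absorbed by a smallness hypothesis. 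I would absorb $\|\bar u\|_{L^1(\dot B_{2,1}^{7/2})}$ by the interpolation $\|\bar u\|_{\dot B_{2,1}^{7/2}}\lesssim\|\bar u\|_{\dot B_{2,1}^{5/2}}^{1/3}\|\bar u\|_{\dot B_{2,1}^4}^{2/3}$ followed by Young's inequality, which yields $\eta\|\bar u\|_{L^1(\dot B_{2,1}^4)}$ plus an amplified multiple of the finite $\|\bar u\|_{L^1(\mathbb{R}^+;\dot B_{2,1}^{5/2})}$; choosing $\eta$ with $\|\bar a\|_{\tilde L^\infty(\dot H^2)}\eta<\tfrac12$ permits reabsorption, and the pressure term is treated in parallel via Lemma~\ref{pressure_estimate} applied at $s=\tfrac32$ (admissible since $\tfrac32<2$) and then reabsorbed. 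Completing this closure on $[t_0,\infty)$, adjoining the finite $[0,t_0]$ contribution, running the same -- and simpler, since $f=g=0$ -- argument for $\bar B$, and finally pairing the homogeneous $\dot B_{2,1}^2$ bounds with the $L^2$ decay of Proposition~\ref{decay pro 3} to recover the inhomogeneous $B_{2,1}^2$ norms, gives the proposition.
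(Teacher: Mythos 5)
Your treatment of \eqref{estimate of a in 5 2} (transport estimate at $s=5/2$ plus Gronwall, exponent finite by Proposition \ref{global reference 5/2}) and of the $L^{p}$ bounds (revisiting \eqref{L p estimate}, \eqref{decay 2 pressure} and the decay rates) coincides with the paper, as does your setup of the coupled linear system with $v=\bar u$, $w=\bar B$, $f=\bar a\,\bar B\cdot\nabla\bar B$, $g=0$, and your absorption-by-interpolation device for $\|\bar u\|_{L^{1}(\dot B_{2,1}^{7/2})}$. The gap is in the central step: your claim that \eqref{linear_estimate_var} can be re-derived at $s=2$ because ``$\bar a\in\tilde L^{\infty}(B_{2,1}^{5/2})$ supplies exactly the regularity needed for the products $\bar a\,\Delta\bar u$ and $\bar\Pi\,\nabla\bar a$'' is false. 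In the commutator $\mathrm{div}\,[\Delta_{q},\bar a]\nabla\bar u$ the paraproduct piece $\Delta_{q}\,\mathrm{div}\,T_{\nabla\bar u}\bar a$ carries the full weight $2^{q(s+1)}$ on the \emph{high} frequencies of $\bar a$, since the low-frequency factor only satisfies $\|S_{k-1}\nabla\bar u\|_{L^{\infty}}\lesssim\|\bar u\|_{\dot B_{2,1}^{5/2}}$ with no gain in $k$; hence one needs $\bar a\in\dot B_{2,1}^{s+1}$, i.e. $\dot B_{2,1}^{3}$ at $s=2$, and the term $T_{\bar\Pi}\nabla\bar a$ behaves the same way. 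This is exactly what the hypothesis $a\in L_{T}^{\infty}(\dot H^{s+3/2})$ in Lemma \ref{linear estimate momentum} encodes: at $s=2$ it demands $\dot H^{7/2}$, strictly beyond the $B_{2,1}^{5/2}$ bound available at this stage. Nor can you first propagate $\bar a\in\tilde L^{\infty}(B_{2,1}^{7/2})$ from the data $\bar a_{0}\in B_{2,1}^{7/2}$: by Lemma \ref{transport_estimate 2} with $\sigma=7/2$ this requires $\nabla\bar u\in L^{1}(\mathbb{R}^{+};B_{2,1}^{5/2})$, which is precisely the bound being proved, and since none of these quantities is small the circle cannot be broken by absorption.

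This obstruction is why the paper does \emph{not} apply the coupled estimate at $s=2$ to the undifferentiated system. Instead it differentiates the momentum and magnetic equations in space (system \eqref{d u b}) and applies the estimate at $s=1$ (Remark \ref{s=1 linear couple}, proved in the appendix) to $(\nabla\bar u,\nabla\bar B)$, after first securing the intermediate level \eqref{estimate in paper} and the $s=0$ bound. The point of the differentiation is structural: the top-order coefficient in \eqref{d u b} is still $(1+\bar a)$, so the commutators only require $\bar a\in\dot B_{2,1}^{2}$-type regularity, while all occurrences of $\nabla\bar a$ are pushed into \emph{source} terms $\partial_{i}\bar a\,\Delta\bar u$, $\partial_{i}\bar a\,\nabla\bar\Pi$, $\partial_{i}\bar a\,\bar B\cdot\nabla\bar B$, where the multiplier bound $\|\partial_{i}\bar a\|_{\dot B_{2,1}^{3/2}}$ (available from $B_{2,1}^{5/2}$) suffices and the remaining factors are absorbed exactly as in your $\eta$-argument, together with the companion elliptic estimate \eqref{pi} for $\nabla\bar\Pi$. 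To repair your proof you should replace the direct $s=2$ application by this differentiation step (or otherwise supply commutator and product estimates at $s=2$ that avoid placing $s+1$ derivatives on $\bar a$, which the paraproduct structure does not permit).
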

\begin{proof}
Thanks to Proposition \ref{global reference 5/2}, we get by applying (\ref{transport_estimate}) to the transport equation
in (\ref{mhd_a}) that
\begin{align}
\label{estimate a}
\begin{split}
\|\bar{a}\|_{\tilde{L}_{t}^{\infty}(B_{2,1}^{5/2})} \leq \|\bar{a}_{0}\|_{B_{2,1}^{5/2}}
\exp{\left\{ C\int_{0}^{t} \|\bar{u}(\tau)\|_{\dot{B}_{2,1}^{5/2}}\,d\tau \right\}} \leq \,C.
\end{split}
\end{align}
Next, let us turn to the estimates of $\bar{u}$ and $\bar{B}$. Indeed, from (\ref{L p estimate}), we know that
\begin{align*}
\begin{split}
\|\bar{u}\|_{L_{t}^{\infty}(L^{p})} + \|\bar{B}\|_{L_{t}^{\infty}(L^{p})} \lesssim &\, \|(u_{0}, B_{0})\|_{L^{p}}
+ \int_{0}^{t}\|(u,B)\|_{L^{2}\cap L^{3}}\|(\nabla u, \nabla B)\|_{L^{2}}\,dt'  \\
&\, + \|\nabla \Pi\|_{L_{t}^{1}(L^{p})}.
\end{split}
\end{align*}
Then by (\ref{decay 2 pressure}), Proposition \ref{decay pro 5}, Theorem \ref{decay_main_theorem} and the above inequality, we deduce that
\begin{align}
\label{lp}
\|\bar{u}\|_{L^{\infty}(\mathbb{R}^{+};L^{p})} + \|\bar{B}\|_{L^{\infty}(\mathbb{R}^{+};L^{p})}\leq \,C.
\end{align}
On the other hand, applying Proposition \ref{linear estimate momentum} to the momentum and magnetic field equation of (\ref{mhd_a})
ensures that
\begin{align*}
&\, \|\bar{u}\|_{\tilde{L}_{T}^{\infty}(\dot{B}_{2,1}^{0})} + \|\bar{u}\|_{\tilde{L}_{T}^{1}(\dot{B}_{2,1}^{2})}
+ \|\bar{B}\|_{\tilde{L}_{T}^{\infty}(\dot{B}_{2,1}^{0})} + \|\bar{B}\|_{\tilde{L}_{T}^{1}(\dot{B}_{2,1}^{2})}  \\
\lesssim &\, \exp{\left( C\int_{0}^{T} \|\bar{u}\|_{\dot{B}_{2,1}^{5/2}} + \|\bar{B}\|_{\dot{B}_{2,1}^{5/2}} \, dt \right)}
\bigg( \|\bar{u}_{0}\|_{\dot{B}_{2,1}^{0}} + \|\bar{B}_{0}\|_{\dot{B}_{2,1}^{0}} + \|\bar{a}\,\bar{B}\cdot\nabla\bar{B}\|_{L^{1}_{T}(\dot{B}_{2,1}^{0})} \\
&\, + \|\bar{a}\|_{\tilde{L}_{T}^{\infty}(\dot{H}^{3/2})}\|\nabla \bar{\Pi}\|_{L_{T}^{1}(L^{2})}
+ \|\bar{a}\|_{L^{\infty}_{T}(\dot{H}^{2})}\|\bar{u}\|_{L^{1}_{T}(\dot{H}^{3/2})} \bigg).
\end{align*}
We know that
\begin{align*}
\|\bar{a}\, \bar{B}\cdot\nabla\bar{B}\|_{L_{T}^{1}(\dot{B}_{2,1}^{0})}
\lesssim &\, \int_{0}^{T}\|\bar{a}\|_{\dot{B}_{2,1}^{1}}\|\bar{B}\cdot\nabla\bar{B}\|_{\dot{B}_{2,1}^{1/2}}\, dt    \\
\lesssim &\, \int_{0}^{T}\|\bar{B}\|_{\dot{B}_{2,1}^{1/2}}\|\nabla\bar{B}\|_{\dot{B}_{2,1}^{3/2}}\, dt \leq \,C.
\end{align*}
Combining the above estimation, (\ref{estimate a}) and Proposition \ref{decay pro 4}, we obtain
\begin{align}
\label{l2}
\|\bar{u}\|_{\tilde{L}_{T}^{\infty}(\dot{B}_{2,1}^{0})} + \|\bar{B}\|_{\tilde{L}_{T}^{\infty}(\dot{B}_{2,1}^{0})} \leq \,C.
\end{align}
Due to just a minor change of the proof in \cite{zhangping}, we can get the following estimation. So, we postpone the proof in the appendix
and give the estimation first
\begin{align}
\label{estimate in paper}
\begin{split}
\|\nabla\bar{u}\|_{\tilde{L}_{t}^{\infty}(\dot{B}_{2,1}^{1/2})} + & \|\nabla\bar{B}\|_{\tilde{L}_{t}^{\infty}(\dot{B}_{2,1}^{1/2})}
+ \|\nabla\bar{u}\|_{L_{t}^{1}(\dot{B}_{2,1}^{5/2})} \\
&\quad +\|\nabla\bar{B}\|_{L_{t}^{1}(\dot{B}_{2,1}^{5/2})} + \|\nabla\bar{\Pi}\|_{L_{t}^{1}(\dot{B}_{2,1}^{3/2})} \leq \,C.
\end{split}
\end{align}
Differentiating the momentum equation and magnetic field equation of (\ref{mhd_a}) with respect to the spatial variables gives
rise to
\begin{align}
\label{d u b}
\begin{split}
\partial_{t}\partial_{i}\bar{u} + \bar{u}\cdot\nabla\partial_{i}\bar{u} - \bar{B}\cdot\nabla\partial_{i}\bar{B}
& - (1+\bar{a})\Delta\partial_{i}\bar{u} + (1+\bar{a})\nabla\partial_{i}\Pi   \\
& = -\partial_{i}\bar{u}\cdot\nabla\bar{u} + \partial_{i}\bar{a}\,\Delta \bar{u} - \partial_{i}\bar{a}\,\nabla\bar{\Pi}
+ \partial_{i}\bar{B}\cdot\nabla\bar{B} \\
& \quad + \partial_{i}\bar{a}\,\bar{B}\cdot\nabla\bar{B} + \bar{a}\,\partial_{i}\bar{B}\cdot\nabla\bar{B}
+ \bar{a}\,\bar{B}\cdot\nabla\partial_{i}\bar{B},  \\
\partial_{t}\partial_{i}\bar{B} - \Delta\partial_{i}\bar{B} + \bar{u}\cdot\nabla\partial_{i}\bar{B} & - \bar{B}\cdot\nabla\partial_{i}\bar{u}
 = \partial_{i}\bar{B}\cdot\nabla\bar{u} - \partial_{i}\bar{u}\cdot\nabla\bar{B}.
\end{split}
\end{align}
Using Remark \ref{s=1 linear couple} and Gronwall's inequality, we will obtain
\begin{align*}
& \|\nabla\bar{u}\|_{\tilde{L}_{T}^{\infty}(\dot{B}_{2,1}^{1})} + \|\nabla\bar{u}\|_{\tilde{L}_{T}^{1}(\dot{B}_{2,1}^{3})}
+ \|\nabla\bar{B}\|_{\tilde{L}_{T}^{\infty}(\dot{B}_{2,1}^{1})} + \|\nabla\bar{B}\|_{\tilde{L}_{T}^{1}(\dot{B}_{2,1}^{3})} \\
\lesssim &\, \exp{\left\{ C\int_{0}^{T} \|\bar{u}\|_{\dot{B}_{2,1}^{5/2}} + \|\bar{B}\|_{\dot{B}_{2,1}^{5/2}}\,dt \right\}}
\bigg\{ \|\nabla \bar{u}_{0}\|_{\dot{B}_{2,1}^{1}} + \|\nabla \bar{B}_{0}\|_{\dot{B}_{2,1}^{1}}  \\
&\, + \|\bar{a}\|_{\tilde{L}_{T}^{\infty}(\dot{B}_{2,1}^{2})}\left( \|\bar{u}\|_{L_{T}^{1}(\dot{B}_{2,1}^{7/2})}
+ \|\Pi\|_{L_{T}^{1}(\dot{B}_{2,1}^{5/2})} + \|\bar{B}\|_{\tilde{L}_{t}^{\infty}(\dot{B}_{2,1}^{3/2})}\|\bar{B}\|_{L_{t}^{1}(\dot{B}_{2,1}^{5/2})} \right) \\
&\, + \|\bar{a}\|_{\tilde{L}_{t}^{\infty}(\dot{B}_{2,1}^{2})}\|\bar{B}\|_{\tilde{L}_{t}^{\infty}(\dot{B}_{2,1}^{3/2})}
\|\bar{B}\|_{L_{t}^{1}(\dot{B}_{2,1}^{5/2})}
+ \|\bar{a}\|_{\tilde{L}_{T}^{\infty}(\dot{B}_{2,1}^{1})}\|\bar{B}\|_{\tilde{L}_{T}^{\infty}(\dot{B}_{2,1}^{3/2})}\|\nabla\bar{B}\|_{L_{T}^{1}(\dot{B}_{2,1}^{5/2})}
\bigg\}.
\end{align*}
Using Proposition \ref{global reference 5/2}, (\ref{estimate a}), (\ref{estimate in paper}) and product laws in Besov space, we get
\begin{align}
\label{estimate contain eta}
\begin{split}
&\, \|\nabla\bar{u}\|_{\tilde{L}_{T}^{\infty}(\dot{B}_{2,1}^{1})} + \|\nabla\bar{u}\|_{\tilde{L}_{T}^{1}(\dot{B}_{2,1}^{3})}
+ \|\nabla\bar{B}\|_{\tilde{L}_{T}^{\infty}(\dot{B}_{2,1}^{1})} + \|\nabla\bar{B}\|_{\tilde{L}_{T}^{1}(\dot{B}_{2,1}^{3})} \\
\leq &\, C + C \left( \|\bar{u}\|_{L_{T}^{1}(\dot{B}_{2,1}^{7/2})} + \|\bar{B}\|_{L_{T}^{1}(\dot{B}_{2,1}^{7/2})} + \|\bar{\Pi}\|_{L_{T}^{1}(\dot{B}_{2,1}^{5/2})} \right)      \\
\leq &\, C + C \left( \|\Delta \bar{u}\|_{L_{T}^{1}(L^{2})} + \|\Delta \bar{B}\|_{L_{T}^{1}(L^{2})} +\|\nabla \bar{\Pi}\|_{L_{T}^{1}(L^{2})} \right) \\
&\, + \eta \bigg( \|\bar{u}\|_{L_{T}^{1}(\dot{B}_{2,1}^{4})}+\|\bar{B}\|_{L_{T}^{1}(\dot{B}_{2,1}^{4})} + \|\bar{\Pi}\|_{L_{T}^{1}(\dot{B}_{2,1}^{3})} \bigg).
\end{split}
\end{align}
Notice that $\mathrm{div} \bar{u} = 0$, we get by taking $\mathrm{div}$ to (\ref{d u b}) that
\begin{align*}
\mathrm{div} \left( (1+\bar{a})\nabla\partial_{i}\bar{\Pi} \right) = & -\mathrm{div} \partial_{i} \left[ (\bar{u}\cdot\nabla)\bar{u} \right]
+ \mathrm{div} \partial_{i} \left[ \bar{a} \, \Delta \bar{u} \right] - \mathrm{div}\left[ \partial_{i}\bar{a}\,\nabla \bar{\Pi} \right] \\
& + \mathrm{div}\partial_{i}\left( \bar{B}\cdot\nabla\bar{B} \right) + \mathrm{div} \left( \partial_{i}\bar{a}\,\bar{B}\cdot\nabla\bar{B} \right)  \\
& + \mathrm{div}\left( \bar{a}\,\partial_{i}\bar{B}\cdot\nabla\bar{B} \right)
+ \mathrm{div}\left( \bar{a}\,\bar{B}\cdot\nabla\partial_{i}\bar{B} \right)
\end{align*}
From this and (\ref{pressure_estimate}), we deduce that
\begin{align*}
\|\nabla^{2}\bar{\Pi}\|_{L_{t}^{1}(\dot{B}_{2,1}^{1})} \lesssim &\, \|\partial_{i}\left( \bar{u}\cdot\nabla\bar{u} \right)\|_{L_{t}^{1}(\dot{B}_{2,1}^{1})}
+ \|\partial_{i}\left( \bar{a}\, \Delta\bar{u} \right)\|_{L_{t}^{1}(\dot{B}_{2,1}^{1})} \\
&\, + \|\partial_{i}\bar{a}\,\nabla\bar{\Pi}\|_{L_{t}^{1}(\dot{B}_{2,1}^{1})}
+ \|\partial_{i}\left( \bar{B}\cdot\nabla\bar{B} \right)\|_{L_{t}^{1}\dot{B}_{2,1}^{1}}  \\
& + \|\partial_{i}\bar{a}\,\bar{B}\cdot\nabla\bar{B}\|_{L_{t}^{1}(\dot{B}_{2,1}^{1})}
+ \|\bar{a}\,\partial_{i}\bar{B}\cdot\nabla\bar{B}\|_{L_{t}^{1}\dot{B}_{2,1}^{1}}   \\
&\, + \|\bar{a}\,\bar{B}\cdot\nabla\partial_{i}\bar{B}\|_{L_{t}^{1}\dot{B}_{2,1}^{1}}
+ \|\bar{a}\|_{\tilde{L}_{t}^{\infty}(\dot{H}^{2})}\|\nabla^{2}\bar{\Pi}\|_{\tilde{L}_{t}^{1}(\dot{H}^{1/2})}.
\end{align*}
Applying the product laws in Besov space yields that for any $\epsilon > 0$
\begin{align}
\label{d d pi}
\begin{split}
&\, \|\nabla^{2}\bar{\Pi}\|_{L_{t}^{1}(\dot{B}_{2,1}^{1})}  \\
\lesssim &\, \|\nabla\bar{u}\|_{L_{t}^{\infty}(\dot{B}_{2,1}^{1})}\|\nabla\bar{u}\|_{L_{t}^{1}(\dot{B}_{2,1}^{3/2})}
+ \|\bar{u}\|_{L_{t}^{\infty}(\dot{B}_{2,1}^{3/2})}\|\nabla^{2}\bar{u}\|_{L_{t}^{1}(\dot{B}_{2,1}^{1})}     \\
&\, + \|\nabla\bar{B}\|_{L_{t}^{\infty}(\dot{B}_{2,1}^{1})}\|\nabla\bar{B}\|_{L_{t}^{1}(\dot{B}_{2,1}^{3/2})}
+ \|\bar{B}\|_{L_{t}^{\infty}(\dot{B}_{2,1}^{3/2})}\|\nabla^{2}\bar{B}\|_{L_{t}^{1}(\dot{B}_{2,1}^{1})}      \\
&\, + \|\nabla\bar{a}\|_{L_{t}^{\infty}(\dot{B}_{2,1}^{3/2})}\|\Delta\bar{u}\|_{L_{t}^{1}(\dot{B}_{2,1}^{1})}
+ \|\bar{a}\|_{L_{t}^{\infty}(\dot{B}_{2,1}^{3/2})}\|\nabla^{3}\bar{u}\|_{L_{t}^{1}(\dot{B}_{2,1}^{1})}     \\
&\, + \|\nabla\bar{a}\|_{\tilde{L}_{t}^{\infty}(\dot{B}_{2,1}^{1})}\|\bar{B}\|_{L_{t}^{\infty}(\dot{B}_{2,1}^{3/2})}\|\nabla\bar{B}\|_{L_{t}^{1}(\dot{B}_{2,1}^{3/2})}
\\
&\, + \|\bar{a}\|_{\tilde{L}_{t}^{\infty}(\dot{B}_{2,1}^{3/2})}\|\nabla\bar{B}\|_{L_{t}^{\infty}(\dot{B}_{2,1}^{1})}\|\nabla\bar{B}\|_{L_{t}^{1}(\dot{B}_{2,1}^{3/2})}
\\
&\, + \|\bar{a}\|_{\tilde{L}_{t}^{\infty}(\dot{B}_{2,1}^{3/2})}\|\bar{B}\|_{L_{t}^{\infty}(\dot{B}_{2,1}^{3/2})}\|\nabla^{2}\bar{B}\|_{L_{t}^{1}(\dot{B}_{2,1}^{1})}
\\
&\, + \|\bar{a}\|_{\tilde{L}_{t}^{\infty}(\dot{B}_{2,1}^{2})}
\left( \epsilon \|\nabla\bar{\Pi}\|_{L_{t}^{1}(\dot{B}_{2,1}^{2})} + C \, \|\nabla\bar{\Pi}\|_{L_{t}^{1}(L^{2})} \right)
\end{split}
\end{align}

Combining (\ref{estimate in paper}), Proposition \ref{global reference 5/2} and taking $\epsilon > 0$ in (\ref{d d pi}) small enough, we will have
\begin{align}
\label{pi}
\begin{split}
\|\nabla\bar{\Pi}\|_{L_{t}^{1}(\dot{B}_{2,1}^{2})} \leq &\, C\,\big( 1 + \|\nabla\bar{u}\|_{L_{t}^{\infty}(\dot{B}_{2,1}^{1})}
+ \|\nabla\bar{B}\|_{L_{t}^{\infty}(\dot{B}_{2,1}^{1})}     \\
&\, + \|\bar{u}\|_{L_{t}^{1}(\dot{B}_{2,1}^{4})} + \|\bar{B}\|_{L_{t}^{1}(\dot{B}_{2,1}^{4})} \big).
\end{split}
\end{align}
Finally, taking $\eta$ in (\ref{estimate contain eta}) small enough and substituting (\ref{pi}) into (\ref{estimate contain eta}), we get
\begin{align*}
\|\nabla\bar{u}\|_{\tilde{L}_{t}^{\infty}(\dot{B}_{2,1}^{1})}+\|\nabla\bar{u}\|_{\tilde{L}_{t}^{1}(\dot{B}_{2,1}^{3})}
+\|\nabla\bar{B}\|_{\tilde{L}_{t}^{\infty}(\dot{B}_{2,1}^{1})} + \|\nabla\bar{B}\|_{\tilde{L}_{t}^{1}(\dot{B}_{2,1}^{3})} \leq C.
\end{align*}
This along with $(\ref{lp})$ and $(\ref{l2})$ completes the proof of the proposition.
\end{proof}


\section{Stability of the Global Large Solutions}

In this section we will give the proof of Theorem \ref{stability main theorem}.
Denoting $\tilde{u} := u-\bar{u}$, $\tilde{B} := B-\bar{B}$ and $\tilde{a} := a-\bar{a}$, we have
\begin{align}
\label{perturbed solution}
\begin{split}
\begin{cases}
\partial_{t}\tilde{a} + (\bar{u} + \tilde{u})\nabla\tilde{a} = -\tilde{u}\cdot\nabla\bar{a},    \\
\partial_{t}\tilde{u} + \tilde{u}\cdot\nabla\tilde{u} + \tilde{u}\cdot\nabla\bar{u} + \bar{u}\cdot\nabla\tilde{u}
- (1+\bar{a}+\tilde{a})(\Delta\tilde{u} - \nabla\tilde{\Pi})    \\
\quad\quad\quad\quad - (1+\bar{a}+\tilde{a})(\tilde{B}\cdot\nabla\tilde{B})
- (1+\bar{a}+\tilde{a})(\tilde{B}\cdot\nabla\bar{B} + \bar{B}\cdot\nabla\tilde{B})   \\
\quad\quad\quad\quad\quad\quad\quad\quad\quad\quad\quad\quad\quad\quad\quad\,\,
= \tilde{a}(\Delta\bar{u} - \nabla\bar{\Pi}) + \tilde{a}\,(\bar{B}\cdot\nabla\bar{B}), \\
\partial_{t}\tilde{B} - \Delta\tilde{B} + \tilde{u}\cdot\nabla\tilde{B} - \tilde{B}\cdot\nabla\tilde{u} = \tilde{B}\cdot\nabla\bar{u}
+ \bar{B}\nabla\tilde{u} - \tilde{u}\nabla\bar{B} - \bar{u}\cdot\nabla\tilde{B}, \\
\mathrm{div}\,\tilde{u} = \mathrm{div}\,\tilde{B} = 0,  \\
(\tilde{a}, \tilde{u}, \tilde{B})|_{t=0} = (\tilde{a}_{0}, \tilde{u}_{0}, \tilde{B}_{0}).
\end{cases}
\end{split}
\end{align}

Then the proof of Theorem \ref{stability main theorem} is equivalent to the proof of the global well posedness of (\ref{perturbed solution})
with small enough initial data $(\tilde{a}_{0}, \tilde{u}_{0})$. Indeed, according to the coupled parabolic hyperbolic theory \cite{R. Danchin 2004},
it is standard
to prove that there exists a positive time $\tilde{T}^{*}$ such that (\ref{mhd_a}) with initial data
$(\bar{a}_{0}+\tilde{a}_{0}, \bar{u}_{0}+\tilde{u}_{0}, \bar{B}_{0}+\tilde{B}_{0})$ has a unique solution $(a,u,B)$ with
\begin{align*}
& a \in C([0, \tilde{T}^{*}); B_{2,1}^{7/2}(\mathbb{R}^{3})), \\
& u \in C([0, \tilde{T}^{*}); B_{2,1}^{2}) \cap L_{\mathrm{loc}}^{1}((0,\tilde{T}^{*}); \dot{B}_{2,1}^{4}(\mathbb{R}^{3})),  \\
& B \in C([0, \tilde{T}^{*}); B_{2,1}^{2}) \cap L_{\mathrm{loc}}^{1}((0,\tilde{T}^{*}); \dot{B}_{2,1}^{4}(\mathbb{R}^{3})).
\end{align*}
Then $(\tilde{a}, \tilde{u}, \tilde{B})$ with
\begin{align*}
& \tilde{a} \in C([0, \tilde{T}^{*}); B_{2,1}^{7/2}(\mathbb{R}^{3})), \\
& \tilde{u} \in C([0, \tilde{T}^{*}); B_{2,1}^{2}) \cap L_{\mathrm{loc}}^{1}((0,\tilde{T}^{*}); \dot{B}_{2,1}^{4}(\mathbb{R}^{3})),  \\
& \tilde{B} \in C([0, \tilde{T}^{*}); B_{2,1}^{2}) \cap L_{\mathrm{loc}}^{1}((0,\tilde{T}^{*}); \dot{B}_{2,1}^{4}(\mathbb{R}^{3})).
\end{align*}
solves (\ref{perturbed solution}) on $[0, \tilde{T}^{*})$. Without loss of generality, we may assume that $\tilde{T}^{*}$ is the maximal time of
the existence to this solution. The aim of what follows is to prove that $\tilde{T}^{*} = \infty$ and $(\tilde{a}, \tilde{u}, \tilde{B})$ remains small
for all $t > 0$.
\begin{lemma}\label{lemma 7.1}
Let
\begin{align*}
V(t) = 2 \int_{0}^{t}\left( \|\nabla\bar{u}(\tau)\|_{L^{\infty}} + \|\nabla\bar{B}(\tau)\|_{L^{\infty}} \right)\,d\tau.
\end{align*}
Then under the assumption of Theorem \ref{stability main theorem}, we have
\begin{align}
\label{7.2}
\begin{split}
&\, \frac{d}{dt}\left[ e^{-V(t)}\left( \|(\sqrt{\rho}\tilde{u}(t)\|_{L^{2}}^{2} + \|\tilde{B}(t)\|_{L^{2}}^{2} \right) \right] \\
&\, \quad\quad\quad\quad\quad\quad\quad\quad
+ c_{0}g^{2}(t)e^{-V(t)}\left( \|\sqrt{\rho}\tilde{u}(t)\|_{L^{2}}^{2} + \|\tilde{B}(t)\|_{L^{2}}^{2} \right)   \\
\leq &\, Ce^{-V(t)}\bigg\{  g^{2}(t)\int_{S_{1}(t)}e^{-2t|\xi|^{2}}|\hat{\tilde{u}}_{0}(\xi)|^{2}\,d\xi
+ g^{2}(t)\int_{S_{2}(t)}e^{-2t|\xi|^{2}}|\hat{\tilde{B}}_{0}(\xi)|^{2}\,d\xi   \\
&\, + \|\tilde{\rho}\|_{L^{3}}^{2}\|\Delta\bar{u} - \nabla\bar{\Pi}\|_{L^{2}}^{2} + \|\tilde{\rho}\|_{L^{3}}^{2}\|\bar{B}\cdot\nabla\bar{B}\|_{L^{2}}^{2} \\
&\, + g(t)^{7}\left( \|\tilde{u}\|_{L_{t}^{2}(L^{2})}^{2} + \|\tilde{B}\|_{L_{t}^{2}(L^{2})}^{2} \right)
\left( \|\bar{u}\|_{L_{t}^{2}(L^{2})}^{2} + \|\bar{B}\|_{L_{t}^{2}(L^{2})}^{2} \right)  \\
&\, + g(t)^{7}\left( \|\tilde{B}\|_{L_{t}^{2}(L^{2})}^{2} + \|\tilde{u}\|_{L_{t}^{2}(L^{2})}^{2} \right)^{2}    \\
&\, + g(t)^{5}\|\tilde{B}\|_{L_{t}^{2}(L^{2})}^{2}\left( \|\nabla\bar{B}\|_{L_{t}^{2}(L^{2})}^{2} + \|\nabla\tilde{B}\|_{L_{t}^{2}(L^{2})}^{2} \right)  \\
&\, + g(t)^{5}\left( \|\Delta\tilde{u}\|_{L_{t}^{1}(L^{2})}^{2} + \|\nabla\tilde{\Pi}\|_{L_{t}^{1}(L^{2})}^{2}
+ \|\tilde{\rho}\|_{L_{t}^{\infty}(L^{2})}^{2}\|\Delta\bar{u} - \nabla\bar{\Pi}\|_{L_{t}^{1}(L^{2})}^{2} \right)  \\
&\, + g(t)^{5} \|\tilde{\rho}\|_{L_{t}^{\infty}(L^{2})}^{2} \|\bar{B}\cdot\nabla\bar{B}\|_{L_{t}^{1}(L^{2})}^{2} \bigg\},
\end{split}
\end{align}
for $t < \tilde{T}^{*}$, where the time dependent phase space region $S_{1}(t)$, $S_{2}(t)$ is given as in the proof of
Proposition \ref{decay pro 3}. Here and in what follows, we shall always denote
\begin{align*}
\rho := \frac{1}{1+\bar{a}+\tilde{a}}, \quad \bar{\rho} := \frac{1}{1+\bar{a}}, \quad \tilde{\rho}:=\rho - \bar{\rho}.
\end{align*}
\end{lemma}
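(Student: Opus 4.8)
The plan is to combine a $\rho$-weighted $L^{2}$ energy estimate for the coupled system (\ref{perturbed solution}) with the Fourier splitting (Schonbek) argument already used in Proposition \ref{decay pro 3}. First I would multiply the momentum equation of (\ref{perturbed solution}) by $\rho=(1+\bar{a}+\tilde{a})^{-1}$ so that the two convection contributions combine into $\rho(\bar{u}+\tilde{u})\cdot\nabla\tilde{u}$, take the $L^{2}$ inner product with $\tilde{u}$, and take the $L^{2}$ inner product of the $\tilde{B}$-equation with $\tilde{B}$. Using $\mathrm{div}\,\tilde{u}=\mathrm{div}\,\bar{u}=\mathrm{div}\,\tilde{B}=\mathrm{div}\,\bar{B}=0$ and the mass law $\partial_{t}\rho+(\bar{u}+\tilde{u})\cdot\nabla\rho=0$, the transport terms reproduce $\tfrac12\frac{d}{dt}\|\sqrt{\rho}\tilde{u}\|_{L^{2}}^{2}$ and $\tfrac12\frac{d}{dt}\|\tilde{B}\|_{L^{2}}^{2}$, while the diffusion yields $\|\nabla\tilde{u}\|_{L^{2}}^{2}+\|\nabla\tilde{B}\|_{L^{2}}^{2}$.

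The decisive algebraic step is to exploit the cancellations of the $u$--$B$ coupling: by integration by parts and $\mathrm{div}\,\bar{B}=\mathrm{div}\,\tilde{B}=0$ one has $\int\bar{B}\cdot\nabla\tilde{B}\cdot\tilde{u}+\int\bar{B}\cdot\nabla\tilde{u}\cdot\tilde{B}=0$ and $\int\tilde{B}\cdot\nabla\tilde{B}\cdot\tilde{u}+\int\tilde{B}\cdot\nabla\tilde{u}\cdot\tilde{B}=0$, so that after summing the two identities only the linear convection terms $-\int\rho\,\tilde{u}\cdot\nabla\bar{u}\cdot\tilde{u}$, $\int\tilde{B}\cdot\nabla\bar{u}\cdot\tilde{B}$, $\int\tilde{B}\cdot\nabla\bar{B}\cdot\tilde{u}$, $-\int\tilde{u}\cdot\nabla\bar{B}\cdot\tilde{B}$ and the density sources $\int\rho\,\tilde{a}\,(\Delta\bar{u}-\nabla\bar{\Pi})\cdot\tilde{u}$, $\int\rho\,\tilde{a}\,(\bar{B}\cdot\nabla\bar{B})\cdot\tilde{u}$ remain. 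The convection terms are dominated by $C(\|\nabla\bar{u}\|_{L^{\infty}}+\|\nabla\bar{B}\|_{L^{\infty}})(\|\sqrt{\rho}\tilde{u}\|_{L^{2}}^{2}+\|\tilde{B}\|_{L^{2}}^{2})$, and multiplying the identity by $e^{-V(t)}$ with $V'(t)=2(\|\nabla\bar{u}\|_{L^{\infty}}+\|\nabla\bar{B}\|_{L^{\infty}})$ absorbs them into the time derivative. Since $\tilde{\rho}=-\tilde{a}/((1+a)(1+\bar{a}))$ makes $\tilde{a}$ and $\tilde{\rho}$ comparable, the two density sources are bounded by $\|\tilde{\rho}\|_{L^{3}}\|\Delta\bar{u}-\nabla\bar{\Pi}\|_{L^{2}}\|\tilde{u}\|_{L^{6}}$ and $\|\tilde{\rho}\|_{L^{3}}\|\bar{B}\cdot\nabla\bar{B}\|_{L^{2}}\|\tilde{u}\|_{L^{6}}$; using $\|\tilde{u}\|_{L^{6}}\lesssim\|\nabla\tilde{u}\|_{L^{2}}$ and Young's inequality, a small multiple of $\|\nabla\tilde{u}\|_{L^{2}}^{2}$ is absorbed by the diffusion and the remainder yields the terms $\|\tilde{\rho}\|_{L^{3}}^{2}\|\Delta\bar{u}-\nabla\bar{\Pi}\|_{L^{2}}^{2}$ and $\|\tilde{\rho}\|_{L^{3}}^{2}\|\bar{B}\cdot\nabla\bar{B}\|_{L^{2}}^{2}$ on the right of (\ref{7.2}).

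To create the dissipative gain $c_{0}g^{2}(t)e^{-V(t)}(\|\sqrt{\rho}\tilde{u}\|_{L^{2}}^{2}+\|\tilde{B}\|_{L^{2}}^{2})$ on the left I would split $\mathbb{R}^{3}=S_{1}(t)\cup S_{1}(t)^{c}$ (and $S_{2}(t)\cup S_{2}(t)^{c}$) exactly as in Proposition \ref{decay pro 3}: on the high-frequency set $|\xi|^{2}\gtrsim g^{2}(t)$ one gets $\|\nabla\tilde{u}\|_{L^{2}}^{2}\geq \tfrac12 g^{2}\|\sqrt{\rho}\tilde{u}\|_{L^{2}}^{2}-c\,g^{2}\int_{S_{1}(t)}|\hat{\tilde{u}}|^{2}\,d\xi$ and likewise for $\tilde{B}$, the first term staying on the left and the low-frequency error moving to the right. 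I would then substitute the Duhamel formulas for $\tilde{u},\tilde{B}$ based on $e^{t\Delta}$, treating $\tilde{u}\otimes\tilde{u}$, $\tilde{u}\otimes\bar{u}$, $\tilde{B}\otimes\tilde{B}$, $\bar{B}\otimes\tilde{B}$, $\tilde{a}(\Delta\bar{u}-\nabla\bar{\Pi})$, $\tilde{a}\,\bar{B}\cdot\nabla\bar{B}$ and $a(\Delta\tilde{u}-\nabla\tilde{\Pi})$ as forcing, take Fourier transforms and estimate the low-frequency integrals. Because $\int_{S_{1}(t)}d\xi\sim g(t)^{3}$, quadratic terms carrying one derivative contribute a factor $g^{2}\cdot g^{5}=g^{7}$, derivative-free products contribute $g^{2}\cdot g^{3}=g^{5}$, and the free evolution contributes $g^{2}\int_{S_{i}}e^{-2t|\xi|^{2}}|\hat{\tilde{u}}_{0}|^{2}\,d\xi$; the conserved bound $\|a\|_{L^{2}}=\|a_{0}\|_{L^{2}}$ lets the $a(\Delta\tilde{u}-\nabla\tilde{\Pi})$ forcing be absorbed into $C$. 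This reproduces exactly the $g^{7}$, $g^{5}$ and data terms displayed on the right of (\ref{7.2}).

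The main obstacle, as elsewhere in the paper, is the $u$--$B$ coupling: one cannot treat $\bar{B}\cdot\nabla\tilde{B}$ or $\bar{B}\cdot\nabla\tilde{u}$ as harmless source terms, and the whole estimate closes only because of the two antisymmetric cancellations above together with the simultaneous handling of the coupled linear part. A secondary but delicate point is the bookkeeping of the powers of $g(t)$ in the low-frequency Duhamel estimates and the need to keep every density factor ($\|\tilde{\rho}\|_{L^{3}}$ or $\|\tilde{\rho}\|_{L_{t}^{\infty}(L^{2})}$) attached to a reference quantity ($\Delta\bar{u}-\nabla\bar{\Pi}$ or $\bar{B}\cdot\nabla\bar{B}$) whose time integrability is already supplied by Theorem \ref{decay_main_theorem} and Proposition \ref{decay pro 5}, so that (\ref{7.2}) can later be integrated in time.
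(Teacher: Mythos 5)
Your proposal follows essentially the same route as the paper's own proof: the $\rho$-weighted energy identity for the system (\ref{perturbed}), the antisymmetric cancellation $\int B\cdot\nabla\tilde{B}\cdot\tilde{u}+\int B\cdot\nabla\tilde{u}\cdot\tilde{B}=0$, absorption of the remaining linear convection terms via the weight $e^{-V(t)}$, the $\|\tilde{\rho}\|_{L^{3}}$--Sobolev--Young treatment of the density sources, Schonbek's frequency splitting to produce the $c_{0}g^{2}(t)$ term, and Duhamel plus $L^{\infty}_{\xi}$ Fourier bounds for the low-frequency integrals with the same $g^{5}$/$g^{7}$ power counting. Indeed, your bookkeeping of which coupling terms survive ($\int\tilde{B}\cdot\nabla\bar{B}\cdot\tilde{u}$ and $-\int\tilde{u}\cdot\nabla\bar{B}\cdot\tilde{B}$, controlled by $\|\nabla\bar{B}\|_{L^{\infty}}$ and hence by $V$) is slightly more explicit than the identity displayed in the paper, which tacitly accounts for them through the definition of $V(t)$.
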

\begin{proof}
Thanks to (\ref{perturbed solution}), $(\rho, \tilde{u}, \tilde{B})$ solves
\begin{align}
\label{perturbed}
\begin{cases}
\partial_{t}\rho + \mathrm{div} \, (\rho u) = 0, \\
\rho \partial_{t}\tilde{u} + \rho u\cdot\nabla\tilde{u} - B\cdot\nabla\tilde{B} + \rho \tilde{u}\cdot\nabla\bar{u} - \tilde{B}\cdot\nabla\bar{B}
-\Delta \tilde{u} + \nabla\tilde{\Pi}   \\
\quad\quad\quad\quad\quad\quad\quad\quad\quad\quad\quad\quad\quad\quad\quad\quad\quad
= -\frac{\tilde{\rho}}{\bar{\rho}}(\Delta\bar{u} - \nabla\bar{\Pi}) - \frac{\tilde{\rho}}{\bar{\rho}}\bar{B}\cdot\nabla\bar{B}, \\
\partial_{t}\tilde{B} - \Delta\tilde{B} + u\cdot\nabla\tilde{B} - B\cdot\nabla\tilde{u} = \tilde{B}\cdot\nabla\bar{u} - \tilde{u}\cdot\nabla\bar{B},  \\
\mathrm{div}\,u = \mathrm{div}\,B = 0,
\end{cases}
\end{align}
from which we get by a standard energy estimate that
\begin{align*}
&\, \frac{1}{2}\frac{d}{dt}\left( \|\sqrt{\rho}\tilde{u}(t)\|_{L^{2}}^{2} + \|\tilde{B}(t)\|_{L^{2}}^{2} \right)
+ \left( \|\nabla\tilde{u}(t)\|_{L^{2}}^{2} + \|\nabla\tilde{B}(t)\|_{L^{2}}^{2} \right)    \\
= &\, -\int_{\mathbb{R}^{3}}\rho \,\tilde{u}\cdot\nabla\bar{u}\,\tilde{u}\,dx + \int_{\mathbb{R}^{3}}\tilde{B}\cdot\nabla\bar{u}\,\tilde{B}\,dx
- \int_{\mathbb{R}^{3}}\frac{\tilde{\rho}}{\bar{\rho}}(\Delta\bar{u} - \nabla\bar{\Pi})\,\tilde{u}\,dx  \\
&\, - \int_{\mathbb{R}^{3}}\frac{\tilde{\rho}}{\bar{\rho}}\,\bar{B}\cdot\nabla\bar{B}\,\tilde{u}\,dx    \\
\leq &\, \|\nabla\bar{u}\|_{L^{\infty}}\int_{\mathbb{R}^{3}}\rho |\tilde{u}|^{2} + |\tilde{B}|^{2} \,dx
+ C \|\tilde{\rho}\|_{L^{3}} \|\Delta\bar{u}-\nabla\bar{\Pi}\|_{L^{2}}\|\nabla\tilde{u}\|_{L^{2}}   \\
&\, + C \|\tilde{\rho}\|_{L^{3}}\|\bar{B}\cdot\nabla\bar{B}\|_{L^{2}}\|\nabla\tilde{u}\|_{L^{2}}.
\end{align*}
This gives
\begin{align}
\label{7.4}
\begin{split}
&\, \frac{d}{dt}\left(e^{-V(t)}\left( \|\sqrt{\rho}\tilde{u}(t)\|_{L^{2}}^{2} + \|\tilde{B}(t)\|_{L^{2}}^{2} \right)\right)
+ e^{-V(t)}\left( \|\nabla\tilde{u}(t)\|_{L^{2}}^{2} + \|\nabla\tilde{B}(t)\|_{L^{2}}^{2} \right)   \\
\leq &\, Ce^{-V(t)}\|\tilde{\rho}\|_{L^{3}}^{2}\|\Delta\bar{u}-\nabla\bar{\Pi}\|_{L^{2}}^{2}
+ C e^{-V(t)}\|\tilde{\rho}\|_{L^{3}}^{2}\|\bar{B}\cdot\nabla\bar{B}\|_{L^{2}}^{2},
\end{split}
\end{align}
which along with a similar derivation of (\ref{decay 3 step 1 main}) ensures
\begin{align}
\begin{split}
&\, \frac{d}{dt}\left(e^{-V(t)}\left( \|\sqrt{\rho}\tilde{u}(t)\|_{L^{2}}^{2} + \|\tilde{B}(t)\|_{L^{2}}^{2} \right)\right) \\
&\, \quad\quad\quad\quad\quad\quad\quad\quad\quad
+ c_{0}g^{2}(t)e^{-V(t)}\left( \|\sqrt{\rho}\tilde{u}(t)\|_{L^{2}}^{2} + \|\tilde{B}(t)\|_{L^{2}}^{2} \right)   \\
\leq &\, Ce^{-V(t)}\|\tilde{\rho}\|_{L^{3}}^{2}\left( \|\Delta\bar{u} - \nabla\bar{\Pi}\|_{L^{2}}^{2} + \|\bar{B}\cdot\nabla\bar{B}\|_{L^{2}}^{2} \right)   \\
&\, \quad\quad\quad\quad\quad
+ C e^{-V(t)}g^{2}(t)\left( \int_{S_{1}(t)} |\hat{\tilde{u}}(\xi)|^{2}\,d\xi + \int_{S_{2}(t)}|\hat{\tilde{B}}(\xi)|^{2}\,d\xi \right),
\end{split}
\end{align}
with $c_{0} \leq \min{(\frac{1}{\rho}, 1)}$ and the time dependent phase space region $S_{1}(t)$, $S_{2}(t)$
being the same as the one in (\ref{decay 3 step 1 main}).

Now, we need to give the estimate of $\int_{S_{1}(t)}|\hat{\tilde{u}}(\xi)|^{2}\,d\xi$ and $\int_{S_{2}(t)}|\hat{\tilde{B}}(\xi)|^{2}\,d\xi$.
Rewrite the second equation in (\ref{perturbed}) as
\begin{align*}
\tilde{u}(t) = &\, e^{t\Delta}\tilde{u}_{0} + \int_{0}^{t}e^{(t-\tau)\Delta}\mathbb{P}
\bigg( -\nabla\cdot(\tilde{u}\otimes\tilde{u} + \bar{u}\otimes\tilde{u} + \tilde{u}\otimes\bar{u})  \\
&\, +\left( \frac{1}{\rho}-1 \right)\Delta\tilde{u} - \left( \frac{1}{\rho}-1 \right)\nabla\tilde{\Pi}
- \frac{\tilde{\rho}}{\rho\bar{\rho}} (\Delta\bar{u}-\nabla\bar{\Pi})   \\
&\, + \frac{1}{\rho}B\cdot\nabla\tilde{B} + \frac{1}{\rho}\tilde{B}\cdot\nabla\bar{B} - \frac{\tilde{\rho}}{\rho\bar{\rho}}\, \bar{B}\cdot\nabla\bar{B}
\bigg)\,d\tau.
\end{align*}
Taking the Fourier transformation with respect to the $x$ variables and integrating the resulting equation over $S_{1}(t)$, we obtain
\begin{align}
\label{7.6}
\begin{split}
&\, \int_{S_{1}(t)}|\hat{\tilde{u}}(\xi)|^{2}\,d\xi     \\
\lesssim &\, \int_{S_{1}(t)} e^{-2t|\xi|^{2}}|\hat{\tilde{u}}_{0}(\xi)|^{2}\,d\xi
+ g(t)^{5}\bigg( \int_{0}^{t}\|\mathcal{F}(\tilde{u}\otimes\tilde{u})\|_{L_{\xi}^{\infty}}  \\
&\, + \|\mathcal{F}(\bar{u}\otimes\tilde{u})\|_{L_{\xi}^{\infty}}\,d\tau \bigg)^{2}
+ g(t)^{3}\bigg( \int_{0}^{t} \left\|\mathcal{F}\left(\left(\frac{1}{\rho}-1\right)\Delta\tilde{u}\right)\right\|_{L_{\xi}^{\infty}} \\
&\, + \left\|\mathcal{F}\left(\left(\frac{1}{\rho}-1\right)\Delta\tilde{\Pi}\right)\right\|_{L_{\xi}^{\infty}}
+ \left\|\mathcal{F}\left(\frac{\tilde{\rho}}{\rho\bar{\rho}}(\Delta\bar{u}-\nabla\bar{\Pi})\right)\right\|_{L_{\xi}^{\infty}}\,d\tau \bigg)^{2}    \\
&\, + g(t)^{3}\bigg( \int_{0}^{t}\left\|\mathcal{F}\left(\frac{1}{\rho}\nabla\cdot\left(\tilde{B}\otimes\tilde{B}\right)\right)\right\|_{L_{\xi}^{\infty}}
+ \left\|\mathcal{F}\left(\frac{1}{\rho}\,\nabla\cdot\left(\bar{B}\otimes\tilde{B}\right)\right)\right\|_{L_{\xi}^{\infty}} \\
&\, + \left\|\mathcal{F}\left( \frac{\tilde{\rho}}{\rho\bar{\rho}}\,\bar{B}\cdot\nabla\bar{B} \right)\right\|_{L_{\xi}^{\infty}}
\,d\tau \bigg)^{2}.
\end{split}
\end{align}
Notice that
\begin{align*}
\int_{0}^{t}\|\mathcal{F}(\tilde{u}\otimes\tilde{u})(\tau)\|_{L_{\xi}^{\infty}}\,d\tau \leq C\,\int_{0}^{t}\|\tilde{u}(\tau)\|_{L^{2}}^{2}\,d\tau,
\end{align*}
\begin{align*}
\int_{0}^{t}\|\mathcal{F}(\tilde{u}\otimes\bar{u})(\tau)\|_{L_{\xi}^{\infty}}\,d\tau \leq C\,
\left( \int_{0}^{t}\|\tilde{u}(\tau)\|_{L^{2}}^{2} \right)^{\frac{1}{2}}\left( \int_{0}^{t}\|\bar{u}(\tau)\|_{L^{2}}^{2} \right)^{\frac{1}{2}},
\end{align*}
\begin{align*}
\int_{0}^{t}\left\|\mathcal{F}\left(\left(\frac{1}{\rho}-1\right)\Delta\tilde{u}\right)(\tau)\right\|_{L_{\xi}^{\infty}}\,d\tau
\leq C \int_{0}^{t}\|\Delta\tilde{u}\|_{L^{2}}\,d\tau,
\end{align*}
\begin{align*}
\int_{0}^{t}\left\|\mathcal{F}\left(\left(\frac{1}{\rho}-1\right)\nabla\tilde{\Pi}\right)\right\|_{L_{\xi}^{\infty}}
\leq C \int_{0}^{t}\|\nabla\tilde{\Pi}\|_{L^{2}}\,d\tau,
\end{align*}
\begin{align*}
\int_{0}^{t}\left\|\mathcal{F}\left(\frac{\tilde{\rho}}{\bar{\rho}\rho}\left(\Delta\bar{u}-\nabla\bar{\Pi}\right)\right)\right\|_{L_{\xi}^{\infty}}\,d\tau
\leq C \|\tilde{\rho}\|_{L_{t}^{\infty}(L^{2})}\int_{0}^{t}\|\Delta\bar{u}-\nabla\bar{\Pi}\|_{L^{2}}\,d\tau,
\end{align*}
\begin{align*}
\int_{0}^{t}\left\|\mathcal{F}\left(\frac{1}{\rho}\, \nabla\left(\tilde{B}\otimes\tilde{B}\right)\right)\right\|_{L_{\xi}^{\infty}}\,d\tau
\leq &\, C \int_{0}^{t}\|\frac{1}{\rho}\,\nabla\cdot\left(\tilde{B}\otimes\tilde{B}\right)\|_{L^{1}}\,d\tau \\
\leq &\, C \left(\int_{0}^{t}\|\tilde{B}(\tau)\|_{L^{2}}^{2}\,d\tau\right)^{\frac{1}{2}}
\left(\int_{0}^{t}\|\nabla\tilde{B}(\tau)\|_{L^{2}}^{2}\,d\tau\right)^{\frac{1}{2}},
\end{align*}
\begin{align*}
\int_{0}^{t}\left\|\mathcal{F}\left(\frac{1}{\rho}\, \nabla\left(\tilde{B}\otimes\bar{B}\right)\right)\right\|_{L_{\xi}^{\infty}}\,d\tau
\leq C \left(\int_{0}^{t}\|\tilde{B}(\tau)\|_{L^{2}}^{2}\,d\tau\right)^{\frac{1}{2}}
\left(\int_{0}^{t}\|\nabla\bar{B}(\tau)\|_{L^{2}}^{2}\,d\tau\right)^{\frac{1}{2}},
\end{align*}
\begin{align*}
\int_{0}^{t}\left\|\frac{\tilde{\rho}}{\rho\bar{\rho}}\,\bar{B}\,\nabla\bar{B}\right\|_{L^{1}}\,d\tau
\leq C\,\|\tilde{\rho}\|_{L_{t}^{\infty}(L^{2})}\int_{0}^{t}\|\bar{B}\cdot\nabla\bar{B}\|_{L^{2}}\,d\tau.
\end{align*}
Then, we rewrite the third equation in (\ref{perturbed}) as
\begin{align*}
\tilde{B}(t) = &\, e^{t\Delta}\tilde{B}_{0} + \int_{0}^{t}e^{(t-\tau)\Delta} \bigg[\nabla\cdot\Big(\tilde{B}\otimes\bar{u}
+ \bar{B}\otimes\tilde{u} - \tilde{u}\otimes\bar{B}     \\
&\, - \bar{u}\otimes\tilde{B} + \tilde{B}\otimes\tilde{u} - \tilde{u}\otimes\tilde{B}
\Big)\bigg]\,d\tau.
\end{align*}
Taking the Fourier transformation with respect to the $x$ variables and integrating the resulting equation over $S(t)$, we obtain
\begin{align}
\label{7.6'}
\begin{split}
\int_{S_{2}(t)}|\hat{\tilde{B}}(\xi)|^{2}\,d\xi \lesssim &\,\int_{S_{2}}e^{-2t|\xi|^{2}}|\hat{\tilde{B}}_{0}|^{2}\,d\xi \\
&\, + g(t)^{5}\bigg( \int_{0}^{t}\|\mathcal{F}(\tilde{B}\otimes\bar{u})\|_{L_{\xi}^{\infty}}
+ \|\mathcal{F}(\bar{B}\otimes\tilde{u})\|_{L_{\xi}^{\infty}}   \\
&\, + \|\mathcal{F}(\tilde{u}\otimes\bar{B})\|_{L_{\xi}^{\infty}} + \|\mathcal{F}(\bar{u}\otimes\tilde{B})\|_{L_{\xi}^{\infty}} \\
&\, + \|\mathcal{F}(\tilde{B}\otimes\tilde{u})\|_{L_{\xi}^{\infty}} + \|\mathcal{F}(\tilde{u}\otimes\tilde{B})\|_{L_{\xi}^{\infty}}
\bigg)^{2}.
\end{split}
\end{align}
Similar to the estimates used above, we have
\begin{align}
\begin{split}
\int_{S_{2}(t)}|\hat{\tilde{B}}(\xi)|^{2}\,d\xi & \lesssim \, \int_{S_{2}(t)}e^{-2t|\xi|^{2}}|\hat{\tilde{B}}_{0}(\xi)|^{2}\,d\xi   \\
&\, + g(t)^{5}\left(\|\tilde{u}\|_{L_{t}^{2}(L^{2})}^{2} + \|\tilde{B}\|_{L_{t}^{2}(L^{2})}^{2}\right)
\left(\|\bar{u}\|_{L_{t}^{2}(L^{2})}^{2} + \|\bar{B}\|_{L_{t}^{2}(L^{2})}^{2}\right)    \\
&\, + g(t)^{5}\left(\|\tilde{B}\|_{L_{t}^{2}(L^{2})}^{2} + \|\tilde{u}\|_{L_{t}^{2}(L^{2})}^{2}\right)^{2}
\end{split}
\end{align}
Combining all the above estimates, we complete the proof.
\end{proof}

\begin{lemma}\label{lemma 7.2}
Let
\begin{align*}
U(t) := C \int_{0}^{t} \Big( \|\Delta\bar{u}(\tau)\|_{L^{2}}\|\nabla\bar{u}(\tau)\|_{L^{2}}
& + \|\bar{u}(\tau)\|_{L^{\infty}}^{2}    \\
& + \|\Delta\bar{B}(\tau)\|_{L^{2}}\|\nabla\bar{B}(\tau)\|_{L^{2}}
+ \|\bar{B}(\tau)\|_{L^{\infty}}^{2} \Big)\,d\tau.
\end{align*}
If
\begin{align}
\label{7.7}
\sup_{t\in [0,\bar{T})} \left[ \|\tilde{u}(t)\|_{L^{2}}\|\nabla\tilde{u}(t)\|_{L^{2}} + \|\tilde{B}(t)\|_{L^{2}}\|\nabla\tilde{B}(t)\|_{L^{2}} \right]
\leq \nu
\end{align}
for some $\bar{T} \leq \tilde{T}^{*}$ and some sufficiently small positive constant $\nu$, then under the assumptions of Theorem \ref{stability main theorem},
we have
\begin{align}
\label{7.8}
\begin{split}
&\, \frac{d}{dt}\left(e^{-U(t)}\|(\nabla\tilde{u}, \nabla\tilde{B})\|_{L^{2}}^{2}\right)
+ e^{-U(t)}\|(\sqrt{\rho}\partial_{t}\tilde{u}, \partial_{t}\tilde{B})\|_{L^{2}}^{2}    \\
&\quad\quad\quad\quad\quad\quad\quad\quad\quad\quad\quad\quad\quad\quad\quad\quad\quad
+ c_{0}e^{-U(t)}\|(\Delta\tilde{u}, \Delta\tilde{B})\|_{L^{2}}^{2}     \\
\leq &\,C\,e^{-U(t)}\left(\|\tilde{\rho}\|_{L^{\infty}}^{2}\|\Delta\bar{u}-\nabla\bar{\Pi}\|_{L^{2}}^{2}
+ \|\tilde{\rho}\|_{L^{\infty}}^{2}\|\bar{B}\cdot\nabla\bar{B}\|_{L^{2}}^{2}\right) \quad\quad \text{for }  t < \bar{T}.
\end{split}
\end{align}
\end{lemma}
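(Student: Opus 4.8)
The plan is to reproduce, for the perturbed system (\ref{perturbed}), the two-fold energy argument that establishes Proposition \ref{decay pro 1}, while carefully tracking the additional terms that couple the perturbation $(\tilde u,\tilde B)$ to the reference solution $(\bar a,\bar u,\bar B)$. First I would take the $L^2$ inner product of the $\tilde u$-equation of (\ref{perturbed}) with $\frac1\rho\Delta\tilde u$ and of the $\tilde B$-equation with $\Delta\tilde B$. Exactly as in (\ref{decay 1 momentum})--(\ref{decay 1 magnetic}), the terms $\rho\partial_t\tilde u$, $-\Delta\tilde u$ (respectively $\partial_t\tilde B$, $-\Delta\tilde B$) produce on the left $\frac12\frac{d}{dt}\|\nabla\tilde u\|_{L^2}^2+\|\tfrac1{\sqrt\rho}\Delta\tilde u\|_{L^2}^2$ and $\frac12\frac{d}{dt}\|\nabla\tilde B\|_{L^2}^2+\|\Delta\tilde B\|_{L^2}^2$; using $c_0\le\min(\tfrac1\rho,1)$ this yields the dissipation $c_0\|(\Delta\tilde u,\Delta\tilde B)\|_{L^2}^2$. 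Everything else is sent to the right-hand side.

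The core bookkeeping is to split $u=\bar u+\tilde u$, $B=\bar B+\tilde B$ in all convection and coupling terms and sort the resulting pieces into three classes. The pieces carrying the reference velocity or field as the convecting quantity, such as $\bar u\cdot\nabla\tilde u$ and $\bar B\cdot\nabla\tilde B$, are bounded by $\|\bar u\|_{L^\infty}\|\nabla\tilde u\|_{L^2}\|\Delta\tilde u\|_{L^2}$ and $\|\bar B\|_{L^\infty}\|\nabla\tilde B\|_{L^2}\|\Delta\tilde B\|_{L^2}$, so Young's inequality gives an absorbable $\|\Delta\cdot\|_{L^2}^2$ plus the coefficients $\|\bar u\|_{L^\infty}^2$, $\|\bar B\|_{L^\infty}^2$ times $\|(\nabla\tilde u,\nabla\tilde B)\|_{L^2}^2$. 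The pieces with a reference gradient as coefficient, such as $\tilde u\cdot\nabla\bar u$, $\tilde B\cdot\nabla\bar B$, $\tilde B\cdot\nabla\bar u$, $\tilde u\cdot\nabla\bar B$, are instead estimated by $\|\tilde u\|_{L^6}\|\nabla\bar u\|_{L^3}\|\Delta\tilde u\|_{L^2}$ together with the interpolation $\|\nabla\bar u\|_{L^3}\lesssim\|\nabla\bar u\|_{L^2}^{1/2}\|\Delta\bar u\|_{L^2}^{1/2}$, producing the coefficient $\|\Delta\bar u\|_{L^2}\|\nabla\bar u\|_{L^2}$ (and likewise for $\bar B$). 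All four of these coefficients are precisely the integrand defining $U'(t)$, so after multiplying the whole inequality by $e^{-U(t)}$ they are converted into the time-derivative $\frac{d}{dt}\big(e^{-U}\|(\nabla\tilde u,\nabla\tilde B)\|_{L^2}^2\big)$. The genuinely quadratic pieces $\tilde u\cdot\nabla\tilde u$, $\tilde B\cdot\nabla\tilde B$, $\tilde B\cdot\nabla\tilde u$ each carry a factor $\|(\tilde u,\tilde B)\|_{L^2}^{1/2}\|(\nabla\tilde u,\nabla\tilde B)\|_{L^2}^{1/2}\|(\Delta\tilde u,\Delta\tilde B)\|_{L^2}$, so the smallness assumption (\ref{7.7}) bounds their coefficient by $\lesssim\nu^{1/2}$ and they are absorbed into $c_0\|(\Delta\tilde u,\Delta\tilde B)\|_{L^2}^2$ for $\nu$ small. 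The two density-driven source terms $-\frac{\tilde\rho}{\bar\rho}(\Delta\bar u-\nabla\bar\Pi)$ and $-\frac{\tilde\rho}{\bar\rho}\bar B\cdot\nabla\bar B$ have no such smallness; I bound them by $\|\tilde\rho\|_{L^\infty}\|\Delta\bar u-\nabla\bar\Pi\|_{L^2}\|\Delta\tilde u\|_{L^2}$ and $\|\tilde\rho\|_{L^\infty}\|\bar B\cdot\nabla\bar B\|_{L^2}\|\Delta\tilde u\|_{L^2}$, and Young's inequality leaves exactly the right-hand side of (\ref{7.8}).

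To generate the $\|(\sqrt\rho\,\partial_t\tilde u,\partial_t\tilde B)\|_{L^2}^2$ term on the left I would run the companion computation of (\ref{decay 1 second}): take the $L^2$ inner product of the two evolution equations with $\partial_t\tilde u$ and $\partial_t\tilde B$, then add a small multiple of the resulting identity to the one above, in the same way the quantity $\mathcal A$ is assembled in the proof of Proposition \ref{decay pro 1}. The pressure contribution $\frac1\rho\nabla\tilde\Pi$ is handled by an elliptic estimate of the form (\ref{decay 1 pressure}), which expresses $\|\nabla\tilde\Pi\|_{L^2}$ and $\|\Delta\tilde u\|_{L^2}$ in terms of $\|\sqrt\rho\,\partial_t\tilde u\|_{L^2}$ plus lower-order absorbable terms, so the $\partial_t$-identity feeds back the missing time-derivative control; exactly as in Proposition \ref{decay pro 1} the smallness of $\nu$ keeps the net coefficient of $\|(\Delta\tilde u,\Delta\tilde B)\|_{L^2}^2$ positive and at least $c_0$.

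The step I expect to be the main obstacle is the simultaneous treatment of the magnetic coupling terms $\frac1\rho B\cdot\nabla\tilde B$ and $B\cdot\nabla\tilde u$ once they are tested against the second-order quantities $\frac1\rho\Delta\tilde u$ and $\Delta\tilde B$: at the $H^1$ level there is no clean antisymmetric cancellation as there is in the plain $L^2$ energy identity, so one must split $B=\bar B+\tilde B$ and show that the reference part produces only coefficients lying in $U'(t)$ while the perturbation part produces only $\nu$-small contributions, all the while ensuring that the variable-density weights $\frac1\rho$ do not spoil the integrations by parts. Lining up every constant so that the total absorbed dissipation never exceeds $c_0\|(\Delta\tilde u,\Delta\tilde B)\|_{L^2}^2$ is where the calculation is most delicate.
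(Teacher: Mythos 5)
Your proposal is correct and follows essentially the same route as the paper's proof: two energy estimates (testing against $\partial_{t}\tilde{u},\partial_{t}\tilde{B}$ and against $\Delta\tilde{u},\Delta\tilde{B}$, the latter after dividing by $\rho$) combined so that the interpolation bounds (\ref{7.9}) turn the reference-solution coefficients into $U'(t)$, the smallness (\ref{7.7}) absorbs the quadratic perturbation terms into the dissipation, the pressure is controlled through the momentum-equation elliptic estimate (\ref{7.11}), and the density-driven sources yield exactly the right-hand side of (\ref{7.8}). The magnetic coupling terms you flag as the main obstacle are handled in the paper precisely as you anticipate, by splitting $B=\bar{B}+\tilde{B}$ and sorting the pieces into $U'(t)$-coefficients and $\nu$-small contributions.
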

\begin{proof}
Taking the $L^{2}$ inner product between the second equation of (\ref{perturbed}) and $\partial_{t}\tilde{u}$, we obtain
\begin{align*}
&\, \frac{1}{2}\frac{d}{dt}\|\nabla\tilde{u}\|_{L^{2}}^{2} + \|\sqrt{\rho}\partial_{t}\tilde{u}\|_{L^{2}}^{2}   \\
\lesssim &\, \|\sqrt{\rho}\partial_{t}\tilde{u}\|_{L^{2}}\|\sqrt{\rho}\|_{L^{\infty}}
\left(\|\tilde{u}\cdot\nabla\tilde{u}\|_{L^{2}} + \|\tilde{u}\cdot\nabla\bar{u}\|_{L^{2}} + \|\bar{u}\cdot\nabla\tilde{u}\|_{L^{2}}\right)  \\
&\, + \|\sqrt{\rho}\partial_{t}\tilde{u}\|_{L^{2}}\left\|\frac{\tilde{\rho}}{\bar{\rho}\sqrt{\rho}}\right\|_{L^{\infty}}\|\Delta\bar{u}-\nabla\bar{\Pi}\|_{L^{2}}
+ \left\|\frac{\tilde{\rho}}{\bar{\rho}}\right\|_{L^{\infty}}^{2}\|\bar{B}\cdot\nabla\bar{B}\|_{L^{2}}^{2}  \\
&\, + \|\tilde{B}\cdot\nabla\tilde{B}\|_{L^{2}}^{2} + \|\tilde{B}\cdot\nabla\bar{B}\|_{L^{2}}^{2} + \|\bar{B}\cdot\nabla\tilde{B}\|_{L^{2}}^{2}.
\end{align*}
Notice that
\begin{align}
\label{7.9}
\begin{split}
&\, \|\tilde{u}\cdot\nabla\tilde{u}\|_{L^{2}} + \|\tilde{u}\cdot\nabla\bar{u}\|_{L^{2}} + \|\bar{u}\cdot\nabla\tilde{u}\|_{L^{2}}   \\
\lesssim &\, \|\tilde{u}\|_{L^{2}}^{\frac{1}{2}}\|\nabla\tilde{u}\|_{L^{2}}^{\frac{1}{2}}\|\Delta\tilde{u}\|_{L^{2}}
+ \|\nabla\tilde{u}\|_{L^{2}}\left(\|\nabla\bar{u}\|_{L^{2}}^{\frac{1}{2}}\|\Delta\bar{u}\|_{L^{2}}^{\frac{1}{2}}+\|\bar{u}\|_{L^{\infty}}\right), \\
&\, \|\tilde{B}\cdot\nabla\tilde{B}\|_{L^{2}} + \|\tilde{B}\cdot\nabla\bar{B}\|_{L^{2}} + \|\bar{B}\cdot\nabla\tilde{B}\|_{L^{2}}   \\
\lesssim &\, \|\tilde{B}\|_{L^{2}}^{\frac{1}{2}}\|\nabla\tilde{B}\|_{L^{2}}^{\frac{1}{2}}\|\Delta\tilde{B}\|_{L^{2}}
+ \|\nabla\tilde{B}\|_{L^{2}}\left(\|\nabla\bar{B}\|_{L^{2}}^{\frac{1}{2}}\|\Delta\bar{B}\|_{L^{2}}^{\frac{1}{2}}+\|\bar{B}\|_{L^{\infty}}\right),
\end{split}
\end{align}
which ensures
\begin{align}
\label{7.10}
\begin{split}
&\, \frac{d}{dt}\|\nabla\tilde{u}\|_{L^{2}}^{2} + \|\sqrt{\rho}\partial_{t}\tilde{u}\|_{L^{2}}^{2}  \\
\lesssim &\, \|\tilde{u}\|_{L^{2}}\|\nabla\tilde{u}\|_{L^{2}}\|\Delta\tilde{u}\|_{L^{2}}^{2}
+ \|\nabla\tilde{u}\|_{L^{2}}^{2} \Big(\|\Delta\bar{u}\|_{L^{2}}\|\nabla\bar{u}\|_{L^{2}}+\|\bar{u}\|_{L^{\infty}}^{2}\Big) \\
&\, \|\tilde{B}\|_{L^{2}}\|\nabla\tilde{B}\|_{L^{2}}\|\Delta\tilde{B}\|_{L^{2}}^{2}
+ \|\nabla\tilde{B}\|_{L^{2}}^{2} \Big(\|\Delta\bar{B}\|_{L^{2}}\|\nabla\bar{B}\|_{L^{2}}+\|\bar{B}\|_{L^{\infty}}^{2}\Big) \\
&\, + \|\tilde{\rho}\|_{L^{\infty}}^{2}\|\Delta\bar{u}-\nabla\bar{\Pi}\|_{L^{2}}^{2}
+ \|\tilde{\rho}\|_{L^{\infty}}^{2}\|\bar{B}\cdot\nabla\bar{B}\|_{L^{2}}^{2}.
\end{split}
\end{align}
By taking the $L^{2}$ inner product between (\ref{perturbed}) and $\Delta\tilde{u}$, we get
\begin{align*}
&\, \frac{1}{2}\frac{d}{dt}\|\nabla\tilde{u}\|_{L^{2}}^{2} + \|\rho\|_{L^{\infty}}^{-1}\|\Delta\tilde{u}\|_{L^{2}}^{2} \\
\leq &\, \|\Delta\tilde{u}\|_{L^{2}}\Big( \|\tilde{u}\cdot\nabla\tilde{u}\|_{L^{2}} + \|\tilde{u}\cdot\nabla\bar{u}\|_{L^{2}}
+ \|\bar{u}\cdot\nabla\tilde{u}\|_{L^{2}}      \\
&\, + \|\rho^{-1}\|_{L^{\infty}}(\|\tilde{B}\cdot\nabla\tilde{B}\|_{L^{2}} + \|\bar{B}\cdot\nabla\tilde{B}\|_{L^{2}}
+ \|\tilde{B}\cdot\nabla\bar{B}\|_{L^{2}} ) \\
&\, \left\|\frac{\tilde{\rho}}{\rho\bar{\rho}}\right\|_{L^{\infty}}\|\Delta\bar{u}-\nabla\bar{\Pi}\|_{L^{2}}
+ \|\rho^{-1}\|_{L^{\infty}}\|\nabla\bar{\Pi}\|_{L^{2}} + \left\|\frac{\tilde{\rho}}{\rho\bar{\rho}}\right\|_{L^{\infty}}
\|\bar{B}\cdot\nabla\bar{B}\|_{L^{2}} \Big).
\end{align*}
Thanks to the momentum equation of (\ref{perturbed}) and $\mathrm{div}\,\tilde{u} = 0$, we have
\begin{align}
\label{7.11}
\begin{split}
\|\nabla\tilde{\Pi}\|_{L^{2}} + \|\Delta\tilde{u}\|_{L^{2}}
\lesssim &\,
\|\tilde{u}\cdot\nabla\tilde{u}\|_{L^{2}} + \|\tilde{u}\cdot\nabla\bar{u}\|_{L^{2}} + \|\bar{u}\cdot\nabla\tilde{u}\|_{L^{2}}   \\
&\, + \|\tilde{B}\cdot\nabla\tilde{B}\|_{L^{2}} + \|\tilde{B}\cdot\nabla\bar{B}\|_{L^{2}} + \|\bar{B}\cdot\nabla\tilde{B}\|_{L^{2}}   \\
&\, + \|\tilde{\rho}\|_{L^{\infty}}\|\Delta\bar{u}-\nabla\bar{\Pi}\|_{L^{2}} + \|\tilde{\rho}\|_{L^{\infty}}\|\bar{B}\cdot\nabla\bar{B}\|_{L^{2}} \\
&\, + \|\sqrt{\rho}\partial_{t}\tilde{u}\|_{L^{2}}.
\end{split}
\end{align}
This gives
\begin{align}
\label{7.12}
\begin{split}
&\, \frac{d}{dt}\|\nabla\tilde{u}\|_{L^{2}}^{2} + c_{0}\|\Delta\tilde{u}\|_{L^{2}}^{2}  \\
\lesssim &\, \|\tilde{u}\|_{L^{2}}\|\nabla\tilde{u}\|_{L^{2}}\|\Delta\tilde{u}\|_{L^{2}}^{2}
+ \|\nabla\tilde{u}\|_{L^{2}}^{2}\Big( \|\nabla\bar{u}\|_{L^{2}}\|\Delta\bar{u}\|_{L^{2}}+\|\bar{u}\|_{L^{\infty}}^{2} \Big)   \\
&\, \|\tilde{B}\|_{L^{2}}\|\nabla\tilde{B}\|_{L^{2}}\|\Delta\tilde{B}\|_{L^{2}}^{2}
+ \|\nabla\tilde{B}\|_{L^{2}}^{2}\Big( \|\nabla\bar{B}\|_{L^{2}}\|\Delta\bar{B}\|_{L^{2}}+\|\bar{B}\|_{L^{\infty}}^{2} \Big)   \\
&\, \|\tilde{\rho}\|_{L^{\infty}}^{2}\|\Delta\bar{u} - \nabla\bar{\Pi}\|_{L^{2}}^{2}
+ \|\sqrt{\rho}\partial_{t}\tilde{u}\|_{L^{2}}^{2} + \|\tilde{\rho}\|_{L^{\infty}}^{2}\|\bar{B}\cdot\nabla\bar{B}\|_{L^{2}}^{2}.
\end{split}
\end{align}
Combining (\ref{7.10}) and (\ref{7.12}), we arrive at
\begin{align}
\label{A}
\begin{split}
&\, \frac{d}{dt}\|\nabla\tilde{u}\|_{L^{2}}^{2} + \|\sqrt{\rho}\partial_{t}\tilde{u}\|_{L^{2}}^{2}
+ \left( c_{0}-C\|\tilde{u}\|_{L^{2}}\|\nabla\tilde{u}\|_{L^{2}} \right)\|\Delta\tilde{u}\|_{L^{2}}^{2} \\
\lesssim &\, \|\nabla\tilde{u}\|_{L^{2}}^{2}(\|\Delta\bar{u}\|_{L^{2}}\|\nabla\bar{u}\|_{L^{2}}+\|\bar{u}\|_{L^{\infty}}^{2})
+ \|\tilde{\rho}\|_{L^{\infty}}^{2}\|\Delta\bar{u}-\nabla\bar{\Pi}\|_{L^{2}}^{2}    \\
&\, + \|\nabla\tilde{B}\|_{L^{2}}^{2}(\|\Delta\bar{B}\|_{L^{2}}\|\nabla\bar{B}\|_{L^{2}}+\|\bar{B}\|_{L^{\infty}}^{2})
+ \|\tilde{\rho}\|_{L^{\infty}}^{2}\|\bar{B}\cdot\nabla\bar{B}\|_{L^{2}}^{2}     \\
&\, + \|\tilde{B}\|_{L^{2}}\|\nabla\tilde{B}\|_{L^{2}}\|\Delta\tilde{B}\|_{L^{2}}^{2}
\end{split}
\end{align}
for $t < \tilde{T}^{*}$.

Taking the $L^{2}$ inner product between the third equation of (\ref{perturbed}) and $\partial_{t}\tilde{B}$, we have
\begin{align*}
&\, \frac{1}{2}\frac{d}{dt}\|\nabla\tilde{B}\|_{L^{2}}^{2} + \|\partial_{t}\tilde{B}\|_{L^{2}}^{2} \\
\lesssim &\, \|\tilde{u}\cdot\nabla\tilde{B}\|_{L^{2}}^{2} + \|\bar{u}\cdot\nabla\tilde{B}\|_{L^{2}}^{2}
+ \|\bar{B}\cdot\nabla\tilde{u}\|_{L^{2}}^{2} + \|\tilde{B}\cdot\nabla\tilde{u}\|_{L^{2}}^{2}   \\
&\, + \|\tilde{B}\cdot\nabla\bar{u}\|_{L^{2}}^{2} + \|\tilde{u}\cdot\nabla\bar{B}\|_{L^{2}}^{2}.
\end{align*}

Noticing that
\begin{align*}
&\, \|\tilde{u}\cdot\nabla\tilde{B}\|_{L^{2}} + \|\bar{u}\cdot\nabla\tilde{B}\|_{L^{2}} + \|\tilde{u}\cdot\nabla\bar{B}\|_{L^{2}}   \\
&\, \quad\quad\quad\quad\quad\quad\quad\quad
+ \|\bar{B}\cdot\nabla\tilde{u}\|_{L^{2}} + \|\tilde{B}\cdot\nabla\tilde{u}\|_{L^{2}} + \|\tilde{B}\cdot\nabla\bar{u}\|_{L^{2}}   \\
\lesssim &\, \|\tilde{u}\|_{L^{2}}^{\frac{1}{2}}\|\nabla\tilde{u}\|_{L^{2}}^{\frac{1}{2}}\|\Delta\tilde{B}\|_{L^{2}}
+ \|\nabla\tilde{u}\|_{L^{2}}\|\nabla\bar{B}\|_{L^{2}}^{\frac{1}{2}}\|\Delta\bar{B}\|_{L^{2}}^{\frac{1}{2}} \\
&\, + \|\tilde{B}\|_{L^{2}}^{\frac{1}{2}}\|\nabla\tilde{B}\|_{L^{2}}^{\frac{1}{2}}\|\Delta\tilde{u}\|_{L^{2}}
+ \|\nabla\tilde{B}\|_{L^{2}}\|\nabla\bar{u}\|_{L^{2}}^{\frac{1}{2}}\|\Delta\bar{u}\|_{L^{2}}^{\frac{1}{2}} \\
&\, + \|\nabla\tilde{B}\|_{L^{2}}\|\bar{u}\|_{L^{\infty}} + \|\nabla\tilde{u}\|_{L^{2}}\|\bar{B}\|_{L^{\infty}}
\end{align*}
which ensures
\begin{align}
\label{7.10'}
\begin{split}
&\, \frac{d}{dt}\|\nabla\tilde{B}\|_{L^{2}}^{2} + \|\partial_{t}\tilde{B}\|_{L^{2}}^{2}   \\
\lesssim &\, \|\tilde{u}\|_{L^{2}}\|\nabla\tilde{u}\|_{L^{2}}\|\Delta\tilde{B}\|_{L^{2}}^{2}
+ \|\nabla\tilde{u}\|_{L^{2}}^{2}\|\nabla\bar{B}\|_{L^{2}}\|\Delta\bar{B}\|_{L^{2}}  \\
&\, \|\tilde{B}\|_{L^{2}}\|\nabla\tilde{B}\|_{L^{2}}\|\Delta\tilde{u}\|_{L^{2}}^{2}
+ \|\nabla\tilde{B}\|_{L^{2}}^{2}\|\nabla\bar{u}\|_{L^{2}}\|\Delta\bar{u}\|_{L^{2}}   \\
&\, + \|\nabla\tilde{B}\|_{L^{2}}^{2}\|\bar{u}\|_{L^{\infty}}^{2} + \|\nabla\tilde{u}\|_{L^{2}}^{2}\|\bar{B}\|_{L^{\infty}}^{2}.
\end{split}
\end{align}
By taking the $L^{2}$ inner product between (\ref{perturbed}) and $\Delta \tilde{B}$, we get
\begin{align*}
\frac{1}{2}\frac{d}{dt}\|\nabla\tilde{B}\|_{L^{2}}^{2} + \|\Delta\tilde{B}\|_{L^{2}}^{2}
\leq &\, \|\Delta\tilde{B}\|_{L^{2}}\Big( \|\tilde{u}\cdot\nabla\tilde{B}\|_{L^{2}} + \|\bar{u}\cdot\nabla\tilde{B}\|_{L^{2}}   \\
&\, + \|\tilde{B}\cdot\nabla\tilde{u}\|_{L^{2}} + \|\bar{B}\cdot\nabla\tilde{u}\|_{L^{2}} \\
&\, + \|\tilde{B}\cdot\nabla\bar{u}\|_{L^{2}} + \|\tilde{u}\cdot\nabla\bar{B}\|_{L^{2}}  \Big).
\end{align*}
Combining this and (\ref{7.10'}), we arrive at
\begin{align}
\label{B}
\begin{split}
&\, \frac{d}{dt}\|\nabla\tilde{B}\|_{L^{2}}^{2} + \|\partial_{t}\tilde{B}\|_{L^{2}}^{2}
+ (1-C\|\tilde{u}\|_{L^{2}}\|\nabla\tilde{u}\|_{L^{2}})\|\Delta\tilde{B}\|_{L^{2}}^{2}  \\
\lesssim &\, \|\nabla\tilde{u}\|_{L^{2}}^{2}\|\nabla\bar{B}\|_{L^{2}}\|\Delta\bar{B}\|_{L^{2}}
+ \|\nabla\tilde{B}\|_{L^{2}}^{2}\|\bar{u}\|_{L^{\infty}}^{2} + \|\nabla\tilde{u}\|_{L^{2}}^{2}\|\bar{B}\|_{L^{\infty}}^{2} \\
&\, + \|\tilde{B}\|_{L^{2}}\|\nabla\tilde{B}\|_{L^{2}}\|\Delta\tilde{u}\|_{L^{2}}^{2}
+ \|\nabla\tilde{B}\|_{L^{2}}^{2}\|\nabla\bar{u}\|_{L^{2}}\|\Delta\bar{u}\|_{L^{2}}.
\end{split}
\end{align}
Hence, by (\ref{A}) and (\ref{B}), we obtain that
\begin{align*}
&\, \frac{d}{dt}\Big( \|\nabla\tilde{u}\|_{L^{2}}^{2} + \|\nabla\tilde{B}\|_{L^{2}}^{2} \Big)
+ \|\sqrt{\rho}\partial_{t}\tilde{u}\|_{L^{2}}^{2} + \|\partial_{t}\tilde{B}\|_{L^{2}}^{2}   \\
&\,\quad\quad + (c_{0} - C\|\tilde{u}\|_{L^{2}}\|\nabla\tilde{u}\|_{L^{2}} - C\|\tilde{B}\|_{L^{2}}\|\nabla\tilde{B}\|_{L^{2}})\|\Delta\tilde{u}\|_{L^{2}}^{2} \\
&\,\quad\quad + (1 - C\|\tilde{u}\|_{L^{2}}\|\nabla\tilde{u}\|_{L^{2}} - C\|\tilde{B}\|_{L^{2}}\|\nabla\tilde{B}\|_{L^{2}})\|\Delta\tilde{B}\|_{L^{2}}^{2}   \\
\lesssim &\, \Big( \|\nabla\tilde{u}\|_{L^{2}}^{2} + \|\nabla\tilde{B}\|_{L^{2}}^{2}\Big)
\Big( \|\Delta\bar{u}\|_{L^{2}}\|\nabla\bar{u}\|_{L^{2}} + \|\Delta\bar{B}\|_{L^{2}}\|\nabla\bar{B}\|_{L^{2}}  \\
&\, + \|\bar{u}\|_{L^{\infty}}^{2} + \|\bar{B}\|_{L^{\infty}}^{2} \Big)
+ \|\tilde{\rho}\|_{L^{\infty}}^{2}\|\Delta\bar{u}-\nabla\bar{\Pi}\|_{L^{2}}^{2}
+ \|\tilde{\rho}\|_{L^{\infty}}^{2}\|\bar{B}\cdot\nabla\bar{B}\|_{L^{2}}^{2}
\end{align*}
for $t < \tilde{T}^{*}$. For small enough $\nu$, we obtain
\begin{align*}
&\, \frac{d}{dt}\Big( \|\nabla\tilde{u}\|_{L^{2}}^{2} + \|\nabla\tilde{B}\|_{L^{2}}^{2} \Big) + \|\sqrt{\rho}\partial_{t}\tilde{u}\|_{L^{2}}^{2}
+ \|\partial_{t}\tilde{B}\|_{L^{2}}^{2} \\
&\, \quad\quad\quad\quad\quad\quad\quad\quad\quad\quad\quad\quad\quad\quad\quad
+ c_{0}\|\Delta\tilde{u}\|_{L^{2}}^{2} + \|\Delta\tilde{B}\|_{L^{2}}^{2}    \\
\lesssim &\, \Big( \|\nabla\tilde{u}\|_{L^{2}}^{2} + \|\nabla\tilde{B}\|_{L^{2}}^{2} \Big)
\Big( \|\Delta\bar{u}\|_{L^{2}}\|\nabla\bar{u}\|_{L^{2}} + \|\Delta\bar{B}\|_{L^{2}}\|\nabla\bar{B}\|_{L^{2}}   \\
&\, + \|\bar{u}\|_{L^{\infty}}^{2} + \|\bar{B}\|_{L^{\infty}}^{2} \Big)
+ \|\tilde{\rho}\|_{L^{\infty}}^{2} \|\Delta\bar{u}-\nabla\bar{\Pi}\|_{L^{2}}^{2}
+ \|\tilde{\rho}\|_{L^{\infty}}^{2} \|\bar{B}\cdot\nabla\bar{B}\|_{L^{2}}^{2}.
\end{align*}
Then simple calculations leads to
\begin{align*}
&\, \frac{d}{dt}\left( e^{-U(t)}\left( \|\nabla\tilde{u}\|_{L^{2}}^{2} + \|\nabla\tilde{B}\|_{L^{2}}^{2} \right)\right)
+ e^{-U(t)}\left( \|\sqrt{\rho}\partial_{t}\tilde{u}\|_{L^{2}}^{2} + \|\partial_{t}\tilde{B}\|_{L^{2}}^{2} \right)  \\
&\, \quad\quad\quad\quad\quad\quad\quad\quad\quad\quad\quad\quad\quad\quad\,\,\,\,\,
+ e^{-U(t)}\left( c_{0}\|\Delta\tilde{u}\|_{L^{2}}^{2} + \|\Delta\tilde{B}\|_{L^{2}}^{2} \right)    \\
\leq &\,C\,e^{-U(t)}\|\tilde{\rho}\|_{L^{\infty}}^{2}\Big( \|\Delta\bar{u} - \nabla\bar{\Pi}\|_{L^{2}}^{2} + \|\bar{B}\cdot\nabla\bar{B}\|_{L^{2}}^{2} \Big)
\end{align*}
for $t < \bar{T}$.
\end{proof}

\begin{proposition}
\label{decay bar}
Under the assumptions in Theorem \ref{stability main theorem}, there exist constants $C$ and $c$ so that if
\begin{align*}
\delta_{0} := \|\tilde{u}_{0}\|_{H^{1}} + \|\tilde{u}_{0}\|_{L^{p}} + \|\tilde{B}_{0}\|_{H^{1}} + \|\tilde{B}_{0}\|_{L^{p}}
+ \|\tilde{\rho}_{0}\|_{L^{2}} + \|\tilde{\rho}_{0}\|_{L^{\infty}} < c,
\end{align*}
then there hold
\begin{align}
\label{7.14}
\begin{split}
&\, \|\tilde{u}(t)\|_{L^{2}} + \|\tilde{B}(t)\|_{L^{2}} \leq C \delta_{0} \langle t \rangle^{-\beta(p)},  \\
&\, \|\nabla\tilde{u}\|_{L^{2}} + \|\nabla\tilde{B}\|_{L^{2}} \leq C \delta_{0} \langle t \rangle^{-\frac{1}{2} - \beta(p)}\quad\text{for }t<\tilde{T}^{*}, \\
&\, \int_{0}^{\tilde{T}^{*}}\|\Delta\tilde{u}\|_{L^{2}} + \|\Delta\tilde{B}\|_{L^{2}} + \|\nabla\tilde{\Pi}\|_{L^{2}}\,dt \leq C\delta_{0},    \\
&\, \int_{0}^{\tilde{T}^{*}}\|\tilde{u}\|_{L^{\infty}}+\|\tilde{B}\|_{L^{\infty}}\,dt \leq C\delta_{0},
\end{split}
\end{align}
and
\begin{align}
\label{7.14'}
\begin{split}
\int_{0}^{\tilde{T}^{*}} \left(\|(\partial_{t}\tilde{u}, \partial_{t}\tilde{B})\|_{L^{2}}^{2} + \|(\Delta\tilde{u}, \Delta\tilde{B})\|_{L^{2}}^{2}
+ \|\nabla\tilde{\Pi}\|_{L^{2}}^{2}\right) \langle t \rangle^{(1+2\beta(p))^{-}} \, d\tau \leq C \delta_{0}
\end{split}
\end{align}
for $\beta(p) = \frac{3}{4}\left( \frac{2}{p} - 1 \right)$.
\end{proposition}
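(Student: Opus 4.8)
The plan is to run a continuity (bootstrap) argument on the maximal interval $[0,\tilde T^{*})$, in which the a priori smallness hypothesis (\ref{7.7}) of Lemma \ref{lemma 7.2} is assumed on a subinterval $[0,\bar T)$ and then shown to self-improve. Before invoking the two differential inequalities I would first record that both time-weights are uniformly bounded: by Proposition \ref{decay pro 4} we have $\int_{0}^{\infty}(\|\nabla\bar u\|_{L^{\infty}}+\|\nabla\bar B\|_{L^{\infty}})\,d\tau\le C$, so $V(t)\le C$; and the decay rates of Theorem \ref{decay_main_theorem} together with $\|\bar u\|_{L^{\infty}}^{2}\lesssim\langle t\rangle^{-1-2\beta(p)}+\|\Delta\bar u\|_{L^{2}}^{2}$ (and the $\bar B$ analogue) make $U(t)\le C$ as well. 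Hence $e^{-V(t)}\approx e^{-U(t)}\approx 1$, so the weighted estimates of Lemmas \ref{lemma 7.1}--\ref{lemma 7.2} may be read as genuine decay inequalities for $\mathcal E(t):=\|\sqrt\rho\,\tilde u\|_{L^{2}}^{2}+\|\tilde B\|_{L^{2}}^{2}$ and for $\|(\nabla\tilde u,\nabla\tilde B)\|_{L^{2}}^{2}$. In parallel I would control the density perturbation: since $\tilde a$ solves the transport equation in (\ref{perturbed solution}) with divergence-free drift and source $-\tilde u\cdot\nabla\bar a$, the transport estimate gives $\|\tilde a(t)\|_{L^{q}}\le\|\tilde a_{0}\|_{L^{q}}+C\int_{0}^{t}\|\tilde u\|_{L^{\infty}}\|\nabla\bar a\|_{L^{q}}\,d\tau$, and because $\tilde\rho=-\tilde a/((1+a)(1+\bar a))$ with $1+a,1+\bar a$ bounded above and below, Proposition \ref{decay pro 5} and the bootstrapped bound on $\int_{0}^{\bar T}\|\tilde u\|_{L^{\infty}}\,dt$ yield $\|\tilde\rho\|_{L^{\infty}_{t}(L^{2}\cap L^{\infty})}\lesssim\delta_{0}$.

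Next I would extract the $L^{2}$ decay. Integrating the inequality of Lemma \ref{lemma 7.1} against the integrating factor $\langle t\rangle^{\alpha}$ with $g^{2}(t)=\alpha/(1+t)$, exactly as in Step~3 of Proposition \ref{decay pro 3}, each right-hand term is handled as follows: the low-frequency heat term $g^{2}\int_{S_{1}}e^{-2t|\xi|^{2}}|\hat{\tilde u}_{0}|^{2}\,d\xi$ is bounded by $\delta_{0}^{2}\langle t\rangle^{-1-2\beta(p)}$ via Hausdorff--Young as in (\ref{u decay beta}); the products $\|\tilde\rho\|_{L^{3}}^{2}(\|\Delta\bar u-\nabla\bar\Pi\|_{L^{2}}^{2}+\|\bar B\cdot\nabla\bar B\|_{L^{2}}^{2})$ are $\lesssim\delta_{0}^{2}$ times a weighted-integrable reference quantity supplied by Theorem \ref{decay_main_theorem}; and the remaining terms are quadratic in $(\tilde u,\tilde B)$ and absorbed using (\ref{7.7}). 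A first pass gives a rough rate, which I would then sharpen by the iteration scheme of Step~3 of Proposition \ref{decay pro 3} to reach $\|\tilde u(t)\|_{L^{2}}+\|\tilde B(t)\|_{L^{2}}\le C\delta_{0}\langle t\rangle^{-\beta(p)}$, the choice $\alpha>2\beta(p)$ in the final pass producing the amplitude $C\delta_{0}$.

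With the $L^{2}$ rate in hand I would integrate the inequality of Lemma \ref{lemma 7.2} against $\langle t\rangle^{\alpha}$. Here the forcing is only $\|\tilde\rho\|_{L^{\infty}}^{2}(\|\Delta\bar u-\nabla\bar\Pi\|_{L^{2}}^{2}+\|\bar B\cdot\nabla\bar B\|_{L^{2}}^{2})$, again $\lesssim\delta_{0}^{2}$ times a reference term integrable against $\langle t\rangle^{(1+2\beta(p))^{-}}$; taking $\alpha>1+2\beta(p)$ yields $\|\nabla\tilde u\|_{L^{2}}+\|\nabla\tilde B\|_{L^{2}}\le C\delta_{0}\langle t\rangle^{-\frac12-\beta(p)}$, while the intermediate choice $\alpha\in(\tfrac32,1+2\beta(p))$ delivers the weighted bound (\ref{7.14'}) on $\|(\partial_{t}\tilde u,\partial_{t}\tilde B)\|_{L^{2}}^{2}$ and, through the elliptic estimate (\ref{7.11}), on $\|(\Delta\tilde u,\Delta\tilde B)\|_{L^{2}}^{2}+\|\nabla\tilde\Pi\|_{L^{2}}^{2}$. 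Combining (\ref{7.11}) with this weighted $L^{2}$ bound and Cauchy--Schwarz against $\langle t\rangle^{-(1+2\beta(p))^{-}}$ gives $\int_{0}^{\tilde T^{*}}(\|\Delta\tilde u\|_{L^{2}}+\|\Delta\tilde B\|_{L^{2}}+\|\nabla\tilde\Pi\|_{L^{2}})\,dt\le C\delta_{0}$, and the interpolation $\|\tilde u\|_{L^{\infty}}\lesssim\|\nabla\tilde u\|_{L^{2}}^{1/2}\|\Delta\tilde u\|_{L^{2}}^{1/2}$ (with its $\tilde B$ analogue) produces the $L^{\infty}$-in-time integral bound.

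The bootstrap is closed by noting that the rates just proved give $\|\tilde u\|_{L^{2}}\|\nabla\tilde u\|_{L^{2}}+\|\tilde B\|_{L^{2}}\|\nabla\tilde B\|_{L^{2}}\le C\delta_{0}^{2}\langle t\rangle^{-1-2\beta(p)}\le C\delta_{0}^{2}$ on $[0,\bar T)$; choosing $\delta_{0}$ so small that $C\delta_{0}^{2}<\nu$ contradicts the maximality defining $\bar T$ unless $\bar T=\tilde T^{*}$, so every estimate persists up to $\tilde T^{*}$. The main obstacle is the simultaneous closure of the coupled $\tilde u$ and $\tilde B$ estimates together with the density feedback: the terms carrying $\tilde\rho$ are only linear (not quadratic) in the perturbation and are tied to the reference pressure $\nabla\bar\Pi$ and to $\bar B\cdot\nabla\bar B$, so their control rests crucially on the sharp time-weighted integrability $\int\langle t\rangle^{(1+2\beta(p))^{-}}(\|\Delta\bar u\|_{L^{2}}^{2}+\|\Delta\bar B\|_{L^{2}}^{2}+\|\nabla\bar\Pi\|_{L^{2}}^{2})\,dt\le C$ from Theorem \ref{decay_main_theorem}, and on propagating the $\delta_{0}$ smallness consistently through every pass of the iteration.
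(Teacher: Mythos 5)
Your overall strategy (the Schonbek splitting with $g^{2}(t)=\alpha/(1+t)$, a rough decay pass, iteration to the sharp rates, weighted integral bounds, interpolation for the $L^{\infty}$-in-time integrals, and closure of the smallness condition (\ref{7.7})) is the same as the paper's. The genuine gap is in your control of the density perturbation. You obtain $\|\tilde{\rho}\|_{L^{\infty}_{t}(L^{2}\cap L^{\infty})}\lesssim\delta_{0}$ by invoking ``the bootstrapped bound on $\int_{0}^{\bar{T}}\|\tilde{u}\|_{L^{\infty}}\,dt$'', but that bound is not part of your stated bootstrap hypothesis --- which is only (\ref{7.7}) --- and it is precisely one of the conclusions (\ref{7.14}) being proved. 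The logic is circular: every decay estimate you run (both the $L^{2}$ decay from Lemma \ref{lemma 7.1} and the gradient decay from Lemma \ref{lemma 7.2}) has forcing terms proportional to $\|\tilde{\rho}\|_{L^{3}}$ or $\|\tilde{\rho}\|_{L^{\infty}}$, so the decay rates require the density bound, while the density bound requires $\int\|\tilde{u}\|_{L^{\infty}}\,dt\lesssim\delta_{0}$, which requires the decay rates.

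Note also that the obvious repair --- enlarging the bootstrap hypothesis to include $\int_{0}^{\bar{T}}\|\tilde{u}\|_{L^{\infty}}\,dt\le\epsilon_{1}$ --- does not close by itself: setting $\xi(t):=\sup_{t'\le t}\left(\|\tilde{\rho}(t')\|_{L^{2}}+\|\tilde{\rho}(t')\|_{L^{\infty}}\right)$ and $m(t):=\|\tilde{u}\|_{L_{t}^{1}(L^{\infty})}+\|\tilde{B}\|_{L_{t}^{1}(L^{\infty})}$, the transport equation gives $\xi(t)\le\delta_{0}+C\,m(t)$ while your estimates give $m(t)\le C\left(\delta_{0}+\xi(t)\right)$, so one only reaches $m\le C\delta_{0}+C^{2}m$, which cannot be absorbed since $C^{2}\ge 1$ in general. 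The paper's proof avoids this by never assuming smallness of $\xi$: it carries the quantity $\delta_{0}+\xi(t)$ through every estimate of Steps 1--3, and closes the loop in Step 4 by splitting $\|\tilde{u}(t)\|_{L^{\infty}}\le C_{\eta}\left(\delta_{0}+\xi(t)\right)\langle t\rangle^{-\frac{1}{2}-\beta(p)}+\eta\|\Delta\tilde{u}(t)\|_{L^{2}}$ as in (\ref{7.26}), choosing $\eta=\frac{1}{2C}$ so that the $\eta$-contribution (which via (\ref{7.27}) and the weighted bound on $\|\Delta\tilde{u}\|_{L_{t}^{1}(L^{2})}$ is controlled by $\eta(\delta_{0}+m(t))$) is absorbed into the left-hand side, and then applying Gronwall to $m(t)\le C\left(\delta_{0}+\int_{0}^{t}\langle t'\rangle^{-\frac{1}{2}-\beta(p)}m(t')\,dt'\right)$, an inequality \emph{linear} in $m$ with integrable kernel since $\frac{1}{2}+\beta(p)>1$ for $p\in(1,\frac{6}{5})$. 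This yields $m(t)\le C\delta_{0}$, hence $\xi(t)\le C\delta_{0}$, with no smallness hypothesis on $\xi$ or $m$; only then is (\ref{7.7}) closed. You need to incorporate this device (or an equivalent one); as written, your argument does not close.
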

\begin{proof}
Let $\xi(t) := \sup_{t'\in [0,t]} \left( \|\tilde{\rho}(t)\|_{L^{2}} + \|\tilde{\rho}(t)\|_{L^{\infty}} \right)$.
Integrating (\ref{7.4}) and (\ref{7.8}) over $[0,t]$ for $t \leq \bar{T}$ that
\begin{align*}
e^{-V(t)}\left(\|\tilde{u}\|_{L^{2}}^{2} + \|\tilde{B}\|_{L^{2}}^{2}\right)\lesssim &\,\|\tilde{u}_{0}\|_{L^{2}}^{2} + \|\tilde{B}_{0}\|_{L^{2}}^{2}  \\
&\, + \int_{0}^{t}e^{-V(t')}\|\tilde{\rho}\|_{L^{3}}^{2}\|\Delta\bar{u} - \nabla\bar{\Pi}\|_{L^{2}}^{2}\,dt'    \\
&\, + \int_{0}^{t}e^{-V(t')}\|\tilde{\rho}\|_{L^{3}}^{2}\|\bar{B}\cdot\nabla\bar{B}\|_{L^{2}}^{2}\,dt'.
\end{align*}
and
\begin{align*}
&\, e^{-U(t)} \|(\nabla\tilde{u}, \nabla\tilde{B})\|_{L^{2}}^{2} + \int_{0}^{t} e^{-U(t')} \|(\sqrt{\rho}\partial_{t}\tilde{u}, \partial_{t}\tilde{B})\|_{L^{2}}^{2} + c_{0}e^{-U(t')} \|(\Delta\tilde{u}, \Delta\tilde{B})\|_{L^{2}}^{2}\,dt'     \\
\lesssim &\, \|(\nabla\tilde{u}_{0}, \nabla\tilde{B}_{0})\|_{L^{2}}^{2}
+ \int_{0}^{t} e^{-V(t')} \|\Delta\bar{u}-\nabla\bar{\Pi}\|_{L^{2}}^{2}\,dt' + \int_{0}^{t}e^{-V(t')}\|\bar{B}\cdot\nabla\bar{B}\|_{L^{2}}^{2}\,dt',
\end{align*}
where $V(t)$ and $U(t)$ defined as in Lemma \ref{lemma 7.1} and \ref{lemma 7.2} respectively.
From the decay properties of the reference solution, we have
\begin{align}
\label{7.15}
\begin{split}
&\, \|\tilde{u}(t)\|_{L^{2}} + \|\tilde{B}(t)\|_{L^{2}} + \|\nabla\tilde{u}\|_{L_{t}^{2}(L^{2})} + \|\nabla\tilde{B}\|_{L_{t}^{2}(L^{2})}     \\
\lesssim &\, \|(\tilde{u}_{0}, \tilde{B}_{0})\|_{L^{2}} + \sup_{t'\in [0,t]}\|\rho(t')\|_{L^{3}}
\leq \, C \, \left( \delta_{0} + \xi(t) \right),  \\
&\, \|\nabla\tilde{u}(t)\|_{L^{2}} + \|\nabla\tilde{B}(t)\|_{L^{2}} + \|\sqrt{\rho}\partial_{t}\tilde{u}\|_{L_{t}^{2}(L^{2})}
+ \|\partial_{t}\tilde{B}\|_{L_{t}^{2}(L^{2})}  \\
&\,\quad
+ \|\Delta\tilde{u}\|_{L_{t}^{2}(L^{2})} + \|\Delta\tilde{B}\|_{L_{t}^{2}(L^{2})}
\leq \,C \,\left( \delta_{0} + \xi(t) \right).
\end{split}
\end{align}
Now, we use Schonbek's strategy in \cite{schonbek} to prove (\ref{7.14}) and (\ref{7.14'}).

Step 1: Rough decay estimates of $\|\tilde{u}(t)\|_{L^{2}}$, $\|\nabla\tilde{u}(t)\|_{L^{2}}$
and $\|\tilde{B}(t)\|_{L^{2}}$, $\|\nabla\tilde{B}(t)\|_{L^{2}}$.
Let $S_{1}(t) := \left\{ \xi : |\xi|\leq \sqrt{\frac{2}{c_{0}}} g(t) \right\}$,
$S_{2}(t) := \left\{ \xi : |\xi|\leq \sqrt{2} g(t) \right\}$
with $g(t)$ satisfying $g(t) \lesssim \langle t \rangle^{-\frac{1}{2}}$, which will be chosen later on.
Then thanks to Lemma \ref{lemma 7.1}, (\ref{7.15}), (\ref{u decay beta}) and (\ref{b decay beta}), we obtain for $t \leq \bar{T}$
\begin{align*}
&\, \frac{d}{dt}\left[ e^{-V(t)}\left( \|\sqrt{\rho}\tilde{u}\|_{L^{2}}^{2} + \|\tilde{B}\|_{L^{2}}^{2} \right)\right]
+ c_{0}g^{2}(t)e^{-V(t)}\left( \|\sqrt{\rho}\tilde{u}\|_{L^{2}}^{2} + \|\tilde{B}\|_{L^{2}}^{2} \right)     \\
\leq &\, C\, \bigg[ g^{2}(t)\delta_{0}^{2}\langle t \rangle^{-2\beta(p)} + \xi(t)^{2}\|\Delta\bar{u} - \nabla\bar{\Pi}\|_{L^{2}}^{2}
+ \xi(t)^{2} \|\bar{B}\cdot\nabla\bar{B}\|_{L^{2}}^{2}  \\
&\, + g(t)^{5} \left( \delta_{0} + \xi(t) \right)^{2}\langle t \rangle  \bigg]
\end{align*}
provided that $\xi(t) \leq 1$ for $t \leq \bar{T}$, which will be justified later on. In what follows, we shall always assume that
$t \leq \bar{T}$. Then, we have
\begin{align*}
&\, e^{c_{0}\int_{0}^{t}g(t')^{2}\,dt'}e^{-V(t)}\Big( \|\sqrt{\rho}\tilde{u}\|_{L^{2}}^{2} + \|\tilde{B}\|_{L^{2}}^{2} \Big) \\
\lesssim &\, \|(\sqrt{\rho_{0}}\tilde{u}_{0}, \tilde{B}_{0})\|_{L^{2}}^{2}
+ \delta_{0}^{2}\int_{0}^{t}e^{c_{0}\int_{0}^{t'}g(\tau)^{2}\,d\tau}\langle t \rangle^{-1-2\beta(p)} \,dt'  \\
&\, + \xi(t)^{2}\int_{0}^{t}e^{c_{0}\int_{0}^{t'}g(\tau)^{2}\,d\tau}
\left( \|\Delta\bar{u} - \nabla\bar{\Pi}\|_{L^{2}}^{2} + \|\bar{B}\cdot\nabla\bar{B}\|_{L^{2}}^{2} \right)\,dt' \\
&\, + (\delta_{0}+\xi(t))^{2}\int_{0}^{t}e^{c_{0}\int_{0}^{t'}g(\tau)^{2}\,d\tau}\langle t \rangle^{-\frac{3}{2}}\,dt'.
\end{align*}
Taking $g^{2}(t) = \frac{\alpha}{c_{0}(1+t)}$ with $\frac{1}{2} < \alpha < 1+ 2\beta(p)$ in the above inequality gives
\begin{align}
\label{7.16}
\|\tilde{u}(t)\|_{L^{2}} + \|\tilde{B}(t)\|_{L^{2}} \leq C (\delta_{0} + \xi(t))\langle t \rangle^{-\frac{1}{4}},
\end{align}
Applying a similar procedure to (\ref{7.8}) ensures that
\begin{align*}
&\, \frac{d}{dt}\left[ e^{-U(t)}\left( \|\nabla\tilde{u}\|_{L^{2}}^{2} + \|\nabla\tilde{B}\|_{L^{2}}^{2} \right) \right]
+ 2 g^{2}(t)e^{-U(t)} \left( \|\nabla\tilde{u}\|_{L^{2}}^{2} + \|\nabla\tilde{B}\|_{L^{2}}^{2} \right)  \\
&\, \quad\quad\quad\quad\quad\quad\quad\quad
+ e^{-U(t)}\left( \|\partial_{t}\tilde{u}\|_{L^{2}}^{2} + \|\partial_{t}\tilde{B}\|_{L^{2}}^{2} \right)     \\
\lesssim &\, g^{4}(t)e^{-U(t)}\int_{S_{1}(t)}|\hat{\tilde{u}}(\xi)|^{2}\,d\xi
+ g^{4}(t)e^{-U(t)}\int_{S_{2}(t)}|\hat{\tilde{B}}(\xi)|^{2}\,d\xi  \\
&\, + e^{-U(t)}\|\tilde{\rho}\|_{L^{\infty}}^{2}\|\Delta\bar{u} - \nabla\bar{\Pi}\|_{L^{2}}^{2}
+ e^{-U(t)}\|\tilde{\rho}\|_{L^{\infty}}^{2}\|\bar{B}\cdot\nabla\bar{B}\|_{L^{2}}^{2}
\end{align*}
which together with (\ref{7.6}), (\ref{7.6'}) implies that
\begin{align}
\label{7.17}
\begin{split}
&\, \frac{d}{dt}\left[e^{-U(t)}\left( \|\nabla\tilde{u}\|_{L^{2}}^{2} + \|\nabla\tilde{B}\|_{L^{2}}^{2} \right)\right]
+ 2g^{2}(t)e^{-U(t)}\left(\|\nabla\tilde{u}\|_{L^{2}}^{2} + \|\nabla\tilde{B}\|_{L^{2}}^{2}\right)  \\
&\, \quad\quad\quad\quad\quad\quad\quad\quad
+ e^{-U(t)}\left( \|\partial_{t}\tilde{u}\|_{L^{2}}^{2} + \|\partial_{t}\tilde{B}\|_{L^{2}}^{2} \right) \\
\lesssim &\, e^{-U(t)}\bigg\{  g^{4}(t)\int_{S_{1}(t)}e^{-2t|\xi|^{2}}|\hat{\tilde{u}}_{0}(\xi)|^{2}\,d\xi
+ g^{4}(t)\int_{S_{2}(t)}e^{-2t|\xi|^{2}}|\hat{\tilde{B}}_{0}|^{2}\,d\xi    \\
&\, + g(t)^{9}\Big( \|\tilde{u}\|_{L_{t}^{2}(L^{2})}^{2} + \|\tilde{B}\|_{L_{t}^{2}(L^{2})}^{2} \Big)
\Big( \|\bar{u}\|_{L_{t}^{2}(L^{2})}^{2} + \|\bar{B}\|_{L_{t}^{2}(L^{2})}^{2} \Big)         \\
&\, + g(t)^{9} \Big( \|\tilde{B}\|_{L_{t}^{2}(L^{2})}^{2} + \|\tilde{u}\|_{L_{t}^{2}(L^{2})}^{2} \Big)^{2} \\
&\, + g(t)^{7} \|\tilde{B}\|_{L_{t}^{2}(L^{2})}^{2} \Big( \|\nabla\bar{B}\|_{L_{t}^{2}(L^{2})}^{2}
+ \|\nabla\tilde{B}\|_{L_{t}^{2}(L^{2})}^{2} \Big)  \\
&\, + g(t)^{7} \Big( \|\Delta\tilde{u}\|_{L_{t}^{1}(L^{2})}^{2} + \|\nabla\tilde{\Pi}\|_{L_{t}^{1}(L^{2})}^{2}
+ \|\tilde{\rho}\|_{L_{t}^{\infty}(L^{2})}^{2}\|\Delta\bar{u} - \nabla\bar{\Pi}\|_{L_{t}^{1}(L^{2})}^{2} \Big)  \\
&\, + g(t)^{7}\|\tilde{\rho}\|_{L_{t}^{\infty}(L^{2})}^{2}\|\bar{B}\cdot\nabla\bar{B}\|_{L_{t}^{1}(L^{2})}^{2}
+ \|\tilde{\rho}\|_{L^{\infty}}^{2}\|\Delta\bar{u}-\nabla\bar{\Pi}\|_{L^{2}}^{2}    \\
&\, + \|\tilde{\rho}\|_{L^{\infty}}^{2}\|\bar{B}\cdot\nabla\bar{B}\|_{L^{2}}^{2}
\bigg\}
\end{split}
\end{align}
Substituting (\ref{7.16}) into (7.17), we arrive at
\begin{align*}
&\, \frac{d}{dt}\left( e^{-U(t)}\left( \|\nabla\tilde{u}\|_{L^{2}}^{2} + \|\nabla\tilde{B}\|_{L^{2}}^{2} \right) \right)
+ 2 e^{-U(t)} g^{2}(t)\left( \|\nabla\tilde{u}\|_{L^{2}}^{2} + \|\nabla\tilde{B}\|_{L^{2}}^{2} \right)  \\
&\, \quad\quad\quad\quad\quad\quad\quad\quad
+ e^{-U(t)}\left( \|\partial_{t}\tilde{u}\|_{L^{2}}^{2} + \|\partial_{t}\tilde{B}\|_{L^{2}}^{2} \right) \\
\lesssim &\, \left( \delta_{0} + \xi(t) \right)^{2}\langle t \rangle^{-\frac{5}{2}}
+ \xi(t)^{2}\|\Delta\bar{u} - \nabla\bar{\Pi}\|_{L^{2}}^{2} + \xi(t)^{2}\|\bar{B}\cdot\nabla\bar{B}\|_{L^{2}}^{2},
\end{align*}
from which we deduce that
\begin{align*}
&\, e^{2\int_{0}^{t}g(\tau)^{2}\,d\tau}e^{-U(t)}\left( \|\nabla\tilde{u}\|_{L^{2}}^{2} + \|\nabla\tilde{B}\|_{L^{2}}^{2} \right)    \\
&\quad\quad\quad\quad\quad\quad\quad\quad\quad\quad
+ \int_{0}^{t}e^{2\int_{0}^{t'}g(\tau)^{2}\,d\tau}e^{-U(t')}\left(\|\partial_{t}\tilde{u}\|_{L^{2}}^{2} + \|\partial_{t}\tilde{B}\|_{L^{2}}^{2}\right)\,dt' \\
\lesssim &\, \|\nabla\tilde{u}_{0}\|_{L^{2}}^{2} + \|\nabla\tilde{B}_{0}\|_{L^{2}}^{2}
+ \xi(t)^{2}\int_{0}^{t}e^{2\int_{0}^{t'}g(\tau)^{2}\,d\tau}\|\Delta\bar{u}-\nabla\bar{\Pi}\|_{L^{2}}^{2}\,dt'  \\
&\, + \xi(t)^{2}\int_{0}^{t}e^{2\int_{0}^{t'}g(\tau)^{2}\,d\tau} \|\bar{B}\cdot\nabla\bar{B}\|_{L^{2}}^{2}\,dt' \\
&\, + \left( \delta_{0} + \xi(t) \right)^{2} \int_{0}^{t}e^{2\int_{0}^{t'}g(\tau)^{2}\,d\tau}\langle t \rangle^{-\frac{5}{2}} \, dt'.
\end{align*}
Taking $g^{2}(t) = \frac{\alpha}{2(1+t)}$ with $\alpha > 1$ in the above inequality yields
\begin{align}
\label{7.18}
\begin{split}
&\, \langle t \rangle^{\alpha}\left( \|\nabla\tilde{u}\|_{L^{2}}^{2} + \|\nabla\tilde{B}\|_{L^{2}}^{2} \right)
+ \int_{0}^{t} \langle t \rangle^{\alpha}\left( \|\partial_{t}\tilde{u}\|_{L^{2}}^{2} + \|\partial_{t}\tilde{B}\|_{L^{2}}^{2} \right)\,dt'  \\
\lesssim &\, \delta_{0}^{2} + \xi(t)^{2}\int_{0}^{t}\langle t' \rangle^{\alpha}\|\Delta\bar{u} - \nabla\bar{\Pi}\|_{L^{2}}^{2}\,dt'
+ \xi(t)^{2}\int_{0}^{t}\langle t' \rangle^{\alpha}\|\bar{B}\cdot\nabla\bar{B}\|_{L^{2}}^{2}\,dt'   \\
&\, + \Big( \delta_{0}+\xi(t) \Big)^{2}\int_{0}^{t}\langle t' \rangle^{\alpha-\frac{5}{2}}\,dt'.
\end{split}
\end{align}
Taking $\alpha \in (\frac{3}{2}, 1+2\beta(p))$ in (\ref{7.18}) yields
\begin{align}
\|\nabla\tilde{u}(t)\|_{L^{2}} + \|\nabla\tilde{B}(t)\|_{L^{2}} \lesssim \left( \delta_{0}+\xi(t) \right)\langle t \rangle^{-\frac{3}{4}}.
\end{align}

Step 2 : Improved decay estimates of $\|\tilde{u}(t)\|_{L^{2}}$, $\|\nabla\tilde{u}(t)\|_{L^{2}}$ and $\|\tilde{B}(t)\|_{L^{2}}$, $\|\nabla\tilde{B}(t)\|_{L^{2}}$.
On the other hand, taking $\alpha \in [1, \frac{3}{2})$ in (\ref{7.18}), we obtain
\begin{align}
\int_{0}^{t}\langle t' \rangle^{\left( \frac{3}{2} \right)^{-}}\left( \|\partial_{t}\tilde{u}(t')\|_{L^{2}}^{2}
+ \|\partial_{t}\tilde{B}(t')\|_{L^{2}}^{2} \right)\,dt' \lesssim \left( \delta_{0}+\xi(t) \right)^{2}
\end{align}
which implies
\begin{align}
\label{7.20}
\int_{0}^{t}\|\partial_{t}\tilde{u}(t')\|_{L^{2}} + \|\partial_{t}\tilde{B}(t')\|_{L^{2}} \,dt' \lesssim \delta_{0} + \xi(t).
\end{align}
Next, we give the estimates about $\|\Delta\tilde{u}\|_{L_{t}^{1}(L^{2})}^{2} + \|\nabla\tilde{\Pi}\|_{L_{t}^{1}(L^{2})}^{2}$.
Notice that
\begin{align*}
&\, \int_{0}^{t} \|\tilde{u}\cdot\nabla\tilde{u}\|_{L^{2}} + \|\tilde{u}\cdot\nabla\bar{u}\|_{L^{2}}
+ \|\bar{u}\cdot\nabla\tilde{u}\|_{L^{2}}\, dt' \\
\lesssim &\, \int_{0}^{t} \|\tilde{u}\|_{L^{2}}^{\frac{1}{2}}\|\nabla\tilde{u}\|_{L^{2}}^{\frac{1}{2}}\|\Delta\tilde{u}\|_{L^{2}}
+ \|\tilde{u}\|_{L^{2}}^{\frac{1}{2}}\|\nabla\tilde{u}\|_{L^{2}}^{\frac{1}{2}}\|\Delta\bar{u}\|_{L^{2}} \\
&\, \quad\quad\quad\quad\quad\quad\quad\quad\quad\quad\quad\quad\quad\quad
+ \|\bar{u}\|_{L^{2}}^{\frac{1}{2}}\|\nabla\bar{u}\|_{L^{2}}^{\frac{1}{2}}\|\Delta\tilde{u}\|_{L^{2}}\, dt' \\
\lesssim &\, \Big( \delta_{0}+\xi(t) \Big)\ln \langle t \rangle,
\end{align*}
and
\begin{align*}
&\, \int_{0}^{t} \|\tilde{B}\cdot\nabla\tilde{B}\|_{L^{2}} + \|\tilde{B}\cdot\nabla\bar{B}\|_{L^{2}}
+ \|\bar{B}\cdot\nabla\tilde{B}\|_{L^{2}}\, dt' \\
\lesssim &\, \int_{0}^{t} \|\tilde{B}\|_{L^{2}}^{\frac{1}{2}}\|\nabla\tilde{B}\|_{L^{2}}^{\frac{1}{2}}\|\Delta\tilde{B}\|_{L^{2}}
+ \|\tilde{B}\|_{L^{2}}^{\frac{1}{2}}\|\nabla\tilde{B}\|_{L^{2}}^{\frac{1}{2}}\|\Delta\bar{u}\|_{L^{2}} \\
&\, \quad\quad\quad\quad\quad\quad\quad\quad\quad\quad\quad\quad\quad\quad
+ \|\bar{B}\|_{L^{2}}^{\frac{1}{2}}\|\nabla\bar{B}\|_{L^{2}}^{\frac{1}{2}}\|\Delta\tilde{B}\|_{L^{2}}\, dt' \\
\lesssim &\, \Big( \delta_{0}+\xi(t) \Big)\ln \langle t \rangle.
\end{align*}
Substituting the above inequality and (\ref{7.20}) into (\ref{7.11}) results in
\begin{align}
\label{7.21}
\left( \int_{0}^{t}\|\Delta\tilde{u}\|_{L^{2}} + \|\nabla\tilde{\Pi}\|_{L^{2}} \,dt' \right)^{2}
\leq \,C\,\Big( \delta_{0}+\xi(t) \Big)^{2}\ln^{2} \langle t \rangle.
\end{align}
Applying (\ref{7.16}) gives
\begin{align}
\label{7.22}
\int_{0}^{t}\|\tilde{u}\|_{L^{2}}^{2} + \|\tilde{B}\|_{L^{2}}^{2}\,dt' \lesssim \Big( \delta_{0}+\xi(t) \Big)^{2}\langle t \rangle^{\frac{1}{2}}.
\end{align}
Plugging (\ref{7.21}) and (\ref{7.22}) into (\ref{7.17}), we obtain
\begin{align*}
&\, \frac{d}{dt}\left[ e^{-U(t)}\left( \|\nabla\tilde{u}\|_{L^{2}}^{2} + \|\nabla\tilde{B}\|_{L^{2}}^{2} \right)\right]
+ 2 g^{2}(t)e^{-U(t)}\left[ \|\nabla\tilde{u}\|_{L^{2}}^{2} + \|\nabla\tilde{B}\|_{L^{2}}^{2} \right]   \\
&\,\quad\quad\quad\quad\quad\quad
+ e^{-U(t)}\left[ \|\partial_{t}\tilde{u}\|_{L^{2}}^{2} + \|\partial_{t}\tilde{B}\|_{L^{2}}^{2} \right]     \\
\lesssim &\, g^{4}(t)\delta_{0}^{2}\langle t \rangle^{-2\beta(p)} + \Big( \delta_{0}+\xi(t) \Big)^{2}
\langle t \rangle^{-\frac{7}{2}}\ln^{2}\langle t \rangle + \xi^{2}(t)\|\Delta\bar{u} - \nabla\bar{\Pi}\|_{L^{2}}^{2}    \\
&\, + \xi^{2}(t)\|\bar{B}\cdot\nabla\bar{B}\|_{L^{2}}^{2}   \\
\lesssim &\, \Big( \delta_{0}+\xi(t) \Big)^{2}\langle t \rangle^{-2-2\beta(p)} + \xi^{2}(t)\|\Delta\bar{u}-\nabla\bar{\Pi}\|_{L^{2}}^{2}
+ \xi^{2}(t)\|\bar{B}\cdot\nabla\bar{B}\|_{L^{2}}^{2}.
\end{align*}
Taking $g^{2}(t) = \frac{\alpha}{c_{0}(1+t)}$ with $1+2\beta(p) < \alpha < 2 + 6\beta(p)$ in the above inequality and integrating
the resulting inequality over $[0, t]$, we arrive at
\begin{align}
\label{7.23}
\|\nabla\tilde{u}(t)\|_{L^{2}}+\|\nabla\tilde{B}(t)\|_{L^{2}}\lesssim \Big( \delta_{0}+\xi(t) \Big)\langle t \rangle^{-\frac{1}{2}-\beta(p)}
\end{align}
for $t < \bar{T}$.
Now, taking $g^{2}(t) = \frac{\alpha}{c_{0}(1+t)}$ with $\alpha < 1+2\beta(p)$, we deduce from Theorem \ref{decay_main_theorem} that
\begin{align}
\label{7.24}
\int_{0}^{t}\langle t' \rangle^{(1+2\beta(p))^{-}} \left( \|\partial_{t}\tilde{u}\|_{L^{2}}^{2} + \|\partial_{t}\tilde{B}\|_{L^{2}}^{2} \right)\,dt'
\lesssim \left( \delta_{0}+\xi(t) \right)^{2} \quad \text{for } t\leq \bar{T}.
\end{align}
Plugging (\ref{7.21}) and (\ref{7.22}) into (\ref{7.2}), we arrive at
\begin{align*}
&\, \frac{d}{dt}\left[ e^{-V(t)}\left( \|\sqrt{\rho}\tilde{u}\|_{L^{2}}^{2} + \|\tilde{B}\|_{L^{2}}^{2} \right)\right]
+ c_{0}g^{2}(t)e^{-V(t)}\left( \|\sqrt{\rho}\tilde{u}\|_{L^{2}}^{2} + \|\tilde{B}\|_{L^{2}}^{2} \right) \\
\lesssim &\, \Big( \delta_{0}^{2}+\xi(t) \Big)^{2}\langle t \rangle^{-1-2\beta(p)} + \xi^{2}(t)\|\Delta\bar{u}-\nabla\bar{\Pi}\|_{L^{2}}^{2}
+ \xi^{2}(t)\|\bar{B}\cdot\nabla\bar{B}\|_{L^{2}}^{2}
\end{align*}
Then, taking $g^{2}(t) = \frac{\alpha}{c_{0}(1+t)}$ with $1+2\beta(p) < \alpha < 2+ 6\beta(p)$ in the above inequality and
integrating the resulting inequality over $[0, t]$, we reach
\begin{align}
\|\tilde{u}(t)\|_{L^{2}} + \|\tilde{B}(t)\|_{L^{2}}\lesssim \Big( \delta_{0} + \xi(t) \Big)\langle t \rangle^{-\beta(p)}.
\end{align}

Step 3 : Time integral estimates of $\|\nabla\bar{\Pi}\|_{L^{2}}$, $\|\Delta\tilde{u}(t)\|_{L^{2}}$ and $\|\Delta\tilde{B}(t)\|_{L^{2}}$.
It follows from (\ref{7.9}) and (\ref{7.11}) that
\begin{align*}
\|\Delta\tilde{\Pi}\|_{L^{2}} + \|\Delta\tilde{u}\|_{L^{2}} \lesssim &\, \|\partial_{t}\tilde{u}\|_{L^{2}}
+ \|\tilde{u}\|_{L^{2}}^{\frac{1}{2}}\|\nabla\tilde{u}\|_{L^{2}}^{\frac{1}{2}}\|\Delta\tilde{u}\|_{L^{2}}
+ \|\tilde{B}\|_{L^{2}}^{\frac{1}{2}}\|\nabla\tilde{B}\|_{L^{2}}^{\frac{1}{2}}\|\Delta\tilde{B}\|_{L^{2}}   \\
&\, + \|\tilde{\rho}\|_{L^{\infty}}\|\Delta\bar{u} - \nabla\bar{\Pi}\|_{L^{2}} + \|\tilde{\rho}\|_{L^{\infty}}\|\bar{B}\cdot\nabla\bar{B}\|_{L^{2}}   \\
&\, + \|\nabla\tilde{u}\|_{L^{2}}\left( \|\nabla\bar{u}\|_{L^{2}}^{\frac{1}{2}}\|\Delta\bar{u}\|_{L^{2}}^{\frac{1}{2}}+\|\bar{u}\|_{L^{\infty}} \right) \\
&\, + \|\nabla\tilde{B}\|_{L^{2}}\left( \|\nabla\bar{B}\|_{L^{2}}^{\frac{1}{2}}\|\Delta\bar{B}\|_{L^{2}}^{\frac{1}{2}}+\|\bar{B}\|_{L^{\infty}} \right),
\end{align*}
and from the equation of $\tilde{B}$, we have
\begin{align*}
\|\Delta\tilde{B}\|_{L^{2}} \lesssim &\,\|\partial_{t}\tilde{B}\|_{L^{2}} + \|\nabla\tilde{B}\|_{L^{2}}\|\bar{u}\|_{L^{\infty}}
+ \|\nabla\tilde{u}\|_{L^{2}}\|\bar{B}\|_{L^{\infty}}   \\
&\, + \|\tilde{u}\|_{L^{2}}^{\frac{1}{2}}\|\nabla\tilde{u}\|_{L^{2}}^{\frac{1}{2}}\|\Delta\tilde{B}\|_{L^{2}}
+ \|\tilde{B}\|_{L^{2}}^{\frac{1}{2}}\|\nabla\tilde{B}\|_{L^{2}}^{\frac{1}{2}}\|\Delta\tilde{u}\|_{L^{2}}   \\
&\, + \|\nabla\tilde{u}\|_{L^{2}}\|\nabla\bar{B}\|_{L^{2}}^{\frac{1}{2}}\|\Delta\bar{B}\|_{L^{2}}^{\frac{1}{2}}
+ \|\nabla\tilde{B}\|_{L^{2}}\|\nabla\bar{u}\|_{L^{2}}^{\frac{1}{2}}\|\Delta\bar{u}\|_{L^{2}}^{\frac{1}{2}}.
\end{align*}
Inequality (\ref{7.7}) along with
\begin{align*}
& \|\bar{u}\|_{L^{\infty}}\lesssim \|\bar{u}\|_{L^{6}}^{\frac{1}{2}}\|\nabla\bar{u}\|_{L^{6}}^{\frac{1}{2}}
\lesssim \|\nabla\bar{u}\|_{L^{2}}^{\frac{1}{2}}\|\Delta\bar{u}\|_{L^{2}}^{\frac{1}{2}},   \\
& \|\bar{B}\|_{L^{\infty}}\lesssim \|\bar{B}\|_{L^{6}}^{\frac{1}{2}}\|\nabla\bar{B}\|_{L^{6}}^{\frac{1}{2}}
\lesssim \|\nabla\bar{B}\|_{L^{2}}^{\frac{1}{2}}\|\Delta\bar{B}\|_{L^{2}}^{\frac{1}{2}},
\end{align*}
implies for $t\leq \bar{T}$
\begin{align*}
&\, \|\nabla\tilde{\Pi}\|_{L^{2}} + \|\Delta\tilde{u}\|_{L^{2}} + \|\Delta\tilde{B}\|_{L^{2}}   \\
\lesssim &\, \|\partial_{t}\tilde{u}\|_{L^{2}} + \|\partial_{t}\tilde{B}\|_{L^{2}}
+ \|\tilde{\rho}\|_{L^{\infty}}\|\bar{B}\cdot\nabla\bar{B}\|_{L^{2}}
+ \|\tilde{\rho}\|_{L^{\infty}}\|\Delta\bar{u} - \nabla\bar{\Pi}\|_{L^{2}}  \\
&\, + \left( \|\nabla\tilde{u}\|_{L^{2}} + \|\nabla\tilde{B}\|_{L^{2}} \right)
\left( \|\nabla\bar{u}\|_{L^{2}}^{\frac{1}{2}}\|\Delta\bar{u}\|_{L^{2}}^{\frac{1}{2}}
+ \|\nabla\bar{B}\|_{L^{2}}^{\frac{1}{2}}\|\Delta\bar{B}\|_{L^{2}}^{\frac{1}{2}} \right)
\end{align*}
Therefore, thanks to Theorem (\ref{decay_main_theorem}) and (\ref{7.24}), we obtain
\begin{align*}
\begin{split}
&\, \int_{0}^{t}\langle t' \rangle^{(1+2\beta(p))^{-}}\left( \|\Delta\tilde{u}\|_{L^{2}}^{2} + \|\Delta\tilde{B}\|_{L^{2}}^{2}
+ \|\nabla\tilde{\Pi}\|_{L^{2}}^{2} \right)\,dt'    \\
\lesssim &\, \int_{0}^{t}\langle t' \rangle^{(1+2\beta(p))^{-}}\left( \|\partial_{t}\tilde{u}\|_{L^{2}}^{2} + \|\partial_{t}\tilde{B}\|_{L^{2}}^{2}
\right)\,dt'    \\
&\, + \left( \delta_{0}+\xi(t) \right)^{2}\int_{0}^{t}\|\nabla\bar{u}\|_{L^{2}}\|\Delta\bar{u}\|_{L^{2}}
+ \|\nabla\bar{B}\|_{L^{2}}\|\Delta\bar{B}\|_{L^{2}} \,dt'  \\
&\, + \left( \delta_{0}+\xi(t) \right)^{2}\int_{0}^{t} \langle t' \rangle^{(1+2\beta(p))^{-}}\|\Delta\bar{u} - \nabla\bar{\Pi}\|_{L^{2}}^{2}\,dt'  \\
&\, + \left( \delta_{0}+\xi(t) \right)^{2}\int_{0}^{t} \langle t' \rangle^{(1+2\beta(p))^{-}}\|\bar{B}\cdot\nabla\bar{B}\|_{L^{2}}^{2}\,dt' \\
\leq &\, C \left( \delta_{0}+\xi(t) \right)^{2}
\end{split}
\end{align*}
for $t \leq \bar{T}$. So finally, we obtain that
\begin{align}
\label{7.25}
\begin{split}
\int_{0}^{t} \langle t' \rangle^{(1+2\beta(p))^{-}} \left( \|\Delta\tilde{u}\|_{L^{2}}^{2} + \|\Delta\tilde{B}\|_{L^{2}}^{2}
+ \|\nabla\tilde{\Pi}\|_{L^{2}}^{2} \right)\,dt'\leq C \left( \delta_{0}+\xi(t) \right)^{2}
\end{split}
\end{align}
which leads to
\begin{align}
\int_{0}^{t} \|\Delta\tilde{u}\|_{L^{2}} + \|\Delta\tilde{B}\|_{L^{2}} + \|\nabla\tilde{\Pi}\|_{L^{2}} \,dt'
\leq C \left( \delta_{0}+\xi(t) \right)
\end{align}
for $t \leq \bar{T}$.

Step 4 : Estimate of $\int_{0}^{t}\|\tilde{u}(t')\|\,dt'$ and $\int_{0}^{t}\|\tilde{B}(t')\|_{L^{\infty}}\,dt'$.
Thanks to (\ref{7.23}), we have
\begin{align}
\label{7.26}
\begin{split}
\|\tilde{u}(t)\|_{L^{\infty}} \lesssim &\, \|\tilde{u}(t)\|_{L^{6}}^{\frac{1}{2}}\|\nabla\tilde{u}(t)\|_{L^{6}}^{\frac{1}{2}}   \\
\lesssim &\, \|\nabla\tilde{u}(t)\|_{L^{2}}^{\frac{1}{2}}\|\Delta\tilde{u}(t)\|_{L^{2}}^{\frac{1}{2}}   \\
\leq &\, C_{\eta} \left( \delta_{0}+\xi(t) \right)\langle t \rangle^{-\frac{1}{2}-\beta(p)} + \eta \|\Delta\tilde{u}(t)\|_{L^{2}},
\end{split}
\end{align}
and
\begin{align}
\label{7.26'}
\begin{split}
\|\tilde{B}(t)\|_{L^{\infty}} \leq C_{\eta}\left( \delta_{0}+\xi(t) \right)\langle t \rangle^{-\frac{1}{2}-\beta(p)}
+ \eta \|\Delta\tilde{B}(t)\|_{L^{2}},
\end{split}
\end{align}
for any $\eta > 0$.
It is easy to observe from the transport equation that
\begin{align*}
\partial_{t}\tilde{\rho} + u\cdot\nabla\tilde{\rho} = -\tilde{u}\cdot\nabla\bar{\rho}.
\end{align*}
From this and $\mathrm{div}\,u = 0$, we deduce that
\begin{align*}
\|\tilde{\rho}(t)\|_{L_{t}^{\infty}(L^{q})} \leq \|\tilde{\rho}_{0}\|_{L^{q}}
+ \|\nabla\bar{\rho}\|_{L_{t}^{\infty}(L^{q})}\|\tilde{u}\|_{L_{t}^{1}(L^{\infty})}
\end{align*}
for any $q \in [1, \infty]$. From the above and (\ref{estimate of a in 5 2}), we get
\begin{align}
\label{7.27}
\xi(t) \leq \delta_{0} + C \|\tilde{u}\|_{L_{t}^{1}(L^{\infty})}.
\end{align}
From (\ref{7.25}), we know that
\begin{align*}
&\, \int_{0}^{t}\|\Delta\tilde{u}\|_{L^{2}} + \|\Delta\tilde{B}\|_{L^{2}}\,dt' \\
\lesssim &\, \left( \int_{0}^{t} \left(\|\Delta\tilde{u}\|_{L^{2}}^{2}+\|\Delta\tilde{B}\|_{L^{2}}^{2}\right)
\langle t' \rangle^{(1+2\beta(p))^{-}}\,dt'\right)^{\frac{1}{2}}    \\
\leq &\, C \left( \delta_{0}+\xi(t) \right).
\end{align*}
Plugging (\ref{7.27}) into (\ref{7.26}) and integrating the resulting equation over $[0, t]$, we obtain
\begin{align}
\begin{split}
&\, \|\tilde{u}\|_{L_{t}^{1}(L^{\infty})} + \|\tilde{B}\|_{L_{t}^{1}(L^{\infty})} \\
\leq &\, C_{\eta} \delta_{0}
+ C_{\eta} \int_{0}^{t}\langle t' \rangle^{-\frac{1}{2}-\beta(p)}\int_{0}^{t'}\|\tilde{u}\|_{L^{\infty}}+\|\tilde{B}\|_{L^{\infty}}\,d\tau\,dt' \\
&\, + \eta \left( \|\Delta\tilde{u}\|_{L_{t}^{1}(L^{2})} + \|\Delta\tilde{B}\|_{L_{t}^{1}(L^{2})} \right)   \\
\leq &\, C_{\eta} \delta_{0}
+ C_{\eta} \int_{0}^{t}\langle t' \rangle^{-\frac{1}{2}-\beta(p)}\int_{0}^{t'}\|\tilde{u}\|_{L^{\infty}}+\|\tilde{B}\|_{L^{\infty}}\,d\tau\,dt' \\
&\, + \eta \left( \delta_{0}+\|\tilde{u}\|_{L_{t}^{1}(L^{\infty})} + \|\tilde{B}\|_{L_{t}^{1}(L^{\infty})} \right).
\end{split}
\end{align}
Taking $\eta = \frac{1}{2C}$. Let $m(t) = \|\tilde{u}\|_{L_{t}^{1}(L^{\infty})} + \|\tilde{B}\|_{L_{t}^{1}(L^{\infty})}$, we obtain
\begin{align*}
m(t) \leq C \left( \delta_{0} + \int_{0}^{t} \langle t' \rangle^{-\frac{1}{2}-\beta(p)} m(t') \,dt' \right)
\end{align*}
for $t \leq \bar{T}$. Applying the Gronwall's inequality gives
\begin{align*}
m(t) = \|\tilde{u}\|_{L_{t}^{1}(L^{\infty})} + \|\tilde{B}\|_{L_{t}^{1}(L^{\infty})} \leq C\delta_{0}
\end{align*}
for $t \leq \bar{T}$.
From this and (\ref{7.27}), we deduce that
\begin{align}
\label{7.29}
\xi(t) \leq C \delta_{0} \quad \text{for } t\leq \bar{T}.
\end{align}
Thanks to (\ref{7.23}), (\ref{7.24}) and (\ref{7.25}), we arrive at (\ref{7.14}) and (\ref{7.14'}) for $t \leq \bar{T}$.
Moreover, (\ref{7.15}) and (\ref{7.29}) ensures that
\begin{align*}
& \|\tilde{u}(t)\|_{L^{2}}+ \|\tilde{B}(t)\|_{L^{2}} + \|\nabla\tilde{u}(t)\|_{L^{2}} + \|\nabla\tilde{B}(t)\|_{L^{2}} \\
&\quad\quad\quad\,\,\,\,   + \|\sqrt{\rho}\partial_{t}\tilde{u}\|_{L_{t}^{2}(L^{2})} + \|\partial_{t}\tilde{B}\|_{L_{t}^{2}(L^{2})}
+ \|\Delta\tilde{u}\|_{L_{t}^{2}(L^{2})} + \|\Delta\tilde{B}\|_{L_{t}^{2}(L^{2})}\leq C \delta_{0}
\end{align*}
for $t \leq \bar{T}$.
Now, let $\nu$ be the small constant determined in (\ref{7.7}). We take $\delta_{0}$ sufficiently small such that
\begin{align*}
\|\tilde{u}(t)\|_{L^{2}}\|\nabla\tilde{u}(t)\|_{L^{2}}
+ \|\tilde{B}(t)\|_{L^{2}}\|\nabla\tilde{B}(t)\|_{L^{2}} \leq C \delta_{0}^{2} \leq \frac{\nu}{2}
\end{align*}
for $t \leq \bar{T}$.
This shows that $\bar{T}$ can be any time smaller than $\tilde{T}^{*}$. This completes the proof of the proposition.
\end{proof}

\begin{proposition}
Under the assumptions of Theorem \ref{stability main theorem}, there exist constants $C$ and $c$ so that if
\begin{align*}
A_{0} := \|\tilde{u}_{0}\|_{H^{1}} + \|\tilde{u}_{0}\|_{L^{p}} + \|\tilde{B}_{0}\|_{H^{1}} + \|\tilde{B}_{0}\|_{L^{p}}
+ \|\tilde{a}_{0}\|_{B_{2,1}^{3/2}} \leq c,
\end{align*}
then we have
\begin{align}
\begin{split}
\|\tilde{a}\|_{\tilde{L}_{t}^{\infty}(B_{2,1}^{3/2})} & + \|\tilde{u}\|_{\tilde{L}_{t}^{\infty}(L^{p})}
+ \|\tilde{u}\|_{\tilde{L}_{t}^{\infty}(\dot{B}_{2,1}^{1/2})} + \|\tilde{u}\|_{L_{t}^{1}(\dot{B}_{2,1}^{5/2})}  \\
& + \|\tilde{B}\|_{\tilde{L}_{t}^{\infty}(L^{p})}
+ \|\tilde{B}\|_{\tilde{L}_{t}^{\infty}(\dot{B}_{2,1}^{1/2})} + \|\tilde{B}\|_{L_{t}^{1}(\dot{B}_{2,1}^{5/2})} \leq C A_{0}
\end{split}
\end{align}
for all $t < \tilde{T}^{*}$.
\end{proposition}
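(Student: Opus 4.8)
The plan is to reduce the proposition to a single closed \emph{a priori} inequality for the scaling-critical quantity
\[
\tilde{X}(t) := \|\tilde{a}\|_{\tilde{L}_{t}^{\infty}(B_{2,1}^{3/2})}
+ \|(\tilde{u},\tilde{B})\|_{\tilde{L}_{t}^{\infty}(L^{p})}
+ \|(\tilde{u},\tilde{B})\|_{\tilde{L}_{t}^{\infty}(\dot{B}_{2,1}^{1/2})}
+ \|(\tilde{u},\tilde{B})\|_{L_{t}^{1}(\dot{B}_{2,1}^{5/2})}
+ \|\nabla\tilde{\Pi}\|_{L_{t}^{1}(\dot{B}_{2,1}^{1/2})},
\]
and then to run a continuation argument on $[0,\tilde{T}^{*})$: once we establish an estimate of the shape $\tilde{X}(t)\leq C\bigl(A_{0}+\varepsilon(t)\tilde{X}(t)+\tilde{X}(t)^{2}\bigr)$, where $\varepsilon(t)$ is controlled by the decay bounds of Proposition \ref{decay bar}, the smallness of $A_{0}$ forces $\tilde{X}(t)\leq C A_{0}$ for all $t<\tilde{T}^{*}$. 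Throughout I would freely use that $\delta_{0}\lesssim A_{0}$ (since $B_{2,1}^{3/2}\hookrightarrow L^{2}\cap L^{\infty}$ relates $\tilde{a}_{0}$ and $\tilde{\rho}_{0}$), so that the decay conclusions $\xi(t)\leq C\delta_{0}$ of Proposition \ref{decay bar} feed directly back into $\tilde{X}$.

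First I would estimate the density perturbation. The first equation of (\ref{perturbed solution}) is transport, so Lemma \ref{transport_estimate} with $s=3/2$ controls $\|\tilde{a}\|_{\tilde{L}_{t}^{\infty}(B_{2,1}^{3/2})}$ by $\|\tilde{a}_{0}\|_{B_{2,1}^{3/2}}$, a Gronwall factor $\exp\bigl(C\int_{0}^{t}\|\nabla(\bar{u}+\tilde{u})\|_{\dot{B}_{2,1}^{3/2}}\bigr)$, and the source $\|\tilde{u}\cdot\nabla\bar{a}\|_{L_{t}^{1}(B_{2,1}^{3/2})}$. The Gronwall exponent is finite because $\|\bar{u}\|_{L^{1}(\dot{B}_{2,1}^{5/2})}\leq C$ by Proposition \ref{global reference 5/2} while $\|\tilde{u}\|_{L^{1}(\dot{B}_{2,1}^{5/2})}$ is a piece of $\tilde{X}$; the source is handled by the product law $\|\tilde{u}\cdot\nabla\bar{a}\|_{B_{2,1}^{3/2}}\lesssim\|\tilde{u}\|_{\dot{B}_{2,1}^{5/2}}\|\bar{a}\|_{B_{2,1}^{5/2}}$ together with $\|\bar{a}\|_{\tilde{L}^{\infty}(B_{2,1}^{5/2})}\leq C$ from Proposition \ref{global reference 2}.

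Second, and this is the heart, I would recast the momentum and magnetic equations of (\ref{perturbed}) in the coupled linear form (\ref{linearcouple}) with $v=\bar{u}$, $w=\bar{B}$, $a=\bar{a}+\tilde{a}$, keeping the \emph{good} coupling terms $\bar{B}\cdot\nabla\tilde{B}$ and $\bar{B}\cdot\nabla\tilde{u}$ on the left (this is precisely the reason the system of Lemma \ref{linear estimate momentum} was introduced, as explained in the Remark) and gathering the remaining perturbation-quadratic terms ($\tilde{u}\cdot\nabla\tilde{u}$, $\tilde{B}\cdot\nabla\tilde{B}$, $\ldots$), reference-cross terms ($\tilde{u}\cdot\nabla\bar{u}$, $\tilde{B}\cdot\nabla\bar{B}$, $\tilde{B}\cdot\nabla\bar{u}$, $\tilde{u}\cdot\nabla\bar{B}$), and density-forcing terms $\tilde{a}(\Delta\bar{u}-\nabla\bar{\Pi})$, $\tilde{a}\,\bar{B}\cdot\nabla\bar{B}$ into the sources $\tilde{f},\tilde{g}$. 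Applying Lemma \ref{linear estimate momentum} (and the variant (\ref{linear_estimate_var})) with $s=1/2$, $r=1$, the prefactor $\exp\bigl(C\int\|\bar{u}\|_{\dot{B}_{2,1}^{5/2}}+\|\bar{B}\|_{\dot{B}_{2,1}^{5/2}}\bigr)$ is bounded by a constant (Proposition \ref{global reference 5/2}), the data obey $\|(\tilde{u}_{0},\tilde{B}_{0})\|_{\dot{B}_{2,1}^{1/2}}\lesssim\|(\tilde{u}_{0},\tilde{B}_{0})\|_{H^{1}}\leq A_{0}$, and the parabolic smoothing delivers the $\tilde{L}^{\infty}(\dot{B}_{2,1}^{1/2})$ and $\tilde{L}^{1}(\dot{B}_{2,1}^{5/2})$ norms simultaneously, bypassing any need for $\dot{H}^{3}$-type control of the perturbation. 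The $a$-coefficient remainders involve $\|a\|_{L^{\infty}(\dot{H}^{2})}$, bounded via $\|\bar{a}\|_{L^{\infty}(\dot{H}^{2})}\leq C$ and $\|\nabla\tilde{\Pi}\|_{L^{1}(L^{2})}\leq CA_{0}$ from Proposition \ref{decay bar}. In parallel, the $L^{p}$ bound follows from the $L^{p}$ energy argument of Proposition \ref{decay pro 2} applied to (\ref{perturbed}), and $\|\nabla\tilde{\Pi}\|_{L^{1}(\dot{B}_{2,1}^{1/2})}$ from the elliptic/pressure estimate of Lemma \ref{pressure_estimate}.

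The main obstacle is closing the source-term estimates, since the cross and density-forcing terms are only \emph{linear} in the perturbation while the reference solution is large, so product laws alone produce no smallness; moreover the large coefficient $\|a\|_{\dot{H}^{2}}\sim C$ means the $\tilde{u}\mapsto\|\tilde{u}\|_{L^{1}(\dot{B}_{2,1}^{2})}$ remainder in the linear estimate is not directly absorbable. The resolution is to interpolate the Besov norms of $\tilde{f},\tilde{g}$ down to $L^{2}$-based quantities and to exploit the precise algebraic decay of Proposition \ref{decay bar}: integrals such as $\int_{0}^{t}\|\tilde{u}\cdot\nabla\bar{u}\|_{\dot{B}_{2,1}^{1/2}}\,dt'$ and $\int_{0}^{t}\|\tilde{a}\,(\Delta\bar{u}-\nabla\bar{\Pi})\|_{\dot{B}_{2,1}^{1/2}}\,dt'$ are rendered $O(A_{0})$ by pairing $\|\tilde{u}\|_{L^{2}}\lesssim\delta_{0}\langle t\rangle^{-\beta(p)}$ and $\|\nabla\tilde{u}\|_{L^{2}}\lesssim\delta_{0}\langle t\rangle^{-\frac12-\beta(p)}$ against the time-weighted reference bounds, using that $\beta(p)\in(\tfrac12,\tfrac34)$ makes the relevant exponents (for example $\tfrac12+\beta(p)>1$ and $2\beta(p)>1$) integrable on $[0,\infty)$. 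Matching these weights and verifying the feedback $\xi(t)\leq C\delta_{0}\leq CA_{0}$ is the delicate bookkeeping; once carried out, the genuinely quadratic terms and the $\varepsilon(t)$-small contributions are absorbed by the left-hand side, and the continuation argument gives $\tilde{X}(t)\leq CA_{0}$ on all of $[0,\tilde{T}^{*})$, which is the asserted bound.
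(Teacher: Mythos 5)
Your proposal is correct and follows essentially the same route as the paper: a transport estimate for $\tilde{a}$ with a Gronwall factor in $\|\tilde{u}\|_{L_{t}^{1}(\dot{B}_{2,1}^{5/2})}$, the coupled linear estimate of Lemma \ref{linear estimate momentum} with $v=\bar{u}$, $w=\bar{B}$ at $s=1/2$, source terms controlled by product laws plus interpolation to $L^{2}$-based norms fed by the decay bounds of Proposition \ref{decay bar}, an $L^{p}$ energy argument as in Proposition \ref{decay pro 2}, and a final absorption/bootstrap closed by the smallness of $A_{0}$. The only cosmetic differences (carrying $\|\nabla\tilde{\Pi}\|_{L_{t}^{1}(\dot{B}_{2,1}^{1/2})}$ inside your quantity $\tilde{X}$ and using time-weighted pairings where a plain $L_{t}^{\infty}\times L_{t}^{1}$ H\"older estimate suffices) do not change the argument.
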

\begin{proof}
Note that $\delta_{0} \lesssim A_{0}$ and $\tilde{a} = a - \bar{a}$. Thanks to Proposition \ref{global reference 5/2},
Proposition \ref{global reference 2} and Proposition \ref{decay bar}, we get by applying (\ref{transport 1}) to
the transport equations in (\ref{mhd_a}) and using Gronwall's inequality that
\begin{align}
\label{7.31}
\begin{split}
\|a\|_{\tilde{L}_{t}^{\infty}(\dot{H}^{2})} \leq C \exp{\left\{ C \|\tilde{u}\|_{L_{t}^{1}(\dot{B}_{2,1}^{5/2})} \right\}} \|a_{0}\|_{\dot{H}^{2}}.
\end{split}
\end{align}
Similarly applying (\ref{transport 1}) to the transport equations in (\ref{perturbed solution}) and using Gronwall's inequality that
\begin{align}
\label{7.32}
\begin{split}
\|\tilde{a}\|_{\tilde{L}_{t}^{\infty}(B_{2,1}^{3/2})} \leq C A_{0}\exp{\left\{ C \|\tilde{u}\|_{L_{t}^{1}(\dot{B}_{2,1}^{5/2})} \right\}}
\end{split}
\end{align}
for any $t < \tilde{T}^{*}$.
Applying Lemma \ref{linear estimate momentum} to the second equation in (\ref{perturbed solution}) yields
\begin{align}
\label{u b per}
\begin{split}
&\, \|\tilde{u}\|_{\tilde{L}_{t}^{\infty}(\dot{B}_{2,1}^{\frac{1}{2}})} + \|\tilde{u}\|_{\tilde{L}_{t}^{1}(\dot{B}_{2,1}^{5/2})}
+ \|\tilde{B}\|_{\tilde{L}_{t}^{\infty}(\dot{B}_{2,1}^{1/2})} + \|\tilde{B}\|_{\tilde{L}_{t}^{1}(\dot{B}_{2,1}^{5/2})} \\
\leq &\, C\exp{\left( C\|\bar{u}\|_{L_{t}^{1}(\dot{B}_{2,1}^{5/2})} + C\|\bar{B}\|_{L_{t}^{1}(\dot{B}_{2,1}^{5/2})} \right)}
\bigg\{ \|\tilde{u}_{0}\|_{\dot{B}_{2,1}^{\frac{1}{2}}} + \|\tilde{B}_{0}\|_{\dot{B}_{2,1}^{1/2}}  \\
&\, + \|\tilde{u}\cdot\nabla\bar{u}\|_{\tilde{L}_{t}^{1}(\dot{B}_{2,1}^{1/2})} + \|\tilde{u}\cdot\nabla\tilde{u}\|_{L_{t}^{1}(\dot{B}_{2,1}^{1/2})}
+ \|a\, \bar{B}\cdot\nabla\tilde{B}\|_{L_{t}^{1}(\dot{B}_{2,1}^{1/2})}  \\
&\, + \|(1+a)(\tilde{B}\cdot\nabla\bar{B} + \tilde{B}\cdot\nabla\tilde{B})\|_{L_{t}^{1}(\dot{B}_{2,1}^{1/2})}
+ \|\tilde{a}(\Delta\bar{u} - \nabla\bar{\Pi})\|_{L_{t}^{1}(\dot{B}_{2,1}^{1/2})}   \\
&\, + \|\tilde{a}\,\bar{B}\cdot\nabla\bar{B}\|_{L_{t}^{1}(\dot{B}_{2,1}^{1/2})}
+ \|\tilde{B}\cdot\nabla\bar{u}\|_{L_{t}^{1}(\dot{B}_{2,1}^{1/2})} + \|\tilde{B}\cdot\nabla\tilde{u}\|_{L_{t}^{1}(\dot{B}_{2,1}^{1/2})}  \\
&\, + \|\tilde{u}\cdot\nabla\bar{B}\|_{L_{t}^{1}(\dot{B}_{2,1}^{1/2})} + \|\tilde{u}\cdot\nabla\tilde{B}\|_{L_{t}^{1}(\dot{B}_{2,1}^{1/2})}
+ \|a\|_{L_{t}^{\infty}(\dot{H}^{2})}\|\tilde{u}\|_{L_{t}^{1}(\dot{B}_{2,1}^{2})}   \\
&\, + \|a\|_{L_{t}^{\infty}(\dot{H}^{2})}\|\nabla\tilde{\Pi}\|_{L_{t}^{1}(L^{2})}  \bigg\}.
\end{split}
\end{align}
Next, we list some useful estimates
\begin{align*}
& \|\tilde{u}\cdot\nabla\tilde{u}\|_{L_{t}^{1}(\dot{B}_{2,1}^{1/2})}
\lesssim \|\tilde{u}\|_{L_{t}^{\infty}(\dot{B}_{2,1}^{1/2})}\|\tilde{u}\|_{L_{t}^{1}(\dot{B}_{2,1}^{5/2})},  \\
& \|(1+a)\tilde{B}\cdot\nabla\tilde{B}\|_{L_{t}^{1}(\dot{B}_{2,1}^{1/2})}
\lesssim (1+\|a\|_{L_{t}^{\infty}(\dot{B}_{2,1}^{3/2})}) \|\tilde{B}\|_{L_{t}^{\infty}(\dot{B}_{2,1}^{1/2})}\|\tilde{B}\|_{L_{t}^{1}(\dot{B}_{2,1}^{5/2})},\\
& \|\tilde{B}\cdot\nabla\tilde{u}\|_{L_{t}^{1}(\dot{B}_{2,1}^{1/2})}
\lesssim \|\tilde{B}\|_{L_{t}^{\infty}(\dot{B}_{2,1}^{1/2})}\|\tilde{B}\|_{L_{t}^{1}(\dot{B}_{2,1}^{5/2})}, \\
& \|\tilde{u}\cdot\nabla\tilde{B}\|_{L_{t}^{1}(\dot{B}_{2,1}^{1/2})}
\lesssim \|\tilde{u}\|_{L_{t}^{\infty}(\dot{B}_{2,1}^{1/2})}\|\tilde{B}\|_{L_{t}^{1}(\dot{B}_{2,1}^{5/2})}, \\
& \|\tilde{u}\cdot\nabla\bar{u}\|_{L_{t}^{1}(\dot{B}_{2,1}^{1/2})}
\lesssim \|\tilde{u}\|_{L_{t}^{\infty}(\dot{B}_{2,1}^{1/2})}\|\nabla\bar{u}\|_{L_{t}^{1}(\dot{B}_{2,1}^{3/2})},  \\
& \|a\,\bar{B}\cdot\nabla\tilde{B}\|_{L_{t}^{1}(\dot{B}_{2,1}^{1/2})}
\lesssim \|a\|_{L_{t}^{\infty}(\dot{B}_{2,1}^{3/2})}\|\bar{B}\|_{L_{t}^{\infty}(\dot{B}_{2,1}^{1})}\|\tilde{B}\|_{L_{t}^{1}(\dot{B}_{2,1}^{2})},    \\
& \|(1+a)\tilde{B}\cdot\nabla\bar{B}\|_{L_{t}^{1}(\dot{B}_{2,1}^{1/2})}
\lesssim (1+\|a\|_{L_{t}^{\infty}(\dot{B}_{2,1}^{3/2})})\|\tilde{B}\|_{L_{t}^{\infty}(\dot{B}_{2,1}^{1/2})}\|\bar{B}\|_{L_{t}^{1}(\dot{B}_{2,1}^{5/2})}, \\
& \|\tilde{a}\,\bar{B}\cdot\nabla\bar{B}\|_{L_{t}^{1}(\dot{B}_{2,1}^{1/2})}
\lesssim \|\tilde{a}\|_{L_{t}^{1}(\dot{B}_{2,1}^{3/2})}\|\bar{B}\|_{L_{t}^{\infty}(\dot{B}_{2,1}^{1/2})}\|\bar{B}\|_{L_{t}^{1}(\dot{B}_{2,1}^{5/2})}, \\
& \|\tilde{u}\cdot\nabla\bar{B}\|_{L_{t}^{1}(\dot{B}_{2,1}^{1/2})}
\lesssim \|\tilde{u}\|_{L_{t}^{\infty}(\dot{B}_{2,1}^{1/2})}\|\bar{B}\|_{L_{t}^{1}(\dot{B}_{2,1}^{5/2})},   \\
& \|\tilde{a}\,(\Delta\bar{u}-\nabla\bar{\Pi})\|_{L_{t}^{1}(\dot{B}_{2,1}^{1/2})}
\lesssim \|\tilde{a}\|_{L_{t}^{\infty}(\dot{B}_{2,1}^{3/2})}\|\Delta\bar{u}-\nabla\bar{\Pi}\|_{L_{t}^{1}(\dot{B}_{2,1}^{1/2})}  \\
&\quad\quad\quad\quad\quad\quad\quad\quad\quad\quad
\leq C A_{0} \exp{\left\{ C \|\tilde{u}\|_{L_{t}^{1}(\dot{B}_{2,1}^{5/2})} \right\}},
\end{align*}
which along with (\ref{7.14}), (\ref{7.31}), (\ref{u b per}) and the following inequalities
\begin{align*}
& \|\tilde{u}\|_{L_{t}^{\infty}(\dot{B}_{2,1}^{1/2})}
\lesssim \|\tilde{u}\|_{L_{t}^{\infty}(L^{2})}^{\frac{1}{2}}\|\nabla\tilde{u}\|_{L_{t}^{\infty}(L^{2})}^{\frac{1}{2}},  \\
& \|\tilde{u}\|_{L_{t}^{1}(\dot{B}_{2,1}^{2})}
\lesssim \|\tilde{u}\|_{L_{t}^{1}(\dot{B}_{2,1}^{5/2})}^{\frac{2}{3}}\|\tilde{u}\|_{L_{t}^{1}(\dot{H}^{1})}^{\frac{1}{3}},  \\
& \|\tilde{B}\|_{L_{t}^{\infty}(\dot{B}_{2,1}^{1/2})}
\lesssim \|\tilde{B}\|_{L_{t}^{\infty}(L^{2})}^{\frac{1}{2}}\|\nabla\tilde{B}\|_{L_{t}^{\infty}(L^{2})}^{\frac{1}{2}},  \\
& \|\tilde{B}\|_{L_{t}^{1}(\dot{B}_{2,1}^{2})}
\lesssim \|\tilde{B}\|_{L_{t}^{1}(\dot{B}_{2,1}^{5/2})}^{\frac{2}{3}}\|\tilde{B}\|_{L_{t}^{1}(\dot{H}^{1})}^{\frac{1}{3}},
\end{align*}
imply that
\begin{align}\label{fanbo 2}
\begin{split}
& \|\tilde{u}\|_{\tilde{L}_{t}^{\infty}(\dot{B}_{2,1}^{1/2})}+\|\tilde{u}\|_{L_{t}^{1}(\dot{B}_{2,1}^{5/2})}
+\|\tilde{B}\|_{\tilde{L}_{t}^{\infty}(\dot{B}_{2,1}^{1/2})} + \|\tilde{B}\|_{L_{t}^{1}(\dot{B}_{2,1}^{5/2})}   \\
\leq &\, C \bigg[ \|\tilde{u}_{0}\|_{\dot{B}_{2,1}^{1/2}} + \|\tilde{B}_{0}\|_{\dot{B}_{2,1}^{1/2}} + \delta_{0}\|\tilde{u}\|_{L_{t}^{1}(\dot{B}_{2,1}^{5/2})}
+ \delta_{0}\|\tilde{B}\|_{L_{t}^{1}(\dot{B}_{2,1}^{5/2})} + A_{0}    \\
&\, + A_{0}\exp{\left( C \|\tilde{u}\|_{L_{t}^{1}(\dot{B}_{2,1}^{5/2})} \right)} \bigg]
+ \frac{1}{2}\|\tilde{B}\|_{L_{t}^{1}(\dot{B}_{2,1}^{5/2})} + \frac{1}{2}\|\tilde{u}\|_{L_{t}^{1}(\dot{B}_{2,1}^{5/2})}.
\end{split}
\end{align}
This gives
\begin{align*}
& \|\tilde{u}\|_{\tilde{L}_{t}^{\infty}(\dot{B}_{2,1}^{\frac{1}{2}})} + \|\tilde{B}\|_{\tilde{L}_{t}^{\infty}(\dot{B}_{2,1}^{\frac{1}{2}})}
+ \left( \frac{1}{2} - C \delta_{0} \right)\left( \|\tilde{u}\|_{L_{t}^{1}(\dot{B}_{2,1}^{5/2})} + \|\tilde{B}\|_{L_{t}^{1}(\dot{B}_{2,1}^{5/2})} \right) \\
\lesssim &\, \|\tilde{u}_{0}\|_{\dot{B}_{2,1}^{1/2}} + \|\tilde{B}_{0}\|_{\dot{B}_{2,1}^{1/2}} + A_{0}
+ A_{0}\exp{\left( C \|\tilde{u}\|_{L_{t}^{1}(\dot{B}_{2,1}^{5/2})} \right)}.
\end{align*}
Taking $A_{0} < c$ sufficiently small, we deduce
\begin{align}
\label{a u b}
\begin{split}
\|\tilde{a}\|_{\tilde{L}_{t}^{\infty}(B_{2,1}^{3/2})} + \|\tilde{u}\|_{\tilde{L}_{t}^{\infty}(\dot{B}_{2,1}^{1/2})}
& + \|\tilde{u}\|_{L_{t}^{1}(\dot{B}_{2,1}^{5/2})}    \\
& + \|\tilde{B}\|_{\tilde{L}_{t}^{\infty}(\dot{B}_{2,1}^{1/2})}
+ \|\tilde{B}\|_{L_{t}^{1}(\dot{B}_{2,1}^{5/2})} \leq C A_{0}
\end{split}
\end{align}
for any $t < \tilde{T}^{*}$.

On the other hand, by multiplying by $|\tilde{u}^{i}|^{p-1}\mathrm{sgn}(\tilde{u}^{i})$ the $\tilde{u}^{i}$ equation in (\ref{perturbed solution})
for $i = 1, 2, 3$ and integrating the resulting equation over $\mathbb{R}^{3}$, we obtain
\begin{align}
\label{u p}
\begin{split}
\frac{d}{dt}\|\tilde{u}\|_{L^{p}}^{p} \lesssim &\, \|\tilde{u}\|_{L^{p}}^{p-1}\bigg\{
\|\tilde{u}\cdot\nabla\tilde{u}\|_{L^{p}} + \|\tilde{u}\cdot\nabla\bar{u}\|_{L^{p}} + \|\bar{u}\cdot\nabla\tilde{u}\|_{L^{p}}   \\
&\, + \|(\bar{a} + \tilde{a})(\Delta\tilde{u} - \nabla\tilde{\Pi})\|_{L^{p}} + \|\nabla\tilde{\Pi}\|_{L^{p}}    \\
&\, + \|\tilde{a}\,(\Delta\bar{u} - \nabla\bar{\Pi})\|_{L^{p}} + \|\tilde{a}\,\bar{B}\cdot\nabla\bar{B}\|_{L^{p}}   \\
&\, + \|(1+\bar{a}+\tilde{a})\tilde{B}\cdot\nabla\tilde{B}\|_{L^{p}} + \|(1+\bar{a}+\tilde{a})\tilde{B}\cdot\nabla\bar{B}\|_{L^{p}} \\
&\, + \|(1+\bar{a}+\tilde{a})\bar{B}\cdot\nabla\tilde{B}\|_{L^{p}} \bigg\}.
\end{split}
\end{align}
Similarly, for $\tilde{B}$, we have
\begin{align}
\label{b p}
\begin{split}
\frac{d}{dt}\|\tilde{B}\|_{L^{p}}^{p} \lesssim &\, \|\tilde{B}\|_{L^{p}}^{p-1} \bigg\{
\|\tilde{u}\cdot\nabla\tilde{B}\|_{L^{p}} + \|\tilde{B}\cdot\nabla\tilde{u}\|_{L^{p}} + \|\tilde{B}\cdot\nabla\bar{u}\|_{L^{p}} \\
&\, + \|\bar{B}\cdot\nabla\tilde{u}\|_{L^{p}} + \|\tilde{u}\cdot\nabla\bar{B}\|_{L^{p}} + \|\bar{u}\cdot\nabla\tilde{B}\|_{L^{p}} \bigg\}
\end{split}
\end{align}
Notice that applying the operator $\mathrm{div}$ to the second equation in (\ref{perturbed solution}) results in
\begin{align*}
\Delta\tilde{\Pi} = & \mathrm{div}\,\bigg\{
- \tilde{u}\cdot\nabla\tilde{u} - \tilde{u}\cdot\nabla\bar{u} - \bar{u}\cdot\nabla\tilde{u} + (\bar{a}+\tilde{a})(\Delta\tilde{u}-\nabla\tilde{\Pi}) \\
& + (1+\bar{a}+\tilde{a})\tilde{B}\cdot\nabla\tilde{B} + (1+\bar{a}+\tilde{a})\tilde{B}\cdot\nabla\bar{B} \\
& + (1+\bar{a}+\tilde{a})\bar{B}\cdot\nabla\tilde{B} + \tilde{a}\,(\Delta\bar{u}-\nabla\bar{\Pi}) + \tilde{a}\,\bar{B}\cdot\nabla\bar{B}
\bigg\}
\end{align*}
which together with the elliptic estimates implies that
\begin{align*}
\|\nabla\tilde{\Pi}\|_{L^{p}} \lesssim &\, \|\tilde{u}\cdot\nabla\tilde{u}\|_{L^{p}} + \|\tilde{u}\dot\nabla\bar{u}\|_{L^{p}}
+ \|\bar{u}\cdot\nabla\tilde{u}\|_{L^{p}}   \\
&\, + \|(\bar{a}+\tilde{a})(\Delta\tilde{u}-\nabla\tilde{\Pi})\|_{L^{p}} + \|(1+\bar{a}+\tilde{a})\tilde{B}\cdot\nabla\tilde{B}\|_{L^{p}} \\
&\, + \|(1+\bar{a}+\tilde{a})\tilde{B}\cdot\nabla\bar{B}\|_{L^{p}} + \|(1+\bar{a}+\tilde{a})\bar{B}\cdot\nabla\tilde{B}\|_{L^{p}}   \\
&\, + \|\tilde{a}\,(\Delta\bar{u}-\nabla\bar{\Pi})\|_{L^{p}} + \|\tilde{a}\,\bar{B}\cdot\nabla\bar{B}\|_{L^{p}}.
\end{align*}
As a consequence, we deduce from (\ref{u p}) that
\begin{align}
\label{u p final}
\begin{split}
\|\tilde{u}\|_{L_{t}^{\infty}(L^{p})} \lesssim &\, \|\tilde{u}\cdot\nabla\tilde{u}\|_{L_{t}^{1}(L^{p})} +
\|\tilde{u}\cdot\nabla\bar{u}\|_{L_{t}^{1}(L^{p})} + \|\bar{u}\cdot\nabla\tilde{u}\|_{L_{t}^{1}(L^{p})} \\
&\, + \|(\bar{a}+\tilde{a})(\Delta\tilde{u}-\nabla\tilde{\Pi})\|_{L_{t}^{1}(L^{p})} + \|\tilde{a}\,(\Delta\bar{u}-\nabla\bar{\Pi})\|_{L_{t}^{1}(L^{p})} \\
&\, + \|\tilde{a}\,\bar{B}\cdot\nabla\bar{B}\|_{L_{t}^{1}(L^{p})} + \|(1+\bar{a}+\tilde{a})\tilde{B}\cdot\nabla\tilde{B}\|_{L_{t}^{1}(L^{p})} \\
&\, + \|(1+\bar{a}+\tilde{a})\tilde{B}\cdot\nabla\bar{B}\|_{L_{t}^{1}(L^{p})}
+ \|(1+\bar{a}+\tilde{a})\bar{B}\cdot\nabla\tilde{B}\|_{L_{t}^{1}(L^{p})} \\
&\, + \|\tilde{u}_{0}\|_{L^{p}}.
\end{split}
\end{align}
However, since $p \in (1, \frac{6}{5})$, it follows from Proposition \ref{decay bar} and Theorem \ref{decay_main_theorem} that
\begin{align*}
&\, \|\tilde{u}\cdot\nabla\tilde{u}\|_{L_{t}^{1}(L^{p})} + \|\tilde{u}\cdot\nabla\bar{u}\|_{L_{t}^{1}(L^{p})} \\
\lesssim &\, \|\tilde{u}\|_{L_{t}^{\infty}(L^{\frac{2p}{2-p}})} \left( \|\nabla\tilde{u}\|_{L_{t}^{1}(L^{2})} + \|\nabla\bar{u}\|_{L_{t}^{1}(L^{2})} \right) \\
\lesssim &\, \|\tilde{u}\|_{L_{t}^{\infty}(H^{1/2})} \lesssim \delta_{0} \lesssim A_{0},
\end{align*}
where we used the Sobolev embedding inequality $\|u\|_{L^{\frac{2p}{2-p}}} \lesssim \|u\|_{H^{1/2}}$.
Similarly, we have
\begin{align*}
& \|\tilde{B}\cdot\nabla\tilde{B}\|_{L_{t}^{1}(L^{p})} + \|\tilde{B}\cdot\nabla\bar{B}\|_{L_{t}^{1}(L^{p})} \lesssim A_{0}, \\
& \|\tilde{u}\cdot\nabla\tilde{B}\|_{L_{t}^{1}(L^{p})} + \|\tilde{u}\cdot\nabla\bar{B}\|_{L_{t}^{1}(L^{p})} \lesssim A_{0}, \\
& \|\tilde{B}\cdot\nabla\tilde{u}\|_{L_{t}^{1}(L^{p})} + \|\tilde{B}\cdot\nabla\bar{u}\|_{L_{t}^{1}(L^{p})} \lesssim A_{0},
\end{align*}
and
\begin{align*}
&\, \|(1+\bar{a}+\tilde{a})\tilde{B}\cdot\nabla\tilde{B}\|_{L_{t}^{1}(L^{p})} + \|(1+\bar{a}+\tilde{a})\tilde{B}\cdot\nabla\bar{B}\|_{L_{t}^{1}(L^{p})} \\
\lesssim &\, \left( 1+\|\bar{a}\|_{L_{t}^{\infty}(L^{\infty})}+\|\tilde{a}\|_{L_{t}^{\infty}(L^{\infty})} \right)
\left( \|\tilde{B}\cdot\nabla\tilde{B}\|_{L_{t}^{1}(L^{p})} + \|\tilde{B}\cdot\nabla\bar{B}\|_{L_{t}^{1}(L^{p})} \right) \lesssim A_{0}.
\end{align*}
Also, we can obtain
\begin{align*}
\|\bar{u}\cdot\nabla\tilde{u}\|_{L_{t}^{1}(L^{p})} \lesssim &\, \|\bar{u}\|_{L_{t}^{\infty}(L^{\frac{2p}{2-p}})}\|\nabla\tilde{u}\|_{L_{t}^{1}(L^{2})}  \\
\lesssim &\, \|\bar{u}\|_{L_{t}^{\infty}(H^{1/2})}\|\nabla\tilde{u}\|_{L_{t}^{1}(L^{2})} \lesssim A_{0}.
\end{align*}
Similarly, we have
\begin{align*}
&\, \|\bar{B}\cdot\nabla\tilde{B}\|_{L_{t}^{1}(L^{p})} + \|\bar{B}\cdot\nabla\tilde{u}\|_{L_{t}^{1}(L^{p})}
+ \|\bar{u}\cdot\nabla\tilde{B}\|_{L_{t}^{1}(L^{p})} \lesssim A_{0}, \\
&\, \|(1+\bar{a}+\tilde{a})\bar{B}\cdot\nabla\tilde{B}\|_{L_{t}^{1}(L^{p})} \lesssim
(1 + \|\bar{a}\|_{L_{t}^{\infty}(L^{\infty})}   \\
&\, \quad\quad\quad\quad\quad\quad\quad\quad\quad\quad\quad\quad
+ \|\tilde{a}\|_{L_{t}^{\infty}(L^{\infty})})\|\bar{B}\|_{L_{t}^{\infty}(H^{1/2})}
\|\nabla\tilde{B}\|_{L_{t}^{1}(L^{2})} \lesssim A_{0},
\end{align*}
and
\begin{align*}
\|(\bar{a}+\tilde{a})(\Delta\tilde{u}-\nabla\tilde{\Pi})\|_{L_{t}^{1}(L^{p})} + \|\tilde{a}\,(\Delta\bar{u}-\nabla\bar{\Pi})\|_{L_{t}^{1}(L^{p})}
+ \|\tilde{a}\,\bar{B}\cdot\nabla\bar{B}\|_{L_{t}^{1}(L^{p})} \lesssim A_{0}.
\end{align*}
Plugging the above estimates into (\ref{b p}) and (\ref{u p final}), we arrive at
\begin{align*}
\|\tilde{u}\|_{L_{t}^{\infty}(L^{p})} + \|\tilde{B}\|_{L_{t}^{\infty}(L^{p})} \lesssim A_{0}.
\end{align*}
This along with (\ref{a u b}) complete the proof.
\end{proof}

\begin{proposition}
Under the assumption of Theorem \ref{stability main theorem}, there holds
\begin{align}
\|\tilde{a}(t)\|_{L^{\infty}} \leq C A_{0}
\end{align}
and
\begin{align}
\|\nabla\tilde{a}(t)\|_{L^{\infty}} \leq C \left( \|\nabla\tilde{a}_{0}\|_{L^{\infty}} + \|\bar{a}_{0}\|_{B_{2,1}^{7/2}} \right),
\end{align}
where $A_{0}$ defined as before.
\end{proposition}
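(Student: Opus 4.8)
The plan is to read off both bounds from the transport equation for $\tilde a$, namely the first line of (\ref{perturbed solution}),
\[
\partial_t\tilde a+(\bar u+\tilde u)\cdot\nabla\tilde a=-\tilde u\cdot\nabla\bar a ,
\]
using that the full velocity $u:=\bar u+\tilde u$ is divergence free, so that $L^\infty$-type norms propagate along its flow with a Gronwall weight $\int_0^t\|\nabla u\|_{L^\infty}\,d\tau$. This weight is harmless throughout, since $\dot B^{3/2}_{2,1}\hookrightarrow L^\infty$ gives $\int_0^t\|\nabla u\|_{L^\infty}\,d\tau\lesssim\|\bar u\|_{L^1_t(\dot B^{5/2}_{2,1})}+\|\tilde u\|_{L^1_t(\dot B^{5/2}_{2,1})}\le C$ by Proposition \ref{global reference 5/2} and (\ref{a u b}).

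The first bound is essentially immediate. Since $B^{3/2}_{2,1}(\mathbb{R}^3)\hookrightarrow L^\infty(\mathbb{R}^3)$ (the critical embedding), I would simply invoke the estimate (\ref{a u b}) already proved, obtaining
\[
\|\tilde a(t)\|_{L^\infty}\lesssim\|\tilde a(t)\|_{B^{3/2}_{2,1}}\le\|\tilde a\|_{\tilde{L}^{\infty}_{t}(B^{3/2}_{2,1})}\le CA_0
\]
for all $t<\tilde T^*$.

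For the gradient bound I would differentiate the transport equation in $x_j$, giving
\[
\partial_t\partial_j\tilde a+u\cdot\nabla\partial_j\tilde a=-(\partial_j u)\cdot\nabla\tilde a-(\partial_j\tilde u)\cdot\nabla\bar a-\tilde u\cdot\nabla\partial_j\bar a ,
\]
and then run the $L^\infty$ transport estimate to get
\[
\|\nabla\tilde a(t)\|_{L^\infty}\le\|\nabla\tilde a_0\|_{L^\infty}+\int_0^t\Big(\|\nabla u\|_{L^\infty}\|\nabla\tilde a\|_{L^\infty}+\|\nabla\tilde u\|_{L^\infty}\|\nabla\bar a\|_{L^\infty}+\|\tilde u\|_{L^\infty}\|\nabla^2\bar a\|_{L^\infty}\Big)\,d\tau .
\]
I would treat the two genuine source integrals by pulling the time-supremum of the $\bar a$ factor outside. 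By Proposition \ref{decay pro 5} applied to the reference solution (with $q=\infty$) and $\nabla\bar a_0\in B^{5/2}_{2,1}\hookrightarrow L^\infty$, one has $\sup_\tau\|\nabla\bar a(\tau)\|_{L^\infty}\le C\|\nabla\bar a_0\|_{L^\infty}\lesssim\|\bar a_0\|_{B^{7/2}_{2,1}}$, while $\int_0^t\|\nabla\tilde u\|_{L^\infty}\,d\tau\lesssim\|\tilde u\|_{L^1_t(\dot B^{5/2}_{2,1})}\le CA_0$ by (\ref{a u b}) and $\int_0^t\|\tilde u\|_{L^\infty}\,d\tau\le C\delta_0\le CA_0$ by Proposition \ref{decay bar}. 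Hence both source integrals are bounded by $CA_0\big(\|\bar a_0\|_{B^{7/2}_{2,1}}+\sup_\tau\|\nabla^2\bar a\|_{L^\infty}\big)$, and Gronwall's inequality together with $A_0,\delta_0\le c$ will yield $\|\nabla\tilde a(t)\|_{L^\infty}\le C(\|\nabla\tilde a_0\|_{L^\infty}+\|\bar a_0\|_{B^{7/2}_{2,1}})$ once $\sup_\tau\|\nabla^2\bar a\|_{L^\infty}$ is controlled.

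The only genuinely new point, and the main obstacle, is the control of $\sup_\tau\|\nabla^2\bar a(\tau)\|_{L^\infty}$, which is one derivative higher than what Proposition \ref{global reference 2} provides (that proposition only gives $\bar a\in\tilde L^\infty(\mathbb{R}^+;B^{5/2}_{2,1})$). I would upgrade this to $\|\bar a\|_{\tilde L^\infty(\mathbb{R}^+;B^{7/2}_{2,1})}\le C\|\bar a_0\|_{B^{7/2}_{2,1}}$ by applying the higher-order transport estimate (\ref{transport 3}) of Lemma \ref{transport_estimate 2} with $\sigma=\tfrac{7}{2}$, $p=p_1=2$, $r=1$; since $\tfrac{7}{2}>1+\tfrac{3}{2}$ the relevant weight is $Z(t)=\|\nabla\bar u(t)\|_{B^{5/2}_{2,1}}$. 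The crux is then the time-integrability $\int_0^\infty\|\nabla\bar u\|_{B^{5/2}_{2,1}}\,dt<\infty$: its low frequencies are dominated by $\|\nabla\bar u\|_{L^2}$, which is integrable because $\|\nabla\bar u(t)\|_{L^2}\lesssim\langle t\rangle^{-\frac{1}{2}-\beta(p)}$ with $\beta(p)>\tfrac{1}{2}$ (Theorem \ref{decay_main_theorem}), while its high frequencies are controlled by interpolating $\dot B^{5/2}_{2,1}$ and $\dot B^4_{2,1}$ and applying H\"older, using that both $\|\bar u\|_{L^1(\dot B^{5/2}_{2,1})}$ and $\|\bar u\|_{L^1(\dot B^4_{2,1})}$ are finite by Propositions \ref{global reference 5/2} and \ref{global reference 2}. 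Then $\|\nabla^2\bar a\|_{L^\infty}\lesssim\|\nabla^2\bar a\|_{B^{3/2}_{2,1}}\lesssim\|\bar a\|_{B^{7/2}_{2,1}}\le C\|\bar a_0\|_{B^{7/2}_{2,1}}$, and feeding this into the previous paragraph closes the argument.
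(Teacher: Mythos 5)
Your proof is correct and follows essentially the same route as the paper: $L^{\infty}$ transport estimates for $\tilde{a}$ and $\nabla\tilde{a}$ (your first bound via the embedding $B_{2,1}^{3/2}\hookrightarrow L^{\infty}$ is a harmless shortcut), closed by propagating the $B_{2,1}^{7/2}$ regularity of $\bar{a}$ and using $\int_0^t\|\tilde{u}\|_{L^{\infty}}\,d\tau\lesssim\delta_0$. You are in fact more careful than the paper at the key point: the paper invokes Lemma \ref{transport_estimate} (stated only for $s\le 5/2$) to get $\|\bar{a}(t)\|_{B_{2,1}^{7/2}}\lesssim\|\bar{a}_0\|_{B_{2,1}^{7/2}}$, whereas your use of Lemma \ref{transport_estimate 2} with $\sigma=\tfrac72$ and weight $Z(t)=\|\nabla\bar{u}(t)\|_{B_{2,1}^{5/2}}$, made time-integrable by splitting into low frequencies (the $L^2$ decay with $\beta(p)>\tfrac12$) and high frequencies (interpolation between $\dot{B}_{2,1}^{5/2}$ and $\dot{B}_{2,1}^{4}$), is the correct instantiation of that step.
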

\begin{proof}
Firstly, let us give the equation of $\tilde{a}$
\begin{align*}
\partial_{t}\tilde{a} + u\cdot\nabla\tilde{a} = -\tilde{u}\cdot\nabla\bar{a}.
\end{align*}
Using basic estimates about transport equations, we have
\begin{align*}
\|\tilde{a}(t)\|_{L^{\infty}} \leq C e^{C \int_{0}^{t}\|\nabla u\|_{L^{\infty}}dt'}\left( \|\tilde{a}_{0}\|_{L^{\infty}} +
\int_{0}^{t} \|\tilde{u}\cdot\nabla\bar{a}\|_{L^{\infty}}\,dt' \right)
\end{align*}
Moreover, we obtain
\begin{align*}
\|\tilde{a}(t)\|_{L^{\infty}} \lesssim \|\tilde{a}_{0}\|_{L^{\infty}} + \|\nabla\bar{a}\|_{L_{t}^{\infty}(\dot{B}_{2,1}^{3/2})} A_{0}.
\end{align*}
By the definition of $A_{0}$ and Proposition \ref{global reference 2}, we get
\begin{align*}
\|\tilde{a}(t)\|_{L^{\infty}} \leq C A_{0}.
\end{align*}
Similarly, we give the equation of $\nabla\tilde{a}$
\begin{align*}
\partial_{t}\nabla\tilde{a} + u\cdot\nabla(\nabla\tilde{a}) = -\tilde{u}\cdot\nabla(\nabla\bar{a}).
\end{align*}
Then classical estimates about transport equations, Proposition \ref{decay pro 4} and Proposition \ref{decay bar} gives
\begin{align*}
\|\nabla\tilde{a}(t)\|_{L^{\infty}} \lesssim \|\nabla\tilde{a}_{0}\|_{L^{\infty}} + \|\bar{a}\|_{L_{t}^{\infty}(\dot{B}_{2,1}^{7/2})}.
\end{align*}
Due to  Lemma \ref{transport_estimate}, we obtain
\begin{align*}
\|\bar{a}(t)\|_{B_{2,1}^{7/2}} \lesssim \|\bar{a}_{0}\|_{B_{2,1}^{7/2}},
\end{align*}
where we used (\ref{estimate in paper}).
From this, we easily obtain
\begin{align*}
\|\nabla\tilde{a}(t)\|_{L^{\infty}} \leq C \left( \|\nabla\tilde{a}_{0}\|_{L^{\infty}} + \|\bar{a}_{0}\|_{B_{2,1}^{7/2}} \right).
\end{align*}
\end{proof}

\begin{proposition}
\label{high per}
Under the assumptions of Theorem \ref{stability main theorem}, there holds
\begin{align*}
\sup_{t\geq t_{0}}\left( \|\nabla^{2}\tilde{u}(t)\|_{L^{2}}^{2} + \|\nabla^{2}\tilde{B}(t)\|_{L^{2}}^{2} \right) & +
\int_{t_{0}}^{\infty}\|\partial_{t}\nabla\tilde{u}(t)\|_{L^{2}}^{2} + \|\partial_{t}\nabla\tilde{B}(t)\|_{L^{2}}^{2}\,dt    \\
& + \int_{t_{0}}^{\infty}\|\nabla^{3}\tilde{u}(t)\|_{L^{2}}^{2} + \|\nabla^{3}\tilde{B}(t)\|_{L^{2}}^{2}\,dt \leq C,
\end{align*}
where $C$ depends on the initial data.
\end{proposition}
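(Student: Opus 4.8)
The plan is to mirror the argument of Proposition \ref{decay pro 6}, now applied to the perturbed system (\ref{perturbed}) rather than to (\ref{mhd_rho}); the difference is that the $\tilde u$ and $\tilde B$ equations carry extra linear and source terms built from the reference solution, whose time–integrability must be harvested from the decay bounds of Theorem \ref{decay_main_theorem} and Proposition \ref{decay pro 6} (applied to $(\bar a,\bar u,\bar B)$, which is legitimate since the reference data lie in $B^{7/2}_{2,1}\times(L^p\cap B^2_{2,1})^2$), together with the perturbation decay of Proposition \ref{decay bar}. First I would differentiate the momentum and magnetic equations of (\ref{perturbed}) once in space and run two coupled $L^2$ energy estimates.

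In the first estimate I test the $\partial_j$–differentiated equations with $\partial_t\partial_j\tilde u$ and $\partial_t\partial_j\tilde B$ and sum over $j$; this produces an inequality of the type (\ref{4.54}), controlling $\frac{d}{dt}\|(\nabla^2\tilde u,\nabla^2\tilde B)\|_{L^2}^2$ plus $\|(\sqrt\rho\,\partial_t\nabla\tilde u,\partial_t\nabla\tilde B)\|_{L^2}^2$ by the $\|\partial_t\tilde u\|_{L^2}^2$ term, the familiar cubic/quartic gradient terms, and the new contributions coming from $\nabla(\rho\,\tilde u\cdot\nabla\bar u)$, $\nabla(\tilde B\cdot\nabla\bar u)$, $\nabla(\tilde u\cdot\nabla\bar B)$, $\nabla(\tilde\rho\,\bar\rho^{-1}(\Delta\bar u-\nabla\bar\Pi))$ and $\nabla(\tilde\rho\,\bar\rho^{-1}\bar B\cdot\nabla\bar B)$. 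In the second estimate I test with $\rho^{-1}\Delta\partial_j\tilde u$ and $\Delta\partial_j\tilde B$ to gain the dissipation $\|\rho^{-1/2}\nabla^3\tilde u\|_{L^2}^2+\|\nabla^3\tilde B\|_{L^2}^2$, exactly as in (\ref{dd u b}). The crucial auxiliary ingredient is the elliptic/pressure bound for $\|\nabla^3\tilde u\|_{L^2}+\|\nabla^2\tilde\Pi\|_{L^2}$, obtained by applying $\nabla$ to the momentum equation and $\mathrm{div}$ to recover $\nabla^2\tilde\Pi$; this is the one–derivative analogue of (\ref{7.11}) and expresses $\nabla^3\tilde u+\nabla^2\tilde\Pi$ through $\sqrt\rho\,\partial_t\nabla\tilde u$, the convective terms and the reference forcing.

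Then I form the combination ``first estimate $+\tfrac12$(second estimate)'', as in passing from (\ref{4.54}) and (\ref{dd u b 2}) to (\ref{dd half}). On the resulting right–hand side every occurrence of $\|\nabla^3\tilde u\|_{L^2}^2$ and $\|\nabla^3\tilde B\|_{L^2}^2$ is multiplied by a factor of the form $\|(\tilde u,\tilde B)\|_{L^2}^{1/2}\|\nabla(\tilde u,\tilde B)\|_{L^2}^{1/2}$ or $\|(\tilde u,\tilde B)\|_{L^2}\|\nabla(\tilde u,\tilde B)\|_{L^2}$; by Proposition \ref{decay bar} ((\ref{7.14})) these quantities are $O(\delta_0)$ uniformly in $t\ge t_0$, so choosing $\delta_0$ (equivalently $A_0$) small lets me absorb all the top–order terms into the dissipative left–hand side, exactly as the smallness of $\eta$ did in Propositions \ref{decay pro 1} and \ref{decay pro 6}. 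After absorption I am left with $\frac{d}{dt}\|(\nabla^2\tilde u,\nabla^2\tilde B)\|_{L^2}^2+c_1\|(\partial_t\nabla\tilde u,\partial_t\nabla\tilde B)\|_{L^2}^2+c_2\|(\nabla^3\tilde u,\nabla^3\tilde B)\|_{L^2}^2$ bounded by $\|\partial_t\tilde u\|_{L^2}^2$, lower–order quadratic terms in $\nabla^2(\tilde u,\tilde B)$ with integrable coefficients, and the reference forcing.

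Integrating from $t_0$ to $\infty$ and using finiteness at $t_0$ (guaranteed by $\tilde u,\tilde B\in C([0,\tilde T^*);B^2_{2,1})\hookrightarrow H^2$), the proof reduces to the integrability of the forcing. The terms $\int_{t_0}^\infty\|\partial_t\tilde u\|_{L^2}^2\,dt$ and the quadratic $\nabla^2$ terms with coefficients built from $\|\nabla(\bar u,\bar B)\|_{L^2}$ are handled by Proposition \ref{decay bar} ((\ref{7.14}), (\ref{7.14'})) and Theorem \ref{decay_main_theorem}. The main obstacle is the time–integrability of the genuinely new coupling terms carrying second derivatives of the reference solution: after differentiation, $\nabla(\tilde\rho\,\bar\rho^{-1}(\Delta\bar u-\nabla\bar\Pi))$ generates $\nabla\tilde\rho\,(\Delta\bar u-\nabla\bar\Pi)$ and $\tilde\rho\,\nabla(\Delta\bar u-\nabla\bar\Pi)$, while $\nabla(\tilde u\cdot\nabla\bar B)$ generates $\nabla\tilde u\cdot\nabla\bar B+\tilde u\cdot\nabla^2\bar B$. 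Bounding these requires the higher–order reference integrals $\int_{t_0}^\infty\big(\|\nabla^3\bar u\|_{L^2}^2+\|\nabla^3\bar B\|_{L^2}^2+\|\partial_t\nabla\bar u\|_{L^2}^2\big)\,dt<\infty$ from Proposition \ref{decay pro 6}, combined with the decay rates of $\tilde u,\tilde B,\nabla\tilde u,\nabla\tilde B$ and the uniform bounds $\|\tilde\rho\|_{L^\infty}+\|\nabla\tilde a\|_{L^\infty}\le CA_0$ established earlier. Gagliardo–Nirenberg interpolation then converts the $\nabla^3$ reference dissipation into the $L^2$-in-time control needed, and the small factor $A_0$ closes the estimate; this careful bookkeeping of the mixed reference/perturbation products is the technically heaviest part of the proof.
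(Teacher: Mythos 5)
Your proposal follows the paper's own proof essentially step for step: differentiate the perturbed system once in space, test with $\partial_t\nabla(\tilde u,\tilde B)$ and with $\rho^{-1}\Delta\nabla\tilde u$, $\Delta\nabla\tilde B$, combine the two estimates with a small weight on the dissipative one, absorb the $\|\nabla^3(\tilde u,\tilde B)\|_{L^2}^2$ contributions using the smallness $\|\tilde u\|_{L^2}\|\nabla\tilde u\|_{L^2}+\|\tilde B\|_{L^2}\|\nabla\tilde B\|_{L^2}\lesssim \delta_0$ from Proposition \ref{decay bar}, and then integrate in time using the integrability of the reference forcing supplied by Theorem \ref{decay_main_theorem} and Proposition \ref{decay pro 6} together with the uniform bounds on $\|\tilde\rho\|_{L^\infty}$ and $\|\nabla\tilde a\|_{L^\infty}$. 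This coincides with the paper's argument, including your identification of the decisive mixed terms $\tilde\rho\,\nabla(\Delta\bar u-\nabla\bar\Pi)$ and $\nabla\tilde\rho\,(\Delta\bar u-\nabla\bar\Pi)$, so nothing further is needed.
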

\begin{proof}
Taking derivative to the second and third equation of (\ref{perturbed}), we obtain
\begin{align}
\label{dup}
\begin{split}
\rho \partial_{t}\partial_{j}\tilde{u}^{i} & - \Delta \partial_{j}\tilde{u}^{i} + \partial_{j}\partial_{i}\tilde{\Pi}
= - \partial_{j}\rho\,\partial_{t}\tilde{u}^{i} - \partial_{j}\rho\,u\cdot\nabla\tilde{u}^{i} - \rho \, \partial_{j}u\cdot\nabla\tilde{u}^{i}   \\
& -\rho\,u\cdot\nabla\partial_{j}\tilde{u}^{i} + \partial_{j}B\cdot\nabla\tilde{B}^{i} + B\cdot\nabla\partial_{j}\tilde{B}^{i} -
\partial_{j}\rho \,\tilde{u}\cdot\nabla\bar{u}^{i} - \rho \, \partial_{j}\tilde{u}\cdot\nabla\bar{u}^{i}    \\
& - \rho\,\tilde{u}\cdot\nabla\partial_{j}\bar{u}^{i} + \partial_{j}\tilde{B}\cdot\nabla\bar{B}^{i} + \tilde{B}\cdot\nabla\partial_{j}\bar{B}^{i}
- \frac{\tilde{\rho}}{\bar{\rho}}(\Delta\partial_{j}\bar{u}^{i}-\partial_{j}\partial_{i}\tilde{\Pi})    \\
& - \partial_{j}\left( \frac{\tilde{\rho}}{\bar{\rho}} \right)(\Delta\bar{u}^{i}-\partial_{i}\tilde{\Pi})
- \frac{\tilde{\rho}}{\bar{\rho}}\,\partial_{j}\bar{B}\cdot\nabla\bar{B}^{i} - \frac{\tilde{\rho}}{\bar{\rho}} \, \bar{B}\cdot\nabla\partial_{j}\bar{B}^{i} \\
& - \partial_{j}\left( \frac{\tilde{\rho}}{\bar{\rho}} \right)\bar{B}\cdot\nabla\bar{B}^{i}
\end{split}
\end{align}
and
\begin{align}
\label{dbp}
\begin{split}
\partial_{t}\partial_{j}\tilde{B}^{i} - \Delta\partial_{i}\tilde{B}^{i} & = - \partial_{j}u\cdot\nabla\tilde{B}^{i} - u\cdot\nabla\partial_{j}\tilde{B}^{i}
+ \partial_{j}B\cdot\nabla\tilde{u}^{i} + B\cdot\nabla\partial_{j}\tilde{u}^{i} \\
& - \partial_{j}\tilde{B}\cdot\nabla\bar{u}^{i} - \tilde{B}\cdot\nabla\partial_{j}\bar{u}^{i} + \partial_{j}\tilde{u}\cdot\nabla\bar{B}^{i}
+ \tilde{u}\cdot\nabla\partial_{j}\bar{B}^{i}.
\end{split}
\end{align}
Multiplying (\ref{dup}) with $\partial_{t}\partial_{j}\tilde{u}^{i}$ and integrating over $\mathbb{R}^{3}$, we obtain
\begin{align}
\label{dup e}
\begin{split}
& \|\sqrt{\rho}\partial_{t}\nabla\tilde{u}(t)\|_{L^{2}}^{2} + \frac{1}{2}\frac{d}{dt}\|\nabla^{2}\tilde{u}(t)\|_{L^{2}}^{2}
= -\int_{\mathbb{R}^{3}} \nabla\rho\, \partial_{t}\tilde{u}\,\partial_{t}\nabla\tilde{u}\,dx    \\
&\quad - \int_{\mathbb{R}^{3}}\nabla\rho\,\tilde{u}\cdot\nabla\tilde{u}\,\partial_{t}\nabla\tilde{u}\,dx
 - \int_{\mathbb{R}^{3}}\nabla\rho\,\bar{u}\cdot\nabla\tilde{u}\,\partial_{t}\nabla\tilde{u}\,dx   \\
&\quad  - \int_{\mathbb{R}^{3}}\rho\,\nabla\tilde{u}\cdot\nabla\tilde{u}\,\partial_{t}\nabla\tilde{u}\,dx
- \int_{\mathbb{R}^{3}} \rho\,\nabla\bar{u}\cdot\nabla\tilde{u}\,\partial_{t}\nabla\tilde{u}\,dx    \\
&\quad - \int{\mathbb{R}^{3}} \rho\,u\cdot\nabla\nabla\tilde{u}\,\partial_{t}\nabla\tilde{u}\,dx
+ \int_{\mathbb{R}^{3}}\nabla\tilde{B}\cdot\nabla\tilde{B}\,\partial_{t}\nabla\tilde{u}\,dx    \\
&\quad + \int_{\mathbb{R}^{3}} \nabla\bar{B}\cdot\nabla\tilde{B}\,\partial_{t}\nabla\tilde{u}\,dx
+ \int_{\mathbb{R}^{3}} \tilde{B}\cdot\nabla\nabla\tilde{B}\,\partial_{t}\nabla\tilde{u}\,dx  \\
&\quad  + \int_{\mathbb{R}^{3}} \bar{B}\cdot\nabla\nabla\tilde{B}\,\partial_{t}\nabla\tilde{u}\,dx
- \int_{\mathbb{R}^{3}}\nabla\rho\,\tilde{u}\cdot\nabla\bar{u}\,\partial_{t}\nabla\tilde{u}\,dx \\
&\quad - \int_{\mathbb{R}^{3}} \rho\,\nabla\tilde{u}\cdot\nabla\bar{u}\,\partial_{t}\nabla\tilde{u}\,dx
- \int_{\mathbb{R}^{3}} \rho\,\tilde{u}\cdot\nabla\nabla\bar{u}\,dx \\
&\quad + \int_{\mathbb{R}^{3}} \nabla\tilde{B}\cdot\nabla\bar{B}\,\partial_{t}\nabla\tilde{u}\,dx
+ \int_{\mathbb{R}^{3}} \tilde{B}\cdot\nabla\nabla\bar{B}\,\partial_{t}\nabla\tilde{u}\,dx      \\
&\quad - \int_{\mathbb{R}^{3}} \frac{\tilde{\rho}}{\bar{\rho}}(\Delta\partial_{j}\bar{u}^{i}-\partial_{j}\partial_{i}\tilde{\Pi})\,\partial_{t}\nabla\tilde{u}\,dx
- \int_{\mathbb{R}^{3}} \partial_{j}\left(\frac{\tilde{\rho}}{\bar{\rho}}\right)(\Delta\bar{u}^{i}-\partial_{i}\tilde{\Pi})\,\partial_{t}\nabla\tilde{u}\,dx \\
&\quad - \int_{\mathbb{R}^{3}} \frac{\tilde{\rho}}{\bar{\rho}}\,\nabla\bar{B}\cdot\nabla\bar{B}\,dx
- \int_{\mathbb{R}^{3}} \frac{\tilde{\rho}}{\bar{\rho}} \,\bar{B}\cdot\nabla\nabla\bar{B}\,\partial_{t}\nabla\tilde{u}\,dx  \\
&\quad - \int_{\mathbb{R}^{3}} \partial_{j}\left( \frac{\tilde{\rho}}{\bar{\rho}} \right)\,\bar{B}\cdot\nabla\bar{B}\,dx.
\end{split}
\end{align}
For $\tilde{B}$, using same process, we have
\begin{align}
\label{dbp e}
\begin{split}
\|\partial_{t}\nabla\tilde{B}(t)\|_{L^{2}}^{2} & + \frac{1}{2}\frac{d}{dt} \|\nabla^{2}\tilde{B}(t)\|_{L^{2}}^{2}
= - \int_{\mathbb{R}^{3}} \nabla u \cdot\nabla\tilde{B}\,\partial_{t}\tilde{B}\,dx  \\
&\quad -\int_{\mathbb{R}^{3}}u\cdot\nabla\nabla\tilde{B}\,\partial_{t}\nabla\tilde{B}\,dx
+ \int_{\mathbb{R}^{3}} \nabla B\cdot\nabla\tilde{u}\,\partial_{t}\nabla\tilde{B}\,dx   \\
&\quad + \int_{\mathbb{R}^{3}} B\cdot\nabla\nabla\tilde{u}\,\partial_{t}\nabla\tilde{B}\,dx
- \int_{\mathbb{R}^{3}} \nabla\tilde{B}\cdot\nabla\bar{u}\,\partial_{t}\nabla\tilde{B}\,dx  \\
&\quad - \int_{\mathbb{R}^{3}} \tilde{B}\cdot\nabla\nabla\bar{u}\,\partial_{t}\nabla\tilde{B}\,dx
+ \int_{\mathbb{R}^{3}}\nabla\tilde{u}\cdot\nabla\bar{B}\,\partial_{t}\nabla\tilde{B}\,dx \\
&\quad + \int_{\mathbb{R}^{3}} \tilde{u}\cdot\nabla\nabla\bar{B}\,\partial_{t}\nabla\tilde{B}\,dx.
\end{split}
\end{align}
Combining (\ref{dup e}) and (\ref{dbp e}), we obtain
{ \allowdisplaybreaks
\begin{align*}
&\, \frac{1}{2}\frac{d}{dt}\left( \|\nabla^{2}\tilde{u}(t)\|_{L^{2}}^{2} + \|\nabla^{2}\tilde{B}(t)\|_{L^{2}}^{2} \right)
+ \|\sqrt{\rho}\,\partial_{t}\nabla\tilde{u}\|_{L^{2}}^{2} + \|\partial_{t}\nabla\tilde{B}\|_{L^{2}}^{2}    \\
\leq &\, \|\partial_{t}\tilde{u}\|_{L^{2}}^{2} + \left( \|\tilde{u}\cdot\nabla\tilde{u}\|_{L^{2}}
+ \|\bar{u}\cdot\nabla\tilde{u}\|_{L^{2}} + \|\tilde{u}\cdot\nabla\bar{u}\|_{L^{2}} \right)^{2}     \\
&\, + \|\nabla\tilde{u}\|_{L^{2}}^{2}\|\nabla^{2}\tilde{u}\|_{L^{2}}\|\nabla^{3}\tilde{u}\|_{L^{2}}
+ \|\nabla\bar{u}\|_{L^{2}}^{2}\|\nabla^{2}\tilde{u}\|_{L^{2}}\|\nabla^{3}\tilde{u}\|_{L^{2}}   \\
&\, + \|\tilde{u}\|_{L^{2}}\|\nabla\tilde{u}\|_{L^{2}}\|\nabla^{3}\tilde{u}\|_{L^{2}}^{2}
+ \|\nabla\bar{u}\|_{L^{2}}\|\nabla^{2}\bar{u}\|_{L^{2}}\|\nabla^{2}\tilde{u}\|_{L^{2}}^{2} \\
&\, + \|\nabla\tilde{B}\|_{L^{2}}^{2}\|\nabla^{2}\tilde{B}\|_{L^{2}}\|\nabla^{3}\tilde{B}\|_{L^{2}}
+ \|\nabla\bar{B}\|_{L^{2}}^{2}\|\nabla^{2}\tilde{B}\|_{L^{2}}\|\nabla^{3}\tilde{B}\|_{L^{2}} \\
&\, + \|\tilde{B}\|_{L^{2}}\|\nabla\tilde{B}\|_{L^{2}}\|\nabla^{3}\tilde{B}\|_{L^{2}}^{2}
+ \|\nabla\bar{B}\|_{L^{2}}\|\nabla^{2}\bar{B}\|_{L^{2}}\|\nabla^{2}\tilde{B}\|_{L^{2}}^{2} \\
&\, + \|\nabla\bar{u}\|_{L^{2}}^{2}\|\nabla^{2}\tilde{u}\|_{L^{2}}\|\nabla^{3}\tilde{u}\|_{L^{2}}
+ \|\tilde{u}\|_{L^{2}}\|\nabla\tilde{u}\|_{L^{2}}\|\nabla^{3}\bar{u}\|_{L^{2}}^{2}   \\
&\, \|\nabla\bar{B}\|_{L^{2}}^{2}\|\nabla^{2}\tilde{B}\|_{L^{2}}\|\nabla^{3}\tilde{B}\|_{L^{2}}
+ \|\tilde{B}\|_{L^{2}}\|\nabla\tilde{B}\|_{L^{2}}\|\nabla^{3}\bar{B}\|_{L^{2}}^{2} \\
&\, + \|\tilde{\rho}\|_{L^{\infty}}^{2}\|\nabla(\Delta\bar{u}-\nabla\bar{\Pi})\|_{L^{2}}^{2}
+ \|\nabla\tilde{a}\|_{L^{\infty}}^{2}\|\Delta\bar{u}-\nabla\bar{\Pi}\|_{L^{2}}^{2} \\
&\, + \|\tilde{\rho}\|_{L^{\infty}}^{2}\|\nabla\bar{B}\cdot\nabla\bar{B}\|_{L^{2}}^{2}
+ \|\tilde{\rho}\|_{L^{\infty}}^{2}\|\bar{B}\cdot\nabla\nabla\bar{B}\|_{L^{2}}^{2}  \\
&\, + \|\nabla\tilde{a}\|_{L^{\infty}}^{2}\|\bar{B}\cdot\nabla\bar{B}\|_{L^{2}}^{2}
+\|\nabla^{2}\tilde{u}\|_{L^{2}}\|\nabla^{3}\tilde{u}\|_{L^{2}}\|\nabla\tilde{B}\|_{L^{2}}^{2} \\
&\, + \|\nabla\bar{u}\|_{L^{2}}^{2}\|\nabla^{2}\tilde{B}\|_{L^{2}}\|\nabla^{3}\tilde{B}\|_{L^{2}}
+ \|\tilde{u}\|_{L^{2}}\|\nabla\tilde{u}\|_{L^{2}}\|\nabla^{3}\tilde{B}\|_{L^{2}}^{2}   \\
&\, + \|\nabla\bar{u}\|_{L^{2}}\|\nabla^{2}\bar{u}\|_{L^{2}}\|\nabla^{2}\tilde{B}\|_{L^{2}}^{2}
+ \|\nabla^{2}\tilde{B}\|_{L^{2}}\|\nabla^{3}\tilde{B}\|_{L^{2}}\|\nabla\tilde{u}\|_{L^{2}}^{2} \\
&\, + \|\nabla\bar{B}\|_{L^{2}}^{2}\|\nabla^{2}\tilde{u}\|_{L^{2}}\|\nabla^{3}\tilde{u}\|_{L^{2}}
+ \|\tilde{B}\|_{L^{2}}\|\nabla\tilde{B}\|_{L^{2}}\|\nabla^{3}\tilde{u}\|_{L^{2}}^{2}  \\
&\, + \|\nabla\bar{B}\|_{L^{2}}\|\nabla^{2}\bar{B}\|_{L^{2}}\|\nabla^{2}\tilde{u}\|_{L^{2}}^{2}
+ \|\nabla\bar{u}\|_{L^{2}}^{2}\|\nabla^{2}\tilde{B}\|_{L^{2}}\|\nabla^{3}\tilde{B}\|_{L^{2}}   \\
&\, + \|\tilde{B}\|_{L^{2}}\|\nabla\tilde{B}\|_{L^{2}}\|\nabla^{3}\bar{u}\|_{L^{2}}^{2}
+ \|\nabla\bar{B}\|_{L^{2}}^{2}\|\nabla^{2}\tilde{u}\|_{L^{2}}\|\nabla^{3}\tilde{u}\|_{L^{2}}   \\
&\, + \|\tilde{u}\|_{L^{2}}\|\nabla\tilde{B}\|_{L^{2}}\|\nabla^{3}\bar{B}\|_{L^{2}}^{2}.
\end{align*}
Multiplying $\frac{1}{\rho}\,\Delta\tilde{u}$ and $\Delta\tilde{u}$ to (\ref{dup}) and (\ref{dbp}) separately and
doing some basic energy estimates, we obtain
\begin{align*}
&\, \frac{1}{2}\frac{d}{dt}\left( \|\nabla^{2}\tilde{u}\|_{L^{2}}^{2} + \|\nabla^{2}\tilde{B}\|_{L^{2}}^{2} \right)
+ \|\rho\|_{L^{\infty}}^{-1}\|\nabla^{3}\tilde{u}\|_{L^{2}}^{2} + \|\nabla^{3}\tilde{B}\|_{L^{2}}^{2}   \\
\leq &\, \|\partial_{t}\nabla\tilde{u}\|_{L^{2}}^{2} +
\left( \|\tilde{u}\cdot\nabla\tilde{u}\|_{L^{2}} + \|\bar{u}\cdot\nabla\tilde{u}\|_{L^{2}} + \|\tilde{u}\cdot\nabla\bar{u}\|_{L^{2}} \right)^{2} \\
&\, + \|\nabla^{2}\tilde{u}\|_{L^{2}}\|\nabla^{3}\tilde{u}\|_{L^{2}}\|\nabla\tilde{u}\|_{L^{2}}^{2}
+ \|\nabla^{2}\tilde{u}\|_{L^{2}}\|\nabla^{3}\tilde{u}\|_{L^{2}}\|\nabla\bar{u}\|_{L^{2}}^{2}   \\
&\, + \|\tilde{u}\|_{L^{2}}\|\nabla\tilde{u}\|_{L^{2}}\|\nabla^{3}\tilde{u}\|_{L^{2}}^{2}
+ \|\nabla\bar{u}\|_{L^{2}}\|\nabla^{2}\bar{u}\|_{L^{2}}\|\nabla^{2}\tilde{u}\|_{L^{2}}^{2}    \\
&\, + \|\nabla^{2}\tilde{B}\|_{L^{2}}\|\nabla^{3}\tilde{B}\|_{L^{2}}\|\nabla\tilde{B}\|_{L^{2}}^{2}
+ \|\nabla^{2}\tilde{B}\|_{L^{2}}\|\nabla^{3}\tilde{B}\|_{L^{2}}\|\nabla\bar{B}\|_{L^{2}}^{2}   \\
&\, + \|\tilde{B}\|_{L^{2}}\|\nabla\tilde{B}\|_{L^{2}}\|\nabla^{3}\tilde{B}\|_{L^{2}}^{2}
+ \|\nabla\bar{B}\|_{L^{2}}\|\nabla^{2}\bar{B}\|_{L^{2}}\|\nabla^{2}\tilde{B}\|_{L^{2}}^{2} \\
&\, + \|\nabla^{2}\tilde{u}\|_{L^{2}}\|\nabla^{3}\tilde{u}\|_{L^{3}}\|\nabla\bar{u}\|_{L^{2}}^{2}
+ \|\tilde{u}\|_{L^{2}}\|\nabla\tilde{u}\|_{L^{2}}\|\nabla^{3}\bar{u}\|_{L^{2}}^{2} \\
&\, + \|\nabla^{2}\tilde{B}\|_{L^{2}}\|\nabla^{3}\tilde{B}\|_{L^{2}}\|\nabla\bar{B}\|_{L^{2}}^{2}
+ \|\tilde{B}\|_{L^{2}}\|\nabla\tilde{B}\|_{L^{2}}\|\nabla^{3}\bar{B}\|_{L^{2}}^{2} \\
&\, + \|\tilde{\rho}\|_{L^{\infty}}^{2}\|\nabla(\Delta\bar{u}-\nabla\bar{\Pi})\|_{L^{2}}^{2}
+ \|\nabla\tilde{a}\|_{L^{\infty}}^{2}\|\Delta\bar{u}-\nabla\bar{\Pi}\|_{L^{2}}^{2} \\
&\, + \|\tilde{\rho}\|_{L^{\infty}}^{2}\|\nabla\bar{B}\cdot\nabla\bar{B}\|_{L^{2}}^{2}
+ \|\tilde{\rho}\|_{L^{\infty}}^{2}\|\bar{B}\cdot\nabla\nabla\bar{B}\|_{L^{2}}^{2}  \\
&\, + \|\nabla\tilde{a}\|_{L^{\infty}}^{2}\|\bar{B}\cdot\nabla\bar{B}\|_{L^{2}}^{2}
+ \|\nabla^{2}\tilde{u}\|_{L^{2}}\|\nabla^{3}\tilde{u}\|_{L^{2}}\|\nabla\tilde{B}\|_{L^{2}}^{2}  \\
&\, + \|\nabla^{2}\tilde{B}\|_{L^{2}}\|\nabla^{3}\tilde{B}\|_{L^{2}}\|\nabla\bar{u}\|_{L^{2}}^{2}
+ \|\tilde{u}\|_{L^{2}}\|\nabla\tilde{u}\|_{L^{2}}\|\nabla^{3}\tilde{B}\|_{L^{2}}^{2}  \\
&\, + \|\nabla\bar{u}\|_{L^{2}}\|\nabla^{2}\bar{u}\|_{L^{2}}\|\nabla^{2}\tilde{B}\|_{L^{2}}^{2}
+ \|\nabla^{2}\tilde{B}\|_{L^{2}}\|\nabla^{3}\tilde{B}\|_{L^{2}}\|\nabla\tilde{u}\|_{L^{2}}^{2} \\
&\, + \|\nabla^{2}\tilde{u}\|_{L^{2}}\|\nabla^{3}\tilde{u}\|_{L^{2}}\|\nabla\bar{B}\|_{L^{2}}^{2}
+ \|\tilde{B}\|_{L^{2}}\|\nabla\tilde{B}\|_{L^{2}}\|\nabla^{3}\tilde{u}\|_{L^{2}}^{2}   \\
&\, + \|\nabla\bar{B}\|_{L^{2}}\|\nabla^{2}\bar{B}\|_{L^{2}}\|\nabla^{2}\tilde{B}\|_{L^{2}}^{2}
+ \|\nabla^{2}\tilde{B}\|_{L^{2}}\|\nabla^{3}\tilde{B}\|_{L^{2}}\|\nabla\bar{u}\|_{L^{2}}^{2} \\
&\, + \|\tilde{B}\|_{L^{2}}\|\nabla\tilde{B}\|_{L^{2}}\|\nabla^{3}\bar{u}\|_{L^{2}}^{2}
+ \|\nabla^{2}\tilde{u}\|_{L^{2}}\|\nabla^{3}\tilde{u}\|_{L^{2}}\|\nabla\bar{B}\|_{L^{2}}^{2}   \\
&\, + \|\tilde{u}\|_{L^{2}}\|\nabla\tilde{u}\|_{L^{2}}\|\nabla^{3}\bar{B}\|_{L^{2}}^{2}.
\end{align*}
}
At this point, we got two completed inequalities. The second inequality times a small number then plus the first inequality yieds
{ \allowdisplaybreaks
\begin{align*}
&\, \frac{d}{dt}\left( \|\nabla^{2}\tilde{u}(t)\|_{L^{2}}^{2} + \|\nabla^{2}\tilde{B}(t)\|_{L^{2}}^{2} \right)
+ \|\sqrt{\rho}\partial_{t}\nabla\tilde{u}\|_{L^{2}}^{2} + \|\partial_{t}\nabla\tilde{B}\|_{L^{2}}^{2}  \\
&\, + \left( \frac{c_{0}}{2} - C \|\tilde{u}\|_{L^{2}}\|\nabla\tilde{u}\|_{L^{2}} - C \|\tilde{B}\|_{L^{2}}\|\nabla\tilde{B}\|_{L^{2}} \right)
\left( \|\nabla^{3}\tilde{u}\|_{L^{2}}^{2} + \|\nabla^{3}\tilde{B}\|_{L^{2}}^{2} \right) \\
\leq &\, \left( \|\nabla\bar{u}\|_{L^{2}}\|\nabla^{2}\bar{u}\|_{L^{2}} + \|\nabla\bar{B}\|_{L^{2}}\|\nabla^{2}\bar{B}\|_{L^{2}} \right)
\left( \|\nabla^{2}\tilde{u}\|_{L^{2}}^{2} + \|\nabla^{2}\tilde{B}\|_{L^{2}}^{2} \right)    \\
&\, + \Big( \|\nabla\tilde{u}\|_{L^{2}}^{4} + \|\nabla\tilde{B}\|_{L^{2}}^{4} + \|\nabla\bar{u}\|_{L^{2}}^{4}
+ \|\nabla\bar{B}\|_{L^{2}}^{4} \Big)\Big( \|\nabla^{2}\tilde{u}\|_{L^{2}}^{2} + \|\nabla^{2}\tilde{B}\|_{L^{2}}^{2} \\
&\, + \|\nabla^{2}\bar{u}\|_{L^{2}}^{2} + \|\nabla^{2}\bar{B}\|_{L^{2}}^{2} \Big) + \|\partial_{t}\tilde{u}\|_{L^{2}}^{2}
+ \|\tilde{u}\|_{L^{2}}\|\nabla\tilde{u}\|_{L^{2}}\|\nabla^{2}\tilde{u}\|_{L^{2}}^{2}   \\
&\, +\|\nabla\tilde{u}\|_{L^{2}}^{2}\|\nabla\bar{u}\|_{L^{2}}\|\nabla^{2}\bar{u}\|_{L^{2}}
+ \Big( \|\tilde{u}\|_{L^{2}}\|\nabla\tilde{u}\|_{L^{2}} + \|\tilde{B}\|_{L^{2}}\|\nabla\tilde{B}\|_{L^{2}} \Big)
\Big( \|\nabla^{3}\bar{u}\|_{L^{2}}^{2} \\
&\, + \|\nabla^{3}\bar{B}\|_{L^{2}}^{2} \Big) + \|\tilde{\rho}\|_{L^{\infty}}^{2}\|\nabla(\Delta\bar{u}-\nabla\bar{\Pi})\|_{L^{2}}^{2}
+ \|\nabla\tilde{a}\|_{L^{\infty}}^{2}\|\Delta\bar{u}-\nabla\bar{\Pi}\|_{L^{2}}^{2} \\
&\, + \|\tilde{\rho}\|_{L^{\infty}}^{2}\|\nabla\bar{B}\cdot\nabla\bar{B}\|_{L^{2}}^{2}
+ \|\tilde{\rho}\|_{L^{\infty}}^{2}\|\bar{B}\cdot\nabla\nabla\bar{B}\|_{L^{2}}^{2}
+ \|\nabla\tilde{a}\|_{L^{\infty}}^{2}\|\bar{B}\cdot\nabla\bar{B}\|_{L^{2}}^{2}.
\end{align*}
}
Taking $c$ in Theorem \ref{stability main theorem} small enough, we have
{ \allowdisplaybreaks
\begin{align*}
&\, \frac{d}{dt}\left( \|\nabla^{2}\tilde{u}(t)\|_{L^{2}}^{2} + \|\nabla^{2}\tilde{B}(t)\|_{L^{2}}^{2} \right)
+ \|\sqrt{\rho}\partial_{t}\nabla\tilde{u}\|_{L^{2}}^{2} + \|\partial_{t}\nabla\tilde{B}\|_{L^{2}}^{2}  \\
&\,\quad + \|\nabla^{3}\tilde{u}\|_{L^{2}}^{2} + \|\nabla^{3}\tilde{B}\|_{L^{2}}^{2}  \\
\leq &\, \left( \|\nabla\bar{u}\|_{L^{2}}\|\nabla^{2}\bar{u}\|_{L^{2}} + \|\nabla\bar{B}\|_{L^{2}}\|\nabla^{2}\bar{B}\|_{L^{2}} \right)
\left( \|\nabla^{2}\tilde{u}\|_{L^{2}}^{2} + \|\nabla^{2}\tilde{B}\|_{L^{2}}^{2} \right)    \\
&\, + \Big( \|\nabla\tilde{u}\|_{L^{2}}^{4} + \|\nabla\tilde{B}\|_{L^{2}}^{4} + \|\nabla\bar{u}\|_{L^{2}}^{4}
+ \|\nabla\bar{B}\|_{L^{2}}^{4} \Big)\Big( \|\nabla^{2}\tilde{u}\|_{L^{2}}^{2} + \|\nabla^{2}\tilde{B}\|_{L^{2}}^{2} \\
&\, + \|\nabla^{2}\bar{u}\|_{L^{2}}^{2} + \|\nabla^{2}\bar{B}\|_{L^{2}}^{2} \Big) + \|\partial_{t}\tilde{u}\|_{L^{2}}^{2}
+ \|\tilde{u}\|_{L^{2}}\|\nabla\tilde{u}\|_{L^{2}}\|\nabla^{2}\tilde{u}\|_{L^{2}}^{2}   \\
&\, +\|\nabla\tilde{u}\|_{L^{2}}^{2}\|\nabla\bar{u}\|_{L^{2}}\|\nabla^{2}\bar{u}\|_{L^{2}}
+ \Big( \|\tilde{u}\|_{L^{2}}\|\nabla\tilde{u}\|_{L^{2}} + \|\tilde{B}\|_{L^{2}}\|\nabla\tilde{B}\|_{L^{2}} \Big)
\Big( \|\nabla^{3}\bar{u}\|_{L^{2}}^{2} \\
&\, + \|\nabla^{3}\bar{B}\|_{L^{2}}^{2} \Big) + \|\tilde{\rho}\|_{L^{\infty}}^{2}\|\nabla(\Delta\bar{u}-\nabla\bar{\Pi})\|_{L^{2}}^{2}
+ \|\nabla\tilde{a}\|_{L^{\infty}}^{2}\|\Delta\bar{u}-\nabla\bar{\Pi}\|_{L^{2}}^{2} \\
&\, + \|\tilde{\rho}\|_{L^{\infty}}^{2}\|\nabla\bar{B}\cdot\nabla\bar{B}\|_{L^{2}}^{2}
+ \|\tilde{\rho}\|_{L^{\infty}}^{2}\|\bar{B}\cdot\nabla\nabla\bar{B}\|_{L^{2}}^{2}
+ \|\nabla\tilde{a}\|_{L^{\infty}}^{2}\|\bar{B}\cdot\nabla\bar{B}\|_{L^{2}}^{2}.
\end{align*}
}
Integrating the above inequality, using decay estimates about reference solution and perturbed solution, we obtain
\begin{align*}
&\, \sup_{t\geq t_{0}}\Big( \|\nabla^{2}\tilde{u}(t)\|_{L^{2}}^{2} + \|\nabla^{2}\tilde{B}(t)\|_{L^{2}}^{2} \Big)
+ \int_{t_{0}}^{\infty} \|\sqrt{\rho}\,\partial_{t}\nabla\tilde{u}\|_{L^{2}}^{2} + \|\partial_{t}\nabla\tilde{B}\|_{L^{2}}^{2}\,dt  \\
&\,\quad\quad\quad\quad\quad\quad\quad\quad\quad\quad\quad
+ \int_{t_{0}}^{\infty}\|\nabla^{3}\tilde{u}\|_{L^{2}}^{2} + \|\nabla^{3}\tilde{B}\|_{L^{2}}^{2} \, dt \leq C.
\end{align*}
Hence, the proof of Proposition \ref{high per} is completed.
\end{proof}

Now, we can complete the proof of Theorem \ref{stability main theorem} as following.

\begin{proof}
According to the statement at the beginning of this section, given initial data $(\bar{a}_{0}+\tilde{a}_{0}, \bar{u}_{0}+\tilde{u}_{0}, \bar{B}_{0}+\tilde{B}_{0})$, (\ref{mhd_a}) has a unique solution $(a, u, B)$ on $[0, \tilde{T}^{*})$ such that
\begin{align*}
& a \in C([0, \tilde{T}^{*}); B_{2,1}^{7/2}(\mathbb{R}^{3})), \\
& u \in C([0, \tilde{T}^{*}); B_{2,1}^{2}) \cap L_{\mathrm{loc}}^{1}((0,\tilde{T}^{*}); \dot{B}_{2,1}^{4}(\mathbb{R}^{3})),  \\
& B \in C([0, \tilde{T}^{*}); B_{2,1}^{2}) \cap L_{\mathrm{loc}}^{1}((0,\tilde{T}^{*}); \dot{B}_{2,1}^{4}(\mathbb{R}^{3})).
\end{align*}
We need only prove the maximal existence time $\tilde{T}^{*} = \infty$. Indeed, according to all the decay estimates for reference solution and
perturbed solution, we repeat the argument used in the proof of Proposition \ref{global reference 5/2} and Proposition \ref{global reference 2}
to prove that $\tilde{T}^{*} = \infty$. Then a standard interpolation argument gives (\ref{1 3 1}) and (\ref{1 3 2}).
This completes the proof of Theorem \ref{stability main theorem}.
\end{proof}


\section{Appendix}

For completeness, in this section, we give the proof of some Lemmas and Propositions.
Firstly, we give the proof of Remark \ref{s=1 linear couple}.
\begin{proof}
Using the same method as in the proof of Lemma \ref{linear estimate momentum}, we can get the result.
The only difference is that we allow the parameter $s=1$, so we only give the estimates related to $s<1$ which is required in
the proof of Lemma \ref{linear estimate momentum}.
Notice that
\begin{align*}
\mathrm{div}[\Delta_{q}, a]\nabla u = &\, \Delta_{q}\mathrm{div}\,\mathcal{R}(a, \nabla u) + \Delta_{q}\mathrm{div}\, T_{\nabla u}a \\
&\, -\mathrm{div}R(a,\Delta_{q}\nabla u) - \mathrm{div}[T_{a},\Delta_{q}]\nabla u.
\end{align*}
Applying Lemma \ref{bernsteininequality}, we have
\begin{align*}
&\, \|\Delta_{q}\mathrm{div}\,\mathcal{R}(a,\nabla u)(t)\|_{L^{2}} \\
\lesssim &\, 2^{\frac{5}{2}q}\sum_{k \geq q-5} \|\Delta_{k}a(t)\|_{L^{2}}\|\tilde{\Delta}_{k}\nabla u(t)\|_{L^{2}}  \\
\lesssim &\, 2^{\frac{5}{2}q}\sum_{k \geq q-5} 2^{-2k}2^{-k-\frac{1}{2}k} 2^{2k}\|\Delta_{k}a(t)\|_{L^{2}}2^{k+\frac{3}{2}k}\|\tilde{\Delta}_{k}u(t)\|_{L^{2}} \\
\lesssim &\, 2^{\frac{5}{2}q}\sum_{k \geq q-5} d_{k}(t) 2^{-k(1+\frac{5}{2})}\|a(t)\|_{\dot{B}_{2,1}^{2}}\|u(t)\|_{\dot{B}_{2,1}^{5/2}}   \\
\lesssim &\, d_{q}(t) 2^{-q} \|a(t)\|_{\dot{B}_{2,1}^{2}}\|u(t)\|_{\dot{B}_{2,1}^{5/2}},
\end{align*}
and
\begin{align*}
&\, \|\mathrm{div}\,R(a,\Delta_{q}\nabla u)(t)\|_{L^{2}}    \\
\lesssim &\, \sum_{|k-q|\leq 1}\|\nabla\Delta_{k}a(t)\|_{L^{2}}\|\tilde{\Delta}_{k}\Delta_{q}\nabla u(t)\|_{L^{\infty}}  \\
&\, \quad\quad\quad\quad\quad\quad
+ \sum_{|k-q|\leq 1} \|\Delta_{k}a(t)\|_{L^{2}}\|\tilde{\Delta}_{k}\Delta_{q}\Delta u(t)\|_{L^{\infty}}  \\
\lesssim &\, \sum_{|k-q|\leq 1}2^{k}\|\Delta_{k}a(t)\|_{L^{2}}2^{q\frac{5}{2}}\|\tilde{\Delta}_{k}u(t)\|_{L^{2}}   \\
\lesssim &\, d_{q}(t)2^{-q}\|a(t)\|_{\dot{B}_{2,1}^{2}}\|u(t)\|_{\dot{B}_{2,1}^{5/2}}.
\end{align*}
Note that
\begin{align*}
\|S_{k-1}\nabla u(t)\|_{L^{\infty}} \lesssim &\, \sum_{j\leq k-2}\|\Delta_{j}\nabla u(t)\|_{L^{\infty}} \\
\lesssim &\, \sum_{j\leq k-2}2^{\frac{5}{2}j}\|\Delta_{j}u(t)\|_{L^{2}}  \\
\lesssim &\, \|u(t)\|_{\dot{B}_{2,1}^{5/2}},
\end{align*}
this along with Lemma \ref{bernsteininequality} leads to
\begin{align*}
\|\Delta_{q}\mathrm{div}\,T_{\nabla u}a(t)\|_{L^{2}} \lesssim &\, 2^{q}\sum_{|k-q|\leq 4}\|\Delta_{k}a(t)\|_{L^{2}}\|S_{k-1}\nabla u(t)\|_{L^{\infty}}  \\
\lesssim &\, 2^{q}\sum_{|q-k|\leq 4}d_{q}(t)2^{-2q}\|u(t)\|_{\dot{B}_{2,1}^{5/2}}\|a(t)\|_{\dot{B}_{2,1}^{2}}  \\
\lesssim &\, d_{q}(t)2^{-q}\|u(t)\|_{\dot{B}_{2,1}^{5/2}}\|a(t)\|_{\dot{B}_{2,1}^{2}}.
\end{align*}
Notice that
\begin{align*}
\|\nabla S_{k-1}a(t)\|_{L^{\infty}} \lesssim &\, \sum_{j\leq k-2}\|\nabla\Delta_{j}a(t)\|_{L^{\infty}}  \\
\lesssim &\, \sum_{j\leq k-2}2^{\frac{1}{2}j}2^{2j}\|\Delta_{j}a(t)\|_{L^{2}}\lesssim 2^{\frac{k}{2}}\|a(t)\|_{\dot{B}_{2,1}^{2}},
\end{align*}
this gives
\begin{align*}
\|\mathrm{div}\, [T_{a}, \Delta_{q}]\nabla u(t)\|_{L^{2}}\lesssim &\, \sum_{|q-k|\leq 4}\|\nabla S_{k-1}a(t)\|_{L^{\infty}}\|\nabla\Delta_{k}u(t)\|_{L^{2}} \\
\lesssim &\, \sum_{|q-k|\leq 4}d_{k}(t)2^{\frac{1}{2}k} 2^{-\frac{3}{2}k}\|u(t)\|_{\dot{B}_{2,1}^{5/2}}\|a(t)\|_{\dot{B}_{2,1}^{2}}  \\
\lesssim &\, d_{q}(t)2^{-q}\|u(t)\|_{\dot{B}_{2,1}^{5/2}}\|a(t)\|_{\dot{B}_{2,1}^{2}}.
\end{align*}
Summing up all the above estimates, we arrive at
\begin{align*}
\|\mathrm{div}\,[\Delta_{q}, a]\nabla u(t)\|_{L^{2}} \lesssim d_{q}(t)2^{-q}\|a(t)\|_{\dot{B}_{2,1}^{2}}\|u(t)\|_{\dot{B}_{2,1}^{5/2}}.
\end{align*}
Similarly, we have
\begin{align*}
[\nabla a\cdot \nabla, \Delta_{q}]u(t) = &\, [T_{\nabla a}, \Delta_{q}]\nabla u - \Delta_{q}\mathcal{R}(\nabla a, \nabla u) \\
&\, - \Delta_{q}(T_{\nabla u}\nabla a) + R(\nabla a, \nabla\Delta_{q}u).
\end{align*}
Then following the same line of reasoning as above, we obtain
\begin{align*}
\|[\nabla a\cdot\nabla, \Delta_{q}]u(t)\|_{L^{2}}\lesssim \,d_{q}(t)2^{-q}\|a(t)\|_{\dot{B}_{2,1}^{2}}\|u(t)\|_{\dot{B}_{2,1}^{5/2}}.
\end{align*}
Finally, we have
\begin{align*}
\|\nabla a(t) \Pi(t)\|_{\dot{B}_{2,1}^{1}} \lesssim &\, \|\nabla a(t)\|_{\dot{B}_{2,1}^{1}}\|\Pi(t)\|_{\dot{B}_{2,1}^{3/2}}.
\end{align*}
At this point, we list all the different estimates related to the restriction of $s$.
\end{proof}

Next, we will give the proof of (\ref{estimate in paper}).
\begin{proof}
Applying Lemma \ref{linear estimate momentum} to (\ref{d u b}) together with Gronwall's inequality gives
\begin{align*}
&\, \|(\nabla\bar{u}, \nabla\bar{B})\|_{\tilde{L}_{t}^{\infty}(\dot{B}_{2,1}^{1/2})} + \|(\nabla\bar{u}, \nabla\bar{B})\|_{L_{t}^{1}(\dot{B}_{2,1}^{5/2})} \\
\leq &\, C\exp{\left\{ \int_{0}^{t}\|\bar{u}(\tau)\|_{\dot{B}_{2,1}^{5/2}} + \|\bar{B}(\tau)\|_{\dot{B}_{2,1}^{5/2}}\,d\tau \right\}}
\Big\{\|(\nabla\bar{u}_{0}, \nabla\bar{B}_{0})\|_{\dot{B}_{2,1}^{1/2}} \\
&\, + \|\nabla\bar{a}\|_{\tilde{L}_{t}^{\infty}(\dot{H}^{1})}\left( \|\nabla\bar{\Pi}\|_{\tilde{L}_{t}^{1}(\dot{H}^{1})}
+ \|\bar{u}\|_{L_{t}^{1}(\dot{B}_{2,1}^{3})} \right) \Big\},
\end{align*}
where we also used (\ref{estimate a}).
Then thanks to (\ref{global reference 5/2}) and (\ref{estimate a}), we conclude for every $\eta > 0$ that
\begin{align}
\label{dudb}
\begin{split}
&\, \|(\nabla\bar{u}, \nabla\bar{B})\|_{\tilde{L}_{t}^{\infty}(\dot{B}_{2,1}^{1/2})} + \|(\nabla\bar{u}, \nabla\bar{B})\|_{L_{t}^{1}(\dot{B}_{2,1}^{5/2})}  \\
\leq &\, C + C_{\eta}\left( \|\Delta\bar{u}\|_{L_{t}^{1}(L^{2})} + \|\nabla\bar{\Pi}\|_{L_{t}^{1}(L^{2})} \right)
+ \eta \left( \|\bar{u}\|_{L_{t}^{1}(\dot{B}_{2,1}^{7/2})} + \|\nabla\bar{\Pi}\|_{\tilde{L}_{t}^{1}(\dot{H}^{3/2})} \right)
\end{split}
\end{align}
Notice that $\mathrm{div}\,\bar{u} = \mathrm{div}\,\bar{B} = 0$, we get by taking $\mathrm{div}$ to (\ref{d u b}) that
\begin{align*}
\mathrm{div} \left( (1+\bar{a})\nabla\partial_{i}\bar{\Pi} \right) = & -\mathrm{div} \partial_{i} \left[ (\bar{u}\cdot\nabla)\bar{u} \right]
+ \mathrm{div} \partial_{i} \left[ \bar{a} \, \Delta \bar{u} \right] - \mathrm{div}\left[ \partial_{i}\bar{a}\,\nabla \bar{\Pi} \right] \\
& + \mathrm{div}\partial_{i}\left( \bar{B}\cdot\nabla\bar{B} \right) + \mathrm{div} \left( \partial_{i}\bar{a}\,\bar{B}\cdot\nabla\bar{B} \right)  \\
& + \mathrm{div}\left( \bar{a}\,\partial_{i}\bar{B}\cdot\nabla\bar{B} \right)
+ \mathrm{div}\left( \bar{a}\,\bar{B}\cdot\nabla\partial_{i}\bar{B} \right).
\end{align*}
From this and Lemma \ref{pressure_estimate}, we deduce that
\begin{align*}
\|\nabla^{2}\bar{\Pi}\|_{L_{t}^{1}(\dot{B}_{2,1}^{1/2})} \lesssim &\, \|\partial_{i}[\bar{u}\cdot\nabla\bar{u}]\|_{L_{t}^{1}(\dot{B}_{2,1}^{1/2})}
+ \|\partial_{i}[\bar{a} \,\Delta\bar{u}]\|_{L_{t}^{1}(\dot{B}_{2,1}^{1/2})}    \\
&\, + \|\partial_{i}\bar{a}\,\nabla\bar{\Pi}\|_{L_{t}^{1}(\dot{B}_{2,1}^{1/2})} + \|\partial_{i}(\bar{B}\cdot\nabla\bar{B})\|_{L_{t}^{1}(\dot{B}_{2,1}^{1/2})} \\
&\, + \|\partial\bar{a}\,\bar{B}\cdot\nabla\bar{B}\|_{L_{t}^{1}(\dot{B}_{2,1}^{1/2})} + \|\bar{a}\,\partial_{i}\bar{B}\cdot\nabla\bar{B}\|_{L_{t}^{1}(\dot{B}_{2,1}^{1/2})}
\\
&\, + \|\bar{a}\,\bar{B}\cdot\nabla\partial_{i}\bar{B}\|_{L_{t}^{1}(\dot{B}_{2,1}^{1/2})}.
\end{align*}
Applying the product laws in Besov spaces yields that for any $\epsilon > 0$
\begin{align*}
\|\nabla^{2}\bar{\Pi}\|_{L_{t}^{1}(\dot{B}_{2,1}^{1/2})} \lesssim &\, \|\bar{u}\|_{L_{t}^{\infty}(\dot{B}_{2,1}^{3/2})}\|\bar{u}\|_{L_{t}^{1}(\dot{B}_{2,1}^{5/2})} \\
&\, + \|\nabla\bar{a}\|_{\tilde{L}_{t}^{\infty}(\dot{B}_{2,1}^{3/2})}\|\Delta\bar{u}\|_{L_{t}^{1}(\dot{B}_{2,1}^{1/2})}
 + \|\bar{a}\|_{L_{t}^{\infty}(\dot{B}_{2,1}^{3/2})}\|\bar{u}\|_{L_{t}^{1}(\dot{H}^{7/2})} \\
& + \|\bar{B}\|_{L_{t}^{\infty}(\dot{B}_{2,1}^{3/2})}\|\bar{B}\|_{L_{t}^{1}(\dot{B}_{2,1}^{5/2})}
+ \|\bar{a}\|_{L_{t}^{\infty}(\dot{B}_{2,1}^{3/2})}\|\bar{B}\|_{L_{t}^{\infty}(\dot{B}_{2,1}^{3/2})}\|\bar{B}\|_{L_{t}^{1}(\dot{B}_{2,1}^{5/2})} \\
& + \|\bar{a}\|_{\tilde{L}_{t}^{\infty}(\dot{H}^{2})}\left( \epsilon \|\nabla^{2}\bar{\Pi}\|_{\tilde{L}_{t}^{1}(\dot{H}^{1/2})}
+ C_{\eta}\|\nabla\bar{\Pi}\|_{L_{t}^{1}(L^{2})} \right).
\end{align*}
Thanks to Theorem \ref{decay_main_theorem}, Proposition \ref{global reference 5/2} and estimates (\ref{estimate a}), we get by
taking $\epsilon$ sufficiently small in the above inequality that
\begin{align*}
\|\nabla^{2}\bar{\Pi}\|_{L_{t}^{1}(\dot{B}_{2,1}^{1/2})} \leq C\, \left( 1 + \|\nabla \bar{u}\|_{\tilde{L}_{t}^{\infty}(\dot{B}_{2,1}^{1/2})}
+ \|\nabla\bar{u}\|_{L_{t}^{1}(\dot{B}_{2,1}^{5/2})} + \|\nabla\bar{B}\|_{\tilde{L}_{t}^{\infty}(\dot{B}_{2,1}^{1/2})} \right).
\end{align*}
Substituting the above inequality into (\ref{dudb}) and then taking $\eta$ sufficiently small, we arrive at
\begin{align*}
\|\nabla\bar{u}\|_{\tilde{L}_{t}^{\infty}(\dot{B}_{2,1}^{1/2})} + & \|\nabla\bar{B}\|_{\tilde{L}_{t}^{\infty}(\dot{B}_{2,1}^{1/2})}
+ \|\nabla\bar{u}\|_{L_{t}^{1}(\dot{B}_{2,1}^{5/2})} \\
&\quad +\|\nabla\bar{B}\|_{L_{t}^{1}(\dot{B}_{2,1}^{5/2})} + \|\nabla\bar{\Pi}\|_{L_{t}^{1}(\dot{B}_{2,1}^{3/2})} \leq \,C.
\end{align*}
\end{proof}

\section{Acknowledgements}
J. Peng and J. Jia's research is support partially by National Natural Science Foundation of China under the grant no.11131006,
and by the National Basic Research Program of China under the grant no.2013CB329404.
K. Li's research is support partially by National Natural Science Foundation of China under the grant no.11201366.


\begin{thebibliography}{1}

\bibitem{Davidsom}P.A. Davidson, An Introduction to Magnetohydrodynamics, Cambridge University Press, Cambridge, 2001.
\bibitem{Xiangdi Huang 2013}X. Huang, Y. Wang, Global strong solution to the 2D nonhomogeneous incompressible MHD system
Journal of Differential Equations, 254, 511-527 (2013).
\bibitem{HammadiMHD}H. Abidi, M. Paicu, Global existence for the MHD system in critical spaces,
Proceedings of the Royal Society of Edinburgh: Section A Mathematics, 138, 447-476 (2008).
\bibitem{Kazhikov}A. V. Kazhikov, Resolution of boundary value problems for nonhomogeneous viscous fluids, Dokl. Akad. Nauk, 216, 1008-1010 (1974).
\bibitem{Antontsev}S. A. Antontesv, A. V. Kazhikov and V. N. Monakhov,Boundary Value Problems in
Mechanics of Nonhomogeneous Fluids, North-Holland, Amsterdam, 1990.
\bibitem{Danchin 2012}R. Danchin, P. B. Mucha, A lagrangian approach for the incompressible Navier-Stokes equations with variable density,
Communications on Pure and Applied Mathematics, Vol. LXV, 1458-1480 (2012).
\bibitem{zhangping2012}H. Abidi, G. Gui, P. Zhang, On the wellposedness of three-dimensional inhomogeneous Navier-Stokes equations in the critical spaces,
Arch. Rational Mech. Anal. 204, 189-230 (2012).
\bibitem{CheminZhangping}J. Y. Chemin, M. Paicu, P. Zhang, Global large solutions to 3-D inhomogeneous Navier-Stokes system with one slow variable,
arXiv:1301.6313
\bibitem{DanchinZhangping}R. Danchin, P. Zhang, Inhomogeneous Navier-Stokes equations in the half-space, with only bounded density,
arXiv:1304.3235
\bibitem{Danchin 2013}R. Danchin, P. B. Mucha, Incompressible flows with piecewise constant density,
Archive for Rational Mechanics and Analysis, 207, 991-1023 (2013).
\bibitem{lions}G. Duraut, J. L. lions, Inquations en thermolasticit et magntohydrodynamique.
(French) Arch. Rational Mech. Anal., 46, 241-279 (1972).
\bibitem{Jiahong Wu 2011}C. Cao, J. Wu, Global regularity for the 2D MHD equations with mixed partial dissipation and magnetic diffusion,
Advances in Mathematics, 226, 1803-1822 (2011).
\bibitem{Jiahong Wu 2013}C. Cao, D. Regmi, J. Wu, The 2D MHD equations with horizontal dissipation and horizontal
magnetic diffusion, Journal of Differential Equations, 254, 2661-2681 (2013).
\bibitem{Jiahong Wu}C. Cao, J. Wu, B. Yuan, The 2D Incompressible Magnetohydrodynamics Equations with only Magnetic Diffusion,
arXiv:1306.3629.
\bibitem{TiTi1994}G. Ponce,R. Racke, T. C. Sideris, E. S. Titi,Global stability of large solutions to the 3D
Navier-Stokes equations. Comm. Math. Phys., 159, 329¨C341 (1994).
\bibitem{Planchon}I. Gallagher, D. Iftimie, F. Planchon, Asymptotics and stability for global solutions to the
Navier-Stokes equations. Ann. Inst. Fourier (Grenoble) 53 , 1387¨C1424 (2003).
\bibitem{Danchin notes}R. Danchin, Fourier analysis methods for PDE's, (2005).
\bibitem{danchin book}H. Bahouri, J.-Y. Chemin and R. Danchin, Fourier Analysis and Nonlinear Partial Differential Equations,
Grundlehren der Mathematischen Wissenschaften, Vol. 343 (Springer, 2011).
\bibitem{chemin}Chemin, J.-Y.; Lerner, N. Flot de champs de vecteurs non lipschitziens et$\acute{e}$quations de Navier-Stokes,
Journal of Differential Equations, 121, 314-328 (1992).
\bibitem{zhangping2}G. Gui, P. Zhang, Stability to the global large solutions of 3-D Navier-Stokes equations, Advances in Mathematics, 225, 1248-1284 (2010).
\bibitem{zhangping}H. Abidi, G. Gui, P. Zhang, On the Decay and Stability of Global Solutions to the 3D Inhomogeneous Navier-Stokes Equations,
Communications on Pure and Applied Mahtematics,Vol. LXIV, 0832-0881 (2011).
\bibitem{R. Danchin 2004}R. Danchin, Local and global well-posedness results for flows of inhomogeneous viscous fluids,
Advances in Differential Equations, 9, 353-386 (2004).
\bibitem{schonbek}M. E. Schonbek, Large time behavior of solutions to the Navier-Stokes equation. Communications in Partial Differential Equations, 11, 733-763(1986).
\bibitem{Classical}L. Grafakos. Classical Fourier analysis, Graduate Texts in Mathematics, Vol. 249 (Springer, New York, 2008).
\bibitem{T.Hmidi2010JDE}Hmidi,T., Keraani,S., Rousset,F. Global well-posedness for a
Boussinesq-Navier-Stokes system with critical dissipation, Journal of Differential Equations, 249, 2147-2174 (2010).

\end{thebibliography}
\end{document}